\newcommand{\arXiv}[1]{\href{http://arxiv.org/abs/#1}{\tt arXiv:\nolinkurl{#1}}}
\newcommand{\googlebooks}[1]{(preview at \href{http://books.google.com/books?id=#1}{google books})}
\definecolor{dark-red}{rgb}{0.7,0.25,0.25}
\definecolor{dark-blue}{rgb}{0.15,0.15,0.55}
\definecolor{medium-blue}{rgb}{0,0,.8}
\theoremstyle{plain}
\newtheorem{thm}{Theorem}[section]
\newtheorem*{thm*}{Theorem}
\newtheorem{cor}[thm]{Corollary}
\newtheorem{lem}[thm]{Lemma}
\newtheorem{prop}[thm]{Proposition}
\newtheorem{quest}[thm]{Question}
\theoremstyle{definition}
\newtheorem{defn}[thm]{Definition}
\newtheorem{nota}[thm]{Notation}
\newtheorem{exs}[thm]{Examples}
\newtheorem{ex}[thm]{Example}
\newtheorem{rem}[thm]{Remark}
\newtheorem{rems}[thm]{Remarks}
\DeclareMathOperator{\Aut}{Aut}
\DeclareMathOperator{\Dom}{Dom}
\DeclareMathOperator{\End}{End}
\DeclareMathOperator{\ev}{ev}
\DeclareMathOperator{\id}{id}
\DeclareMathOperator{\OP}{op}
\DeclareMathOperator{\spann}{span}
\DeclareMathOperator{\Stab}{Stab}
\DeclareMathOperator{\Tr}{Tr}
\DeclareMathOperator{\tr}{tr}
\newcommand{\D}{\displaystyle}
\newcommand{\comment}[1]{}
\newcommand{\hs}{\hspace{.07in}}
\newcommand{\be}{\begin{enumerate}}
\newcommand{\ee}{\end{enumerate}}
\newcommand{\itm}[1]{\item[\underline{\ensuremath{#1}:}]} 
\newcommand{\itt}[1]{\item[\underline{\text{#1}:}]} 
\newcommand{\N}{\mathbb{N}}
\newcommand{\R}{\mathbb{R}}
\newcommand{\C}{\mathbb{C}}
\newcommand{\B}{\mathbb{B}}
\renewcommand{\P}{\mathbb{P}}
\newcommand{\I}{\infty} 
\newcommand{\set}[2]{\left\{#1 \middle| #2\right\}}
\newcommand{\thh}{^{\text{th}}}
\renewcommand{\a}{\mathfrak{a}}
\renewcommand{\b}{\mathfrak{b}}
\renewcommand{\c}{\mathfrak{c}}
\newcommand{\n}{\mathfrak{n}}
\newcommand{\m}{\mathfrak{m}}
\newcommand{\cB}{\mathcal{B}}
\newcommand{\cF}{\mathcal{F}}
\newcommand{\cP}{\mathcal{P}}
\newcommand{\cT}{\mathcal{T}}
\newcommand{\cS}{\mathcal{S}}
\newcommand{\cR}{\mathcal{R}}
\newcommand{\cU}{\mathcal{U}}
\newcommand{\op}{^{\OP}}   
\newcommand{\noshow}[1]{}
\newcommand{\RotationOp}[3]{
\begin{tikzpicture}[rectangular,baseline=-2cm]
	\clip (-1.65,-1.5) --(2,-1.5) -- (2,-3.5) -- (-1.65,-3.5);
	\draw[ultra thick] (-.6,-2) arc (0:180:.4cm) -- (-1.4,-2.5) .. controls ++(270:1.4cm) and ++(270:2.2cm) ..  (1.5,-1.5)--(1.5,-.5);
	\draw[ultra thick] (0,-.5)--(0,-2.5);
	\draw[ultra thick] (-1.2,-2.2)--(-1.4,-2.5)--(-1.6,-2.2);
	\filldraw[thick, unshaded] (.5,-3)--(.5,-2)--(-1,-2)--(-1,-3)--(.5,-3);
	\node at (-.25,-2.5) {$#3$};
%	\draw[ultra thick, unshaded] (0,-1.5) circle (.25cm);
	\node at (0.25,-1.75) {{\scriptsize{$#2$}}};
%	\draw[ultra thick, unshaded] (1.5,-1.5) circle (.25cm);
	\node at (1.7,-1.75) {{\scriptsize{$#1$}}};
\end{tikzpicture}}
\newcommand{\Rotation}[3]{
\begin{tikzpicture}[rectangular,baseline=-2cm]
	\clip (1.65,-1.5) --(-2,-1.5) -- (-2,-3.5) -- (1.65,-3.5);
	\draw[ultra thick] (.6,-2) arc (180:0:.4cm) -- (1.4,-2.5) .. controls ++(270:1.4cm) and ++(270:2.2cm) ..  (-1.5,-1.5)--(-1.5,-.5);
	\draw[ultra thick] (0,-.5)--(0,-2.5);
	\draw[ultra thick] (1.2,-2.2)--(1.4,-2.5)--(1.6,-2.2);
	\filldraw[thick, unshaded] (-.5,-3)--(-.5,-2)--(1,-2)--(1,-3)--(-.5,-3);
	\node at (.25,-2.5) {$#3$};
%	\draw[ultra thick, unshaded] (0,-1.5) circle (.25cm);
	\node at (0.2,-1.75) {{\scriptsize{$#2$}}};
%	\draw[ultra thick, unshaded] (-1.5,-1.5) circle (.25cm);
	\node at (-1.25,-1.75) {{\scriptsize{$#1$}}};
\end{tikzpicture}}
\newcommand{\Switch}[4]{
\begin{tikzpicture}[rectangular,baseline=-1.5cm]
	\clip (1.6,-1.4) --(-3,-1.4) -- (-3,-3.7) -- (1.6,-3.7);
	\draw[ultra thick] (0,-2) arc (180:0:.5cm) -- (1,-2.5) .. controls ++(270:1.4cm) and ++(270:1.4cm) ..  (-2.5,-2.5)--(-2.5,-1.5);
	\draw[ultra thick] (-1.5,-1.25)--(-1.5,-2.5);
	\draw[ultra thick] (.8,-2.2)--(1,-2.5)--(1.2,-2.2);
	\filldraw[thick, unshaded] (-.5,-3)--(-.5,-2)--(.5,-2)--(.5,-3)--(-.5,-3);
	\filldraw[thick, unshaded] (-2,-3)--(-2,-2)--(-1,-2)--(-1,-3)--(-2,-3);
	\node at (0,-2.5) {$#2$};
	\node at (-1.5,-2.5) {$#4$};
%	\draw[ultra thick, unshaded] (-1.5,-1.5) circle (.25cm);
	\node at (-1.25,-1.75) {{\scriptsize{$#3$}}};
%	\draw[ultra thick, unshaded] (-2.5,-1.5) circle (.25cm);
	\node at (-2.3,-1.75) {{\scriptsize{$#1$}}};
\end{tikzpicture}}
\newcommand{\Insert}[5]{
\begin{tikzpicture}[rectangular,baseline=-1.5cm]
	\clip (1.6,0) --(-2.5,0) -- (-2.5,-3.1) -- (1.6,-3.1);
	\draw[ultra thick] (1.2,.5)--(1.2,-2.5);
	\draw[ultra thick] (-1.2,.5)--(-1.2,-2.5);
	\draw[ultra thick] (0,.5)--(0,-1);
	\filldraw[thick, unshaded] (-1.5,-3)--(-1.5,-2)--(1.5,-2)--(1.5,-3)--(-1.5,-3);
	\filldraw[thick, unshaded] (-.5,-1.5)--(-.5,-.5)--(.5,-.5)--(.5,-1.5)--(-.5,-1.5);
	\node at (0,-1) {$#2$};
	\node at (-.15,-2.5) {$#5$};
%	\draw[ultra thick, unshaded] (-1.5,-1.25) ellipse (.7cm and .3cm);
	\node at (-1.8,-1.25) {{\scriptsize{$#3$}}};
%	\draw[ultra thick, unshaded] (1.2,-1.25) circle (.3cm);
	\node at (1.4,-1.25) {{\scriptsize{$#4$}}};
%	\draw[ultra thick, unshaded] (0,0) circle (.25cm);
	\node at (.2,-.25) {{\scriptsize{$#1$}}};
\end{tikzpicture}}
\begin{document}
\title{A planar calculus for infinite index subfactors}
\author{ David Penneys }
\date{\today}
\maketitle
\begin{abstract}
We develop an analog of Jones' planar calculus for  $II_1$-factor bimodules with arbitrary left and right von Neumann dimension. We generalize to bimodules Burns' results on rotations and extremality for infinite index subfactors. These results are obtained without Jones' basic construction and the resulting Jones projections.
\end{abstract}
\tableofcontents
%%%%%%%%%%%%%%%%%%%%%%%%%%%%%%%%%%%%%%%%%%%%%%%%%%

\section{Introduction}

Jones initiated the modern theory of subfactors in \cite{MR696688}. Given a finite index $II_1$-subfactor $A_0\subseteq A_1$, he used the \underline{basic construction} to obtain the Jones tower $(A_n)_{n\geq 0}$, obtained iteratively by adding the Jones projections $(e_n)_{n\geq 1}$ which satisfy the Temperley-Lieb relations. Jones used this structure to show the index  lies in the range $\set{4\cos^2(\pi/n)}{n\geq 3}\cup [4,\I)$, and he found an example for each value.

Much initial subfactor research classified hyperfinite subfactors of small index ($[A_1\colon A_0] \leq 4$) by studying the \underline{standard invariant}, i.e., the two towers of higher relative commutants $(A_i'\cap A_j)_{i=0,1;j\geq 0}$ \cite{MR996454,MR999799,MR1145672,MR1278111}. This combinatorial data was axiomatized in three slightly different structures: paragroups \cite{MR996454}, $\lambda$-lattices \cite{MR1334479}, and planar algebras \cite{math/9909027}. When combined, these viewpoints produce strong results, e.g., standard invariants with index in $(4,5)$ are completely classified, excluding the $A_\I$ standard invariant at each index value \cite{MR1198815} (see \cite{1007.1730,1007.2240,1109.3190,1010.3797} for more details).

Some finite index results generalize to infinite index subfactors, such as discrete, irreducible, ``depth $2$" subfactors correspond to outer (cocylce) actions of Kac algebras \cite{MR1055223,MR1387518}, and the classical Galois correspondence still holds for outer actions of infinite discrete groups and minimal actions of compact groups \cite{MR1622812}. 

In his Ph.D. thesis \cite{burns}, Burns studied rotations and extremality for infinite index, since the key to isotopy invariance of Jones' planar calculus in \cite{math/9909027} is the rotation operator (also known to Ocneanu). Burns' essential observation for finite index was that the centralizer algebras $A_0'\cap A_n$ coincide with the central $L^2$-vectors:
$$
A_0'\cap L^2(A_n)=\set{\zeta\in L^2(A_n)}{a\zeta=\zeta a \text{ for all }a\in A_0}.
$$
Burns found an elegant formula for the rotation on $P_{n,+}=A_0'\cap \bigotimes^n_{A_0} L^2(A_1)$:
$$
\rho = \sum_{\beta} L_\beta R_\beta^*
$$
where $\{\beta\}$ is a Pimsner-Popa basis for $A_1$ over $A_0$, $L_\beta$ is the left creation operator, and $R_\beta^*$ is the right annihilation operator (see Definition \ref{defn:RelativeTensorProduct}). This approach was generalized in \cite{1007.3173} to define a canonical planar $*$-algebra associated to a strongly Markov inclusion of finite von Neumann algebras. Burns adapted his formula to infinite index, and he showed existence of the rotation on the central $L^2$-vectors is equivalent to approximate extremality of the subfactor. 

In infinite index, $A_0'\cap A_n$ and $A_0'\cap L^2(A_n)$ do not coincide. One naturally asks:

\begin{quest}\label{quest:StandardInvariant}
What is a suitable standard invariant for infinite index subfactors?
\end{quest}

A definitive answer to Question \ref{quest:StandardInvariant} is not yet known. On one hand, we have the two towers of centralizer algebras $(A_i'\cap A_j)_{i=0,1;j\geq 0}$ in which we can multiply (the shift isomorphisms $A_i'\cap A_j\cong A_{i+2}'\cap A_{j+2}$ still hold by \cite{MR1387518}). On the other hand, we have the central $L^2$-vectors on which we have Burns' rotation (in the approximately extremal case) and graded multiplication in the sense of \cite{MR2732052} (tensoring of central vectors). However, the operator valued weights which replace the conditional expectations do not preserve these spaces and may not be well-defined. All this structure is necessary for a good planar calculus. We ask:

\begin{quest}\label{quest:PlanarCalculus}
What is the strongest planar calculus we can define for infinite index subfactors?
\end{quest}

In this paper, we propose an answer to Question \ref{quest:PlanarCalculus} using both centralizer algebras and central $L^2$-vectors. We do so in more generality, starting with a bimodule $\sb{A}H_A$ over a $II_1$-factor $A$ (one recovers the subfactor case when $A=A_0$ and $H=L^2(A_1)$). First, we set $H^n=\bigotimes_A^n H$, $Q_n=A'\cap (A\op)'\cap B(H^n)$ (the centralizer algebras), and $P_n = A'\cap H^n = \set{\zeta\in H^n}{a\zeta=\zeta a\text{ for all }a\in A}$ (the central $L^2$-vectors). As mentioned above, the $P_n$'s naturally form a graded algebra $P_\bullet$ in the sense of \cite{MR2732052} under relative tensor product. We represent central vectors in $P_n$ as in \cite{MR2732052} by boxes with $n$ strings emanating from the top, and we denote graded multiplication (relative tensor product) of $\zeta_m\in P_m$ and $\zeta_n\in P_n$ by
$$
\zeta_m\otimes \zeta_n = \TensorPn{m}{\zeta_m}{n}{\zeta_n}\in P_{m+n}.
$$
We represent elements of $Q_n$ as boxes with strings emanating from top and bottom. For $\zeta\in P_n$, note that the creation-annihilation operator $L(\zeta)L(\zeta)^*=R(\zeta)R(\zeta)^*$ lies in $Q_n$, which we represent as
$$
L(\zeta)L(\zeta)^* = \CentralVectorOperator{n}{\zeta}{\zeta}\in Q_n.
$$

\begin{thm}
The  extended positive cones $\widehat{Q_n^+}$ (in the sense of \cite{MR534673}) naturally form an algebra $\widehat{Q_\bullet^+}$ over the operad $\B\P$ generated by the oriented tangles
$$
\idn{n},\, \OperatorValuedWeight{n}{},\,\OperatorValuedWeightOp{n}{},\,\TraceOfTwo{n}{}{},\,\TraceOfTwoOp{n}{}{},\text{ and }\tensor{m}{}{n}{}
$$
for $m,n\geq 0$ up to planar isotopy. (We suppress external disks, draw one thick string labelled $n$ for $n$ individual strings, and orient all strings upward unless otherwise specified.)

Moreover, the $\B\P$-algebra $\widehat{Q_\bullet^+}$ and graded algebra $P_\bullet$ are compatible: if $z\in \widehat{Q_n^+}$ and $\zeta\in P_n$, then
$$
 z(\omega_\zeta)=\TripleInnerProduct{\zeta}{z}{\zeta}= \TraceOfTwoBig{\zeta}{\zeta}{z} =\Tr_n(L(\zeta)L(\zeta)^*\cdot z)
$$
where $\Tr_n$ is the canonical trace on $Q_n$ coming from the right $A$-action on $H^n$. (Note that the multiplication tangle only makes sense once we take the trace by \cite{MR534673}. See Theorem \ref{thm:BilinearExtension} for more details.)
\end{thm}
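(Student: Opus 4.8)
The plan is to equip each $\widehat{Q_n^+}$ with the operations coming from the six generating tangles, check that they are well defined and descend to planar-isotopy classes, and then upgrade the bounded identity $\omega_\zeta(x)=\Tr_n(L(\zeta)L(\zeta)^*x)$ to all of $\widehat{Q_n^+}$. Two generators are routine. The identity tangle $\idn{n}$ acts as the identity of $\widehat{Q_n^+}$. For the tensor tangle $\tensor{m}{}{n}{}$ one uses the isomorphism $H^{m+n}\cong H^m\otimes_A H^n$, under which the spatial inclusion $Q_m\,\overline{\otimes}\,Q_n\hookrightarrow Q_{m+n}$ is normal and sends positive elements to positive elements; since elements of the extended positive cone are, by \cite{MR534673}, exactly the lower-semicontinuous additive positively-homogeneous functionals on the predual cone, this inclusion extends canonically to a map $\widehat{Q_m^+}\times\widehat{Q_n^+}\to\widehat{Q_{m+n}^+}$. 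Note there is deliberately no ``bare'' multiplication tangle: $\widehat{Q_n^+}$ is not closed under products, but, as we explain next, it is closed under $\Tr_n$ applied to a product.

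The new content sits in the operator-valued weight tangles $\OperatorValuedWeight{n}{}$, $\OperatorValuedWeightOp{n}{}$ and the trace-of-two tangles $\TraceOfTwo{n}{}{}$, $\TraceOfTwoOp{n}{}{}$. The right-hand tangle $\OperatorValuedWeight{n}{}$ is interpreted as the canonical operator-valued weight $E_n\colon Q_n\to\widehat{Q_{n-1}^+}$ attached to the unital inclusion $Q_{n-1}\subseteq Q_n$ (where $Q_{n-1}$ acts on all but the last copy of $H$), and its mirror $\OperatorValuedWeightOp{n}{}$ is the analogous weight on the $A\op$-side, removing the first copy. The trace-of-two tangle sends $x,y\in\widehat{Q_n^+}$ to $\Tr_n(x^{1/2}yx^{1/2})\in[0,\I]$, which is symmetric in its two inputs and well defined by \cite{MR534673}; this is precisely the bilinear extension recorded in Theorem~\ref{thm:BilinearExtension}, which I would invoke. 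The crux of the whole construction, and the step I expect to be hardest, is pinning down these $E_n$ and establishing their basic properties: faithful normal semifiniteness, multiplicativity, and, above all, the trace identity $\Tr_{n-1}\circ E_n=\Tr_n$ for the canonical traces coming from the right $A$-action. Because the $Q_n$ need not be factors and $\Tr_n$ is only semifinite, one cannot substitute conditional expectations here and must run Haagerup's operator-valued weight machinery throughout; I would either cite the operator-valued weights already constructed in the paper or re-derive $E_n$ directly from $\Tr_n$.

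Granting the $E_n$, the remaining work is to check invariance under planar isotopy, which reduces to a short list of moves. Associativity of the tensor tangle and commutativity of the two operator-valued weight tangles are immediate (associativity of $\otimes_A$; the weights act on disjoint strands). The ``module'' move — an operator-valued weight commutes with tensoring on the untouched strands — is the relevant Frobenius/naturality property of a partial trace. The move composing two operator-valued weights on adjacent strands into a single weight removing two strands is Haagerup's composition theorem for operator-valued weights \cite{MR534673}. The moves involving a trace loop follow from traciality of $\Tr_n$ together with $\Tr_{n-1}\circ E_n=\Tr_n$, which lets a trace closure slide through an operator-valued weight tangle. Each move thus reduces to normality/multiplicativity of operator-valued weights, the trace identity, or bimodule associativity of the relative tensor product; compatibility of the resulting operations with operadic composition of tangles is then bookkeeping.

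For the compatibility formula I would argue in two steps. First, the bounded case: for $x\in Q_n$ one shows $\omega_\zeta(x)=\langle x\zeta,\zeta\rangle_{H^n}=\Tr_n(L(\zeta)L(\zeta)^*x)$, i.e.\ $L(\zeta)L(\zeta)^*\in Q_n^+$ is the density of the vector functional $\omega_\zeta$ with respect to $\Tr_n$; this is a direct computation from the definitions of $L(\zeta)$, of $\Tr_n$, and traciality of $\Tr_n$ (the graded-algebra analog of ``$\zeta\zeta^*$ is a density matrix''). Second, for $z\in\widehat{Q_n^+}$, one applies the identity $z(\Tr_n(h\,\cdot\,))=\Tr_n(h^{1/2}zh^{1/2})$ valid for $h\in Q_n^+$ by \cite{MR534673}, with $h=L(\zeta)L(\zeta)^*$, obtaining $z(\omega_\zeta)=\Tr_n(L(\zeta)L(\zeta)^*\cdot z)$, the right-hand side read via Theorem~\ref{thm:BilinearExtension}. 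Finally one recognizes the pictures: $\TripleInnerProduct{\zeta}{z}{\zeta}$ is by definition the pairing $z(\omega_\zeta)$ of a central vector with an element of $\widehat{Q_n^+}$, while $\TraceOfTwoBig{\zeta}{\zeta}{z}$ is, by the definitions of the trace-of-two tangle and of the box $\CentralVectorOperator{n}{\zeta}{\zeta}=L(\zeta)L(\zeta)^*$, precisely $\Tr_n(L(\zeta)L(\zeta)^*\cdot z)$; hence all four expressions agree.
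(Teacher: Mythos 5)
The compatibility half of your proposal is essentially the paper's own argument (Lemma \ref{lem:ZetaRelations} and Theorem \ref{thm:CloseOffZetas}): the bounded identity $\omega_\zeta(x)=\Tr_n(L(\zeta)L(\zeta)^*x)$ plus Haagerup's pairing from Theorem \ref{thm:BilinearExtension}. The gap is in how you define the cap tangles. You take the right cap to be ``the canonical operator valued weight attached to the unital inclusion $Q_{n-1}\subseteq Q_n$,'' to be re-derived from $\Tr_n$ if needed. But Haagerup's existence/uniqueness theorem (Theorem \ref{thm:Texists}) requires n.f.s.\ traces on both algebras, and $\Tr_n$ restricted to the centralizer algebra $Q_n$ need not be semifinite: this is exactly Lemma \ref{lem:SemifiniteRestriction} and Proposition \ref{prop:decompose} (the corner $\b_{n}\op\oplus\c_{n}$ meets $\m_{\Tr_n}$ only in $0$), and it is the very reason the paper works with extended positive cones. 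So ``the canonical weight for $Q_{n-1}\subseteq Q_n$'' is not a well-defined object in general, and the properties you want for it (semifiniteness in particular) can fail. The paper's route is different: the caps are the trace-preserving weights $T_n=\sum_\beta R_\beta^*\,\cdot\,R_\beta$ and $T_n\op=\sum_\alpha L_\alpha^*\,\cdot\,L_\alpha$ for the one-sided towers $C_{n-1}\subset C_n$ and $C_{n-1}\op\subset C_n\op$, where the canonical traces are automatically n.f.s.\ (Proposition \ref{prop:T}), and one then checks these preserve the centralizer cones (Remark \ref{rem:TForZ}). Proposition \ref{prop:CrossT} is a further warning: the $\Tr$-preserving weight onto the copy of $Q_n$ on the last $n$ strings is $\sum_\beta L_\beta^*\,\cdot\,L_\beta$, which differs from the left-cap map $\sum_\alpha L_\alpha^*\,\cdot\,L_\alpha$ unless $H$ is extremal, so an abstract trace-preservation prescription over the centralizers is genuinely delicate. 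Moreover, once the caps are only characterized abstractly, the isotopy relations you call immediate --- commutation of the two caps, $z_1\otimes_A T_n(z_2)=T_{m+n}(z_1\otimes_A z_2)$, sliding a trace closure through a cap --- are not formal consequences; in the paper they are the explicit basis computations of Theorem \ref{thm:BPRelations}.

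The second gap is the tensor tangle on the cones. The claim that ``the spatial inclusion $Q_m\overline{\otimes}Q_n\hookrightarrow Q_{m+n}$ is normal'' is unjustified: the paper proves only binormality of the representation of the algebraic tensor product (Lemma \ref{lem:binormal}), and a binormal representation of $M\odot N$ need not extend normally to $M\overline{\otimes}N$ (left-right multiplication of $M\odot M'$ on $L^2(M)$, $M$ a $II_1$-factor, is the standard example). Even granting such an inclusion, what the theorem needs is a bilinear cone-valued operation, not the canonical extension of a single normal map: one must define $x\otimes_A y$ for unbounded $x,y$, prove independence of the approximating nets, joint normality ($x_i\nearrow x$, $y_j\nearrow y$ implies $x_i\otimes_A y_j\nearrow x\otimes_A y$), and associativity --- the content of Appendix \ref{sec:TensorUnbounded} (Definition \ref{defn:TensorCones}, Theorem \ref{thm:increase}, Corollary \ref{cor:associative}) --- which is then used in relation (3) of Theorem \ref{thm:BPRelations}. ``Extends canonically by lower semicontinuity'' does not engage with this. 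So your outline agrees with the paper, but the two steps you yourself flag as hard or canonical are precisely where the paper's concrete constructions (Proposition \ref{prop:T}, Theorem \ref{thm:BPRelations}, Appendices \ref{sec:TensorUnbounded} and \ref{sec:BP}) are doing the work, and the abstract replacements you propose for them do not go through as stated.
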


We generalize to bimodules Burns' work on rotations: an operator $\rho$ on the central $L^2$-vectors $P_n$ is a \underline{Burns rotation} if for all left and right bounded vectors $b_1,\dots,b_n\in H$, (omitting the subscript $A$ on the tensors,)
$$
\langle \rho(\zeta),b_1\otimes\cdots\otimes b_n\rangle = \langle \zeta, b_2\otimes\cdots\otimes b_n\otimes b_1\rangle.
$$
Note this equation implies the uniqueness and periodicity of $\rho$ if it exists. We generalize Burns' notion of (approximate) extremality, and we prove the following theorem:

\begin{thm}\label{thm:MainTheorem}
Consider the following statements (include all or none of the parenthetical statements):
\item[(1)] $H^n$ is (approximately) extremal for some $n\geq 1$,
\item[(2)] $H^n$ is (approximately) extremal for all $n\geq 1$,
\item[(3)] The (possibly non-)unitary $\rho$ exists on $P_{2n}$ for all $n\geq 1$, and
\item[(4)] The (possibly non-)unitary $\rho$ exists on $P_{2n}$ for some $n\geq 1$.
\item
Then $(1)\Rightarrow(2)\Rightarrow(3)\Rightarrow (4)$. If $H$ is symmetric, then $(4)\Rightarrow (1)$.

When $\rho$ exists, we represent it diagrammatically by
$$
\rho^m(\zeta)=\Rotation{m}{n}{\zeta} \text{ for } \zeta\in P_{m+n},
$$ 
(well-defined by Corollary \ref{cor:POperad}) and these diagrams are compatible with the diagrams above in the sense of Theorem \ref{thm:MoveAround}.
\end{thm}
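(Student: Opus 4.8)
The plan is to route every implication through one positive, generally unbounded operator $T_n$ affiliated with $Q_n$ which records the discrepancy between the canonical right trace $\Tr_n$ on $Q_n$ and the corresponding left trace coming from the other $A$-action: by definition $H^n$ is extremal exactly when $T_n$ is bounded with bounded inverse, and approximately extremal under a correspondingly weaker condition (closability with dense domain and range rather than boundedness). The first step, which gives $(1)\Rightarrow(2)$, is to establish that $T$ is multiplicative under relative tensor powers: under the canonical identification $H^{ab}\cong(H^a)^{\otimes_A b}$ the operator $T_{ab}$ is the $b$-fold relative tensor product of $T_a$; this is a computation with operator-valued weights. Combined with the standard fact that a balanced tensor product over a faithful bimodule of positive operators is bounded (respectively boundedly invertible) if and only if each factor is, (approximate) extremality of $H^n$ for one $n\geq1$ is equivalent to that of $H=H^1$, hence to that of every $H^m$.

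For $(2)\Rightarrow(3)$ I would fix a Pimsner--Popa basis $\{\beta\}$ for $H$ over $A$ and analyze the net of finite partial sums $\rho_F=\sum_{\beta\in F}L_\beta R_\beta^*$ of Burns' expression $\rho=\sum_\beta L_\beta R_\beta^*$, each a bounded bimodular operator on $H^{2n}$, restricted to $P_{2n}$. Using the compatibility of $\widehat{Q_\bullet^+}$ with $P_\bullet$ from the first theorem, a standard estimate bounds $\langle\rho_F\zeta,\rho_F\zeta\rangle$ for $\zeta\in P_{2n}$ by $\omega_\zeta$ evaluated on the element of $\widehat{Q_{2n}^+}$ built from the relevant operator-valued weight, which is exactly what (approximate) extremality of $H$ controls; hence the net converges strongly on $P_{2n}$ to a closed operator $\rho$. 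A direct computation on simple tensors $b_1\otimes\cdots\otimes b_{2n}$ then shows $\rho$ preserves $P_{2n}$, satisfies the defining Burns relation, and is unitary precisely in the honestly extremal case. This convergence estimate is the technical heart of $(2)\Rightarrow(3)$; $(3)\Rightarrow(4)$ is immediate.

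The crux of the theorem is $(4)\Rightarrow(1)$ under the symmetry hypothesis $H\cong\overline H$. Given $\rho$ on $P_{2n}$, the periodicity noted after the definition gives $\rho^{2n}=\id$, so $\sigma:=\rho^n$ is the ``half-turn'' swapping the first and last $n$ tensor legs. I would use the symmetry isomorphism $H^n\cong\overline{H^n}$ to identify $H^{2n}\cong H^n\otimes_A\overline{H^n}$, and then recognize $\sigma$ on $P_{2n}=A'\cap H^{2n}$ as the operator encoding the mismatch between the left and right traces on $Q_n$, i.e.\ as the action of $T_n$ (up to normalization and the canonical identifications). Making this precise is exactly the computation of $\langle\sigma\zeta,b_1\otimes\cdots\otimes b_{2n}\rangle$ carried through the symmetry map. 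Hence boundedness of $\sigma$ (resp.\ closability with dense domain and range) is equivalent to the corresponding property of $T_n$ --- the transfer from the $P_{2n}$-picture to $T_n$ itself using the pairing $z\mapsto z(\omega_\zeta)=\Tr_n(L(\zeta)L(\zeta)^*z)$ of the first theorem --- which is (approximate) extremality of $H^n$. The symmetry hypothesis is essential in the identification step: without $H\cong\overline H$ the rotated diagram need not land back in $P_{2n}$ nor represent $T_n$, so this identification is the main obstacle.

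Finally, once $\rho$ exists one declares the rotation tangle with $m$ rotated strands, applied to $\zeta\in P_{m+n}$, to represent $\rho^m(\zeta)$. Independence of the isotopy representative and of how the central-vector box is transported around the annulus is Corollary \ref{cor:POperad}; compatibility with the generating tangles of the first theorem and with graded multiplication in $P_\bullet$ (Theorem \ref{thm:MoveAround}) is then checked by expanding any diagram containing a rotation box into a trace via $z(\omega_\zeta)=\Tr_n(L(\zeta)L(\zeta)^*z)$ and rewriting the rotated legs using the defining Burns relation. This last part is bookkeeping on top of the already established operad identities and the periodicity of $\rho$, requiring no new analytic input.
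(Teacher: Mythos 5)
Your overall architecture (extremality passes between tensor powers; extremality makes Burns' sum $\sum_\beta L_\beta R_\beta^*$ converge; a converse via identifying $P_{2n}$ with an $L^2$-space attached to $Q_n$ using the symmetry) parallels the paper, and your $(2)\Rightarrow(3)$ sketch is essentially the paper's proof of Theorem \ref{thm:rotation} (partial sums controlled by $\sum_\beta\|R_\beta^*\zeta\|_2^2\le\lambda^{2n-1}\|\zeta\|_2^2$, Lemma \ref{lem:RotationLemma}). But the central device you route everything through --- a ``discrepancy operator'' $T_n$ affiliated with $Q_n$ with the dictionary ``extremal $\Leftrightarrow$ $T_n$ bounded with bounded inverse, approximately extremal $\Leftrightarrow$ $T_n$ closable with dense domain and range'' --- is wrong on both counts and need not exist. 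Extremality of $H^n$ is the \emph{equality} $\Tr_n=\Tr_n\op$ on $Q_n^+$ (so the putative $T_n$ is $1$), and approximate extremality is the two-sided bounded domination $\lambda^{-1}\Tr_n\le\Tr_n\op\le\lambda\Tr_n$ (which is what ``bounded with bounded inverse'' would mean); a merely densely defined Radon--Nikodym operator does not give any $\lambda$. Worse, neither trace need be semifinite on $Q_n$ (the $\b_n$, $\b_n\op$, $\c_n$ summands in Proposition \ref{prop:decompose}), so no such affiliated operator is available in general. This undercuts your $(1)\Rightarrow(2)$: the claimed multiplicativity $T_{ab}=T_a^{\otimes_A b}$ is not a routine operator-valued-weight computation, since $Q_{mn}$ is far larger than anything generated by tensor factors from $Q_m$, and the ``standard fact'' about tensor products of positive operators does not apply to traces on these centralizer algebras. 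The paper instead works directly with traces: $\Tr_1(z)^n=\Tr_n(z\otimes_A\cdots\otimes_A z)$ to go from $H^n$ down to $H$, and a planar-calculus induction to go from $H$ up to all $H^m$ (Theorem \ref{thm:extremal}).

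The same dictionary garbles $(4)\Rightarrow(1)$, which is the crux. Under the symmetric identification $\theta_n\colon H^{2n}\cong L^2(C_n,\Tr_n)$ restricting to $P_{2n}\cong L^2(Q_n,\Tr_n)$ (Proposition \ref{prop:ModuleIso} and Lemma \ref{lem:BurnsL2Lemma}), the half-rotation $\rho^n$ corresponds to $\widehat{z}\mapsto\widehat{j_n(z)}$ (Lemma \ref{lem:BurnsLemma}), i.e.\ to a $J\Delta^{1/2}$-type operator, \emph{not} to the trace-discrepancy itself; its boundedness (automatic for an everywhere-defined Burns rotation) yields $\Tr_n\op(z^*z)\le\|\rho^n\|^2\Tr_n(z^*z)$, with the reverse inequality from $\rho^{-n}=(\rho\op)^n$, hence approximate extremality, and one gets genuine extremality only when $\rho$ is unitary. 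Your dictionary would instead read boundedness of $\sigma=\rho^n$ as extremality and mere closability as approximate extremality, which inverts the intended conclusions; and the actual content of this step --- that $\rho^n\theta_n^{-1}(\widehat{x})=\theta_n^{-1}(\widehat{j_n(x)})$ and in particular $C_n\op\cap\n_{\Tr_n}=\n_{\Tr_n\op}\cap\n_{\Tr_n}$ --- is exactly what you defer (``making this precise is the main obstacle'') rather than supply. Also, the role of symmetry is not that the rotated diagram would leave $P_{2n}$ (a Burns rotation maps $P_{2n}$ to itself by definition), but that without symmetry $P_{2n}$ can vanish (Example \ref{ex:NoCentralVectors}), so existence of $\rho$ carries no information about the traces.
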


Interestingly, we find our planar structure without the use of Jones' basic construction and resulting Jones projections! 

%%%%%%%%%%%%%%%%%%%%%%%%%%%%%%%%%

\paragraph{Outline:\\}
\hspace{.03in}
In Section \ref{sec:preliminaries}, we give a brief introduction to modules, the relative tensor product, extended positive cones, and operator valued weights. Subsections \ref{sec:FactsAboutRelativeTensorProduct} and \ref{sec:LemmataPositiveCones} provide some helpful, well-known results for the convenience of the reader.

In Subsection \ref{sec:TowersOfBimodules}, starting with our $A-A$ bimodule $H$, we introduce $H^n$ along with two towers of algebras $C_n,C_n\op$, a tower of centralizer algebras $Q_n=C_n\cap C_n\op$, and the central $L^2$-vectors $P_n$. We then compute formulas for the various canonical maps associated with these towers. In Subsection \ref{sec:PAoverPositiveCones}, we show  the extended positive cones (in the sense of \cite{MR534673}) of the centralizer algebras $\widehat{Q_n^+}$ naturally form an algebra over an operad $\B\P$ (we use positive cones so we can ``conditionally expect" using operator valued weights). In Subsection \ref{sec:CentralVectors}, we show that the vectors in $P_\bullet$ are left and right $A$-bounded and form a graded algebra in the sense of \cite{MR2732052}. We then show the compatibility of $\widehat{Q_\bullet^+}$ and $P_\bullet$ in Subsection \ref{sec:compatible}.

Subsection \ref{sec:Extremality} defines extremality for bimodules and Burns rotations. In Subsection \ref{sec:DiagramsOfRotation}, we show how the Burns rotation fits in our planar calculus, and in Subsection \ref{sec:ExtremalityImpliesRotations}, we show that (approximate) extremality implies the existence of the Burns rotation (Theorem \ref{thm:rotation}). A converse of this theorem for symmetric bimodules is obtained in Subsection \ref{sec:symmetric}, which finishes the proof of Theorem \ref{thm:MainTheorem}.

In Section \ref{sec:Examples}, we discuss the centralizer algebras $Q_n$ and central $L^2$-vectors $P_n$ for some basic examples. In particular, in Corollaries \ref{cor:FiniteDimensional} and \ref{cor:InfiniteSymmetric}, we find an infinite index subfactor for which $\dim(Q_n)<\I$ and $\dim(P_n)=1$ for all $n\in\N$. This example contrasts Burns' example of an infinite index subfactor with a type $III$ summand in a higher relative commutant \cite{burns}.

Throughout the paper, we need some technical results which have been included in a few appendices. Appendix \ref{sec:TensorUnbounded} shows that the relative tensor product of extended positive cones is well-defined and associative, which is necessary for our planar calculus. Appendix \ref{sec:BP} discusses the operad $\B\P$ which acts on the positive cones $\widehat{Q_n^+}$, including results on generating sets of tangles, standard form of tangles, and well-definition of the action. In Appendix \ref{sec:cones}, we axiomatize the notion of \underline{extended positive cone} to make rigorous the idea of a planar algebra over such objects. The main intricacy is that we must make multiplication by $\I_\R$ well-defined.

%%%%%%%%%%%%%%%%%%%%%%%%%%%%%%%%%

\paragraph{Future research:\\}
\hspace{.01in}
The annular Temperley-Lieb category, especially the rotation, played an important role in the construction of certain exotic finite index subfactors \cite{0902.1294,0909.4099}. In a future paper with Jones, we will incorporate the odd Jones projections for infinite index (see \cite{burns}) into the planar calculus, and we will give the analog of the annular Tempeley-Lieb category for infinite index. We hope this viewpoint will be as fruitful as in the finite index case.

The results of this paper should generalize to bimodules over an arbitrary finite von Neumann algebra. As it requires substantial calculations while obscuring the main new ideas presented here, this generalization will appear in a future paper.

Finally, it would be interesting to try to connect Connes' results on self-dual positive cones \cite{MR0377533} to the extended positive cones axiomatized in Appendix \ref{sec:cones}.

%%%%%%%%%%%%%%%%%%%%%%%%%%%%%%%%%

\paragraph{Acknowledgements:\\}
\hspace{.09in}The author would like to thank Stephen Curran, Steven Deprez, Michael Hartglass, Vaughan Jones, Scott Morrison, Jean-Luc Sauvageot, J. Owen Sizemore, and Makoto Yamashita for many helpful conversations. The author would like to thank Vaughan Jones again for giving him this project and for his supervision while completing his Ph.D. at the University of California, Berkeley.

This majority of this work was completed at the Institut Henri Poincar\'{e} during the trimester on von Neumann algebras and ergodic theory of groups actions. The author would like to thank the organizers Damien Gaboriau, Sorin Popa, and Stefaan Vaes for their support during this time. The author was also supported by DOD-DARPA grant HR0011-11-1-0001.

%%%%%%%%%%%%%%%%%%%%%%%%%%%%%%%%%%%%%%%%%%%%%%%%%%
\section{Preliminaries}\label{sec:preliminaries}

\begin{nota}
\item[$\bullet$] Throughout this paper, a \underline{trace} on a finite von Neumann algebra means a faithful, normal, tracial state unless otherwise specified.
\item[$\bullet$] $A$ will always denote a finite von Neumann algebra with trace $\tr_A$. 
\item[$\bullet$] We use the notation $\widehat{a}$ to denote the image of $a\in A$ in $L^2(A,\tr_A)$.
\item[$\bullet$] For a semifinite von Neumann algebra $M$ with normal, faithful, semifinite (n.f.s.) trace $\Tr_M$, we write
\begin{align*}
\n_{\Tr_M}&=\set{x\in M}{\Tr_M(x^*x)<\I}\text{ and}\\
\m_{\Tr_M}&= \n_{\Tr_M}^*\n_{\Tr_M} = \spann\set{x^* y}{x,y\in \n_{\Tr_M}}.
\end{align*}
\end{nota}

%%%%%%%%%%%%%%%%%%%%%%%%%%%%%%%%%%%%%%%%%%%%%%%%%%
\subsection{Modules and the relative tensor product}\label{sec:RelativeTensorProduct}
This exposition follows \cite{MR561983,MR703809,MR1278111,MR1387518,MR1424954,MR1753177,burns}.

\begin{defn}[Left modules]\label{defn:LeftModule}
If $\sb{A}K$ is a left Hilbert A-module, then the set of left $A$-bounded vectors is given by 
$$D(\sb{A}K)=\set{\eta\in K}{\|a\eta\|_2\leq \lambda \|a\|_2 \text{ for some }\lambda\geq 0},$$
and each $\eta\in D(\sb{A}K)$ gives a bounded map $R(\eta)\colon L^2(A)\to H$ by the extension of $\widehat{a}\mapsto a\eta$.

For $\eta_1,\eta_2\in D(\sb{A}K)$, we have an $A$-valued inner product given by $\sb{A}\langle \eta_1,\eta_2\rangle = JR(\eta_1)^*R(\eta_2)J\in A$ satisfying
\be
\item[(1)] ${\sb{A}\langle} a\eta_1 + \eta_2,\eta_3\rangle= a {\sb{A}\langle}\eta_1,\eta_3\rangle+{\sb{A}\langle}\eta_2,\eta_3\rangle$,
\item[(2)] ${\sb{A}\langle} \eta_1, \eta_2\rangle^* = {\sb{A}\langle} \eta_2,\eta_1\rangle$, and
\item[(3)] ${\sb{A}\langle} x \eta_1, \eta_2\rangle={\sb{A}\langle} \eta_,x^* \eta_2\rangle$
\ee
for all $a\in A$, $x\in A'\cap B(K)$, and $\eta_1,\eta_2,\eta_3\in D(\sb{A}K)$ (note $x\eta_i\in D(\sb{A}K)$).

An $\sb{A}K$-basis is a set of vectors $\{\alpha\}\subset D(\sb{A}K)$ such that 
$$
\sum\limits_\alpha R(\alpha)R(\alpha)^* = 1_K \Longleftrightarrow \sum_\alpha {\sb{A}\langle}\eta ,\alpha\rangle\alpha = \eta\text{ for all }\eta\in D(\sb{A}K).
$$
$\sb{A}K$-bases exist by \cite{MR561983}.

The canonical trace on $A'\cap B(K)$ is given by $\Tr_{A'\cap B(K)}(x)= \sum_\alpha \langle x \alpha,\alpha\rangle$ where $\{\alpha\}$ is any $\sb{A}K$ basis.

If $\eta\in D(\sb{A}K)$, then $\Tr_{A'\cap B(K)}(R(\eta)R(\eta)^*)=\tr_A(\sb{A}{\langle} \eta,\eta\rangle)=\|\eta\|^2_2$.
\end{defn}

\begin{defn}[Right modules]\label{defn:RightModule}
A right Hilbert $A$-module is the same as a left Hilbert $A\op$-module. If $H_A$ is a right Hilbert $A$-module, we write $\xi a$ for $a\op \xi$ for all $a\op\in A\op$. We get parallel definitions:

The set of right $A$-bounded vectors is given by
$$D(H_A)=\set{\xi\in H}{\|\xi a\|_2\leq \lambda \|a\|_2 \text{ for some }\lambda\geq 0}.$$
Each $\xi \in D(H_A)$ defines a bounded map $L(\xi)\colon L^2(A)\to H$ by the extension of $\widehat{a}\mapsto \xi a$.

For $\xi_1,\xi_2\in D(H_A)$, we have an $A$-valued inner product given by $\langle \xi_1|\xi_2\rangle_A = L(\xi_1)^*L(\xi_2)\in A$ satisfying
\be
\item[(1)] $\langle \xi_1 | \xi_2 a + \xi_3\rangle_A=\langle \xi_1 | \xi_2\rangle_A a+\langle \xi_1 |\xi_3\rangle_A$,
\item[(2)] $\langle \xi_1 | \xi_2\rangle_A^* = \langle \xi_2 | \xi_1\rangle_A$, and
\item[(3)] $\langle x\xi_1 | \xi_2\rangle_A=\langle \xi_1 | x^*\xi_2\rangle_A$
\ee
for all $a\in A$, $x\in (A\op)'\cap B(H)$, and $\xi_1,\xi_2,\xi_3\in D(H_A)$ (note $x\xi_i\in D(H_A)$).

An $H_A$-basis is a set of vectors $\{\beta\}\subset D(H_A)$ such that 
$$
\sum\limits_\beta L(\beta)L(\beta)^* = 1_H\Longleftrightarrow \sum_\beta \beta \langle \beta|\xi\rangle_A = \xi \text{ for all } \xi\in D(H_A).
$$
$H_A$-bases exist by \cite{MR561983}.

The canonical trace on on $(A\op)'\cap B(H)$ is given by  $\Tr_{(A\op)'\cap B(H)}(x)= \sum_\beta \langle x \beta,\beta\rangle$ where $\{\beta\}$ is any $H_A$ basis.

If $\xi\in D(H_A)$, then $\Tr_{(A\op)'\cap B(H)}(L(\xi)L(\xi)^*)=\tr_A(\langle \xi|\xi\rangle_A)=\|\xi\|^2_2$.
\end{defn}

\begin{defn}[Relative tensor product]\label{defn:RelativeTensorProduct}
The relative tensor product $H\otimes_A K$ is given by one of the three equivalent definitions:
\item[(1)] the completion of the algebraic tensor product $D(H_A)\odot_A K$ under the pseudo-norm induced by the sesquilinear form $\langle \xi\odot \eta, \xi'\odot \eta'\rangle = \langle \langle \xi'|\xi\rangle_A \eta,\eta'\rangle$,
\item[(2)] the completion of the algebraic tensor product $H \odot_A D(\sb{A} K)$ under the pseudo-norm induced by the sesquilinear form $\langle \xi\odot \eta, \xi'\odot \eta'\rangle =  \langle \xi_1{\sb{A}\langle} \eta_1,\eta_2\rangle, \xi_2\rangle_H$, or
\item[(3)] the completion of the algebraic tensor product $D(H_A)\odot_A D(\sb{A} K)$ under the pseudo-norm induced by the sesquilinear form
$$
\langle \xi_1\odot \eta_1,\xi_2\odot \eta_2\rangle = \langle \xi_1{\sb{A}\langle} \eta_1,\eta_2\rangle, \xi_2\rangle_H= \langle \langle \xi_2|\xi_1\rangle_A \eta_1,\eta_2\rangle_K .
$$
The image of $\xi\odot \eta$ in $H\otimes_A K$ is denoted $\xi\otimes \eta$. (This notation avoids confusion with the operators $x\otimes_A y$ as in Lemma \ref{lem:binormal}.)

Given $\xi\in D(H_A)$ and $\eta\in D(\sb{A} K)$, we get bounded creation operators $L_\xi \colon K\to H\otimes_A K$ by $\eta'\mapsto \xi\otimes \eta'$ and $R_\eta \colon H\to H\otimes_A K$ by $\xi'\mapsto \xi'\otimes \eta$, whose adjoints are the annihilation operators given by
$L_\xi^*(\xi'\otimes\eta')=\langle \xi|\xi'\rangle_A \eta'$ and $R_\eta^*(\xi'\otimes \eta')=\xi'{\sb{A}\langle} \eta',\eta\rangle$.
\end{defn}

\begin{defn}[Fiber product, \cite{MR799587,MR1753177}]\label{defn:FiberProduct}
Suppose $A\op\subset M_1\subset B(H)$ and $A\subset M_2\subset B(K)$. Then we define
$$
M_1'\otimes_A M_2' = \set{x\otimes_A y}{x\in M_1'\text{ and } y\in M_2'}\subset B(H\otimes_A K)
$$
(see Appendix \ref{sec:TensorUnbounded} and Lemma \ref{lem:binormal}), and the fiber product of $M_1$ and $M_2$ over $A$ is given by $M_1 \star_A M_2 = (M_1' \otimes_A M_2')'$. The fiber product satisfies:
\item[$\bullet$] $(M_1 \star_A M_2) \cap (N_1 \star_A N_2) = (M_1\cap N_1)\star_A (M_2\cap N_2)$ and
\item[$\bullet$] $M_1 \star_A A = ((A\op)'\cap M_1) \otimes_A 1_K$ and $A\op\star_A M_2 = 1_H \otimes_A (A'\cap M_2)$.
\item In particular,
$$
(B(H)\star_A A)'=((A\op)'\otimes_A 1_K)'=A\op \star_A B(K)=1_H \otimes_A A'.
$$
\end{defn}

%%%%%%%%%%%%%%%%%%%%%%%%%%%%%%%%%%%%%%%%%%%%%%%%%%%%%%%%%%%%%%
\subsection{Some easy facts about the relative tensor product}\label{sec:FactsAboutRelativeTensorProduct}
The following are well-known to experts, but we reproduce them here for the sake of completeness and the reader's convenience. For this subsection, $H_A$ is a right Hilbert $A$-module, and $\sb{A}K$ is a left Hilbert $A$-module unless otherwise stated.

\begin{lem}\label{lem:UnitaryBases}
Suppose $\{\beta\}$ is an $H_A$-basis. Then if $u\in U((A\op)'\cap B(H))$, $\{u\beta\}$ is another $H_A$-basis. If $v\in U(A)$, then $\{\beta v\}$ is also an $H_A$-basis. A similar result holds for left modules.
\end{lem}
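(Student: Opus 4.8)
The plan is to reduce all three assertions to the single defining identity $\sum_\beta L(\beta)L(\beta)^*=1_H$ of an $H_A$-basis (Definition \ref{defn:RightModule}), by tracking how the one-vector creation operators $L(\cdot)$ transform under the two operations in question. First I would check membership in $D(H_A)$: the vector $u\beta$ lies in $D(H_A)$ because $u\in(A\op)'\cap B(H)$ commutes with the right $A$-action, which is precisely the parenthetical remark in Definition \ref{defn:RightModule}; and $\beta v\in D(H_A)$ because $\|(\beta v)a\|_2\leq\lambda\|va\|_2=\lambda\|a\|_2$, using that $v$ is unitary in $A$ and $\tr_A$ is a trace.

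Next I would record the two transformation formulas, both obtained by evaluating on a generic $\widehat a\in L^2(A)$. Since $u$ commutes with the right $A$-action, $L(u\beta)\widehat a=(u\beta)a=u(\beta a)=uL(\beta)\widehat a$, so $L(u\beta)=uL(\beta)$ and hence $L(u\beta)^*=L(\beta)^*u^*$. For the second operation, $L(\beta v)\widehat a=(\beta v)a=\beta(va)=L(\beta)\widehat{va}=L(\beta)\ell_v\widehat a$, where $\ell_v\in B(L^2(A))$ denotes left multiplication by $v$; thus $L(\beta v)=L(\beta)\ell_v$. A one-line computation using traciality gives $\ell_v^*=\ell_{v^*}$, so $\ell_v$ is unitary and $\ell_v\ell_v^*=1_{L^2(A)}$.

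With these formulas the sums collapse. For $\{\beta v\}$ this is even term-by-term: $L(\beta v)L(\beta v)^*=L(\beta)\ell_v\ell_v^*L(\beta)^*=L(\beta)L(\beta)^*$, so $\sum_\beta L(\beta v)L(\beta v)^*=\sum_\beta L(\beta)L(\beta)^*=1_H$. For $\{u\beta\}$ one gets $\sum_\beta L(u\beta)L(u\beta)^*=\sum_\beta uL(\beta)L(\beta)^*u^*=u\big(\sum_\beta L(\beta)L(\beta)^*\big)u^*=u\,1_H\,u^*=1_H$. The only point I would bother to spell out -- the nearest thing to an obstacle in this otherwise routine lemma -- is the interchange of the (possibly infinite) sum with the conjugation by $u$: the partial sums of $\sum_\beta L(\beta)L(\beta)^*$ form an increasing net of positive operators dominated by $1_H$, hence converge in the strong operator topology, and $T\mapsto uTu^*$ is SOT-continuous, so the limit passes through.

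Finally, the assertion for left modules is the mirror image, and I would prove it the same way (or simply invoke the right-module case with $A$ replaced by $A\op$): for an $\sb{A}K$-basis $\{\alpha\}$ one checks $u\alpha\in D(\sb{A}K)$ for $u\in U(A'\cap B(K))$ by the remark in Definition \ref{defn:LeftModule}, and $v\alpha\in D(\sb{A}K)$ for $v\in U(A)$ as above; then $R(u\alpha)=uR(\alpha)$ because $u$ commutes with the left $A$-action, and $R(v\alpha)=R(\alpha)r_v$ where $r_v\in B(L^2(A))$ is right multiplication by $v$ (unitary by the same traciality computation); the sums $\sum_\alpha R(u\alpha)R(u\alpha)^*$ and $\sum_\alpha R(v\alpha)R(v\alpha)^*$ then reduce to $\sum_\alpha R(\alpha)R(\alpha)^*=1_K$ exactly as in the previous paragraph.
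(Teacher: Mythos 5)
Your proof is correct and follows essentially the same route as the paper: the identities $L(u\beta)L(u\beta)^*=uL(\beta)L(\beta)^*u^*$ and $L(\beta v)L(\beta v)^*=L(\beta)vv^*L(\beta)^*=L(\beta)L(\beta)^*$, followed by summing over the basis. The extra details you supply (membership in $D(H_A)$, unitarity of left multiplication by $v$ on $L^2(A)$, and SOT-continuity of conjugation to pass the limit through the sum) are all correct and simply make explicit what the paper leaves implicit.
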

\begin{proof}
For $u\in (A\op)'\cap B(H)$, $L( u \beta)L(u\beta)^*=uL(\beta) L(\beta)^* u^*$. Thus
$$
\sum_{u\beta}L(u\beta)L(u\beta)^*=u\left(\sum_{\beta} L(\beta)L(\beta)^*\right) u^* =  1_H.
$$
If $v\in U(A)$, then $L(\beta v) L(\beta v^*)=L(\beta) v v^*L(\beta)^*=L(\beta)L(\beta)^*$, and the result follows.
\end{proof}

\begin{lem}\label{lem:remove}
Let $\xi_1,\xi_2\in D(H_A)$ and $\eta_1,\eta_2\in D(\sb{A}K)$. Then $L_{\xi_1}^*L_{\xi_2}\in B(K)$ is left multiplication by $\langle \xi_1|\xi_2\rangle_A$ and $R_{\eta_1}^*R_{\eta_2}\in B(H)$ is right multiplication by $\sb{A}{\langle} \eta_1,\eta_2\rangle$.
\end{lem}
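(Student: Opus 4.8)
The plan is to compute directly from the definitions in Definition \ref{defn:RelativeTensorProduct}, checking the claimed identity on the dense subspace spanned by elementary tensors. First I would fix $\eta'\in D(\sb{A}K)$ and $\xi'\in D(H_A)$ and compute
$$
L_{\xi_1}^* L_{\xi_2}(\eta') = L_{\xi_1}^*(\xi_2\otimes \eta') = \langle \xi_1 | \xi_2\rangle_A\, \eta',
$$
where the last equality is exactly the formula for the annihilation operator $L_\xi^*$ recorded at the end of Definition \ref{defn:RelativeTensorProduct}. Since $\langle \xi_1|\xi_2\rangle_A\in A$, this displays $L_{\xi_1}^* L_{\xi_2}$ as left multiplication by the element $\langle \xi_1|\xi_2\rangle_A\in A\subseteq B(K)$ on the dense set $D(\sb{A}K)$, hence everywhere by boundedness. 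The parallel computation $R_{\eta_1}^* R_{\eta_2}(\xi') = R_{\eta_1}^*(\xi'\otimes \eta_2) = \xi'\,{\sb{A}\langle}\eta_2,\eta_1\rangle$ — wait, one must be careful with the order of the arguments: the formula gives $R_\eta^*(\xi'\otimes\eta') = \xi'\,{\sb{A}\langle}\eta',\eta\rangle$, so with $\eta = \eta_1$ and $\eta' = \eta_2$ we get $\xi'\,{\sb{A}\langle}\eta_2,\eta_1\rangle$; the statement's claim that this is right multiplication by ${\sb{A}\langle}\eta_1,\eta_2\rangle$ should then be reconciled using property (2) of the $A$-valued inner product, ${\sb{A}\langle}\eta_1,\eta_2\rangle^* = {\sb{A}\langle}\eta_2,\eta_1\rangle$, together with the convention for how $A$ acts on the right of $D(\sb{A}K)$ (via $A'\cap B(K)\ni JaJ$). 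I would spell this out so the two conventions match.

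There is essentially one genuine thing to verify beyond unwinding notation: that the maps $L_\xi, R_\eta$ really are bounded (so that the adjoints, and hence the compositions, make sense as bounded operators) — but this is already asserted in Definition \ref{defn:RelativeTensorProduct}, so I may simply invoke it. The only subtlety is bookkeeping: making sure the ``left multiplication by an element of $A$'' is interpreted through the correct representation of $A$ on $K$ and of $A\op$ on $H$, and that the adjoint formulas are being read with arguments in the right slots. I expect the main (and only real) obstacle to be this notational reconciliation between the two equivalent inner-product conventions; the analytic content is immediate once the formulas for $L_\xi^*$ and $R_\eta^*$ are granted.

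Concretely, the steps in order are: (i) recall the creation/annihilation formulas from Definition \ref{defn:RelativeTensorProduct}; (ii) evaluate $L_{\xi_1}^*L_{\xi_2}$ on a general vector of $D(\sb{A}K)$ and identify the result with left multiplication by $\langle\xi_1|\xi_2\rangle_A$; (iii) extend from the dense domain to all of $K$ by continuity; (iv) repeat (ii)–(iii) for $R_{\eta_1}^*R_{\eta_2}$, using property (2) of the $A$-valued inner product to put the answer in the stated form as right multiplication by ${\sb{A}\langle}\eta_1,\eta_2\rangle$.
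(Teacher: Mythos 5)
Your treatment of the first half is correct and is essentially the paper's own argument: the paper verifies the identity weakly, pairing $L_{\xi_1}^*L_{\xi_2}\eta$ against a second bounded vector and using the defining sesquilinear form of $H\otimes_A K$, which is exactly the computation underlying the annihilation formula $L_{\xi_1}^*(\xi_2\otimes\eta')=\langle \xi_1|\xi_2\rangle_A\,\eta'$ that you invoke, followed by the same extension from the dense subspace by boundedness.

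For the second half, your computation is right but your planned reconciliation is not, and you should drop it. With the paper's conventions one gets $R_{\eta_1}^*R_{\eta_2}\,\xi'=\xi'\,{\sb{A}\langle} \eta_2,\eta_1\rangle$ for $\xi'\in D(H_A)$, i.e.\ right multiplication by ${\sb{A}\langle} \eta_2,\eta_1\rangle={\sb{A}\langle} \eta_1,\eta_2\rangle^*$, and no appeal to a ``$JaJ$'' convention can turn this into right multiplication by ${\sb{A}\langle} \eta_1,\eta_2\rangle$: here $H$ is an arbitrary right Hilbert $A$-module, so ``right multiplication by $a$'' can only mean $\xi\mapsto \xi a$ (the action of $a\op$), and there is no modular conjugation on $H$ to invoke; the two candidate operators genuinely differ (by an adjoint). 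A sanity check with $H=K=L^2(A)$ and $\eta_i=\widehat{b_i}$ gives ${\sb{A}\langle} \eta_1,\eta_2\rangle=b_1b_2^*$ while $R_{\eta_1}^*R_{\eta_2}$ is right multiplication by $b_2b_1^*$. The honest conclusion is that the displayed statement should be read with the arguments transposed (equivalently, with an adjoint inserted); the paper's proof only treats the $L$ case and dismisses the $R$ case as ``as trivial,'' so it does not detect this, and the transposed version is the one consistent with later uses, e.g.\ deriving $\sum_\alpha R_\alpha R_\alpha^*=1$ from the basis identity $\sum_\alpha {\sb{A}\langle} \eta,\alpha\rangle\,\alpha=\eta$. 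So keep your calculation, state the conclusion as right multiplication by ${\sb{A}\langle} \eta_2,\eta_1\rangle$, and do not try to force agreement with the printed form.
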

\begin{proof}
$\langle L_{\xi_1}^* L_{\xi_2} \eta_1,\eta_2\rangle 
= \langle \xi_2\otimes \eta_1, \xi_1\otimes \eta_2\rangle 
=  \langle \langle \xi_1 | \xi_2\rangle_A \eta_1, \eta_2\rangle 
$. The other is as trivial.
\end{proof}

\begin{lem}
If $\{\beta\}$ is an $H_A$-basis, then $\sum_\beta L_\beta L_\beta^* = 1_{H\otimes_A K}$. Similarly, if $\{\alpha\}$ is an $\sb{A}H$-basis, then $\sum_\alpha R_\alpha R_\alpha^*=1_{H\otimes_A K}$.
\end{lem}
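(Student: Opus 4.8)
The plan is to reduce the first identity to computing one sesquilinear form on a dense subspace and then feeding in the defining relation of an $H_A$-basis. First I would fix the dense subspace of $H\otimes_A K$ spanned by the vectors $\zeta=\sum_i \xi_i\otimes\eta_i$ with $\xi_i\in D(H_A)$ and $\eta_i\in D(\sb{A}K)$ (Definition~\ref{defn:RelativeTensorProduct}(3)), so that $\|\zeta\|^2=\sum_{i,j}\langle\langle\xi_j|\xi_i\rangle_A\eta_i,\eta_j\rangle$. For a finite $F\subseteq\{\beta\}$ put $S_F=\sum_{\beta\in F}L_\beta L_\beta^*\ge 0$. Using $L_\beta^*(\xi'\otimes\eta')=\langle\beta|\xi'\rangle_A\eta'$ and property~(2) of the $A$-valued inner product,
$$
\langle S_F\zeta,\zeta\rangle=\sum_{\beta\in F}\|L_\beta^*\zeta\|^2=\sum_{i,j}\Big\langle\Big(\sum_{\beta\in F}\langle\xi_j|\beta\rangle_A\langle\beta|\xi_i\rangle_A\Big)\eta_i,\eta_j\Big\rangle.
$$
Since $\langle\xi_j|\beta\rangle_A\langle\beta|\xi_i\rangle_A=L(\xi_j)^*L(\beta)L(\beta)^*L(\xi_i)$ and $\{\beta\}$ is an $H_A$-basis, $\sum_{\beta\in F}\langle\xi_j|\beta\rangle_A\langle\beta|\xi_i\rangle_A\to L(\xi_j)^*L(\xi_i)=\langle\xi_j|\xi_i\rangle_A$ as $F$ grows; because the left $A$-action on $K$ is normal, I may pass this limit through, obtaining $\lim_F\langle S_F\zeta,\zeta\rangle=\|\zeta\|^2$ for every such $\zeta$.

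From here the rest is soft. The displayed estimate gives $\langle S_F\zeta,\zeta\rangle\le\|\zeta\|^2$ on the dense subspace, and since each $S_F$ is bounded this extends to all of $H\otimes_A K$, so $S_F\le 1_{H\otimes_A K}$; hence the increasing net $(S_F)$ converges strongly to some $P$ with $0\le P\le 1$, and by continuity $\langle P\zeta,\zeta\rangle=\|\zeta\|^2$ for all $\zeta$, forcing $P=1_{H\otimes_A K}$. This is the first identity. The second is its mirror image: for a basis $\{\alpha\}$ of $\sb{A}K$ (so that $\alpha\in D(\sb{A}K)$, which is what makes $R_\alpha$ defined), I would repeat the argument with $L$ replaced by $R$, with $\langle\cdot|\cdot\rangle_A$ replaced by $\sb{A}\langle\cdot,\cdot\rangle$, and with $\sum_\alpha R(\alpha)R(\alpha)^*=1_K$ in place of $\sum_\beta L(\beta)L(\beta)^*=1_H$.

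Alternatively, and more conceptually, one can first check on the algebraic tensor product the identity $L_\beta L_\beta^*=(L(\beta)L(\beta)^*)\otimes_A 1_K$ — here $L(\beta)L(\beta)^*\in(A\op)'\cap B(H)$ because $L(\beta)$ intertwines the right $A$-actions, and the right-hand side is legitimate by Definition~\ref{defn:FiberProduct} and Appendix~\ref{sec:TensorUnbounded} — and then apply the unital normal $*$-homomorphism $x\mapsto x\otimes_A 1_K$ (Lemma~\ref{lem:binormal}, Appendix~\ref{sec:TensorUnbounded}) to the $H_A$-basis relation $\sum_\beta L(\beta)L(\beta)^*=1_H$, normality guaranteeing that the supremum of the increasing net of partial sums is preserved. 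Either way, the hard part, such as it is, is the interchange of the infinite sum $\sum_\beta$ with the representation of $A$ on $K$ (equivalently, with the amplification map), which is precisely what normality supplies; everything else is routine bookkeeping.
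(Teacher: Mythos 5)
Your proof is correct and is essentially the paper's argument: the paper simply computes $\sum_\beta L_\beta L_\beta^*(\xi\otimes\eta)=\sum_\beta \beta\langle\beta|\xi\rangle_A\otimes\eta=\xi\otimes\eta$ for $\xi\in D(H_A)$ and $\eta\in D(\sb{A}K)$, i.e.\ the same use of $L_\beta^*L_\xi=\langle\beta|\xi\rangle_A$ together with the basis relation on the dense subspace, with the convergence bookkeeping you spell out (monotone partial sums $S_F\leq 1$, strong limit identified on a dense set, or equivalently normality of $x\mapsto x\otimes_A 1_K$) left implicit. Your reading of $\{\alpha\}$ as an $\sb{A}K$-basis is indeed the intended statement, so that point is fine as well.
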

\begin{proof}
We prove the first statement. Suppose $\xi\in D(H_A)$ and $\eta\in D(\sb{A}K)$. Then
$$
\sum_\beta L_\beta L_\beta^*( \xi\otimes \eta )= \sum_\beta L_\beta (L_\beta^* L_\xi) \eta=\sum_\beta\beta \langle \beta|\xi\rangle_A \otimes \eta=\xi\otimes \eta.
$$
\end{proof}

\begin{lem}\label{lem:RN}
Suppose $\eta\in {\sb{A}K}$ and $\eta'\in D(\sb{A}K)$. Then there is a unique $\sb{A}\langle \eta',\eta\rangle\in L^2(A)\subset L^1(A)$ such that $\langle a \eta,\eta'\rangle_K = \langle a, {\sb{A}\langle} \eta',\eta\rangle\rangle_{L^2(A)}$ for all $a\in A$. A similar result holds for right modules.
\end{lem}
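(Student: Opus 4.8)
The plan is to produce $\sb{A}\langle\eta',\eta\rangle$ by an explicit formula and then verify the stated identity directly. Since $\eta'\in D(\sb{A}K)$, the operator $R(\eta')\colon L^2(A)\to K$ of Definition \ref{defn:LeftModule} is bounded, hence so is its adjoint $R(\eta')^*\colon K\to L^2(A)$. Writing $J$ for the modular conjugation on $L^2(A)$, so that $J\widehat a=\widehat{a^*}$ for $a\in A$, I would set
$$
\sb{A}\langle\eta',\eta\rangle := JR(\eta')^*\eta\in L^2(A).
$$
Since $A$ is finite we have $\|x\|_1\leq\|x\|_2$ for $x\in A$, so $L^2(A)\subseteq L^1(A)$ and this element does lie in $L^1(A)$ as asserted. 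When $\eta\in D(\sb{A}K)$ as well, evaluating the operator $\sb{A}\langle\eta',\eta\rangle=JR(\eta')^*R(\eta)J\in A$ of Definition \ref{defn:LeftModule} at $\widehat 1$ returns exactly $JR(\eta')^*\eta$, so the new notation will be consistent with the old $A$-valued inner product.

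Next I would check the defining relation. For $a\in A$, using in turn that $a$ acts as a bounded $*$-operator on $K$, that $R(\eta')\widehat{a^*}=a^*\eta'$, the adjoint relation for $R(\eta')$, the identity $\widehat{a^*}=J\widehat a$, and antiunitarity of $J$ (with $J^2=1$):
\begin{align*}
\langle a\eta,\eta'\rangle_K
&=\langle \eta, a^*\eta'\rangle_K
=\langle \eta, R(\eta')\widehat{a^*}\rangle_K
=\langle R(\eta')^*\eta, \widehat{a^*}\rangle_{L^2(A)}\\
&=\langle R(\eta')^*\eta, J\widehat a\rangle_{L^2(A)}
=\langle \widehat a, JR(\eta')^*\eta\rangle_{L^2(A)}
=\langle a, \sb{A}\langle\eta',\eta\rangle\rangle_{L^2(A)},
\end{align*}
which is the claimed formula. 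Uniqueness is then immediate: if $\zeta_1,\zeta_2\in L^2(A)$ both induce the linear functional $a\mapsto\langle a\eta,\eta'\rangle_K$, then $\langle\widehat a,\zeta_1-\zeta_2\rangle_{L^2(A)}=0$ for all $a\in A$, and since $\{\widehat a\mid a\in A\}$ is dense in $L^2(A)$ this forces $\zeta_1=\zeta_2$.

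The right-module version is obtained by the mirror-image argument, using the operator $L(\xi')$ of Definition \ref{defn:RightModule} in place of $R(\eta')$. I do not expect a genuine obstacle here; the only point demanding care is bookkeeping the antilinear $J$ through the computation and keeping the several inner-product conventions straight. If one would rather not commit to an explicit formula, an alternative route is to note the estimate $|\langle a\eta,\eta'\rangle_K|\leq\|R(\eta')\|\,\|\eta\|_K\,\|\widehat a\|_2$ (Cauchy--Schwarz applied to $\langle\eta,a^*\eta'\rangle_K$ together with $a^*\eta'=R(\eta')\widehat{a^*}$ and $\|\widehat{a^*}\|_2=\|\widehat a\|_2$), which exhibits $a\mapsto\langle a\eta,\eta'\rangle_K$ as a $\|\cdot\|_2$-bounded functional on the dense subspace $\{\widehat a\}$ of $L^2(A)$, and then to invoke the Riesz representation theorem; the resulting representing vector is the desired $\sb{A}\langle\eta',\eta\rangle$.
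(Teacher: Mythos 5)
Your argument is correct. Defining $\sb{A}\langle\eta',\eta\rangle:=JR(\eta')^*\eta$ makes sense because $R(\eta')$ is bounded (this is exactly what $\eta'\in D(\sb{A}K)$ gives), your computation
$\langle a\eta,\eta'\rangle_K=\langle R(\eta')^*\eta,\widehat{a^*}\rangle_{L^2(A)}=\langle \widehat a,JR(\eta')^*\eta\rangle_{L^2(A)}$
is a correct use of the adjoint and of the antiunitarity of $J$ together with $J\widehat a=\widehat{a^*}$, uniqueness follows from density of $\widehat A$ in $L^2(A)$, and your consistency check with Definition \ref{defn:LeftModule} (evaluating $JR(\eta')^*R(\eta)J$ at $\widehat 1$) is exactly right.

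The route is genuinely different from the paper's, though it rests on the same underlying fact. The paper stays with the already-defined $A$-valued inner product on bounded vectors, proves the estimate $\|\sb{A}\langle\eta',\eta\rangle\|_2\leq\lambda\|\eta\|_2$ (with $\lambda$ depending only on $\eta'$, i.e.\ essentially $\|R(\eta')\|$), and then extends $\eta\mapsto\sb{A}\langle\eta',\eta\rangle$ by continuity along an approximating sequence $\eta_n\to\eta$ with $\eta_n\in D(\sb{A}K)$, the defining relation then holding by construction. You instead produce the element in closed form as $JR(\eta')^*\eta$ (or, in your alternative, via Riesz representation applied to the bounded functional $\widehat a\mapsto\langle a\eta,\eta'\rangle_K$, whose boundedness is the same Cauchy--Schwarz estimate the paper uses). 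Your formula-based proof is more direct: it avoids the density-and-limit step entirely and makes uniqueness and the verification of the identity one-line computations, at the cost of having to check separately that the new notation agrees with the old one on bounded vectors --- a check the paper gets for free since its definition is literally the limit of the bounded-vector inner products. Both proofs are complete and correct; yours is arguably the cleaner packaging of the same boundedness input.
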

\begin{proof}
If $\xi\in D(\sb{A}K)$, this is just the usual Radon-Nikodym derivative, and
\begin{align*}
\|{\sb{A}\langle} \eta',\eta \rangle\|_2
& =\sup_{a\in A, \|\widehat{a}\|_2\leq 1} |\langle \widehat{a}, {\sb{A}\langle}\eta',\eta\rangle^{\widehat{\hs}}\rangle_{L^2(A)}| 
= \sup_{a\in A, \|\widehat{a}\|_2\leq 1} \tr ({\sb{A}\langle}\eta,\eta'\rangle a) \\
& = \sup_{a\in A, \|\widehat{a}\|_2\leq 1} |\langle a \eta,\eta'\rangle_K| 
\leq \left(\sup_{a\in A, \|\widehat{a}\|_2\leq 1} \|a^* \eta'\|_2 \right)\|\eta\|_2 
\leq \lambda \|\eta\|_2
\end{align*}
for some $\lambda>0$ depending only on $\eta'$ as $\eta'\in D(\sb{A}K)$. Now if $\eta\notin D(\sb{A}K)$, take $\eta_n\in D(\sb{A}K)$ with $\eta_n\to \eta$ in $\|\cdot\|_2$, and define 
$$
\sb{A}\langle \eta' ,\eta\rangle = \lim_n {\sb{A}\langle} \eta',\eta_n\rangle
$$
which exists by the above estimate. Now $\langle a \eta,\eta'\rangle_K = \langle \widehat{a}, {\sb{A}\langle} \eta',\eta\rangle\rangle_{L^2(A)}$ for all $a\in A$ by construction.
\end{proof}

\begin{cor}
Each $\eta \in {\sb{A}K}$ gives a closable operator $R(\eta)^0\colon \widehat{A}\to {\sb{A}K}$ by $\widehat{a}\mapsto a\eta$. A similar result holds for right modules.
\end{cor}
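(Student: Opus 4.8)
The plan is to use the standard criterion that a densely defined operator is closable if and only if its adjoint is densely defined. Since $\widehat{A}$ is dense in $L^2(A)$, the operator $R(\eta)^0$ is densely defined, so it is enough to produce a dense subspace of $\sb{A}K$ on which $(R(\eta)^0)^*$ is defined. The key input is Lemma \ref{lem:RN}, which for each $\eta'\in D(\sb{A}K)$ yields a vector $\sb{A}\langle \eta',\eta\rangle\in L^2(A)$ satisfying $\langle a\eta,\eta'\rangle_K=\langle \widehat{a},\sb{A}\langle \eta',\eta\rangle\rangle_{L^2(A)}$ for all $a\in A$.

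Concretely, I would define $S\colon D(\sb{A}K)\to L^2(A)$ by $S\eta'=\sb{A}\langle \eta',\eta\rangle$; this is linear by axiom (1) for the $A$-valued inner product in Definition \ref{defn:LeftModule}. The identity furnished by Lemma \ref{lem:RN} then says exactly that $\langle R(\eta)^0\widehat{a},\eta'\rangle_K=\langle \widehat{a},S\eta'\rangle_{L^2(A)}$ for all $a\in A$ and $\eta'\in D(\sb{A}K)$, i.e.\ $S\subseteq (R(\eta)^0)^*$. Because $D(\sb{A}K)$ is dense in $\sb{A}K$ (standard: for any $\sb{A}K$-basis $\{\alpha\}$ one has $\eta=\sum_\alpha R(\alpha)R(\alpha)^*\eta$, and each summand is a norm-limit of vectors $c\alpha$ with $c\in A$, which lie in $D(\sb{A}K)$), the adjoint $(R(\eta)^0)^*$ is densely defined, and hence $R(\eta)^0$ is closable. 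Equivalently, one can argue without mentioning adjoints: if $\widehat{a_n}\to 0$ in $L^2(A)$ and $a_n\eta\to\zeta$ in $\sb{A}K$, then for each $\eta'\in D(\sb{A}K)$ we get $\langle\zeta,\eta'\rangle_K=\lim_n\langle \widehat{a_n},\sb{A}\langle\eta',\eta\rangle\rangle_{L^2(A)}=0$ since $\sb{A}\langle\eta',\eta\rangle$ is a fixed $L^2$-vector, so $\zeta=0$ by density of $D(\sb{A}K)$.

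The right-module statement is proved by the mirror-image argument: replace $R(\eta)^0$ by $L(\xi)^0\colon\widehat{a}\mapsto\xi a$ on $\widehat{A}$ and use the right-module version of Lemma \ref{lem:RN} to define the corresponding densely-defined restriction of $(L(\xi)^0)^*$. I do not expect any real obstacle; the argument is a formal consequence of Lemma \ref{lem:RN}, with the only non-computational ingredient being the density of the bounded-vector subspaces, which is classical.
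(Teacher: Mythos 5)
Your proposal is correct and follows essentially the same route as the paper: show $(R(\eta)^0)^*$ is densely defined by using Lemma \ref{lem:RN} to verify that every $\eta'\in D(\sb{A}K)$ lies in its domain (with $(R(\eta)^0)^*\eta'=\sb{A}\langle\eta',\eta\rangle$), then invoke density of $D(\sb{A}K)$ in $K$. The extra remarks (the explicit density argument via an $\sb{A}K$-basis and the adjoint-free reformulation) are fine but not needed beyond what the paper does.
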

\begin{proof}
We need only show its adjoint is densely defined. If $\eta'\in D(\sb{A}K)$, then 
$$
\langle R(\eta)^0 \widehat{a} , \eta'\rangle_K 
= \langle a\eta,\eta'\rangle_K 
= \langle \widehat{A} , {\sb{A}\langle} \eta',\eta\rangle \rangle_{L^2(A)}
$$
by Lemma \ref{lem:RN}, and the result follows as $D(\sb{A}K)$ is dense in $K$.
\end{proof}

\begin{cor}\label{cor:UnboundedLR}
Each $\eta\in {\sb{A}K}$ gives a closable unbounded operator $R_\eta^0\colon D(H_A)\to H\otimes_A K$ by $\xi\mapsto \xi\otimes \eta$. A similar result holds for each $\xi'\in H_A$.
\end{cor}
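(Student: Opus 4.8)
The plan is to mirror the proof of the preceding corollary, using the Radon–Nikodym result of Lemma \ref{lem:RN} for right modules. First I would fix $\eta\in {\sb{A}K}$ and define $R_\eta^0\colon D(H_A)\to H\otimes_A K$ by $\xi\mapsto \xi\otimes\eta$; this is well-defined on $D(H_A)$ since for $\xi\in D(H_A)$ the element $\xi\otimes\eta$ makes sense in $H\otimes_A K$ via definition (1) of the relative tensor product (the sesquilinear form $\langle\xi\odot\eta,\xi'\odot\eta'\rangle = \langle\langle\xi'|\xi\rangle_A\eta,\eta'\rangle$ requires only that $\xi,\xi'$ be right bounded, not $\eta$). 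Linearity on $D(H_A)$ is immediate. To see the operator is closable it suffices to exhibit a densely defined adjoint.

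For the adjoint, I would test against vectors of the form $\xi'\otimes\eta'$ with $\xi'\in D(H_A)$ and $\eta'\in D(\sb{A}K)$, which are total in $H\otimes_A K$. For such $\xi'\otimes\eta'$, compute
$$
\langle R_\eta^0\xi, \xi'\otimes\eta'\rangle = \langle \xi\otimes\eta, \xi'\otimes\eta'\rangle = \langle \xi \,{\sb{A}\langle}\eta,\eta'\rangle, \xi'\rangle_H,
$$
using definition (2) of the relative tensor product, where ${\sb{A}\langle}\eta,\eta'\rangle\in L^2(A)\subset L^1(A)$ is the Radon–Nikodym element supplied by the right-module analog of Lemma \ref{lem:RN} (note $\eta'\in D(\sb{A}K)$, so the pairing is defined and lies in $L^2(A)$). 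The point is that $\xi\mapsto \langle\xi\,{\sb{A}\langle}\eta,\eta'\rangle,\xi'\rangle_H$ extends to a bounded functional on $H$: since $\xi'\in D(H_A)$, right multiplication of $\xi'$ by elements of $L^2(A)$ is controlled, i.e. $|\langle \xi m,\xi'\rangle_H|\le \|\xi\|_2\,\|{L(\xi')}\|\,\|m\|_2$ for $m\in L^2(A)$ (or one argues directly that $\xi\mapsto \xi\cdot m$ is bounded $H\to H$ for $m\in L^2(A)$ when paired against a right-bounded vector, exactly as in the estimate inside Lemma \ref{lem:RN}). Hence $\xi'\otimes\eta'$ lies in the domain of $(R_\eta^0)^*$, and since such vectors are dense, $(R_\eta^0)^*$ is densely defined, so $R_\eta^0$ is closable.

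The symmetric statement — each $\xi'\in H_A$ gives a closable $L_{\xi'}^0\colon D(\sb{A}K)\to H\otimes_A K$ by $\eta\mapsto \xi'\otimes\eta$ — follows by the same argument with the roles of the left and right modules interchanged, using definition (1) of the relative tensor product and the left-module Lemma \ref{lem:RN}. The main (and only) subtlety I anticipate is checking the boundedness of the functional $\xi\mapsto \langle\xi\,{\sb{A}\langle}\eta,\eta'\rangle,\xi'\rangle_H$: one must be slightly careful that ${\sb{A}\langle}\eta,\eta'\rangle$ is only in $L^2(A)$ (not in $A$), so the action on $\xi\in H$ is a priori only densely defined, but pairing against the right-bounded vector $\xi'$ restores boundedness via the estimate in Lemma \ref{lem:RN}. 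Everything else is a routine transcription of the previous corollary's proof.
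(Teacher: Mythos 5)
Your proof is correct and follows essentially the same route as the paper: show the adjoint is densely defined on the total set $D(H_A)\odot_A D(\sb{A}K)$, using Lemma \ref{lem:RN} to interpret the pairing of the unbounded vector $\eta$ with $\eta'\in D(\sb{A}K)$ as an element of $L^2(A)$, and the boundedness of $L(\xi')$ to bound the resulting functional (the paper just writes the adjoint explicitly, $\xi'\otimes\eta'\mapsto L(\xi')\,{\sb{A}\langle}\eta',\eta\rangle$). The only slight imprecision is your appeal to definition (2) of the relative tensor product, which presupposes $\eta\in D(\sb{A}K)$; for unbounded $\eta$ one should instead start from definition (1), $\langle\xi\otimes\eta,\xi'\otimes\eta'\rangle=\langle\langle\xi'|\xi\rangle_A\eta,\eta'\rangle_K$, and then apply Lemma \ref{lem:RN} -- which produces exactly the formula and estimate you state, so the argument goes through.
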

\begin{proof}
Once again, we show its adjoint is densely defined. If $\xi'\in D(H_A)$ and $\eta'\in D(\sb{A}K)$, then by Lemma \ref{lem:RN},
\begin{align*}
\langle R_\eta^0 \xi , \xi'\otimes \eta' \rangle_{H\otimes_A K}
& = \langle \xi\otimes \eta, \xi'\otimes \eta'\rangle_{H\otimes_A K}
= \langle \langle \xi'|\xi\rangle_A \eta,\eta'\rangle_K 
= \langle \langle \xi'|\xi\rangle_A^{\widehat{\hs}}, {\sb{A}\langle} \eta',\eta\rangle^{\widehat{\hs}} \rangle_{L^2(A)}\\
& = \langle  L(\xi')^* \xi, {\sb{A}\langle} \eta',\eta\rangle^{\widehat{\hs}} \rangle_{L^2(A)}
= \langle \xi, L(\xi'){\sb{A}\langle} \eta',\eta\rangle^{\widehat{\hs}} \rangle_{H}.
\end{align*}
The result now follows as $D(H_A)\otimes_A D(\sb{A}K)$ is dense in $H\otimes_A K$.
\end{proof}

%%%%%%%%%%%%%%%%%%%%%%%%%%%%%%%%%%%%%%%%%%%%%%%%%%%%%
\subsection{Haagerup's extended positive cones and operator valued weights}\label{sec:MR534673}

For this subsection, $M$ is a von Neumann algebra acting on a Hilbert space $H$. 

\begin{defn}[Section 1 of \cite{MR534673}]
The extended positive cone of $M$, denoted $\widehat{M^+}$, is the set of weights on the predual of $M$, i.e., maps $m\colon M_*^+\to [0,\I]$ such that
\item[(1)] $m(\lambda \phi+\psi) = \lambda m(\phi)+m(\psi)$ for all $\lambda\geq 0$ and $\phi,\psi\in M_*^+$, and
\item[(2)] $m$ is lower semicontinuous.
\item
The extended positive cone has additional structure:
\item[$\bullet$] There is a natural inclusion $M^+\to\widehat{M^+}$ by $m\mapsto (\phi\mapsto \phi(m))$.
\item[$\bullet$] For $m\in \widehat{M^+}$ and $a\in M$, we define $a^*ma\in \widehat{M^+}$ by 
$$a^*ma(\phi)=m(a\phi a^*)=m(\phi(a^*\, \cdot\,a)).$$
We write $\lambda m$ for $\lambda^{1/2}m\lambda^{1/2}$ for $\lambda\geq 0$.
\item[$\bullet$] 
There is a natural partial ordering on $\widehat{M^+}$ given by
$m_1\leq m_2$ if $m_1(\phi)\leq m_2(\phi)$ for all $\phi\in M_*^+$. 
\item[$\bullet$] 
If $I$ is a directed set, we say $(m_i)_{i\in I}\subset\widehat{M^+}$ \underline{increases} to $m\in\widehat{M^+}$ if $i\leq j$ implies $m_i\leq m_j$ and $\sup_{i} m_i(\phi)= m(\phi)$ for all $\phi\in M_*^+$. Hence we can define the sum of elements of $\widehat{M^+}$ pointwise. 
\item[$\bullet$] Each $\phi\in M_*^+$ extends uniquely to a map $\widehat{M^+}\to[0,\I]$ by $\phi(m)=m(\phi)$.
\end{defn}

\begin{rem}[Section 1 of \cite{MR534673}]
There are equivalent definitions of $\widehat{M^+}$:
\item[$\bullet$] Given a projection $p\in P(M)$ and a densely-defined positive, self-adjoint operator $S$ in $K=pH$ affiliated with $M$, we can define 
\begin{equation}\label{eq:KS}
m_{(K,S)}(\omega_\xi) = 
\begin{cases}
\|S^{1/2}\xi\| & \text{ if }\xi\in D(S^{1/2})\\
\I & \text{ else}
\end{cases}
\end{equation}
where $\omega_\xi = \langle \cdot\, \xi,\xi\rangle$. Conversely, given $m\in \widehat{M^+}$, there are unique $(K,S)$ such that Equation \eqref{eq:KS} holds. In the sequel, we will write $m=(K,S)$ when we use this bijective correspondence.
\item[$\bullet$] Each $m\in \widehat{M^+}$ has a unique spectral resolution
$$
m(\phi) = \int_0^\I \lambda d \phi(e_\lambda) + \I \phi(p)
$$
where $\{e_\lambda\}_{\lambda\in [0,\I)}$ are increasing family of projections in $M$ such that:
\be
\item[(1)] $\lambda\mapsto e_\lambda$ is strongly continuous from the right, and
\item[(2)] $p=1-\lim_{\lambda\to\I} e_\lambda$
\ee
Moreover, 
\begin{align*}
e_0 = 0 &\Longleftrightarrow m(\phi)>0 \text{ for all } \phi \in M_*^+\setminus\{0\}\\
p=0 &\Longleftrightarrow \set{\phi\in M_*^+}{m(\phi)<\I} \text{ is dense in } M_*^+.
\end{align*}
\item[$\bullet$] Every $m\in \widehat{M^+}$ is a pointwise limit of an increasing sequence of operators in $M^+$.
\item[$\bullet$] $\widehat{M^+}$ is the set of all $m\in \widehat{B(H)^+}$ affiliated to $M$ ($umu^*=m$ for all $u\in U(M')$).
\end{rem}

\begin{thm}[\cite{MR534673}, Proposition 1.11, Theorem 1.12]\label{thm:BilinearExtension}
Suppose $M$ is a semifinite von Neumann algebra with n.f.s. trace $\Tr_M$. For $x,y\in M^+$, let $\Tr_M(x\cdot y) =\Tr_M(x^{1/2}yx^{1/2})$. Then the map $(x,y)\mapsto \Tr_M(x\cdot y)$ has a unique extension to $\widehat{M^+}\times\widehat{M^+}$ such that
\item[$\bullet$] $\Tr_M(x\cdot y) = \Tr_M(y\cdot x)$ for all $x,y\in \widehat{M^+}$,
\item[$\bullet$] $\Tr_M$ is additive and homogeneous in both variables,
\item[$\bullet$] if $(x_i),(y_j)\subset \widehat{M^+}$ with $x_i\nearrow x$ and $y_j\nearrow y$, then $\Tr_M(x_i\cdot y_j)\nearrow \Tr_M(x\cdot y)$, and
\item[$\bullet$] $\Tr_M((a^*xa)\cdot y) = \Tr_M(x\cdot(aya^*))$ for all $x,y\in \widehat{M^+}$ and $a\in M$.
\item
Moreover
\item[$\bullet$] The map $x\mapsto \Tr(x\,\cdot\,)$ is a homogeneous, additive bijection from $\widehat{M^+}$ onto the set of normal weights of $M$,
\item[$\bullet$] $x\leq y\Longleftrightarrow \Tr(x\,\cdot\,)\leq \Tr(y\,\cdot\,)$ and  $x_i\nearrow x\Longleftrightarrow \Tr(x_i\,\cdot\,)\nearrow \Tr(x\,\cdot\,)$, and
\item[$\bullet$] If $x = \int_0^\I \lambda \, de_\lambda+\I p$, then $\Tr(x\,\cdot\,)$ is faithful if and only if $e_0=0$ and semifinite if and only if $p=0$.
\end{thm}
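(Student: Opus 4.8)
The plan is to construct the extension by monotone approximation from $M^+$, after which uniqueness comes for free and every listed property is read off in turn. Throughout I use the facts recalled above: every $m\in\widehat{M^+}$ is a pointwise (on $M_*^+$) limit of an increasing sequence in $M^+$; each $\phi\in M_*^+$ extends uniquely to $\widehat{M^+}$; $\widehat{M^+}$ is closed under increasing suprema, under $m\mapsto a^*ma$, and under $m\mapsto\lambda m$; and a normal weight on $M$ is the pointwise supremum of the normal positive functionals it dominates. For $x,y\in\widehat{M^+}$ I pick increasing nets $x_i\nearrow x$ and $y_j\nearrow y$ in $M^+$ and set $\Tr_M(x\cdot y):=\sup_{i,j}\Tr_M(x_i^{1/2}y_jx_i^{1/2})$. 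If $E$ is any extension with the four bulleted properties, normality forces $E(x,y)=\sup_{i,j}E(x_i,y_j)=\sup_{i,j}\Tr_M(x_i^{1/2}y_jx_i^{1/2})$, so at most one extension exists and only existence is at issue.

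Two preliminary points make this definition legitimate. First, for $x,y\in M^+$ one has $\Tr_M(x^{1/2}yx^{1/2})=\Tr_M(y^{1/2}xy^{1/2})$: with $b=x^{1/2}y^{1/2}$ this is $\Tr_M(b^*b)=\Tr_M(bb^*)$, which holds directly when $b\in\n_{\Tr_M}$ and in general after replacing $x,y$ by their truncations $x\chi_{[0,n]}(x)$, $y\chi_{[0,n]}(y)$ and using normality of $\Tr_M$. In particular $\Tr_M(x_i^{1/2}y_jx_i^{1/2})=\Tr_M(y_j^{1/2}x_iy_j^{1/2})$, and since $w\mapsto\Tr_M(c^*wc)$ is monotone on $M^+$, this quantity increases in $i$ and in $j$, so the supremum above is a genuine directed supremum. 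Second, a normal weight $\psi$ on $M$ extends to $\widehat{M^+}$ by $\overline\psi(m):=\sup\{\phi(m):\phi\in M_*^+,\ \phi\le\psi\}$, and for any increasing net $m_i\nearrow m$ in $M^+$ one has $\overline\psi(m)=\sup_i\psi(m_i)$, because $\sup_i\phi(m_i)=\phi(m)$ for every $\phi\in M_*^+$ by the very definition of ``increases to''. Applying this with $\psi=\Tr_M(y_j^{1/2}\,\cdot\,y_j^{1/2})$ shows $\sup_i\Tr_M(x_i^{1/2}y_jx_i^{1/2})$ depends only on $x$ and $y_j$; applying it again in the second variable removes the dependence on $(y_j)$, so $\Tr_M(x\cdot y)$ is well-defined.

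Now the properties. Symmetry is immediate from the scalar identity. For homogeneity and additivity, $\lambda x_i$ and $x_i+x_i'$ are again increasing nets converging to $\lambda x$ and $x+x'$ (the operations on $\widehat{M^+}$ being computed pointwise), and $\Tr_M((x_i+x_i')^{1/2}y_j(x_i+x_i')^{1/2})=\Tr_M(y_j^{1/2}x_iy_j^{1/2})+\Tr_M(y_j^{1/2}x_i'y_j^{1/2})$ passes to the supremum; symmetry then gives both in the second variable. For normality, fix $y$: the previous paragraph shows $x\mapsto\Tr_M(x\cdot y)=\sup_j\overline{\psi_{y_j}}(x)$ is a supremum of extended normal weights, hence preserves increasing limits $x^{(k)}\nearrow x$; symmetry handles the other variable. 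For covariance, with $x_i,y_j\in M^+$ and $a\in M$ write $\Tr_M(y_j^{1/2}(a^*x_ia)y_j^{1/2})=\Tr_M\!\big((x_i^{1/2}ay_j^{1/2})^*(x_i^{1/2}ay_j^{1/2})\big)=\Tr_M\!\big((x_i^{1/2}ay_j^{1/2})(x_i^{1/2}ay_j^{1/2})^*\big)=\Tr_M(x_i^{1/2}(ay_ja^*)x_i^{1/2})$; since $a^*x_ia\nearrow a^*xa$ and $ay_ja^*\nearrow aya^*$, taking suprema gives $\Tr_M((a^*xa)\cdot y)=\Tr_M(x\cdot(aya^*))$. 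For the ``moreover'' part, restricting the second variable to $M^+$ makes $z\mapsto\Tr_M(x\cdot z)$ a normal weight on $M$; every normal weight arises so, since a finite one is $\Tr_M(h\,\cdot\,)$ with $h\in L^1(\Tr_M)^+\subset\widehat{M^+}$ and a general one is $\sup_\alpha\Tr_M(h_\alpha\,\cdot\,)=\Tr_M(h\,\cdot\,)$ for $h:=\sup_\alpha h_\alpha\in\widehat{M^+}$ by first-variable normality. Injectivity, the order equivalence, and the increasing-limit equivalence all follow from the pairing $\phi(x)=\Tr_M(x\cdot k)$ when $\phi=\Tr_M(k\,\cdot\,)$, $k\in L^1(\Tr_M)^+$ (valid on $M^+$ by the scalar identity, hence everywhere by normality), combined with $k=\sup_n k^{(n)}$ in $M^+$ and monotonicity of $\overline{\psi_z}$ on $\widehat{M^+}$; finally the faithfulness/semifiniteness criteria come from testing $\Tr_M(x\,\cdot\,)$ against spectral projections of $x=\int_0^\I\lambda\,de_\lambda+\I p$ and the recalled equivalences $e_0=0\Leftrightarrow m(\phi)>0\ \forall\phi\ne0$ and $p=0\Leftrightarrow\{\phi:m(\phi)<\I\}$ dense.

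The main obstacle is the well-definedness step in the second paragraph: that a normal weight on $M$ is computed on $\widehat{M^+}$ by \emph{any} increasing approximating net. This is precisely where the definition of ``increases to'' ($\sup_i m_i(\phi)=m(\phi)$ for all $\phi\in M_*^+$) and the representation of normal weights as suprema of normal positive functionals do the essential work; everything else is bookkeeping with directed double suprema, plus the truncation argument for $\Tr_M(b^*b)=\Tr_M(bb^*)$ made necessary by the fact that $x^{1/2}y^{1/2}$ need not lie in $\n_{\Tr_M}$.
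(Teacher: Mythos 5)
This theorem is not proved in the paper at all: it is quoted from Haagerup \cite{MR534673} (Proposition 1.11 and Theorem 1.12), so there is no internal argument to compare yours against; the relevant comparison is with Haagerup's original proof, and your construction is essentially that classical route. Defining $\Tr_M(x\cdot y)=\sup_{i,j}\Tr_M(x_i^{1/2}y_jx_i^{1/2})$ over increasing nets from $M^+$, getting independence of the nets by viewing $\Tr_M(y_j^{1/2}\,\cdot\,y_j^{1/2})$ as a normal weight evaluated along arbitrary increasing approximations, and then reading off symmetry, bilinearity, normality, covariance, and the bijection with normal weights is sound, and your identification of the well-definedness step as the crux is accurate. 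Three remarks. First, the load-bearing ingredient is the representation of a normal weight as the pointwise supremum of the normal positive functionals it dominates (equivalently, of an increasing net from $M_*^+$, used again for surjectivity onto normal weights); you list it among ``facts recalled above,'' but it is not among the facts recalled in the paper's preliminaries --- it is Haagerup's normal-weights theorem, a substantial external input (prior to and independent of the statement being proved, so not circular, but it should be cited as such). You also need its consequence for increasing nets in $\widehat{M^+}$, not merely in $M^+$, at the point where you assert that $x\mapsto\Tr_M(x\cdot y)=\sup_j\overline{\psi_{y_j}}(x)$ preserves increasing limits; this does follow by the same sup-interchange, but it is the step to make explicit, since the third bullet of the theorem concerns nets in $\widehat{M^+}$. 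Second, the truncation you invoke for $\Tr_M(b^*b)=\Tr_M(bb^*)$ is vacuous as written ($x\chi_{[0,n]}(x)=x$ for bounded $x$); the identity is simply the tracial property of $\Tr_M$ (or follows from unitary invariance and polar decomposition), valid with both sides possibly infinite. Third, the last bullet (faithfulness iff $e_0=0$, semifiniteness iff $p=0$) is dispatched in one clause; via the pairing $x(\phi)=\Tr_M(x\cdot k)$ for $\phi=\Tr_M(k\,\cdot\,)$, $k\in L^1(M,\Tr_M)^+$, it does reduce to the recalled spectral equivalences, but one still needs a short argument (e.g.\ $\Tr_M(x\cdot e_0)=0$ with $e_0\neq 0$ kills faithfulness, and $\Tr_M(x\cdot z)=0$ forces $z^{1/2}(1-e_0)=0$ for the converse) to close it. With those points tightened, the proposal is a correct reconstruction of the cited result.
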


\begin{defn}[\cite{MR534673}, Definitions 2.1 and 2.2]
Let $M$ and $N$ be von Neumann algebras $N\subseteq M$. An operator valued weight from $M\to N$ is a map $T\colon M^+\to \widehat{N^+}$ which satisfies the following conditions:
\item[(1)] $T(\lambda x+ y) = \lambda T(x)+T(y)$ for all $\lambda \geq 0$ and $x,y\in M^+$, and
\item[(2)]  $T(a^*xa)=a^*T(x)a$ for all $x\in M^+$ and $a\in N$.
\item
As in the case of ordinary weights, we set
\begin{align*}
\n_T &= \set{x\in M}{T(x^*x)\in N^+}\text{ and }\\
\m_T &=\n_T^*\n_T=\spann\set{x^*y}{x,y\in\n_T}.
\end{align*}
\item Moreover, we say $T$ is:
\item[$\bullet$] \underline{normal} if  $x_i\nearrow x \Rightarrow T(x_i)\nearrow T(x)$ for all $x_i,x\in M^+$,
\item[$\bullet$] \underline{faithful} if $T(x^*x)=0 \Rightarrow x=0$ for all $x\in M^+$, and
\item[$\bullet$] \underline{semifinite} if $\n_T$ is $\sigma$-weakly dense in $M$.
\item
We will abbreviate normal, faithful, semifinite by the acronym n.f.s.
\end{defn}

\begin{rems}
\item[(1)] $T$ is a conditional expectation if and only if $T(1)=1$.
\item[(2)] If $T$ is normal, it has a unique extension to $\widehat{M^+}$ satisfying (1) and (2).
\item[(3)] $\n_T$ is a left-ideal and $\n_T,\m_T$ are algebraic $N-N$ bimodules. By polarization, $T$ extends to a map $T\colon\m_T\to N$, and $T(axb)=aT(x)b$ for all $x\in \m_T$ and $a,b\in N$.
\end{rems}

\begin{thm}[\cite{MR534673}, Theorem 2.7]\label{thm:Texists}
Given an inclusion $N\subseteq M$ of semifinite von Neumann algebras with n.f.s. traces $\Tr_N,\Tr_M$ respectively. Then there is a unique n.f.s. trace-preserving operator valued weight $T\colon M^+\to \widehat{N^+}$. Moreover, if $x\in M^+$, $T(x)$ is the unique element of $\widehat{N^+}$ such that
\begin{equation}\label{eq:ovwCondition}
\Tr_M(y\cdot x)=\Tr_N(y\cdot T(x))\text{ for all }y\in N^+
\end{equation}
(where we also write $\Tr_N$ for the unique extension of $\Tr_N$ to $\widehat{N^+}$).
\end{thm}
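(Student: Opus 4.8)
The plan is to read $T$ straight off Haagerup's bilinear trace pairing $(x,y)\mapsto\Tr_M(x\cdot y)$ of Theorem~\ref{thm:BilinearExtension}, and then to verify each axiom of an operator valued weight using only the uniqueness that is built into that theorem.

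First I would construct $T$. Fix $x\in M^+$. By Theorem~\ref{thm:BilinearExtension}, $\Tr_M(x\,\cdot\,)$ is a normal weight on $M$, and since $N\subseteq M$ is a von Neumann subalgebra its restriction $\phi_x:=\Tr_M(x\,\cdot\,)|_{N^+}$ is a normal weight on $N$. Theorem~\ref{thm:BilinearExtension} also provides a bijection $z\mapsto\Tr_N(z\,\cdot\,)$ from $\widehat{N^+}$ onto the normal weights of $N$, so I would \emph{define} $T(x)$ to be the unique element of $\widehat{N^+}$ with
\[
\Tr_M(y\cdot x)=\Tr_N(y\cdot T(x))\qquad \text{for all } y\in N^+.
\]
This is precisely the ``moreover'' part of the statement, and it already includes the uniqueness of $T(x)$ for each fixed $x$.

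Next I would check that $T$ is a normal, faithful, trace-preserving operator valued weight, each point reducing to the matching property of the pairing together with the uniqueness above. Additivity and homogeneity of $T$ follow from the corresponding properties of $\Tr_M(\,\cdot\,\cdot\,)$ and $\Tr_N(\,\cdot\,\cdot\,)$ in the appropriate slot. Bimodularity $T(a^*xa)=a^*T(x)a$ for $a\in N$ comes from the identity $\Tr_M((a^*xa)\cdot y)=\Tr_M(x\cdot(aya^*))$ of Theorem~\ref{thm:BilinearExtension} applied to both sides of the defining equation, using $aya^*\in N^+$. Normality follows from the monotone-convergence clause: if $x_i\nearrow x$ in $M^+$ then $\Tr_N(y\cdot T(x_i))=\Tr_M(y\cdot x_i)\nearrow\Tr_M(y\cdot x)=\Tr_N(y\cdot T(x))$ for every $y$, whence $T(x_i)\nearrow T(x)$. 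Taking $y=1$ in the defining equation gives $\Tr_N(T(x))=\Tr_M(x)$, so $T$ is trace-preserving, and faithfulness is then immediate since $T(x^*x)=0$ forces $\Tr_M(x^*x)=0$. Uniqueness of $T$ among n.f.s.\ trace-preserving operator valued weights is the same argument in reverse: for any such $T'$, bimodularity gives $T'(y^{1/2}xy^{1/2})=y^{1/2}T'(x)y^{1/2}$, so trace-preservation yields $\Tr_M(y\cdot x)=\Tr_N(T'(y^{1/2}xy^{1/2}))=\Tr_N(y\cdot T'(x))$, meaning $T'(x)$ solves the equation defining $T(x)$, so $T'=T$.

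The step I expect to be the real obstacle is semifiniteness of $T$, because the defining equation only determines $T(x)$ ``after pairing against $\Tr_N$'', so I must exhibit a $\sigma$-weakly dense supply of $x$ for which $T(x^*x)$ is genuinely bounded. My plan: for $b\in\n_{\Tr_M}$ one has $\Tr_N(T(b^*b))=\Tr_M(b^*b)<\I$, which forces the $\I$-part of the spectral resolution of $T(b^*b)\in\widehat{N^+}$ to vanish (by faithfulness of $\Tr_N$), so its spectral projections $e_\lambda\in N$ increase strongly to $1$; then bimodularity gives $T\big((be_\lambda)^*(be_\lambda)\big)=e_\lambda T(b^*b)e_\lambda\leq\lambda\cdot 1\in N^+$, so $be_\lambda\in\n_T$, and since $be_\lambda\to b$ strongly while $\n_{\Tr_M}$ is $\sigma$-weakly dense in $M$, so is $\n_T$. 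With all pieces in place the theorem follows.
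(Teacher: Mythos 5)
Your proof is correct. Note, however, that the paper does not prove this statement at all: Theorem \ref{thm:Texists} is quoted from Haagerup \cite{MR534673} (Theorem 2.7), so there is no in-paper argument to compare against. Your derivation — define $T(x)$ as the preimage of the restricted normal weight $\Tr_M(x\,\cdot\,)|_{N^+}$ under the bijection $z\mapsto\Tr_N(z\,\cdot\,)$ of Theorem \ref{thm:BilinearExtension}, then read off additivity, $N$-bimodularity, normality, trace-preservation, faithfulness, and uniqueness from the listed properties of the pairing — is the standard route, and it is the same mechanism the author relies on elsewhere (e.g.\ Proposition \ref{prop:T}, where Equation \eqref{eq:ovwCondition} is used to identify concrete operator valued weights). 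Two small remarks: your uniqueness argument in fact uses only that the competitor $T'$ is a trace-preserving operator valued weight (its normality and semifiniteness are never invoked), which is fine and slightly stronger than needed; and you correctly isolated semifiniteness as the one step requiring a genuine construction — for $b\in\n_{\Tr_M}$ the finiteness $\Tr_N(T(b^*b))=\Tr_M(b^*b)<\infty$ together with faithfulness of $\Tr_N$ kills the infinite part of the spectral resolution of $T(b^*b)$, so its spectral projections satisfy $e_\lambda\nearrow 1$, bimodularity gives $T\bigl((be_\lambda)^*(be_\lambda)\bigr)=e_\lambda T(b^*b)e_\lambda\leq\lambda\cdot 1$, hence $be_\lambda\in\n_T$, and since $be_\lambda\to b$ $\sigma$-weakly (bounded net, strong convergence) and $\n_{\Tr_M}$ is $\sigma$-weakly dense, $\n_T$ is dense as required.
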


\begin{defn}
For $N\subseteq M$ an inclusion of von Neumann algebras, we write
\item[$\bullet$] $\cP(M,N)$ for the set of n.f.s. operator valued weights $M^+\to \widehat{N^+}$, and
\item[$\bullet$] $\cP_0(M,N)\subseteq \cP(M,N)$ for the set of operator valued weights whose restriction to $N'\cap M$ is semifinite.
\end{defn}

\begin{lem}[\cite{MR1622812}, Lemma 2.5 and Proposition 2.8, \cite{MR1297671}, Corollary 28]\label{lem:SemifiniteRestriction}
Let $N\subset M$ be an inclusion of semifinite von Neumann algebras.
\item[(1)] 
There is a unique central projection $z\in N'\cap M$ such that
\be
\item[$\bullet$] $\cP_0(pMp,pN)=\emptyset$ for all $p\in N'\cap M$, $p\leq (1-z)$ and
\item[$\bullet$] $\cP_0(zMz,zN)=\cP(zMz,zN)$.
\ee
Moreover, for all $T\in\cP(M,N)$,
\be
\item[$\bullet$] $(1-z)(N'\cap M)\cap \m_T=\{0\}$, and
\item[$\bullet$] $T|_{z(N'\cap M)}$ is semifinite.
\ee
\item[(2)] 
If $\cP_0(M,N)\neq \emptyset$ and $\cP_0(N',M')\neq \emptyset$, then $N'\cap M$ is a direct sum of type $I$ factors, and $pN\subset pMp$ has finite index for every finite rank $p\in N'\cap M$.
\end{lem}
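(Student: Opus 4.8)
Both assertions are quoted from \cite{MR1622812} (Lemma 2.5 and Proposition 2.8) and \cite{MR1297671} (Corollary 28), and in the paper it suffices to cite these; what follows is an outline of the underlying argument. The plan for (1) is to reduce it to a soft statement about traces. By Theorem \ref{thm:Texists} fix the canonical n.f.s.\ trace-preserving operator valued weight $T_0\in\cP(M,N)$ and consider $\phi_0:=T_0|_{(N'\cap M)^+}$. Since $\phi_0(u^*xu)=u^*\phi_0(x)u$ for $u\in U(N)$ while $u^*xu=x$ for $x\in N'\cap M$, the values of $\phi_0$ are affiliated to $Z(N)$; moreover $\phi_0$ is a normal faithful \emph{tracial} weight $N'\cap M\to\widehat{Z(N)^+}$ (the canonical operator valued weight restricts to a trace on the relative commutant --- I would cite this). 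Exactly as for an ordinary trace, the projections $e\in N'\cap M$ with $\phi_0(e)\in Z(N)^+$ form an upward directed, $U(N'\cap M)$-invariant family whose supremum is therefore a central projection $z\in N'\cap M$, with $\phi_0|_{z(N'\cap M)}$ semifinite and $\phi_0(e)\notin Z(N)^+$ for every nonzero projection $e\leq 1-z$ in $N'\cap M$. Since $\m_{T_0}^+=\{x\in M^+:T_0(x)\in N^+\}$ is hereditary in $M$, any nonzero element of $\m_{T_0}\cap(1-z)(N'\cap M)$ would produce such a projection $e$ with $\phi_0(e)=T_0(e)\in Z(N)^+$, a contradiction; so $\m_{T_0}\cap(1-z)(N'\cap M)=\{0\}$ while $T_0|_{z(N'\cap M)}$ is semifinite.

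The next step, which I expect to be the crux, is to upgrade from $T_0$ to an arbitrary $T\in\cP(M,N)$. For this I would use the Radon--Nikodym theory for operator valued weights: for a fixed n.f.s.\ weight $\psi$ on $N$, the Connes cocycle $(D(\psi\circ T):D(\psi\circ T_0))_t$ is independent of $\psi$ and lies in $U(N'\cap M)$ (because the modular flows of $\psi\circ T$ and $\psi\circ T_0$ both restrict on $N$ to $\sigma^\psi$, which is globally $\sigma^\psi$-invariant), and this is what forces the set $\m_T\cap(N'\cap M)$ --- and hence $z$ --- to depend only on $N\subseteq M$. Granting this, the ``Moreover'' bullets hold for every $T$ by the paragraph above applied to $T$; $\cP_0(zMz,zN)=\cP(zMz,zN)$ holds because any element of $\cP(zMz,zN)$ differs from $T_0|_{zMz}$ by a cocycle in $U(z(N'\cap M)z)$ and $T_0|_{z(N'\cap M)}$ is semifinite; and $\cP_0(pMp,pN)=\emptyset$ for every projection $p\leq 1-z$ in $N'\cap M$, since a member of it would, after reducing $p$ to a spectral projection, produce a nonzero projection $e\leq 1-z$ in $N'\cap M$ with $T(e)\in N^+$ for some $T\in\cP(M,N)$, against the previous point. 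For uniqueness, if $z'$ also satisfies (1) and $z\neq z'$ then (interchanging $z,z'$ if necessary) $p:=z(1-z')\neq 0$; applying $\cP_0(zMz,zN)=\cP(zMz,zN)$ one finds a nonzero projection $f\leq p$ with $\cP_0(fMf,fN)\neq\emptyset$, contradicting $\cP_0(fMf,fN)=\emptyset$ since $f\leq 1-z'$.

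For (2) the plan is to combine (1) with finite-index theory. If $\cP_0(M,N)\neq\emptyset$ then, by the ``Moreover'' bullets, the $z$ of (1) equals $1$, so $\tau_1:=T_0|_{N'\cap M}$ is a genuine n.f.s.\ (tracial) weight on $N'\cap M$; in particular $N'\cap M$ is semifinite. Applying the same to the inclusion $M'\subseteq N'$, whose relative commutant $(M')'\cap N'=M\cap N'$ is again $N'\cap M$, the hypothesis $\cP_0(N',M')\neq\emptyset$ produces a second n.f.s.\ tracial weight $\tau_2$ on $N'\cap M$, coming from the canonical operator valued weight $N'\to M'$ (equivalently the dual operator valued weight $\langle M,e_N\rangle\to M$ transported by $J$). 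The basic-construction duality relates $\tau_1$ and $\tau_2$ in such a way that, for any projection $p\in N'\cap M$ with $\tau_1(p)<\I$ and $\tau_2(p)<\I$, the inclusion of factors $pN\subseteq pMp$ carries normal conditional expectations in both directions whose indices are finite (expressible through $\tau_1(p),\tau_2(p)$); by the Pimsner--Popa/Kosaki theory $pN\subseteq pMp$ then has finite index, so $p(N'\cap M)p=(pN)'\cap pMp$ is finite-dimensional. Such $p$ have supremum $1$ (both $\tau_i$ being semifinite), so the minimal projections of the various $p(N'\cap M)p$ are minimal projections of $N'\cap M$ with supremum $1$; hence $N'\cap M$ is atomic, i.e.\ a direct sum of type $I$ factors. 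Finally, any finite-rank $p\in N'\cap M$ is dominated by one of these reducing projections, so $pN\subseteq pMp$ has finite index.

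In summary, the only ingredient that is neither soft (the trace manipulations and the heredity of $\m_T$ in (1), the reduction argument in (2)) nor a direct citation of basic-construction duality and finite-index theory is the $T$-independence in (1): that the Connes cocycle between two operator valued weights $M\to N$ lies in $N'\cap M$ and controls $\m_T\cap(N'\cap M)$, so that semifiniteness of the restriction to the relative commutant is a property of $N\subseteq M$ alone. This is the technical heart of the statement, and it is exactly what \cite{MR1622812,MR1297671} supply.
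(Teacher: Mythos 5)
The paper itself contains no proof of this lemma: as the bracketed citations in its statement indicate, it is imported wholesale from \cite{MR1622812} (Lemma 2.5, Proposition 2.8) and \cite{MR1297671} (Corollary 28), so your decision to cite and only outline the underlying argument is exactly what the paper does, and your outline is faithful in architecture to those sources (values of an operator valued weight on $N'\cap M$ affiliated with $Z(N)$, $z$ as the supremum of the ``finite'' projections, $T$-independence via the Connes cocycle $(D(\psi\circ T):D(\psi\circ T_0))_t$ lying in $N'\cap M$, and, for (2), combination of the two semifinite restrictions with finite-index theory to force atomicity).

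Two remarks in case you ever expand the sketch. First, the traciality you propose to cite is genuinely soft in this semifinite setting: for $u\in U(N'\cap M)$, $x\in (N'\cap M)^+$ and $y\in N^+$ one has $\Tr_M(y\cdot uxu^*)=\Tr_M(x\cdot u^*yu)=\Tr_M(y\cdot x)$ by Theorem \ref{thm:BilinearExtension}, and the uniqueness in Theorem \ref{thm:Texists} then gives $T_0(uxu^*)=T_0(x)$. Conversely, the step you dismiss as mere heredity is where care is actually needed: $\m_{T}$ is only an $N$-bimodule, so from a nonzero (self-adjoint) element of $\m_{T}\cap(1-z)(N'\cap M)$ you cannot simply cut by its spectral projections (these lie in $N'\cap M$, not $N$) to land back in $\m_{T}$ and produce a projection $e\le 1-z$ with $T(e)\in N^+$; in the cited papers this point, like the transfer of semifiniteness along cocycles and the extension of an element of $\cP_0(pMp,pN)$ to an element of $\cP(M,N)$, is handled by the operator-valued-weight machinery, so the ``technical heart'' is somewhat larger than just the cocycle argument you single out. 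Finally, in (2) your phrase ``the inclusion of factors $pN\subseteq pMp$'' implicitly uses factoriality of $N$; Proposition 2.8 of \cite{MR1622812} is a statement about factors, and the paper applies the lemma with $N=A$ a $II_1$-factor, so this is harmless, but the transcription for arbitrary semifinite inclusions should be read with that hypothesis in mind.
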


%%%%%%%%%%%%%%%%%%%%%%%%%%%%%%%%%%%%%%%%%%%%%%%%%%
\subsection{Useful lemmata on extended positive cones}\label{sec:LemmataPositiveCones}

For this subsection, $M$ is a von Neumann algebra acting on a Hilbert space $H$. 

\begin{lem}\label{lem:parallelogram}
For $m\in \widehat{M^+}$ and $\eta,\xi\in H$, the parallelogram identity holds:
$$
m(\omega_{\eta+\xi})+m(\omega_{\eta-\xi})= 2m(\omega_{\eta})+2m(\omega_{\xi}).
$$
\end{lem}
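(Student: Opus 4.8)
The plan is to reduce the parallelogram identity for $m$ to the (trivial) parallelogram identity for vector functionals in the predual, and then invoke the defining additivity and positive homogeneity of elements of $\widehat{M^+}$.

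First I would record the elementary identity in $M_*^+$. For every $a\in M$,
$$
\omega_{\eta+\xi}(a)+\omega_{\eta-\xi}(a)= \langle a(\eta+\xi),\eta+\xi\rangle + \langle a(\eta-\xi),\eta-\xi\rangle = 2\langle a\eta,\eta\rangle + 2\langle a\xi,\xi\rangle = 2\omega_\eta(a)+2\omega_\xi(a),
$$
the cross terms $\langle a\eta,\xi\rangle$ and $\langle a\xi,\eta\rangle$ cancelling. Each $\omega_\zeta$ is a normal positive functional, so $\omega_{\eta+\xi},\omega_{\eta-\xi},\omega_\eta,\omega_\xi$ all lie in $M_*^+$; since an element of $M_*$ is determined by its values on $M$, this gives the equality $\omega_{\eta+\xi}+\omega_{\eta-\xi}= 2\omega_\eta+2\omega_\xi$ in $M_*^+$.

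Then I would apply $m$ to both sides. Since $m$ satisfies $m(\lambda\phi+\psi)=\lambda m(\phi)+m(\psi)$ for all $\lambda\geq 0$ and $\phi,\psi\in M_*^+$, it is in particular additive and positively homogeneous on $M_*^+$ (with the usual conventions for $\I$), so
$$
m(\omega_{\eta+\xi})+m(\omega_{\eta-\xi})= m(\omega_{\eta+\xi}+\omega_{\eta-\xi})= m(2\omega_\eta+2\omega_\xi)= 2m(\omega_\eta)+2m(\omega_\xi),
$$
which is exactly the claim.

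There is essentially no serious obstacle here; the only points requiring a moment's care are that all four vector functionals genuinely lie in $M_*^+$, so that additivity applies and the possibility of a value $\I$ on one side automatically matches the other, and that the pointwise identity on $M$ upgrades to an identity of functionals. Alternatively, one could argue through the $(K,S)$ representation of $m$ from \eqref{eq:KS}, using the parallelogram law for the norm of $K=pH$ applied to the vectors $S^{1/2}(\cdot)$, but then one must separately handle the domain bookkeeping --- namely that $\eta,\xi\in D(S^{1/2})$ if and only if $\eta\pm\xi\in D(S^{1/2})$, and that otherwise both sides equal $\I$ --- so the predual argument is cleaner.
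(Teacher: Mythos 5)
Your proof is correct, but it takes a genuinely different (and shorter) route than the paper. You observe that $\omega_{\eta+\xi}+\omega_{\eta-\xi}=2\omega_\eta+2\omega_\xi$ already holds as an identity in $M_*^+$ and then simply apply the defining additivity/homogeneity of $m\colon M_*^+\to[0,\infty]$; this is a complete argument, and if you want to avoid even mentioning homogeneity (and any fuss about $m(0)$) you can write $2\omega_\eta+2\omega_\xi=\omega_\eta+\omega_\eta+\omega_\xi+\omega_\xi$ and use additivity alone. The paper instead never touches the predual identity: it takes an increasing net $(x_i)\subset M^+$ with $x_i\nearrow m$, applies the ordinary parallelogram identity to each bounded $x_k$, and proves the two inequalities by a careful interchange of suprema over the directed index set. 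Your approach buys brevity and uses nothing beyond the definition of $\widehat{M^+}$ as weights on the predual; the paper's approach buys a reusable technique --- the same sup-interchange argument is invoked almost verbatim in Lemma \ref{lem:VectorStates}, Remark \ref{rem:sup}, and Lemma \ref{lem:QuadraticForm}, which is presumably why the author proves even this lemma by approximation from $M^+$ rather than by the one-line predual computation. Your alternative remark about arguing via the $(K,S)$ picture and the norm parallelogram law would also work, but, as you note, it requires the domain bookkeeping for $S^{1/2}$, so your main argument is the cleanest of the three.
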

\begin{proof}
Take $(x_i)\subset M^+$ with $x_i$ increasing to $m$. Then
\begin{align*}
m(\omega_{\eta+\xi})+m(\omega_{\eta-\xi})
& = \sup_{i,j} \bigg( x_i(\omega_{\eta+\xi})+x_j(\omega_{\eta-\xi}) \bigg)\\
& \leq  \sup_{i,j} \bigg(\sup_{k\geq i,j}\bigg(x_k(\omega_{\eta+\xi})+x_k(\omega_{\eta-\xi})  \bigg)\bigg)\\
& =  \sup_{i,j} \bigg(\sup_{k\geq i,j}\bigg( 2x_k(\omega_{\eta})+2x_k(\omega_{\xi})\bigg)\bigg)\\
& \leq   \sup_{i',j'} \bigg( 2x_{i'}(\omega_{\eta})+2x_{j'}(\omega_{\xi})\bigg)
= 2m(\omega_{\eta})+2m(\omega_{\xi}).
\end{align*}
The other inequality is proved similarly.
\end{proof}

\begin{lem}\label{lem:VectorStates}
\item[(1)]  $m_1\leq m_2$ if and only if $m_1(\omega_\xi)\leq m_2(\omega_\xi)$ for all $\xi\in H$.
\item[(2)] $(m_i)_{i\in I}$ increases to $m$ if and only if $i\leq j$ implies $m_i\leq m_j$ and $\sup_i m_i(\omega_\xi) = m(\omega_\xi)$ for all $\xi\in H$.
\item[(3)] If $(m_i)_{i\in I}$ increases to $m$ and $a\in M^+$, then $a^*m_i a$ increases to $a^*ma$.
\end{lem}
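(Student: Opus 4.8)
The plan is to reduce all three parts to a single analytic fact: every $m\in\widehat{M^+}$ is $\sigma$-additive along decompositions of normal positive functionals into vector functionals. Recall every $\phi\in M_*^+$ is a norm-convergent sum $\phi=\sum_{n\geq 1}\omega_{\xi_n}$ with $\sum_n\|\xi_n\|^2=\|\phi\|<\I$ (extend $\phi$ to a normal positive functional on $B(H)$ and diagonalize the associated trace-class operator). Writing $\phi_N=\sum_{n\leq N}\omega_{\xi_n}$, finite additivity of $m$ makes $N\mapsto m(\phi_N)$ increasing with $m(\phi_N)\leq m(\phi)$ (as $m(\phi)=m(\phi_N)+m(\phi-\phi_N)$), while $\phi_N\to\phi$ in norm plus lower semicontinuity of $m$ gives $m(\phi)\leq\liminf_N m(\phi_N)$; hence $m(\phi)=\sup_N m(\phi_N)=\sum_n m(\omega_{\xi_n})$. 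This is the only non-formal step; the remainder is bookkeeping with directed suprema.

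For (1): the forward direction is immediate since each $\omega_\xi\in M_*^+$, and conversely if $m_1(\omega_\xi)\leq m_2(\omega_\xi)$ for all $\xi\in H$, then for any $\phi=\sum_n\omega_{\xi_n}$ one has $m_1(\phi)=\sum_n m_1(\omega_{\xi_n})\leq\sum_n m_2(\omega_{\xi_n})=m_2(\phi)$. For (2): the ``only if'' direction is a special case of the definition; for ``if'', assume $m_i\leq m_j$ for $i\leq j$ and $\sup_i m_i(\omega_\xi)=m(\omega_\xi)$ for all $\xi$. By (1) we get $m_i\leq m$ for every $i$, hence $\sup_i m_i(\phi)\leq m(\phi)$; for the reverse, fix $\phi=\sum_n\omega_{\xi_n}$ and compute
$$
m(\phi)=\sum_n m(\omega_{\xi_n})=\sum_n\sup_i m_i(\omega_{\xi_n})=\sup_N\sup_i\sum_{n\leq N}m_i(\omega_{\xi_n})\leq\sup_i m_i(\phi),
$$
where interchanging $\sup_i$ with the \emph{finite} sum is legitimate since $I$ is directed and each $i\mapsto m_i(\omega_{\xi_n})$ is monotone, and the last inequality uses $\sigma$-additivity of $m_i$.

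For (3): whenever $i\leq j$, $(a^*m_ia)(\phi)=m_i(a\phi a^*)\leq m_j(a\phi a^*)=(a^*m_ja)(\phi)$ because $a\phi a^*\in M_*^+$, so $a^*m_ia$ is increasing; and since $a\omega_\xi a^*=\omega_{a\xi}$ (as $(a\omega_\xi a^*)(x)=\langle xa\xi,a\xi\rangle$), one has $\sup_i(a^*m_ia)(\omega_\xi)=\sup_i m_i(\omega_{a\xi})=m(\omega_{a\xi})=(a^*ma)(\omega_\xi)$ for all $\xi$, so (2) applies. (Alternatively, skip the vector states entirely: $\sup_i(a^*m_ia)(\phi)=\sup_i m_i(a\phi a^*)=m(a\phi a^*)=(a^*ma)(\phi)$ for every $\phi\in M_*^+$.) The only genuine obstacle is the bridging fact in the first paragraph — verifying that a map in $\widehat{M^+}$, a priori only finitely additive and lower semicontinuous, is automatically $\sigma$-additive on vector-functional decompositions, which is the sole point where lower semicontinuity (against the norm convergence $\phi_N\to\phi$) is used. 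After that, all three parts are formal.
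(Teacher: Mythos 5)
Your proof is correct and follows essentially the same route as the paper's: decompose $\phi\in M_*^+$ as a norm-convergent sum of vector functionals, use additivity together with lower semicontinuity to get $m(\phi)=\sum_n m(\omega_{\xi_n})$, and then exchange the directed supremum with the (finite partial) sums, with (3) reduced to (1)--(2) via $a\omega_\xi a^*=\omega_{a\xi}$. The only divergences are cosmetic: your uniform partial-sum estimate replaces the paper's two-case analysis ($m(\phi)=\infty$ versus $m(\phi)<\infty$) in part (2), and your parenthetical alternative for (3), applying the definition of increase directly to $a\phi a^*$, is a small shortcut the paper does not take.
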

\begin{proof}
First, note every $\phi\in M_*^+$ is a sum of functionals $\omega_{\xi_k} = \langle \cdot\, \xi_k,\xi_k\rangle$ for $\xi_i\in H$.
\item[(1)] Follows immediately by lower semicontinuity of $m\in \widehat{M^+}$.
\item[(2)] Suppose $\phi = \sum_k \omega_{\xi_k}$. By lower semicontinuity,  
\begin{align*}
m(\phi) 
& = \sum_{k} m(\omega_{\xi_k}) 
= \sum_k \sup_i m_i(\omega_{\xi_k})\\
& \geq \sup_i  \sum_km_i(\omega_{\xi_k}) 
= \sup_i m_i\left(\sum_k \omega_k\right)
= \sup_i m_i(\phi).
\end{align*}
There are two cases:
\itt{Case 1} Suppose $m(\phi)=\I$. Then there is a $\varepsilon>0$ such that $\sup_i m_i(\omega_{\xi_k})>\varepsilon$ for infinitely many $k$, say $(k_n)$. Let $N>0$, and let $M>0$ such that $M\varepsilon>N$. Choose $j_1\in I$ such that $i\geq j_1$ implies $m_i(\omega_{k_1})>\varepsilon$. For $n=2,\dots,M$, inductively choose $j_n> j_{n-1}$ such that $i\geq j_n$ implies $m_i(\omega_{k_n})>\varepsilon$. Then for all $i>j_M$, 
$$
\sum_k m_i(\omega_{\xi_k}) \geq \sum_{n=1}^M m_i(\omega_{\xi_{k_n}}) \geq \sum_{n=1}^M \varepsilon = M\varepsilon>N.
$$
Since $N$ was arbitrary, we must have 
$$\sup_i m_i(\phi)=\sup_i m_i \left( \omega_k\right)=\sup_i \sum_k m_i(\omega_k)=\I.$$
\itt{Case 2} Suppose $m(\phi)<\I$. Let $\varepsilon>0$. Then there is an $N\in\N$ such that $\sum_{k>N} m(\omega_{\xi_k})<\varepsilon$. Now as in the proof of Lemma \ref{lem:parallelogram},
$$
m(\phi)-\varepsilon  <  \sum_{k=1}^N \sup_i m_i(\omega_{\xi_k})= \sup_i  \sum_{k=1}^N m_i(\omega_{\xi_k})\leq \sup_i \sum_k m_i(\omega_k) =\sup_i m_i(\phi),
$$
and the result follows as $\varepsilon$ was arbitrary.
\item[(3)] We use (2). Let $\xi\in H$.
\begin{align*}
a^*m_i a (\omega_\xi) &= m_i(\omega_{a\xi})\leq m_j(\omega_{a\xi})= a^* m_j a (\omega_\xi)\text{ for all } i\leq j \text{ and}\\
\sup_i a^*m_ia(\omega_\xi) &= \sup_i m_i(\omega_{a\xi})=m(\omega_{a\xi}) = a^* m a (\omega_\xi).
\end{align*}
\end{proof}

\begin{rem}\label{rem:sup}
Suppose $(x_i)_{i\in I},(y_i)_{i\in I}\subset M^+$ are directed families and $\lambda\geq 0$. Then by Lemma \ref{lem:VectorStates} and techniques similar to those used in the proof of Lemma \ref{lem:parallelogram},
$$
\sup_{i} (\lambda x_i+y_i) = \lambda \sup_i x_i + \sup_j y_j.
$$
\end{rem}

\begin{lem}\label{lem:QuadraticForm}
Suppose $F\subset \widehat{M^+}$ is a directed family, i.e., if $x,y\in F$, then there is a $z\in F$ with $z\geq x$ and $z\geq y$. Then there is a unique $m_F=(K_F,S_F)\in \widehat{M^+}$ with $K_F=\overline{\Dom(S_F^{1/2})}$ such that
\begin{align*}
m_F(\omega_\xi)=\langle S_F^{1/2}\xi,S_F^{1/2}\xi\rangle =\sup_{x\in F} x(\omega_\xi)  \text{ for all }\\
\xi \in \Dom(S_F^{1/2})=\set{\xi \in H}{ \sup_{x\in F} x(\omega_\xi)<\I}.
\end{align*}
We denote $m_F$ by $\sup_{x\in F} x$.
\end{lem}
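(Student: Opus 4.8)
The plan is to recognise $\xi\mapsto\sup_{x\in F}x(\omega_\xi)$ as a closed nonnegative quadratic form on $H$, invoke the representation theorem for such forms to produce the pair $(K_F,S_F)$, and then check that the resulting element of $\widehat{B(H)^+}$ is affiliated with $M$ and uniquely determined.

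First I would set $q(\xi)=\sup_{x\in F}x(\omega_\xi)\in[0,\I]$ and $\Dom(q)=\set{\xi\in H}{q(\xi)<\I}$. Since $\omega_{\lambda\xi}=|\lambda|^2\omega_\xi$ and each $x\in F$ is homogeneous, $q(\lambda\xi)=|\lambda|^2q(\xi)$ for all $\lambda\in\C$. Applying the parallelogram identity (Lemma~\ref{lem:parallelogram}) to each individual $x\in F$ gives $x(\omega_{\eta+\xi})\leq x(\omega_{\eta+\xi})+x(\omega_{\eta-\xi})=2x(\omega_\eta)+2x(\omega_\xi)\leq 2q(\eta)+2q(\xi)$, so after taking suprema $\Dom(q)$ is a linear subspace. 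For $\eta,\xi\in\Dom(q)$ all four terms $q(\eta\pm\xi),q(\eta),q(\xi)$ are finite, and an $\varepsilon$-argument that genuinely uses the directedness of $F$ — choose $z\in F$ dominating elements of $F$ that almost realise the two relevant suprema — upgrades the inequality to the full parallelogram identity $q(\eta+\xi)+q(\eta-\xi)=2q(\eta)+2q(\xi)$ on $\Dom(q)$. By polarization, $q$ is then the quadratic form of a nonnegative sesquilinear form on $\Dom(q)$.

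Next I would verify that $q$ is lower semicontinuous on $H$ (with the convention $q\equiv\I$ off $\Dom(q)$): the map $H\to M_*^+$, $\xi\mapsto\omega_\xi$, is norm-continuous, and each $x\in\widehat{M^+}$ is lower semicontinuous on $M_*^+$ by definition, so $\xi\mapsto x(\omega_\xi)$ is lower semicontinuous, and a pointwise supremum of lower semicontinuous functions is lower semicontinuous. A nonnegative quadratic form which is lower semicontinuous in this sense is closed, so the standard representation theorem for closed nonnegative quadratic forms applies: there is a unique positive self-adjoint operator $S_F$ on $K_F:=\overline{\Dom(q)}$ with $\Dom(S_F^{1/2})=\Dom(q)$ and $q(\xi)=\|S_F^{1/2}\xi\|^2$. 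Then $m_F:=(K_F,S_F)$ satisfies $m_F(\omega_\xi)=q(\xi)$ for every $\xi\in H$ by \eqref{eq:KS} (finite on $\Dom(q)$, equal to $\I$ elsewhere).

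It remains to see $m_F\in\widehat{M^+}$ and that it is the unique such element. Because each $x\in F$ is affiliated with $M$, $x(\omega_{u\xi})=x(\omega_\xi)$ for all $u\in U(M')$; hence $q$, the subspace $\Dom(q)$, and the form $q$ are $U(M')$-invariant. Consequently the projection onto $K_F$ lies in $M''=M$, and uniqueness in the representation theorem forces $uS_Fu^*=S_F$, i.e. $S_F$ is affiliated with $M$; equivalently $m_F\in\widehat{M^+}$. Uniqueness of an element of $\widehat{M^+}$ with prescribed values on all vector states is Lemma~\ref{lem:VectorStates}(1) applied in both directions. The only step with real content is the verification of closedness of the form — the lower-semicontinuity observation together with the form representation theorem; the other genuine (as opposed to bookkeeping) point is the use of directedness of $F$ in establishing the parallelogram identity for $q$.
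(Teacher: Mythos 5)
Your proposal is correct and follows essentially the same route as the paper: both verify that $\xi\mapsto\sup_{x\in F}x(\omega_\xi)$ is an extended quadratic form (homogeneity, the parallelogram identity via directedness of $F$ as in Lemma~\ref{lem:parallelogram}, lower semicontinuity as a supremum of lower semicontinuous functions, and $U(M')$-invariance) and then pass to the pair $(K_F,S_F)$. The only difference is that the paper cites \cite{MR534673, MR561983, MR1943006} for the correspondence between such forms and elements of $\widehat{M^+}$, whereas you spell out that step via the representation theorem for closed nonnegative forms and the affiliation argument, which is just the content of those references.
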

\begin{proof}
As in \cite{MR534673, MR561983,MR1943006}, one checks that the extended quadratic form $s_F\colon H\to [0_\R,\I_\R]$ given by $s_F(\xi) = \sup_{x\in F}x(\omega_\xi)$ satisfies
\be
\item[(1)] $s_F(\lambda \xi)= |\lambda|^2 s_F(\xi)$,
\item[(2)] $s_F(\eta+\xi)+s_F(\eta-\xi) = 2s_F(\eta)+2s_F(\xi)$,
\item[(3)] $s_F$ is lower semicontinuous, and
\item[(4)] $s_F(u\xi)=s_F(\xi)$ for all $u\in M'$.
\ee
(1) and (4) are trivial. (3) follows as sups of lower semicontinuous maps are lower semicontinuous. (2) is similar to the proof of Lemma \ref{lem:parallelogram}.
\end{proof}

\begin{defn}\label{defn:UnboundedRN}
Suppose $M$ is a semifinite von Neumann algebra with n.f.s. trace $\Tr_M$ acting on the right of $H$. Let $\xi\in D(H_M)$, and suppose $(x_i)\in (M'\cap B(H))^+$ with $x_i\nearrow x\in \widehat{(M'\cap B(H))^+}$. Then each $L(\xi)^*x_iL(\xi)\in M^+$ as it commutes with the right $M$-action on $L^2(M,\Tr_M)$, so we define
$$
L(\xi)^*xL(\xi)\ = \sup_i L(\xi)^*x_iL(\xi)\in \widehat{M^+}.
$$
Note that if $\kappa\in L^2(M,\Tr_M)$, then
$$
\bigg(L(\xi)^*xL(\xi)\bigg)(\omega_\kappa)=\sup_i \bigg(L(\xi)^*x_iL(\xi)\bigg)(\omega_\kappa) = \sup_i x_i(\omega_{\xi\otimes \kappa})=x(\omega_{\xi\otimes \kappa}),
$$
which is independent of the choice of $(x_i)$. Hence $L(\xi)^*xL(\xi)$ is well-defined by Lemma \ref{lem:VectorStates}. Similarly, we may define operators of the form $R(\eta)^*yR(\eta)$, $L_\xi^*xL_\xi$, and $R_\eta^*yR_\eta$.
\end{defn}

%%%%%%%%%%%%%%%%%%%%%%%%%%%%%%%%%%%%%%%%%%%%%%%%%%
\section{Planar calculus for bimodules}
For this section, let $A$ be a $II_1$-factor, and let $\sb{A}H_A$ be an $A-A$ Hilbert bimodule, i.e., $H$ has commuting actions of $A$ and $A\op$.

%%%%%%%%%%%%%%%%%%%%%%%%%%%%%%%%%%%%%%%%%%%%%%%%%%%%%
\subsection{Centralizer algebras, central $L^2$-vectors, and canonical maps}\label{sec:TowersOfBimodules}

\begin{defn}
For an $A-A$ bimodule $K$ (algebraic or Hilbert), we define 
$$A'\cap K=\set{\xi\in K}{a\xi = \xi a\text{ for all }a\in A}.$$
\end{defn}

\begin{nota}
For $n\geq 0$, let
\item[$\bullet$] $H^n = \bigotimes^n_A H$, with the convention that $H^0=L^2(A)$,
\item[$\bullet$] $B^n=D(\sb{A}H^n)\cap D(H^n_A)$, which is dense in $H^n$ by Lemma 1.2.2 of \cite{correspondences}. We also use the convention $B=B^1$. Note $B^0=A$.
\item[$\bullet$] $\{\alpha\}\subset B$ be an $\sb{A}H$ basis (possible due to the density of $B$ in $H$), with 
$$\{\alpha^n\}=\set{\alpha_{1}\otimes\cdots\otimes\alpha_{n} }{\alpha_{i}\in\{\alpha\}\text{ for all } i=1,\dots,n}\subset B^n$$ 
the corresponding $\sb{A}H^n$ basis (as $R_{\alpha_{1}\otimes\cdots\otimes \alpha_{n}} =R_{\alpha_{1}}\cdots R_{\alpha_{n}}$). We let $\{\beta\}\subset B$ be an $H_A$ basis, with $\{{\beta}^n\}\subset B^n$ the corresponding $H_A^n$ basis.
\item[$\bullet$] (central $L^2$-vectors) $P_n = A'\cap H^n$. Note $P_0=A'\cap L^2(A) = \C\widehat{1}$.
\item[$\bullet$] $C_n = (A\op)'\cap B(H^n)$ (the commutant of the right $A$-action on $H^n$) with canonical trace $\Tr_n=\sum_{{\beta}^n} \langle \, \cdot \, {\beta}^n,{\beta}^n\rangle$,
\item[$\bullet$] $C_n\op = A'\cap B(H^n)$ with canonical trace $\Tr_n\op=\sum_{{\alpha}^n} \langle \, \cdot \, {\alpha}^n,{\alpha}^n\rangle$,
\item[$\bullet$] (centralizer algebras) $Q_n = C_n\cap C_n\op$.
\end{nota}

\begin{rem}\label{rem:central}
Note that $A\subset C_n$ and $A\op\subset C_n\op$.
\end{rem}

\begin{defn}\label{defn:symmetric}
$H$ is called \underline{symmetric} if there is a conjugate-linear isomorphism $J\colon H\to H$ such that $J(a\xi b)= b^* (J\xi) a^*$ for all $a,b\in A$ and $\xi\in H$. 
\end{defn}

\begin{rem}\label{rem:symmetric}
If $H$ is symmetric, then for $n\geq 1$, $H^n$ is symmetric with conjugate-linear isomorphism $J_n\colon H^n\to H^n$ given by the extension of
$$
J_n( \xi_1\otimes \cdots \otimes \xi_n)=(J\xi_1)\otimes\cdots \otimes (J\xi_n).
$$
for $\xi_i\in B$ for all $i$. Note that $J_nAJ_n=A\op$, $J_n C_nJ_n=C_n\op$, and $J_n B^n=B^n$. On $B(H^n)$, we define $j_n$ by $j_n(x)=J_nx^*J_n$. Note that $j_n^2=\id$ and $\Tr_n=\Tr_n\op\circ j_n$. 

If $H$ is not symmetric, then in general, $C_n\op$ is not the opposite algebra of $C_n$, e.g. $\sb{R\otimes 1} L^2(R\otimes R)_{R\otimes R}$ where $R$ is the hyperfinite $II_1$-factor.
\end{rem}

\begin{rem}\label{rem:StayBounded}
It is clear that $B^n$ is an $A-A$ bimodule. If $\eta\in B^n$ and $c\in C_n$, then $c\xi\in D(H^n_A)$, but in general, $c\xi\notin D(\sb{A}H^n)$. However, if $c\in Q_n$, then clearly $c\xi\in B^n$.
\end{rem}

\begin{prop}
\label{prop:include}
We have natural inclusions:
\begin{align*}
&i_n\colon C_n\to C_{n+1} \text{ by } x\mapsto x \otimes_A \id_H =  (\eta\otimes \xi\mapsto (x\eta)\otimes \xi \text{ for } \eta\in B^n \text{ and }\xi\in B)\text{ and}\\
&i_n\op\colon C_n\op\to C_{n+1}\op \text{ by } y\mapsto \id_H \otimes_A y =  (\xi\otimes \eta\mapsto \xi\otimes (y \eta) \text{ for } \xi\in B \text{ and }\eta\in B^{n}).
\end{align*}
Both maps include $Q_n\to Q_{n+1}$.
\end{prop}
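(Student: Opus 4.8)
The plan is to dispatch four routine verifications in sequence. First, the operators in the statement are well-defined: since $x\in C_n=(A\op)'\cap B(H^n)$ commutes with the right $A$-action on $H^n$ and $\id_H\in A'\cap B(H)$ commutes with the left $A$-action on $H$, the operator $x\otimes_A\id_H$ on $H^{n+1}=H^n\otimes_AH$ is a well-defined bounded operator by Lemma \ref{lem:binormal} (and Appendix \ref{sec:TensorUnbounded}), acting as $\eta\otimes\xi\mapsto(x\eta)\otimes\xi$ on elementary tensors; symmetrically, using the identification $H^{n+1}=H\otimes_AH^n$, for $y\in C_n\op=A'\cap B(H^n)$ the operator $\id_H\otimes_Ay$ is well-defined and sends $\xi\otimes\eta\mapsto\xi\otimes(y\eta)$.

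Second, I would check that $i_n$ and $i_n\op$ are injective unital $*$-homomorphisms, which is the content of the word ``inclusion''. The basic properties of the operators $x\otimes_Ay$ (Lemma \ref{lem:binormal}) show they are unital normal $*$-homomorphisms. For injectivity I would observe that $C_n$ and $C_n\op$ are factors: as $A$, hence $A\op$, is a $II_1$-factor, its left and right actions on $H^n$ are faithful normal unital representations (faithful because a nonzero $*$-homomorphism out of a factor is injective), so $C_n\op=A'\cap B(H^n)$ and $C_n=(A\op)'\cap B(H^n)$ are commutants of faithful normal representations of factors and hence are factors. A unital --- thus nonzero --- normal $*$-homomorphism out of a factor has kernel a $\sigma$-weakly closed, proper, two-sided ideal, hence $\{0\}$, so $i_n$ and $i_n\op$ are injective.

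Third, I would check that the images land where claimed, which reduces entirely to the fact that operators supported on disjoint tensor legs of a relative tensor product commute. The right $A$-action on $H^{n+1}=H^n\otimes_AH$ lives on the last leg, so it commutes with $x\otimes_A\id_H$; hence $i_n(C_n)\subseteq(A\op)'\cap B(H^{n+1})=C_{n+1}$, and the mirror argument gives $i_n\op(C_n\op)\subseteq C_{n+1}\op$. For the statement about $Q$: if $x\in Q_n=C_n\cap C_n\op$ then $x$ additionally lies in $A'\cap B(H^n)$, so it commutes with the left $A$-action on $H^n$; hence $x\otimes_A\id_H$ commutes with the left $A$-action on $H^{n+1}$, giving $i_n(x)\in C_{n+1}\op$, and together with $i_n(x)\in C_{n+1}$ this yields $i_n(x)\in Q_{n+1}$; the argument for $i_n\op$ is symmetric. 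The only nontrivial input is the existence, boundedness, and functoriality of $x\otimes_Ay$ for such commuting $x,y$ (supplied by Lemma \ref{lem:binormal} and Appendix \ref{sec:TensorUnbounded}); the one place I would be careful is attaching the new strand to the correct side --- so $i_n$ uses $H^{n+1}=H^n\otimes_AH$ and $i_n\op$ uses $H^{n+1}=H\otimes_AH^n$, identified via associativity of $\otimes_A$ --- and noticing that landing in $Q_{n+1}$ genuinely requires \emph{both} commutation relations defining $Q_n$, not merely membership in $C_n$, respectively $C_n\op$.
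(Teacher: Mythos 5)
Your proposal is correct and takes essentially the same approach as the paper: the paper's proof consists solely of the commutation computation showing $i_n(Q_n)\subseteq Q_{n+1}$ (your third step), with well-definedness of $x\otimes_A\id_H$ and $\id_H\otimes_A y$ delegated to Lemma \ref{lem:binormal} exactly as you do. Your extra verification that the maps are injective (via factoriality of $C_n$ and $C_n\op$) is detail the paper leaves implicit, but it is correct and does not change the argument.
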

\begin{proof}
If $z\in Q_n$, then $i_n(z)\in Q_{n+1}$ as for all $a,b\in A$,
$$
(z\otimes_A \id_H)[a(\xi\otimes \eta)b] = (z(a\xi))\otimes (\eta b)=(a(z\xi))\otimes (\eta b)=a[(z\eta)\otimes \xi]b.
$$
The result is similar for $i_n\op$.
\end{proof}

\begin{prop}\label{prop:iFormula}
If $x\in C_n$, then $i_n(x)=\sum_\alpha R_\alpha x R_\alpha^*$. If $y\in C_n\op$, then $i_n\op(y)=\sum_\beta L_\beta y L_\beta^*$.
\end{prop}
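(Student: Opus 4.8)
The plan is to check each operator identity on a dense subspace, having first made sense of the right-hand series as a bounded operator. Consider the first formula; the second is entirely symmetric. By Proposition \ref{prop:include}, $i_n(x)$ is bounded, so it suffices to prove that $\sum_\alpha R_\alpha x R_\alpha^*$ converges to it in the strong operator topology. For $x\geq 0$ the finite partial sums $\sum_{\alpha\in F} R_\alpha x R_\alpha^*$ are positive, increase with $F$, and satisfy $\sum_{\alpha\in F} R_\alpha x R_\alpha^*\leq \|x\|\sum_{\alpha\in F} R_\alpha R_\alpha^*\leq \|x\|\cdot 1_{H^{n+1}}$, since $\sum_\alpha R_\alpha R_\alpha^*=1_{H^{n+1}}$ because $\{\alpha\}$ is an $\sb{A}H$-basis; hence the series converges strongly when $x\geq 0$, and then for arbitrary $x$ by splitting into four positive pieces. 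This also bounds the partial sums uniformly, by a constant multiple of $\|x\|$.

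Next I would evaluate both operators on an elementary tensor $\eta\otimes\xi$ with $\eta\in B^n$ and $\xi\in B$; such vectors are total in $H^{n+1}=H^n\otimes_A H$. By the annihilation formula of Definition \ref{defn:RelativeTensorProduct}, $R_\alpha^*(\eta\otimes\xi)=\eta\cdot\sb{A}\langle\xi,\alpha\rangle$ with $\sb{A}\langle\xi,\alpha\rangle\in A$. Since $x\in C_n=(A\op)'\cap B(H^n)$ commutes with the right $A$-action, $x\big(\eta\cdot\sb{A}\langle\xi,\alpha\rangle\big)=(x\eta)\cdot\sb{A}\langle\xi,\alpha\rangle$, and applying $R_\alpha$ together with the balancing of the relative tensor product over $A$ gives
$$
R_\alpha\,x\,R_\alpha^*(\eta\otimes\xi)=\big((x\eta)\cdot\sb{A}\langle\xi,\alpha\rangle\big)\otimes\alpha=(x\eta)\otimes\big(\sb{A}\langle\xi,\alpha\rangle\,\alpha\big).
$$
Summing over $\alpha$ and using the $\sb{A}H$-basis identity $\sum_\alpha\sb{A}\langle\xi,\alpha\rangle\,\alpha=\xi$ of Definition \ref{defn:LeftModule} (valid since $\xi\in D(\sb{A}H)$) yields $(x\eta)\otimes\xi=i_n(x)(\eta\otimes\xi)$. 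The one subtle point is the last interchange of the infinite $\alpha$-sum with the tensor: since $x\eta\in D(H^n_A)$ — because $C_n$ preserves right $A$-boundedness (cf.\ Remark \ref{rem:StayBounded}) — the map $\zeta\mapsto(x\eta)\otimes\zeta$ is the bounded operator $L_{x\eta}$, so it commutes with the norm-convergent partial sums $\sum_{\alpha\in F}\sb{A}\langle\xi,\alpha\rangle\,\alpha\to\xi$. Having shown that two bounded operators agree on a total set, they coincide.

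The second identity is the mirror image: work with $H^{n+1}=H\otimes_A H^n$, an $H_A$-basis $\{\beta\}$, the annihilation formula $L_\beta^*(\xi\otimes\eta)=\langle\beta|\xi\rangle_A\cdot\eta$, the fact that $y\in C_n\op=A'\cap B(H^n)$ commutes with the left $A$-action, the balancing relation, and the $H_A$-basis identity $\sum_\beta\beta\,\langle\beta|\xi\rangle_A=\xi$ of Definition \ref{defn:RightModule}; the interchange is justified the same way, now using $y\eta\in D(\sb{A}H^n)$. I expect the main obstacle to be exactly this convergence bookkeeping: establishing strong convergence of the defining series and its compatibility with the relative tensor product, which is only separately — not jointly — continuous. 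Both issues dissolve once one uses the boundedness of the creation and annihilation operators and the stability of one-sided $A$-boundedness under $C_n$ and $C_n\op$.
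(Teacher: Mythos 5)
Your proof is correct and follows essentially the same route as the paper: evaluate both sides on elementary tensors from $B^n\odot_A B$, apply the annihilation formula $R_\alpha^*(\eta\otimes\xi)=\eta\,{\sb{A}\langle}\xi,\alpha\rangle$, use that $x\in C_n$ commutes with the right $A$-action together with the balancing of $\otimes_A$, and conclude with the $\sb{A}H$-basis identity $\sum_\alpha {\sb{A}\langle}\xi,\alpha\rangle\,\alpha=\xi$. The only difference is that you spell out the convergence bookkeeping (uniform boundedness of the partial sums of $\sum_\alpha R_\alpha x R_\alpha^*$ and the interchange of the sum with the tensor via boundedness of $L_{x\eta}$, justified by Remark \ref{rem:StayBounded}), which the paper's computation leaves implicit.
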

\begin{proof}
We prove the first statement. If $\xi_1,\dots,\xi_{n+1}\in B$, we have
\begin{align*}
\left(\sum_\alpha R_\alpha x R_\alpha^*\right) \xi_1\otimes\cdots \otimes\xi_n
& = \sum_\alpha R_\alpha x( \xi_1\otimes\cdots \otimes \xi_{n-1} {\sb{A}\langle} \xi_n,\alpha\rangle)\\
& = \sum_\alpha  \big(x(\xi_1\otimes\cdots \otimes \xi_{n-1}{\sb{A}\langle} \xi_n,\alpha\rangle)\otimes \alpha \big)\\
& = \sum_\alpha \big(x(\xi_1\otimes\cdots \otimes \xi_{n-1})\big)\otimes{\sb{A}\langle} \xi_n,\alpha\rangle \alpha\\
& = [x(\xi_1\otimes\cdots \otimes \xi_{n-1})]\otimes \xi_n
= i_n(x) (\xi_1\otimes\cdots\otimes \xi_n).
\end{align*}
\end{proof}

\begin{rem}\label{rem:FiberProduct}
By Definition \ref{defn:FiberProduct}, $(C_k\otimes_A \id_{n-k})'\cap B(H^n)=\id_k\otimes_A C_{n-k}\op$.
\end{rem}

\begin{lem}\label{lem:AlsoClosable}
Suppose $\xi\in H^n$ and $y\in (C_{n+1}\op)^+$. Recall the operator $R_\xi^0\colon B\to H^{n+1}$ by $\eta\mapsto \eta\otimes\xi$ is closable by Corollary \ref{cor:UnboundedLR}. Then $y^{1/2}R_\xi^0\colon B\to H^{n+1}$ is also closable.
\end{lem}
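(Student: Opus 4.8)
The plan is to verify the standard criterion that $y^{1/2}R_\xi^0$ is closable, namely that its adjoint is densely defined. Write $R_\xi:=\overline{R_\xi^0}$ for the closure furnished by Corollary \ref{cor:UnboundedLR}. Since $y^{1/2}$ is bounded and self-adjoint, $(y^{1/2}R_\xi^0)^*=(R_\xi^0)^*y^{1/2}=R_\xi^*y^{1/2}$, whose domain is $\{v\in H^{n+1}:y^{1/2}v\in D(R_\xi^*)\}$; so it suffices to prove this set is dense in $H^{n+1}$. The basic input here is the computation behind Corollary \ref{cor:UnboundedLR}: by the Radon--Nikodym Lemma \ref{lem:RN}, $D(R_\xi^*)$ contains every elementary tensor $\mu'\otimes\nu'$ with $\mu'\in D(H_A)$ and $\nu'\in D(\sb{A}H^n)$, and on such vectors $R_\xi^*(\mu'\otimes\nu')=L(\mu')\widehat{\sb{A}\langle\nu',\xi\rangle}$, which is well-defined precisely because $\sb{A}\langle\nu',\xi\rangle\in L^2(A)$. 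Thus it is enough to show that $y^{1/2}$ carries a dense subspace of $H^{n+1}$ into the (dense) span $\mathcal D$ of these elementary tensors.

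This is where the hypothesis $y\in C_{n+1}\op=A'\cap B(H^{n+1})$ is used: $y^{1/2}$ commutes with the left $A$-action on $H^{n+1}=H\otimes_A H^n$, hence preserves left-$A$-bounded vectors and, more importantly, commutes with left multiplication by elements of $A$. I would first reduce to the case that $\xi$ is left-$A$-bounded. Indeed, taking the polar decomposition $\overline{R(\xi)^0}=u_0k$ of the closure of the closable map $\widehat a\mapsto a\xi$ (Corollary preceding \ref{cor:UnboundedLR}), and letting $f_m$ be the spectral projection of $k=|\,\overline{R(\xi)^0}\,|$ for $[0,m]$, one has $\xi_m:=\overline{R(\xi)^0}f_m\widehat1=p_m\xi$ with $\|a\xi_m\|_2\le m\|a\|_2$, so $\xi_m\in D(\sb{A}H^n)$, and $p_m\in A'\cap B(H^n)$ are projections increasing to the support projection of $\xi$. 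For such $\xi_m$ the operator $\eta\mapsto\eta\otimes\xi_m$ is \emph{bounded} on $B$, since $\|\eta\otimes\xi_m\|^2=\langle\langle\eta|\eta\rangle_A\xi_m,\xi_m\rangle=\tr_A(\langle\eta|\eta\rangle_A\sb{A}\langle\xi_m,\xi_m\rangle)\le\|\sb{A}\langle\xi_m,\xi_m\rangle\|\,\|\eta\|_2^2\le m^2\|\eta\|_2^2$; hence $y^{1/2}R_{\xi_m}^0$ is bounded, in particular closable, and the associated positive quadratic forms $q_m(\eta):=\langle y(\eta\otimes\xi_m),\eta\otimes\xi_m\rangle$ are closable.

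One then recovers the closability of $y^{1/2}R_\xi^0$ — equivalently, by the polarization identity, of the form $q_\xi(\eta):=\langle y(\eta\otimes\xi),\eta\otimes\xi\rangle$ on $B$ — by exhibiting $q_\xi$ as a nondecreasing limit of the closable forms attached to the truncations, using the positivity of $y$ (the same increasing-limit mechanism as in Lemma \ref{lem:QuadraticForm}). The main obstacle, and the technical heart of the argument, is making this limit genuinely monotone: the truncating projections $1_H\otimes_A p_m$ on $H^{n+1}$ need not commute with $y$, so $\langle y(1_H\otimes_A p_m)\psi,(1_H\otimes_A p_m)\psi\rangle$ is not obviously increasing in $m$. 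This is repaired by interleaving the $p_m$ with spectral cut-offs of $y$ (which do commute with $y$ but do not tame $\xi$) and tracking the interaction of the two families of estimates — the $L^2$-estimate in the slot $\eta$ (governed by $\|\sb{A}\langle\eta,\alpha\rangle\|_2\le\lambda_\alpha\|\eta\|_2$ for a Pimsner--Popa basis $\{\alpha\}\subset B$ for $\sb{A}H$) and the one in $\xi$ (governed by $\sum_\alpha\|\sb{A}\langle\xi,\alpha\rangle\|_2^2=\|\xi\|^2$). Once the monotone approximation is in place, $q_\xi$ is closable, hence so is $y^{1/2}R_\xi^0$.
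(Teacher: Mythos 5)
Your first paragraph sets up the right criterion---$y^{1/2}R_\xi^0$ is closable iff $\{v\in H^{n+1} : y^{1/2}v\in D(R_\xi^*)\}$ is dense---and the sufficient condition you state there (exhibit a dense set that $y^{1/2}$ maps into vectors on which the adjoint is computable) is exactly what the paper verifies. But you then abandon that route, and the route you actually follow has a genuine gap at the step you yourself call the ``technical heart.'' Truncating $\xi$ by the projections $p_m$ does give bounded operators $y^{1/2}R^0_{\xi_m}$, but this is not a reduction: the forms $q_m(\eta)=\langle y(\eta\otimes\xi_m),\eta\otimes\xi_m\rangle$ converge to $q_\xi$ only pointwise, not monotonically, precisely because $1_H\otimes_A p_m$ need not commute with $y$, so the increasing-limit mechanism of Lemma \ref{lem:QuadraticForm} does not apply; and a non-monotone pointwise limit of bounded (hence closable) forms carries no closability information at all (any quadratic form, closable or not, arises as such a limit). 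The proposed repair---``interleaving the $p_m$ with spectral cut-offs of $y$ and tracking the two families of estimates''---is not an argument: projections $1_{[\varepsilon,\infty)}(y)$ commute with $y$ but destroy the elementary-tensor structure on which your $\xi$-estimates rest, and no increasing family with supremum $q_\xi$ is ever produced. Note also that the statement is simply false for a general bounded $y\geq 0$ (take $y$ the rank-one projection onto a vector outside $D(R_\xi^*)$, which exists whenever $\xi$ is not left bounded), so any correct proof must use the hypothesis $y\in (C_{n+1}\op)^+$, or at least the specific structure of $D(R_\xi^*)$, at the decisive moment; your sketch invokes this hypothesis rhetorically but never deploys it in a completed step.

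For comparison, the paper realizes your own sufficient condition by truncating $y^{1/2}$ instead of $\xi$: take spectral projections $p_k$ of $y^{1/2}$ (hence lying in $C_{n+1}\op$) with $y^{1/2}p_k=p_ky^{1/2}$ invertible on $p_kH^{n+1}$ and $p_k$ increasing to the support projection $p$ of $y^{1/2}$. For $\zeta=\sum_{i}\sigma_i\otimes\kappa_i\in p_kH^{n+1}$ with $\sigma_i\in B$, $\kappa_i\in B^n$, the vector $v=y^{-1/2}p_k\zeta$ (inverse taken on $p_kH^{n+1}$) satisfies $y^{1/2}v=\zeta$, and the computation of Corollary \ref{cor:UnboundedLR} via Lemma \ref{lem:RN} gives $\langle y^{1/2}R_\xi^0\eta, v\rangle=\langle\eta,\sum_i L_{\sigma_i}({\sb{A}\langle}\kappa_i,\xi\rangle)\rangle$ for all $\eta\in B$, so $v\in D\big((y^{1/2}R_\xi^0)^*\big)$; such vectors, together with $\ker y^{1/2}=(1-p)H^{n+1}$, span a dense subspace. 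If you rewrite your argument along these lines, your second and third paragraphs can be discarded: the reduction to left-bounded $\xi$ is unnecessary and the monotone-forms machinery is not needed.
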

\begin{proof}
Let $p$ be the range/kernel perp projection of $y^{1/2}$. By the spectral theorem, there are projections $p_k\in C_{n+1}\op$ such that $y^{1/2}p_k=p_ky^{1/2}$ is invertible on $p_kH^{n+1}$ and $p_k\nearrow p$ (strongly). Fix $k\geq 0$.  Vectors of the form $\zeta=\sum_{i=1}^j \sigma_i \otimes \kappa_i \in p_kH^{n+1}$ where $\sigma_1,\dots,\sigma_j\in B$ and $\kappa_1,\dots,\kappa_j\in B^n$ are dense in $p_k H^{n+1}$ by the density of $B\otimes_A B^n \subset H^{n+1}$. Then for such $\zeta$ and all $\eta\in B$,
$$
\langle y^{1/2}R_\xi^0 \eta, y^{-1/2}p_k \zeta\rangle=\sum_{i=1}^j \langle \eta\otimes \xi, \sigma_i\otimes\kappa_i\rangle = \sum_{i=1}^j \langle \eta, L_{\sigma_i} ({\sb{A}\langle} \kappa_i,\xi\rangle) \rangle=\left\langle \eta , \sum_{i=1}^j L_{\sigma_i} ({\sb{A}\langle} \kappa_i,\xi\rangle)\right\rangle
$$
(see Corollary \ref{cor:UnboundedLR}). Finally, the span of vectors of the form $y^{-1/2}p_k \zeta$ where $\zeta$ is as above and $k\geq 0$ is dense in $pH^{n+1}$. 
\end{proof}

The following proposition and its proof are similar to Theorem 3.2.26 and Proposition 3.2.27 of \cite{burns}.

\begin{prop}\label{prop:T}
Recall from Proposition \ref{prop:include} that $i_n(C_n)\subset C_{n+1}$ and $i_n\op (C_n\op)\subset C_{n+1}\op$. The unique trace-preserving operator valued weight 
$$T_{n+1}\colon (C_{n+1}^+,\Tr_{n+1})\to (\widehat{C_n^+} ,\Tr_n)\text{ is given by }x\mapsto \sum_\beta R_\beta^* x R_\beta.$$
The unique trace-preserving operator valued weight 
$$T_{n+1}\op\colon \left((C_{n+1}\op)^+,\Tr_{n+1}\op\right) \to \left(\widehat{(C_{n}\op)^+},\Tr_n\op\right)\text{ is given by }y\mapsto \sum_\alpha L_\alpha^* y L_\alpha.$$
In particular, $T_{n+1}$ and $T_{n+1}\op$ are independent of the choice of basis.
\end{prop}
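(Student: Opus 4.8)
The plan is to verify that the candidate maps $x\mapsto \sum_\beta R_\beta^* x R_\beta$ and $y\mapsto\sum_\alpha L_\alpha^* y L_\alpha$ are genuine n.f.s.\ operator valued weights $C_{n+1}^+\to\widehat{C_n^+}$ (resp.\ $(C_{n+1}\op)^+\to\widehat{(C_n\op)^+}$) that are trace-preserving, and then invoke the uniqueness clause of Theorem \ref{thm:Texists}. I will write out the argument for $T_{n+1}$; the case of $T_{n+1}\op$ is the mirror image, swapping the roles of $\{\alpha\}$ and $\{\beta\}$ and of the left and right $A$-actions. First I would check the map is well-defined: for $x\in C_{n+1}^+$ and each $\beta\in B$, the operator $R_\beta^* x R_\beta$ is a bounded positive operator on $H^n$ that commutes with the right $A$-action (since $R_\beta$ intertwines the right actions and $x\in C_{n+1}$), hence lies in $C_n^+$; the sum over the $H_A$-basis $\{\beta\}$ is an increasing net of elements of $C_n^+$, so by Lemma \ref{lem:QuadraticForm} (or the construction of Definition \ref{defn:UnboundedRN}) it converges to a well-defined element $\sum_\beta R_\beta^* x R_\beta\in\widehat{C_n^+}$. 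Additivity and homogeneity in $x$ are immediate from the linearity of $y\mapsto R_\beta^* y R_\beta$ together with Remark \ref{rem:sup}. The bimodule property $T_{n+1}(a^*xa)=a^*T_{n+1}(x)a$ for $a\in C_n$ follows because $R_\beta(a\,\cdot\,)=(a\otimes_A\id_H)R_\beta$ as maps (i.e.\ $R_\beta$ commutes with $i_n(a)$), so $R_\beta^*(i_n(a)^* x\, i_n(a))R_\beta = a^* (R_\beta^* x R_\beta) a$, and one passes to the sup using Lemma \ref{lem:VectorStates}(3).

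The crux is the trace-preservation identity of Theorem \ref{thm:Texists}, namely that for all $y\in C_n^+$ and $x\in C_{n+1}^+$,
\begin{equation*}
\Tr_{n+1}\big(i_n(y)\cdot x\big)=\Tr_n\big(y\cdot T_{n+1}(x)\big).
\end{equation*}
Using Proposition \ref{prop:iFormula}, $i_n(y)=\sum_\alpha R_\alpha y R_\alpha^*$, and the canonical trace $\Tr_{n+1}=\sum_{\beta^{n+1}}\langle\,\cdot\,\beta^{n+1},\beta^{n+1}\rangle$ where $\{\beta^{n+1}\}$ may be taken to be $\{\beta^n\otimes\beta\}$ for $H_A$-bases $\{\beta^n\},\{\beta\}$. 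Expanding the left side on finite-rank truncations of $x$ and $y$ (so that all sums are absolutely convergent and Theorem \ref{thm:BilinearExtension} lets us pass to the limit monotonically), one computes $\Tr_{n+1}(i_n(y)\cdot x)$ by writing it as a sum over the product basis and using that $R_\beta^*(\xi'\otimes\beta)=\xi'$ and Lemma \ref{lem:remove}; the $R_\alpha,R_\alpha^*$ coming from $i_n(y)$ collapse via $\sum_\alpha R_\alpha R_\alpha^*=1$ (the basis relation for $\{\alpha\}$, which holds here since $\{\alpha\}$ is an $\sb{A}H$-basis and the $R$'s refer to the left-module creation operators). After this bookkeeping the expression rearranges into $\sum_{\beta^n}\langle\, y\cdot(\sum_\beta R_\beta^* x R_\beta)\,\beta^n,\beta^n\rangle=\Tr_n(y\cdot T_{n+1}(x))$. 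I expect this is the step requiring genuine care: one must be disciplined about which basis relation ($\sum_\alpha R_\alpha R_\alpha^*=1$ vs.\ $\sum_\beta L_\beta L_\beta^*=1$) is being used, justify interchanging the various infinite sums (handled by the monotone-convergence clauses of Theorem \ref{thm:BilinearExtension} and Lemma \ref{lem:VectorStates}), and check that the identity persists when $x$ is a general element of $\widehat{C_{n+1}^+}$, not just a bounded one.

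Finally, normality of $T_{n+1}$ (if $x_i\nearrow x$ then $R_\beta^* x_i R_\beta\nearrow R_\beta^* x R_\beta$ for each $\beta$ by Lemma \ref{lem:VectorStates}(3), and sups commute with sups), faithfulness, and semifiniteness all follow either directly or, more efficiently, from the trace-preservation identity together with the corresponding properties of $\Tr_{n+1}$ and $\Tr_n$ via the characterizations in Theorem \ref{thm:BilinearExtension} and Theorem \ref{thm:Texists}. Since Theorem \ref{thm:Texists} asserts that the n.f.s.\ trace-preserving operator valued weight is \emph{unique}, the formula $x\mapsto\sum_\beta R_\beta^* x R_\beta$ must equal $T_{n+1}$; in particular it is independent of the choice of $H_A$-basis $\{\beta\}$ (which can also be seen directly from Lemma \ref{lem:UnitaryBases}). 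The same argument with $\{\alpha\}$ and the left creation operators $L_\alpha$ gives the statement for $T_{n+1}\op$.
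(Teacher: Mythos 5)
Your overall skeleton (produce an element of the correct extended positive cone, verify the trace identity \eqref{eq:ovwCondition}, then quote the uniqueness clause of Theorem \ref{thm:Texists}) is the same as the paper's, and your trace-preservation sketch, though vague, is repairable. The genuine problem is your very first step. It is not true that $R_\beta$ intertwines the right $A$-actions, and consequently it is not true that each single term $R_\beta^* x R_\beta$ lies in $C_n^+$. Indeed $R_\beta(\xi a)=(\xi a)\otimes\beta=\xi\otimes(a\beta)$, whereas $(R_\beta\xi)a=\xi\otimes(\beta a)$; what $R_\beta$ does intertwine is the \emph{left} action (more generally $R_\beta c=i_n(c)R_\beta$ for $c\in C_n$), while $x\in C_{n+1}$ only commutes with the \emph{right} action, so the two properties do not compose. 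The failure is real, not just a missing justification: already for $n=0$ and the coarse bimodule $H=L^2(A)\otimes_\C L^2(A)$ (left action on the first leg, right action on the second), take $\beta=\widehat{1}\otimes\widehat{1}\in B$ and $x=p\op\otimes 1\in C_1^+$, where $p\op$ is right multiplication by a nontrivial projection $p\in A$ on the first leg; then $R_\beta^*xR_\beta$ is right multiplication by $p$ on $H^0=L^2(A)$, which lies in $A\op$ but not in $C_0=A$. The mirror claim for the $\OP$ case fails for the symmetric reason: $L_\alpha$ intertwines the right actions, but $y\in C_{n+1}\op$ commutes only with the left action, so $L_\alpha^* y L_\alpha\notin (C_n\op)^+$ in general.

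Because of this, your argument that $\sum_\beta R_\beta^* x R_\beta\in\widehat{C_n^+}$ collapses: the partial sums are merely positive operators in $B(H^n)$, so Lemma \ref{lem:QuadraticForm} only hands you an element of $\widehat{B(H^n)^+}$, and the affiliation to $C_n$ — which, together with basis-independence, is one of the two substantive points of the proof — still has to be argued for the \emph{sum}. This is exactly what the paper does (written out for the $\OP$ case): it first obtains well-definedness and basis-independence by expressing $T_{n+1}\op(y)(\omega_\xi)$ as $\Tr_1\op(m_S)$ for the closed positive operator $S$ built from the closable map of Lemma \ref{lem:AlsoClosable}, and then gets affiliation by checking invariance of the sum under the $A$-unitaries, using that $\{\alpha u\}$ (resp. $\{u\beta\}$) is again a basis by Lemma \ref{lem:UnitaryBases}; only after that does it verify \eqref{eq:ovwCondition} with the product basis and invoke uniqueness, as you do. (A small additional point: Lemma \ref{lem:UnitaryBases} alone does not give basis-independence, since two arbitrary bases need not be related by a unitary; independence comes from the canonical-trace argument or, as you correctly say first, from the uniqueness statement itself once the rest is in place.)
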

\begin{proof}
We prove the result for the second statement. 

Suppose $y\in (C_{n+1}\op)^+$ and $\xi\in H^{n}$. 
By Lemma \ref{lem:AlsoClosable}, $y^{1/2}R_\xi^0$ is closable, so we set $S=(y^{1/2}R_\xi^0)^*\overline{y^{1/2} R_\xi^0}$, which is affiliated with $C_{1}\op$, and define $m_{S}\in \widehat{(C_{1}\op)^+}$ as in Equation \eqref{eq:KS} by
$$
m_{S}(\omega_\eta) = 
\begin{cases}
\| S^{1/2} \eta \| &\text{ if } \eta \in D(S^{1/2})\supset B\\
\I & \text{ else.}
\end{cases}
$$

Now we calculate that
\begin{align*}
\Tr_{1}\op (m_{S})
& = \sum_{\alpha} m_{S}(\omega_\alpha) 
= \sum_\alpha \|S^{1/2} \alpha\|^2_2
= \sum_\alpha \|y^{1/2} R_\xi^0 \alpha\|^2_2\\
& = \sum_\alpha \langle y (\alpha\otimes \xi) , (\alpha\otimes \xi)\rangle_{H^{n+1}}
= \left\langle \left( \sum_\alpha L_\alpha^* y L_\alpha\right) \xi,\xi\right\rangle_{H^n} = T_{n+1}\op(y)(\omega_\xi).
\end{align*}
As all elements of $B(H)^+_*$ are sums $\sum_{i} \omega_{\xi_i}$, $T_{n+1}\op$ is well-defined and independent of the choice of $\{\alpha\}$. 

Note that $T_{n+1}\op((C_{n+1}\op)^+)\subset \widehat{(C_{n}\op)^+}$ as if $y\in (C_{n+1}\op)^+$, $\xi\in H^{n}$, and $u\in U(A)$, then
\begin{align*}
\sum_\alpha L_\alpha^* y L_\alpha (\omega_{u\xi}) 
& = \sum_\alpha \langle y (\alpha\otimes u\xi), \alpha \otimes u\xi \rangle
 = \sum_\alpha \langle y (\alpha u\otimes \xi), \alpha u\otimes \xi \rangle \\
& = \sum_\alpha L_{\alpha u}^* y L_{\alpha u} (\omega_{\xi}) 
= \sum_\alpha L_\alpha^* y L_\alpha (\omega_{\xi }) 
\end{align*}
as $\{\alpha u\}$ is another $\sb{A}H$ basis by Lemma \ref{lem:UnitaryBases}.

Finally, if $x\in (C_n\op)^+$ and $y\in (C_{n+1}\op)^+$, then 
\begin{align*}
\Tr_{n+1}\op \left( [i_n\op(x^{1/2})] y [i_n\op(x^{1/2})]\right)
& =  \sum_{{\alpha}^{n+1}} \left\langle [i_n\op(x^{1/2})] y [i_n\op(x^{1/2})]{\alpha}^{n+1},{\alpha}^{n+1}\right\rangle\\
& =  \sum_{\alpha,{\alpha}^{n}} \left\langle y (\alpha\otimes (x^{1/2}{\alpha}^{n})),(\alpha\otimes (x^{1/2}{\alpha}^{n}))\right\rangle \\
& =  \sum_{{\alpha}^{n}} \left\langle \sum_\alpha L_\alpha^* yL_\alpha  (x^{1/2}{\alpha}^{n}), (x^{1/2}{\alpha}^{n})\right\rangle \\
& = \Tr_n\op\left(x^{1/2} T_{n+1}\op(y) x^{1/2} \right),
\end{align*}
so $T_{n+1}\op$ is the unique trace-preserving operator valued weight by Equation \eqref{eq:ovwCondition} in Theorem \ref{thm:Texists}.
\end{proof}

\begin{rem}\label{rem:TForZ}
If $z\in Q_{n+1}^+$, then $T_{n+1}\op(z)\in \widehat{Q_{n}^+}$ as if $\xi\in H^{n}$ and $u\in U(A)$,
$$
\sum_\alpha L_\alpha^* z L_\alpha (\omega_{\xi u}) 
= \sum_\alpha \langle z (\alpha\otimes \xi u), \alpha \otimes \xi u \rangle
= \sum_\alpha \langle (z(\alpha \otimes\xi))uu^*, \alpha \otimes \xi\rangle
= \sum_\alpha L_{\alpha}^* z L_{\alpha} (\omega_{\xi}).
$$
A similar result holds for $T_{n+1}$.
\end{rem}

\begin{cor}\label{cor:Z1Traces}
If $z\in Q_1^+$, then $\sum_\alpha L(\alpha)^* z L(\alpha) = \Tr_1\op(z) 1_{L^2(A)}$. Similarly, $\sum_\alpha R(\beta)^* z R(\beta) = \Tr_1(z) 1_{L^2(A)}$.
\end{cor}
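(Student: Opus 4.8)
The plan is to derive this as a direct special case of Proposition \ref{prop:T} applied with $n=0$, using that $H^0 = L^2(A)$ and $C_0\op = A'\cap B(L^2(A))$, $C_0 = (A\op)'\cap B(L^2(A))$. By Proposition \ref{prop:T}, the trace-preserving operator valued weight $T_1\op\colon (C_1\op)^+\to \widehat{(C_0\op)^+}$ is given by $y\mapsto \sum_\alpha L_\alpha^* y L_\alpha$, and similarly $T_1\colon C_1^+\to\widehat{C_0^+}$ is $x\mapsto\sum_\beta R_\beta^* x R_\beta$. So the first thing to observe is that for $z\in Q_1^+ = (C_1\cap C_1\op)^+$, by Remark \ref{rem:TForZ} the sum $\sum_\alpha L_\alpha^* z L_\alpha$ lands in $\widehat{C_0^+}\cap\widehat{(C_0\op)^+} = \widehat{Q_0^+}$. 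The key point is then that $Q_0 = C_0\cap C_0\op = (A\op)'\cap A'\cap B(L^2(A)) = A\cap A' = Z(A) = \C 1_{L^2(A)}$, since $A$ is a $II_1$-factor (here I use the standard fact that $A'\cap B(L^2(A)) = JAJ$ and $(A\op)' = (JAJ)' $, so that $C_0\cap C_0\op$ is the center of $A$, which is trivial). Hence $\widehat{Q_0^+} = [0,\infty]\cdot 1_{L^2(A)}$, so $\sum_\alpha L_\alpha^* z L_\alpha = \lambda\, 1_{L^2(A)}$ for some scalar $\lambda\in[0,\infty]$, and the same for $\sum_\beta R_\beta^* z R_\beta$.

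Next I would identify the scalar $\lambda$ as $\Tr_1\op(z)$. Since $T_1\op$ is trace-preserving, we have $\Tr_0\op(T_1\op(z)) = \Tr_1\op(z)$. Now $\Tr_0\op$ is the canonical trace on $C_0\op = A'\cap B(L^2(A))$ coming from the left $A$-action, which on the scalar $\lambda\, 1_{L^2(A)}$ evaluates to $\lambda\cdot\Tr_0\op(1) = \lambda\cdot\tr_A(1) = \lambda$ (the canonical trace on $A'\cap B(L^2(A))\cong A\op$ restricts to a tracial state, normalized so that $\Tr_0\op(1)=1$; this is immediate from the definition $\Tr_0\op(x)=\sum_\alpha\langle x\alpha,\alpha\rangle$ applied with the one-element basis $\{\widehat 1\}$ for $\sb{A}L^2(A)$, or directly from the last displayed identity in Definition \ref{defn:LeftModule}). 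Therefore $\lambda = \Tr_1\op(z)$, which gives $\sum_\alpha L(\alpha)^* z L(\alpha) = \Tr_1\op(z)\, 1_{L^2(A)}$. The statement for $\sum_\beta R(\beta)^* z R(\beta) = \Tr_1(z)\, 1_{L^2(A)}$ follows by the symmetric argument using $T_1$ and $\Tr_1$ in place of $T_1\op$ and $\Tr_1\op$.

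The only mildly delicate points are bookkeeping ones rather than genuine obstacles: one must check that $L_\alpha$ and $R_\beta$ (the creation operators on $H^1 = H = H\otimes_A L^2(A)$) agree with the operators $L(\alpha)$ and $R(\beta)$ of Definitions \ref{defn:LeftModule} and \ref{defn:RightModule} under the canonical identification $H\otimes_A L^2(A)\cong H$; this is essentially the statement that relative tensoring with $L^2(A)$ on the right is the identity functor, which is standard. One also needs that $T_1\op$ and $T_1$ restrict sensibly to the center $\widehat{Q_0^+}$, but this is exactly Remark \ref{rem:TForZ}. Given these, the corollary is just the $n=0$ instance of Proposition \ref{prop:T} combined with the factoriality of $A$, so I expect no real difficulty — the ``hard part'' is merely making sure the normalization of $\Tr_0\op$ is such that $\Tr_0\op(1)=1$, which pins down the scalar correctly.
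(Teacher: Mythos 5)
Your proposal is correct and is essentially the paper's own argument: both first observe that the sum lies in $\widehat{Q_0^+}=[0,\infty]$ (Remark \ref{rem:TForZ} plus factoriality of $A$), and then pin down the scalar. The paper does the second step by evaluating the element at the vector state $\omega_{\widehat{1}}$ and using $L(\alpha)\widehat{1}=\alpha$, which, since $\Tr_0\op=\langle\,\cdot\,\widehat{1},\widehat{1}\rangle$ with $\Tr_0\op(1)=1$, amounts to the same computation as your appeal to the trace-preservation of $T_1\op$ from Proposition \ref{prop:T}.
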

\begin{proof}
We prove the first formula. First, $\sum_\alpha L(\alpha)^* z L(\alpha) \in \widehat{Q_0^+}=[0,\I]$. Now
$$
\left(\sum_\alpha L(\alpha)^* z L(\alpha)\right) (\omega_{\widehat{1}})
= \sum_\alpha\langle  L(\alpha)^* z L(\alpha) \widehat{1}, \widehat{1} \rangle 
= \sum_\alpha \langle z \alpha, \alpha \rangle 
= \Tr_1\op(z).
$$
\end{proof}

\begin{prop}\label{prop:CrossT}
The unique trace-preserving operator valued weight 
$$\widetilde{T_{n+1}}\colon (Q_{n+1}^+,\Tr_{n+1})\to (i_n\op(\widehat{Q_n^+}) ,\Tr_n)\text{ is given by }x\mapsto \sum_\beta L_\beta^* x L_\beta.$$
The unique trace-preserving operator valued weight 
$$\widetilde{T_{n+1}\op}\colon \left(Q_n^+,\Tr_{n+1}\op\right) \to \left(i_n(\widehat{Q_{n}^+}),\Tr_n\op\right)\text{ is given by }y\mapsto \sum_\alpha R_\alpha^* y R_\alpha.$$
In particular, $\widetilde{T_{n+1}}$ and $\widetilde{T_{n+1}\op}$ are independent of the choice of basis.
\end{prop}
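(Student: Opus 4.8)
The plan is to follow the proof of Proposition~\ref{prop:T} almost verbatim, adapted to the smaller algebras. It suffices to treat $\widetilde{T_{n+1}}$: the statement for $\widetilde{T_{n+1}\op}$ is obtained by interchanging the left and right $A$-actions throughout (equivalently, swapping $\{\alpha\}\leftrightarrow\{\beta\}$, the operators $L\leftrightarrow R$, and the inclusions $i_n\leftrightarrow i_n\op$). I will use that $\Tr_n$ and $\Tr_{n+1}$, being faithful and normal, are semifinite on $Q_n$ and $Q_{n+1}$, so that Theorem~\ref{thm:Texists} applies to the inclusion $i_n\op(Q_n)\subseteq Q_{n+1}$ (the inclusion itself being Proposition~\ref{prop:include}) and supplies the \emph{uniqueness} of a trace-preserving operator valued weight; existence will come from the explicit formula.

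First I would set up the formula. Fix $x\in Q_{n+1}^+$ and $\xi\in H^n$. Since $Q_{n+1}^+\subseteq(C_{n+1}\op)^+$, Lemma~\ref{lem:AlsoClosable} applies to the closable operator $R_\xi^0\colon B\to H^{n+1}$, $\eta\mapsto\eta\otimes\xi$ (Corollary~\ref{cor:UnboundedLR}), so $x^{1/2}R_\xi^0$ is closable. As in Proposition~\ref{prop:T}, because $R_\xi^0$ intertwines the left $A$-action on $H$ with that on $H^{n+1}$ and $x$ commutes with the latter, $S_\xi:=(x^{1/2}R_\xi^0)^*\overline{x^{1/2}R_\xi^0}$ is affiliated with $C_1\op$, and $\sum_\beta m_{S_\xi}(\omega_\beta)=\sum_\beta\|x^{1/2}(\beta\otimes\xi)\|_2^2=\langle(\sum_\beta L_\beta^*xL_\beta)\,\xi,\xi\rangle$. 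Hence $F(x):=\sum_\beta L_\beta^*xL_\beta$ is well-defined in $\widehat{B(H^n)^+}$ by Lemma~\ref{lem:QuadraticForm}.

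The crux is that $F(x)$ actually lies in $\widehat{Q_n^+}$, i.e.\ that $i_n\op(F(x))$ makes sense. That $F(x)\in\widehat{C_n^+}$ is routine: $L_\beta$ and $L_\beta^*$ intertwine the right $A$-action on $H^n$ with that on $H^{n+1}$, and $x\in C_{n+1}$ commutes with the right action, so $F(x)(\omega_{\xi a})=F(x)(\omega_\xi)$ for every $a\in U(A)$. For $F(x)\in\widehat{(C_n\op)^+}$ I would imitate the corresponding step of Proposition~\ref{prop:T}: conjugating $F(x)$ by left multiplication $\lambda^{H^n}(u)$ by $u\in U(A)$ on the first factor of $H^n$ replaces $\{\beta\}$ by the $H_A$-basis $\{\beta u^*\}$ (Lemma~\ref{lem:UnitaryBases}), and hence equals $F(x)$ once one knows the formula is independent of the choice of $H_A$-basis — which I expect to extract from the identification $F(x)(\omega_\xi)=\sum_\beta m_{S_\xi}(\omega_\beta)$ exactly as basis-independence of $T_{n+1}\op$ is extracted in Proposition~\ref{prop:T}. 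This basis-independence is the point I expect to need the most care, since $S_\xi$ is affiliated only with $C_1\op$ and not with $Q_1$, so the argument must genuinely use membership of $x$ in $Q_{n+1}$ rather than merely in $C_{n+1}\op$. One then gets $F(x)\in\widehat{C_n^+}\cap\widehat{(C_n\op)^+}=\widehat{Q_n^+}$.

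Finally I would check the defining equation~\eqref{eq:ovwCondition} of Theorem~\ref{thm:Texists}. For $y\in Q_n^+$, writing the $H^{n+1}_A$-basis as $\{\beta_1\otimes\beta^n\}$ (first factor times last $n$) and using $i_n\op(y^{1/2})(\beta_1\otimes\kappa)=\beta_1\otimes y^{1/2}\kappa$, one obtains
\[
\Tr_{n+1}\big(i_n\op(y^{1/2})\,x\,i_n\op(y^{1/2})\big)=\sum_{\beta^n}\Big\langle\Big(\sum_{\beta_1}L_{\beta_1}^*xL_{\beta_1}\Big)y^{1/2}\beta^n,\ y^{1/2}\beta^n\Big\rangle=\Tr_n\big(y^{1/2}F(x)y^{1/2}\big),
\]
which is exactly the computation ending the proof of Proposition~\ref{prop:T}. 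Together with the uniqueness in Theorem~\ref{thm:Texists} this gives $\widetilde{T_{n+1}}=F$; basis-independence has been established along the way, and $\widetilde{T_{n+1}\op}$ follows by the symmetric argument.
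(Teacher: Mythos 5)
Your skeleton is the same as the paper's (its proof is literally ``similar to Proposition~\ref{prop:T} using Remark~\ref{rem:TForZ}''), and your closing trace-preservation computation coincides with the one ending the proof of Proposition~\ref{prop:T}. But there is a genuine gap exactly at the point you flag and then defer: showing that $F(x)=\sum_\beta L_\beta^* x L_\beta$ lies in $\widehat{(C_n\op)^+}$, equivalently that the formula is unchanged when the $H_A$-basis $\{\beta\}$ is replaced by $\{\beta u\}$, $u\in U(A)$. In Proposition~\ref{prop:T} this comes for free because $\sum_\alpha m_S(\omega_\alpha)=\Tr_1\op(m_S)$ is the \emph{canonical trace} of an operator affiliated with $C_1\op$, computed from an ${}_AH$-basis, hence basis-independent. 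In the crossed case the types no longer match: your $S_\xi$ is affiliated only with $C_1\op$, while you sum over an $H_A$-basis, so $\sum_\beta m_{S_\xi}(\omega_\beta)$ is not $\Tr_1\op(m_{S_\xi})$ (nor the canonical trace of anything in $\widehat{C_1^+}$), and nothing can be ``extracted exactly as in Proposition~\ref{prop:T}.'' Moreover your fallback reduction is circular: since $L_{\beta u}=L_\beta\,\lambda(u)$ with $\lambda(u)$ the left action on $H^n$, invariance under $\{\beta\}\mapsto\{\beta u\}$ \emph{is} the statement that $F(x)$ commutes with the left $A$-action, which is what you are trying to prove. This membership/basis-independence step is the entire new content of the proposition beyond Proposition~\ref{prop:T} — it must use $x\in Q_{n+1}$ (not merely $x\in C_{n+1}\op$) in an essential way, via a Remark~\ref{rem:TForZ}-type manipulation of $\beta\otimes u\xi=\beta u\otimes\xi$ together with Lemma~\ref{lem:UnitaryBases} — and your write-up supplies no argument for it, so the proof is incomplete at its crux.

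A secondary problem is your justification for uniqueness: you assert that $\Tr_n$ and $\Tr_{n+1}$, ``being faithful and normal, are semifinite on $Q_n$ and $Q_{n+1}$,'' and then invoke Theorem~\ref{thm:Texists}. Faithfulness and normality do not give this: the restriction of a semifinite trace to a von Neumann subalgebra need not be semifinite, and the paper's Proposition~\ref{prop:decompose} records exactly this phenomenon for $Q_n$ (the summand $\b_n\op\oplus\c_n$ meets $\m_{\Tr_n}$ only in $\{0\}$). So the hypotheses of Haagerup's existence/uniqueness theorem are not verified by your argument; to get the uniqueness statement one has to address semifiniteness of the restricted traces (e.g.\ via the decomposition of Proposition~\ref{prop:decompose}/Lemma~\ref{lem:SemifiniteRestriction}) rather than cite faithfulness and normality.
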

\begin{proof}
Similar to the proof of Proposition \ref{prop:T} using Remark \ref{rem:TForZ}. Note that if $u\in U(A)$, then $\{u\alpha\},\{\beta u\}$ are also $\sb{A}H,H_A$-bases respectively by Lemma \ref{lem:UnitaryBases}.
\end{proof}

%%%%%%%%%%%%%%%%%%%%%%%%%%%%%%%%%%%%%%%%%%%%%%%%%
\subsection{Planar algebra over extended positive cones of centralizer algebras}\label{sec:PAoverPositiveCones}

The following theorem is necessary to show the planar calculus is well-defined.

\begin{thm}\label{thm:BPRelations}
The following relations hold among the maps $T_n, T_n\op, \otimes_A,\Tr_n,\Tr_n\op$ for $m,n\geq 1$ (compare with Theorem \ref{thm:relations}):
\item[(1)] $T_n T_{n+1}\op (z) = T_{n}\op T_{n+1}(z)$ for all $z\in \widehat{Q_{n+1}^+}$,
\item[(2)] $z_1\otimes_A( z_2\otimes_A z_3) = (z_1\otimes_A z_2)\otimes_A z_3$ for all $z_i\in \widehat{Q_{n_i}^+}$, $i=1,2,3$,
\item[(3)] $z_1\otimes_A (T_n z_2) = T_{m+n}(z_1\otimes z_2)$ and $(T_m\op z_1)\otimes_A z_2 = T_{m+n}\op (z_1\otimes z_2)$ for all $z_1\in \widehat{Q_{m}^+}$ and $z_2\in \widehat{Q_{n}^+}$, and
\item[(4)] $\Tr_n(z_1\cdot z_2)=\Tr_n(z_2\cdot z_1)$ and $\Tr_n\op(z_1\cdot z_2)=\Tr_n\op(z_2\cdot z_1)$ for all $z_1,z_2\in \widehat{Q_n^+}$.
\end{thm}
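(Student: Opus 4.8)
The plan is to prove the four relations by reducing each to a computation on vector states $\omega_\xi$ for $\xi \in H^n$ (resp.\ $H^{m+n}$), exploiting that elements of $\widehat{Q_n^+}$ are determined by their values on such states (Lemma \ref{lem:VectorStates}), and that all the maps involved ($T_n$, $T_n\op$, $\otimes_A$, $\Tr_n$) are continuous for increasing limits, so it suffices to verify everything on $M^+$ (genuine operators) and pass to the sup, or to work directly with the explicit basis formulas from Propositions \ref{prop:T}, \ref{prop:CrossT} and Appendix \ref{sec:TensorUnbounded}. The recurring technical point, which I expect to be the main obstacle, is bookkeeping the relative tensor product of \emph{unbounded} positive cone elements and checking that the formal manipulations of sums $\sum_\alpha R_\alpha(-)R_\alpha^*$, $\sum_\beta L_\beta^*(-)L_\beta$ commute as claimed; this is where Appendix \ref{sec:TensorUnbounded} (well-definedness and associativity of $\otimes_A$ on extended positive cones) does the heavy lifting.

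For (4): $\Tr_n$ is the canonical trace on the semifinite von Neumann algebra $C_n$ restricted (well, extended) to $\widehat{Q_n^+} \subset \widehat{C_n^+}$, so symmetry of $\Tr_n(z_1 \cdot z_2)$ is immediate from Theorem \ref{thm:BilinearExtension}, which already asserts $\Tr_M(x \cdot y) = \Tr_M(y \cdot x)$ for all $x, y \in \widehat{M^+}$; the same applies to $\Tr_n\op$ on $C_n\op$. This is essentially a one-line citation.

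For (3): I would compute both sides on a vector state $\omega_{\xi \otimes \eta}$ with $\xi \in B^m$, $\eta \in B^n$ (these span a dense subspace of $H^{m+n}$), first for $z_1, z_2$ genuine bounded positive operators. Using Proposition \ref{prop:T}, $T_{m+n}(z_1 \otimes_A z_2) = \sum_{\beta} R_\beta^*(z_1 \otimes_A z_2) R_\beta$ where the $R_\beta$ now create on the last strand; since $z_1 \otimes_A z_2$ acts as $z_1$ on the first $m$ strands and $z_2$ on the last $n$, the sum over $\beta$ only ``sees'' the $z_2$ part, yielding $z_1 \otimes_A (\sum_\beta R_\beta^* z_2 R_\beta) = z_1 \otimes_A T_n(z_2)$. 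Then extend to all of $\widehat{Q^+}$ by writing $z_i$ as increasing limits of bounded operators and invoking normality of $T$ (Remarks after the operator-valued-weight definition) together with the monotone-continuity of $\otimes_A$ from Appendix \ref{sec:TensorUnbounded} and Lemma \ref{lem:VectorStates}(3). The opposite statement is the mirror image using $T_{m+n}\op = \sum_\alpha L_\alpha^*(-)L_\alpha$ acting on the first strand.

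For (2), associativity of $\otimes_A$ on the cones: this is exactly the content of Appendix \ref{sec:TensorUnbounded}, so I would cite it — the point is that $\widehat{Q_{n_1}^+} \otimes_A \widehat{Q_{n_2}^+}$ is defined via the bounded approximants and the tensor of operators $x \otimes_A y$ (Lemma \ref{lem:binormal}) is associative on bounded operators, then pass to sups. For (1), the commuting of $T_n$ and $T_n\op$ (equivalently, $\widetilde{T}$'s from Proposition \ref{prop:CrossT}): on a genuine $z \in Q_{n+1}^+$ and vector state $\omega_\xi$, $\xi \in B^n$, I compute $T_n T_{n+1}\op(z)(\omega_\xi) = \sum_\alpha \sum_{\beta} \langle z\, ((\text{something with } \alpha,\beta)), \ldots\rangle$ — explicitly, $T_{n+1}\op$ creates on the first strand with the $\alpha$'s and $T_n$ creates on the last strand with the $\beta$'s (or vice versa depending on indexing conventions in the tower), and since the two creation operators act on disjoint strands they commute as operators; hence the double sum is symmetric under swapping the roles, giving $T_{n-1}\text{-side}$... more precisely the iterated sum $\sum_{\alpha,\beta}\langle z(R_\beta L_\alpha \xi), R_\beta L_\alpha \xi\rangle$ is manifestly symmetric in the two families once one checks $R_\beta$ and $L_\alpha$ act on different tensor legs of $H^{n+1}$. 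I would then extend to unbounded $z$ by the usual increasing-limit argument. The main obstacle throughout is making the ``act on disjoint strands, hence commute'' intuition rigorous for the unbounded closable operators $R_\xi^0$, $L_\xi^0$ of Corollary \ref{cor:UnboundedLR} and their twisted versions in Lemma \ref{lem:AlsoClosable} — i.e.\ carefully tracking domains — but since every identity only needs to hold after pairing with $\omega_\xi$ for $\xi$ in the dense algebraic tensor product $B \otimes_A B^n \subset H^{n+1}$, the domain issues are benign and the verifications reduce to the finite-sum identities already used in Propositions \ref{prop:iFormula}, \ref{prop:T}, and \ref{prop:CrossT}.
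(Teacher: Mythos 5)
Your proposal is correct and follows essentially the same route as the paper: (4) and (2) are the same one-line citations of Theorem \ref{thm:BilinearExtension} and Corollary \ref{cor:associative}, (3) is proved exactly as you describe (bounded approximants $z_{1,j}\otimes_A z_{2,k}$, the basis formula $\sum_\beta R_\beta^*(\,\cdot\,)R_\beta$ acting only on the last strands, then Theorem \ref{thm:UnboundedIncrease} to pass to sups), and (1) is the same symmetric double sum $\sum_{\alpha,\beta} z(\omega_{\alpha\otimes\xi\otimes\beta})$ evaluated on vector states. The only cosmetic difference is that for (1) the paper applies the basis formulas directly to $z\in\widehat{Q_{n+1}^+}$ (these are already defined on the extended cone via Definition \ref{defn:UnboundedRN}), so no separate increasing-limit step is needed there.
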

\begin{proof}
\item[(1)] For all $\xi \in H^{n}$ and $z\in \widehat{Q_{n+1}^+}$,
\begin{align*}
\big(T_n T_{n+1}\op (z) \big)(\omega_\xi)
& =  \left(\sum_{\beta} R_\beta^*\left(\sum_\alpha L_\alpha^* z L_\alpha \right)R_\beta\right)(\omega_\xi)
= \sum_{\alpha,\beta}  (R_\beta^* L_\alpha^* z L_\alpha R_\beta  )(\omega_\xi) \\
& =  \sum_{\alpha,\beta} z(\omega_{\alpha \otimes \xi\otimes\beta})
= \left(\sum_{\alpha} L_\alpha^*\left(\sum_\beta R_\beta^* z R_\beta \right)L_\alpha \right) (\omega_\xi)\\
& = \big(T_{n}\op T_{n+1}(z)\big)(\omega_\xi).
\end{align*}
\item[(2)] This is Corollary \ref{cor:associative}.
\item[(3)] Suppose $z_{1,j}\in Q_m^+$ increases to $z_1$ and $z_{2,k}\in Q_{n}^+$ increases to $z_2$. Then 
\begin{align*}
T_{m+n} ( z_{1,j}\otimes_A z_{2,k} ) 
& = \sum_\beta R_\beta^* ( z_{1,j}\otimes_A z_{2,k} ) R_\beta 
= \sum_\beta  z_{1,j}\otimes_A \left(R_\beta^* z_{2,k} R_\beta\right)\\
& =   z_{1,j}\otimes_A \left(\sum_\beta R_\beta^* z_{2,k} R_\beta\right)
= z_{1,j} \otimes_A (T_nz_{2,k})
\end{align*}
Now $T_nz_{2,k}$ increases to $T_n z_2$, and we are finished by Theorem \ref{thm:UnboundedIncrease}. The other equality is similar.
\item[(4)] This is Theorem \ref{thm:BilinearExtension}.
\end{proof}

\begin{cor}\label{cor:BPRelationsCor} The following relations also hold:
\item[(1)] $i_{n+1}i_{n}\op(z)=i_{n+1}\op i_{n}(z)$ for all $z\in \widehat{Q_n^+}$.
\item[(2)] $i_{m+n} (z_1\otimes_A z_n) = z_1 \otimes_A i_n(z_2)$ and $i_{m+n}\op(z_1\otimes_A z_2) = i_m\op(z_1)\otimes_A z_2$ for all $z_1\in \widehat{Q_m^+}$ and $z_2\in \widehat{Q_n^+}$,
\item[(3)] $i_{n-1}\op T_n (z)= T_{n+1} i_{n}\op (z)$ and $i_{n-1} T_n\op (z)= T_{n+1}\op i_n (z)$ for all $z\in \widehat{Q_n^+}$, and
\item[(4)] for all $z_1\in \widehat{Q_m^+}$ and $z_2\in \widehat{Q_n^+}$,
\begin{align*}
(T_{n+1}\circ\cdots\circ T_{m+n})(z_1\otimes_A z_2)&=\Tr_n(z_2)z_1\text{ and}\\
(T_{m+1}\circ\cdots\circ T_{m+n})(z_1\otimes_A z_2)&=\Tr_m\op(z_1)z_2.
\end{align*}
In particular, $\Tr_{m+n}(z_1\otimes z_2)=\Tr_m(z_1)\Tr_n(z_2)$ and $\Tr_{m+n}\op(z_1\otimes z_2)=\Tr_m\op(z_1)\Tr_n\op(z_2)$.
\end{cor}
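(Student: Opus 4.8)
The plan is to derive all four assertions from Theorem~\ref{thm:BPRelations} by viewing the inclusions $i_n,i_n\op$ as tensoring with the unit bimodule $L^2(A)=H^0$. The starting observation is that $\id_H\in Q_1^+\subset\widehat{Q_1^+}$ and that, for $z\in\widehat{Q_n^+}$,
\[
i_n(z)=z\otimes_A\id_H\qquad\text{and}\qquad i_n\op(z)=\id_H\otimes_A z
\]
as elements of $\widehat{Q_{n+1}^+}$: for $z\in Q_n^+$ this is Proposition~\ref{prop:include} and its proof, and it passes to $\widehat{Q_n^+}$ because $i_n$, $i_n\op$ and $\otimes_A$ are normal, both sides being the increasing limit of the $z_i\otimes_A\id_H$ for $z_i\nearrow z$ in $Q_n^+$ (Appendix~\ref{sec:TensorUnbounded} and Theorem~\ref{thm:UnboundedIncrease}, as already invoked in the proof of Theorem~\ref{thm:BPRelations}(3)). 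Granting this, part~(1) reads
\[
i_{n+1}i_n\op(z)=(\id_H\otimes_A z)\otimes_A\id_H=\id_H\otimes_A(z\otimes_A\id_H)=i_{n+1}\op i_n(z),
\]
and part~(2) is $i_{m+n}(z_1\otimes_A z_2)=(z_1\otimes_A z_2)\otimes_A\id_H=z_1\otimes_A(z_2\otimes_A\id_H)=z_1\otimes_A i_n(z_2)$ together with its left-right mirror image; in both cases the only ingredient is the associativity of $\otimes_A$ from Theorem~\ref{thm:BPRelations}(2) (so, despite the formal resemblance, part~(1) here does not come from Theorem~\ref{thm:BPRelations}(1)).

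For part~(3) I specialize Theorem~\ref{thm:BPRelations}(3) so that one tensor factor is $\id_H$. Taking $z_1=\id_H\in\widehat{Q_1^+}$ and $z_2=z\in\widehat{Q_n^+}$ yields
\[
T_{n+1}i_n\op(z)=T_{n+1}(\id_H\otimes_A z)=\id_H\otimes_A(T_n z)=i_{n-1}\op T_n(z),
\]
where Remark~\ref{rem:TForZ} is used to see $T_n z\in\widehat{Q_{n-1}^+}$; the other identity in (3) comes the same way from the $T\op$-half of Theorem~\ref{thm:BPRelations}(3) with $z_2=\id_H$. For part~(4) I iterate Theorem~\ref{thm:BPRelations}(3): peeling the right-hand strands off $z_1\otimes_A z_2$ one operator valued weight at a time produces $z_1\otimes_A(T_1\circ\cdots\circ T_n)(z_2)$, and peeling the left-hand strands with the $T\op$ tower produces $(T_1\op\circ\cdots\circ T_m\op)(z_1)\otimes_A z_2$. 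It then remains to identify the fully iterated weight $T_1\circ\cdots\circ T_n$ on $\widehat{Q_n^+}$ with $\Tr_n$ (and its $\op$-analogue with $\Tr_m\op$): since $Q_0=\C1_{L^2(A)}$ we have $\widehat{Q_0^+}=[0,\I]$ with $\Tr_0=\id$ on it, so trace-preservation of each $T_k$ (Proposition~\ref{prop:T}, Theorem~\ref{thm:Texists}) gives $(T_1\circ\cdots\circ T_n)(z_2)=\Tr_0\big((T_1\circ\cdots\circ T_n)(z_2)\big)=\Tr_n(z_2)$. Since $z_1\otimes_A\lambda=\lambda z_1$ for a scalar $\lambda\in[0,\I]$ (using Appendix~\ref{sec:cones} for well-definedness when $\lambda=\I$), this gives $\Tr_n(z_2)\,z_1$; applying $\Tr_m$ to both sides and using trace-preservation once more gives $\Tr_{m+n}(z_1\otimes z_2)=\Tr_m(z_1)\Tr_n(z_2)$, and symmetrically for $\Tr\op$.

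The argument is entirely formal once Theorem~\ref{thm:BPRelations} is in hand, and there is no serious obstacle: the analytic content has all been packaged into that theorem and the appendices. The two points that need care rather than ingenuity are: (i) that each identity, transparent on the bounded positive cones $Q_n^+$, extends to the extended cones $\widehat{Q_n^+}$ by normality of $i_n$, $i_n\op$, $T_n$, $T_n\op$ and $\otimes_A$, so that the iteration in part~(4) must be run compatibly with the increasing-limit definitions on $\widehat{Q_\bullet^+}$; and (ii) the base-case identification of the fully iterated operator valued weight with the numerical trace $\Tr_n$, which hinges precisely on $\widehat{Q_0^+}=[0,\I]$ and $\Tr_0=\id$.
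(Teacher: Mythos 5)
Your argument is correct and is essentially the route the paper intends: the corollary is stated without a separate proof precisely because, as you show, it follows formally from Theorem \ref{thm:BPRelations} together with the identifications $i_n(z)=z\otimes_A\id_H$, $i_n\op(z)=\id_H\otimes_A z$ (Proposition \ref{prop:include}, extended to $\widehat{Q_\bullet^+}$ by normality and Theorem \ref{thm:UnboundedIncrease}) and the trace-preservation of the $T_k$'s, with $\widehat{Q_0^+}=[0,\I]$ giving the scalar in part (4). Your careful bookkeeping also resolves the statement's typographical slips (the indexing of the iterated weights and the missing $\OP$'s in the second display of (4)) in the way the diagrammatic picture demands, so nothing further is needed.
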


\begin{defn}
The \underline{bimodule planar operad $\B\P$} is the operad of oriented, unshaded planar tangles (up to planar isotopy) generated by
$$
\idn{n},\, \OperatorValuedWeight{n}{},\,\OperatorValuedWeightOp{n}{},\,\TraceOfTwo{n}{}{},\,\TraceOfTwoOp{n}{}{},\text{ and }\tensor{m}{}{n}{}
$$
for $m,n\geq 0$ up to planar isotopy. (We draw all disks as boxes, suppress external disks, draw one thick string labelled $n$ for $n$ individual strings, and orient all strings upward unless otherwise specified.) Some topological properties of tangles in $\B\P$ are given in Appendix \ref{sec:BP}.

A \underline{$\B\P$-algebra (of extended positive cones)} $V_\bullet$ is a sequence $\{V_n\}_{n\geq 0}$ of extended positive cones (see Appendix \ref{sec:cones}) and an action by multilinear maps
$$Z\colon \B\P \to ML\{V_n\}$$
($Z$ is the \underline{partition function}) which is well-behaved under composition. 
\item
A $\B\P$-algebra is called:
\item[$\bullet$]  \underline{connected} if $V_0=[0_\R,\I_\R]$,
\item[$\bullet$] \underline{normal} if $Z(\cT)$ is normal for all $\cT\in\B\P$, and
\item[$\bullet$] \underline{self-dual} if $V_n$ is self-dual for all $n$, and for all annular tangles $\cT\in\B\P$, flipping it inside out gives the adjoint map (see Definition \ref{defn:ExtendedConeAdjoint}).
\end{defn}

\begin{thm}\label{thm:ConePA}
Given an $A-A$ bimodule $H$, the extended positive cones $\widehat{Q_n^+}$ form a unique connected, normal, self-dual $\B\P$-algebra $\widehat{Q_\bullet^+}$ such that:
\item[(1)] $\id_{H^n}=\id_n=\idn{n}$\,,
\item[(2)] $T_{n+1}(z)=\OperatorValuedWeight{n}{z}$\, and $T_{n+1}\op(z)=\OperatorValuedWeightOp{n}{z}$ for all $z\in \widehat{Q_{n+1}^+}$
\item[(3)] $z_1\otimes_A z_2 = \tensor{m}{z_1}{n}{z_2}$ (defined in Appendix \ref{sec:TensorUnbounded}) for all $z_1\in \widehat{Q_m^+}$ and $z_2\in \widehat{Q_n^+}$, and
\item[(4)] $\Tr_n(z_1\cdot z_2) = \TraceOfTwo{n}{z_1}{z_2}$ and $\Tr_n\op(z_1\cdot z_2) = \TraceOfTwoOp{n}{z_1}{z_2}$ for all $z_1,z_2\in \widehat{Q_n^+}$.
\item
Moreover, the following hold:
\item[(5)] $i_n(z) = \include{n}{z}$\, and $i_n\op(z) = \includeop{n}{z}$ for all $z\in \widehat{Q_n^+}$ and
\item[(6)] $\dim_{-A}(H)=T_1(1)=\ClosedLoop{1}$\, and $\dim_{A-}(H)=T_1\op(1)=\ClosedLoopOp{1}$.
\item
Note that the well-definition of the partition function $Z$ means that any closed diagram counts for a multiplicative factor in $\widehat{Q_0^+}=\widehat{Z(A)^+}=[0_\R,\I_\R].$
\item
We call $\widehat{Q_\bullet^+}$ the canonical $\B\P$-algebra associated to $H$.
\end{thm}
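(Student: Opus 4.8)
The plan is to pin down the partition function $Z$ on a generating set of $\B\P$ and then invoke the presentation of $\B\P$ by generators and relations (Appendix \ref{sec:BP}). Concretely, I would declare that $Z$ sends the identity tangle $\idn{n}$ to $\id_{H^n}\in\widehat{Q_n^+}$ (resp.\ to the identity map on $\widehat{Q_n^+}$), the capping tangles $\OperatorValuedWeight{n}{}$ and $\OperatorValuedWeightOp{n}{}$ to the trace-preserving operator valued weights $T_{n+1}$ and $T_{n+1}\op$ of Proposition \ref{prop:T}, the side-by-side tangle $\tensor{m}{}{n}{}$ to the relative tensor product $\otimes_A$ of extended positive cones (Appendix \ref{sec:TensorUnbounded}), and the closed tangles $\TraceOfTwo{n}{}{}$, $\TraceOfTwoOp{n}{}{}$ to the bilinear pairings $(z_1,z_2)\mapsto\Tr_n(z_1\cdot z_2)$ and $(z_1,z_2)\mapsto\Tr_n\op(z_1\cdot z_2)$ of Theorem \ref{thm:BilinearExtension}, valued in $\widehat{Q_0^+}$. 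By Theorem \ref{thm:relations} this assignment extends to a well-defined operad morphism $Z\colon\B\P\to ML\{\widehat{Q_n^+}\}$ as soon as the defining relations of $\B\P$ are satisfied by these maps; but those are exactly the identities recorded in Theorem \ref{thm:BPRelations} and Corollary \ref{cor:BPRelationsCor}, which are already established. This gives the existence of a $\B\P$-algebra $\widehat{Q_\bullet^+}$ satisfying (1)--(4); uniqueness is automatic, since (1)--(4) determine $Z$ on the generators and hence everywhere.

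Next I would verify the three adjectives. \emph{Connectedness:} on $H^0=L^2(A)$ the commutation theorem gives $C_0=(A\op)'\cap B(L^2(A))=A$ and $C_0\op=A'\cap B(L^2(A))=A\op$, so $Q_0=A\cap A\op=\C$ because $A$ is a factor, whence $\widehat{Q_0^+}=[0_\R,\I_\R]$. \emph{Normality:} each generating map is normal --- $\idn{n}$ trivially; $T_{n+1}$ and $T_{n+1}\op$ because trace-preserving operator valued weights between semifinite von Neumann algebras are normal (Theorem \ref{thm:Texists}); $\otimes_A$ because it commutes with increasing limits (Theorem \ref{thm:UnboundedIncrease}); and the trace pairings because $\Tr_n$ is normal in each variable (Theorem \ref{thm:BilinearExtension}) --- and a composite of normal multilinear maps is normal, so $Z(\cT)$ is normal for every $\cT\in\B\P$. \emph{Self-duality:} each $\widehat{Q_n^+}$ is self-dual in the sense of Appendix \ref{sec:cones} with pairing induced by $\Tr_n$, and one must check that flipping an annular tangle inside out induces the adjoint map (Definition \ref{defn:ExtendedConeAdjoint}); it suffices to do this on the annular generators. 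The cap/cup pieces of $\TraceOfTwo{n}{}{}$ and $\TraceOfTwoOp{n}{}{}$ are self-adjoint by $\Tr_n(z_1\cdot z_2)=\Tr_n(z_2\cdot z_1)$, and the decisive computation is that the adjoint of $T_{n+1}$ is the inclusion $i_n$: extending Equation \eqref{eq:ovwCondition} to the cones by normality gives $\langle i_n(w),z\rangle=\Tr_{n+1}(i_n(w)\cdot z)=\Tr_n(w\cdot T_{n+1}(z))=\langle w,T_{n+1}(z)\rangle$ for $z\in\widehat{Q_{n+1}^+}$, $w\in\widehat{Q_n^+}$, and symmetrically $T_{n+1}\op$ is adjoint to $i_n\op$; these are precisely the inside-out flips of $\OperatorValuedWeight{n}{}$ and $\OperatorValuedWeightOp{n}{}$, which are $\include{n}{}$ and $\includeop{n}{}$.

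For the ``moreover'' part, $(5)$ follows from Proposition \ref{prop:include}, which identifies $i_n(z)=z\otimes_A\id_H$: the tangle computing $z\otimes_A\id_1$ is planar-isotopic to $\include{n}{z}$ (and symmetrically one gets $\includeop{n}{z}$ for $i_n\op$), so $(3)$ together with $(1)$ yields the stated diagrams. For $(6)$, $T_1\colon C_1^+\to\widehat{C_0^+}=[0,\I]$ is trace-preserving and maps $Q_1^+$ into $\widehat{Q_0^+}$ by Remark \ref{rem:TForZ}, so $T_1(1)$ is the scalar $\Tr_1(1)=\dim_{-A}(H)$; as a tangle this is the closed loop $\ClosedLoop{1}$ (the capping tangle applied to $\id_1$), and symmetrically $\ClosedLoopOp{1}=T_1\op(1)=\dim_{A-}(H)$. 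The concluding remark --- that a closed subdiagram contributes a scalar factor in $\widehat{Q_0^+}=[0_\R,\I_\R]$ --- is just the special case of output color $0$, which is legitimate precisely because $\B\P$ has no ``multiplication'' generator, so that products of (possibly unbounded) positive elements of $\widehat{Q_n^+}$ occur only inside a trace.

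I expect the genuine difficulty to be located in the infrastructure rather than in the checks above: that the relations of Theorem \ref{thm:BPRelations} and Corollary \ref{cor:BPRelationsCor} form a \emph{complete} set for $\B\P$ relative to the chosen generators (the content of Appendix \ref{sec:BP}), and that $\otimes_A$ is genuinely well-defined and associative on extended positive cones (Appendix \ref{sec:TensorUnbounded}). Since extended positive cones are not closed under products, one must make sure that when an arbitrary tangle is cut into generators every operation invoked is actually defined --- in particular that multiplication of unbounded positive elements never appears outside a trace --- and the coherence of $\I_\R$-arithmetic in the target cones (Appendix \ref{sec:cones}) underlies all of this.
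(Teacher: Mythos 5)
Your proposal is correct and follows essentially the same route as the paper: the paper likewise pins down $Z$ on the generating tangles via (1)--(4), reduces well-definedness and uniqueness to Theorem \ref{thm:BPRelations} together with the standard-form analysis of Appendix \ref{sec:BP}, and verifies connectedness from $Q_0=Z(A)=\C$, normality from Theorem \ref{thm:BilinearExtension} and Remark \ref{rem:normal}, and self-duality from Proposition \ref{prop:adjoint}, whose content is exactly your adjunction computation $\langle i_n(w),z\rangle=\langle w,T_{n+1}(z)\rangle$. You also correctly locate the genuine work in Appendices \ref{sec:TensorUnbounded} and \ref{sec:BP} (associativity and monotone continuity of $\otimes_A$ on extended positive cones, and completeness of the relations/standard forms under composition), which is precisely where the paper defers it.
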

\begin{proof}
It suffices to show (1)-(4) uniquely determine the action of any tangle in $\B\P$. This follows from Theorem \ref{thm:BPRelations} and Appendix \ref{sec:BP}. Note that $\widehat{Q_\bullet^+}$ is connected since $\widehat{Q_0^+}=\widehat{Z(A)^+}=[0_\R,\I_\R]$, normal by Theorem \ref{thm:BilinearExtension} and Remark \ref{rem:normal}, and self-dual by Proposition \ref{prop:adjoint}. 
\end{proof}

\begin{rem}
Given some operad $\P$ of (shaded, unshaded, oriented, disoriented, etc.) planar tangles, it is not always possible to define an (extended) positive cone planar algebra over $\P$. For example, the rotation does not always map positive elements to positive elements in a subfactor planar algebra.
\end{rem}

%%%%%%%%%%%%%%%%%%%%%%%%%%%%%%%%%%%%%%%%%%%%%%%%%
\subsection{Graded algebra of central $L^2$-vectors}\label{sec:CentralVectors}

\begin{lem}\label{lem:CentralBounded}
Suppose $K$ is a Hilbert $A-A$ bimodule. Then $A'\cap K \subseteq D(\sb{A}K)\cap D(K_A)$.
\end{lem}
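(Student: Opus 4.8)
The plan is to show that every central vector is automatically left- and right-$A$-bounded by exhibiting, for a fixed $\xi\in A'\cap K$, a single constant $\lambda\geq 0$ with $\|a\xi\|_2\leq\lambda\|a\|_2$ for all $a\in A$; the dual estimate then comes for free, since $\|\xi a\|_2=\|a\xi\|_2$ by centrality.

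First I would associate to $\xi$ the linear functional $\phi\colon A\to\C$, $\phi(x)=\langle x\xi,\xi\rangle_K$. As the left action of $A$ on $K$ is normal and unital and $\xi\in K$, this $\phi$ is a bounded normal positive functional on $A$, and $\phi(x^*x)=\|x\xi\|_2^2$. The key observation is that $\phi$ is a trace. Indeed, using that $\xi$ is central (so $y\xi=\xi y$), that the left and right $A$-actions on $K$ commute, and that the right action of $y$ has adjoint the right action of $y^*$, one computes
$$
\phi(xy)=\langle x(y\xi),\xi\rangle=\langle (x\xi)y,\xi\rangle=\langle x\xi,\xi y^*\rangle=\langle x\xi,y^*\xi\rangle=\langle (yx)\xi,\xi\rangle=\phi(yx)
$$
for all $x,y\in A$.

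Next I would invoke uniqueness of the trace on the $II_1$-factor $A$: every normal positive tracial functional on $A$ is a non-negative scalar multiple of $\tr_A$ (writing $\phi=\tr_A(\,\cdot\,h)$ for some $h\in L^1(A)^+$, traciality forces $h$ to commute with all of $A$, hence $h\in Z(A)=\C 1$). Thus $\phi=\lambda^2\tr_A$ for some $\lambda\geq 0$, so $\|a\xi\|_2^2=\phi(a^*a)=\lambda^2\tr_A(a^*a)=\lambda^2\|a\|_2^2$ for every $a\in A$; hence $\xi\in D(\sb{A}K)$. Finally, centrality gives $\|\xi a\|_2=\|a\xi\|_2\leq\lambda\|a\|_2$, so $\xi\in D(K_A)$ as well, and therefore $A'\cap K\subseteq D(\sb{A}K)\cap D(K_A)$.

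The only step requiring genuine care is the traciality computation above, where one must keep scrupulous track of which $A$-action (left or right) is invoked at each stage; the appeal to uniqueness of the normal trace on a $II_1$-factor is routine. As an alternative to the last two paragraphs, one could note that the closable operator $\widehat{a}\mapsto a\xi$ on $L^2(A)$ intertwines both the left and the right $A$-actions, so the positive self-adjoint operator $T^*T$ attached to its closure $T$ is densely defined and affiliated with $A\cap A'=\C 1$, hence bounded --- but the trace-functional route is shorter and more transparent.
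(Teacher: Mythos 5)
Your proposal is correct and follows essentially the same route as the paper: define the positive functional $a\mapsto\langle a\xi,\xi\rangle$, use centrality of $\xi$ to show it is tracial, invoke uniqueness of the trace on the $II_1$-factor $A$ to get $\|a\xi\|_2^2=\lambda\|a\|_2^2$, and deduce right-boundedness from $\|\xi a\|_2=\|a\xi\|_2$. The only differences are cosmetic (you verify full traciality $\phi(xy)=\phi(yx)$ where the paper checks $\varphi(a^*a)=\varphi(aa^*)$, and you sketch an optional alternative via an operator affiliated with $A'\cap A$).
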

\begin{proof}
Suppose $\zeta\in A'\cap K$, $\zeta\neq 0$. Define $\varphi\colon A_+\to \C$ by $a\mapsto \langle a\zeta,\zeta\rangle$. Note that $\varphi$ is traicial as
$$
\varphi(a^*a)=\langle a^*a\zeta,\zeta\rangle = \langle a^* \zeta a,\zeta\rangle =  \langle a^* \zeta ,\zeta a^*\rangle =  \langle a^* \zeta ,a^* \zeta\rangle=  \langle aa^* \zeta ,\zeta\rangle=\varphi(aa^*).
$$
Hence there is a $\lambda\geq 0$ such that $\varphi=\lambda \tr_A$ by the uniqueness of the trace on a $II_1$-factor. Now for all $a\in A$,
$$
\| a \zeta\|^2_2 =\|\zeta a\|^2_2= \varphi(a^*a)= \lambda \tr_A(a^*a)=\lambda \|a\|^2_2,
$$
and $\zeta$ is left and right $A$-bounded.
\end{proof}

\begin{rem}
In the sequel, we will confuse elements $\zeta\in P_n$ and the operators $L(\zeta)=R(\zeta)\colon L^2(A)\to H^n$. We will omit $R(\zeta)$ and only write $L(\zeta)$.
\end{rem}

\begin{thm}
Representing elements $\zeta\in P_n$ by boxes with $n$ strings emanating from the top, 
$$
\zeta,L(\zeta)=\Pn{n}{\zeta},
$$
the $P_n$'s form a graded algebra $P_\bullet$ in the sense of  \cite{MR2732052} where the graded multiplication is given by relative tensor product (over $A$) of central vectors. We denote the product of $\zeta_m\in P_m$ and $\zeta_n\in P_n$ by
$$
\zeta_m\otimes \zeta_n = \TensorPn{m}{\zeta_m}{n}{\zeta_n}\in P_{m+n}.
$$
\end{thm}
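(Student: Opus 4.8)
The plan is to verify, one property at a time, that relative tensor product of central vectors makes $P_\bullet=\{P_n\}_{n\geq 0}$ into an associative, unital, graded algebra of the kind required in \cite{MR2732052}, using nothing beyond the already-recorded properties of the relative tensor product together with Lemma \ref{lem:CentralBounded}. First I would check that $\zeta_m\otimes\zeta_n$ is a well-defined vector of $H^{m+n}$ lying in $P_{m+n}$. Since $\zeta_m\in P_m$ is left and right $A$-bounded by Lemma \ref{lem:CentralBounded}, in particular $\zeta_m\in D(H^m_A)$, so the creation operator $L_{\zeta_m}\colon H^n\to H^m\otimes_A H^n=H^{m+n}$ of Definition \ref{defn:RelativeTensorProduct} is bounded and $\zeta_m\otimes\zeta_n:=L_{\zeta_m}(\zeta_n)$ has finite $2$-norm. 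For centrality, fix $a\in A$; using that the left and right $A$-actions on $H^{m+n}$ act on the first and last legs respectively, the $A$-balancing $\xi a\otimes\eta=\xi\otimes a\eta$ of $\otimes_A$, and centrality of $\zeta_m$ and $\zeta_n$,
$$
a(\zeta_m\otimes\zeta_n)=(a\zeta_m)\otimes\zeta_n=(\zeta_m a)\otimes\zeta_n=\zeta_m\otimes(a\zeta_n)=\zeta_m\otimes(\zeta_n a)=(\zeta_m\otimes\zeta_n)a,
$$
so $\zeta_m\otimes\zeta_n\in A'\cap H^{m+n}=P_{m+n}$.

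Next I would record that $(\zeta_m,\zeta_n)\mapsto\zeta_m\otimes\zeta_n$ is bilinear (immediate from bilinearity of $\otimes_A$), associative, i.e. $(\zeta_m\otimes\zeta_n)\otimes\zeta_p=\zeta_m\otimes(\zeta_n\otimes\zeta_p)$ under the canonical identification $H^m\otimes_A(H^n\otimes_A H^p)=(H^m\otimes_A H^n)\otimes_A H^p=H^{m+n+p}$ (the classical associativity of $\otimes_A$ underlying the notation $H^n=\bigotimes_A^n H$, of which Corollary \ref{cor:associative} is the extended-positive-cone analog), and unital: $P_0=\C\widehat{1}$, and under the identifications $L^2(A)\otimes_A H^n\cong H^n\cong H^n\otimes_A L^2(A)$ (via $\widehat{a}\otimes\xi\mapsto a\xi$ and $\xi\otimes\widehat{a}\mapsto\xi a$) one has $\widehat{1}\otimes\zeta_n=\zeta_n=\zeta_n\otimes\widehat{1}$. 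This already exhibits $P_\bullet$ as a $\Z_{\geq0}$-graded unital associative algebra.

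Finally I would match the remaining data of \cite{MR2732052} --- the inner products $\langle\,\cdot\,,\cdot\,\rangle_{P_n}$ (and, when $H$ is symmetric, the induced involution) and their compatibility with the multiplication. The useful observation is that $L(\cdot)$ turns $\otimes$ into composition of creation operators: for $\widehat{a}\in L^2(A)$ one has $L(\zeta_m\otimes\zeta_n)\widehat{a}=(\zeta_m\otimes\zeta_n)a=\zeta_m\otimes L(\zeta_n)\widehat{a}=L_{\zeta_m}L(\zeta_n)\widehat{a}$, so $L(\zeta_m\otimes\zeta_n)=L_{\zeta_m}L(\zeta_n)$; combined with the annihilation-operator adjoint formulas of Definition \ref{defn:RelativeTensorProduct} and the identity $L(\zeta)L(\zeta)^*\in Q_n$ from Theorem \ref{thm:ConePA}, this yields the Frobenius-type relations tying $\otimes$ to $\langle\,\cdot\,,\cdot\,\rangle$ and to the traces $\Tr_n,\Tr_n\op$. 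I expect the only genuine obstacle to be bookkeeping: consistently choosing the canonical isometric avatar $H^{m+n}=H^m\otimes_A H^n$ of each space so that the balancing relation, the associativity isomorphisms, and the factorization $L(\zeta_m\otimes\zeta_n)=L_{\zeta_m}L(\zeta_n)$ are all applied coherently to the same representatives. Once that is pinned down, each verification above is a one-line consequence of properties of $\otimes_A$ already in hand.
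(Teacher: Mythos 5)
Your proposal is correct and matches the argument the paper leaves implicit: the theorem is stated without a separate proof, the only substantive input being Lemma \ref{lem:CentralBounded} (centrality implies left and right $A$-boundedness, so the relative tensor product of central vectors is defined), after which the centrality computation, bilinearity, associativity, and unitality you spell out are exactly the routine verifications intended. One cosmetic slip: the identity $L(\zeta)L(\zeta)^*\in Q_n^+$ is item (3) of Lemma \ref{lem:ZetaRelations}, not Theorem \ref{thm:ConePA}, but nothing in your argument depends on this.
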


\begin{rem}
If $z\in Q_n$ and $\zeta\in P_n$, then $z\zeta\in P_n$, which we denote as:
$$
z\zeta,L(z\zeta)=\ActOnPn{z}{\zeta}.
$$
The dual version of these diagrams denotes the functionals
$\langle \,\cdot\,,\zeta\rangle,L(\zeta)^*=\PnStar{n}{\zeta}$.
The inner product $\langle \,\cdot\,,\,\cdot\,\rangle\colon P_n\times P_n^*\to \C$ is given by $\langle \xi,\zeta\rangle = \InnerProduct{\zeta}{\xi}$.
\end{rem}

%%%%%%%%%%%%%%%%%%%%%%%%%%%%%%%%%%%%%%%%%%%%%%%%%
\subsection{Compatibility}\label{sec:compatible}

We now show how $\widehat{Q_\bullet^+}$ and $P_\bullet$ are compatible.

\begin{lem}\label{lem:ZetaRelations} 
\item[(1)] If $\zeta\in P_n$ and $\xi\in B^n$, then $\sb{A}\langle \zeta,\xi\rangle=\langle \xi|\zeta\rangle_A$.
\item[(2)] If $\zeta,\xi\in P_n$, $\sb{A}\langle \zeta,\xi\rangle=\langle \xi|\zeta\rangle_A=\langle \zeta,\xi\rangle 1_{L^2(A)}\in \C 1_{L^2(A)}$.
\item[(3)] For $\zeta\in P_n$, $L(\zeta)L(\zeta)^*=R(\zeta)R(\zeta)^*\in Q_n^+$. We denote the common operator as:
$$
\CentralVectorOperator{n}{\zeta}{\zeta}\in Q_n^+.
$$
\item[(4)] If $\zeta\in P_n$ and $\|\zeta\|_2=1$, $L(\zeta)L(\zeta)^*|_{P_n}=p_\zeta$, the projection onto $\C\zeta$. 
\end{lem}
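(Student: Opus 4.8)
The four parts are linked, with all the real content sitting in (1) and (2); (3) and (4) then follow quickly, so I would prove them in the stated order.

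For (1), I would first record that $\zeta$ and $\xi$ both lie in $B^n\subseteq D(\sb{A}H^n)\cap D(H^n_A)$ by Lemma \ref{lem:CentralBounded}, so that $\sb{A}\langle\zeta,\xi\rangle$ and $\langle\xi|\zeta\rangle_A$ are genuine elements of $A$ and it suffices to check that they pair the same way against every $a\in A$ under $\tr_A$. Unwinding the definitions exactly as in the proof of Lemma \ref{lem:RN} gives $\tr_A(\sb{A}\langle\zeta,\xi\rangle\,a)=\langle a\zeta,\xi\rangle_{H^n}$ and $\tr_A(\langle\xi|\zeta\rangle_A\,a)=\langle\zeta a,\xi\rangle_{H^n}$. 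Since $\zeta\in P_n$ is central, $a\zeta=\zeta a$, so the two expressions coincide for all $a$, and faithfulness of $\tr_A$ finishes (1). (Only the centrality of $\zeta$ is used here.)

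For (2), set $c=\langle\xi|\zeta\rangle_A\in A$; by (1) it equals $\sb{A}\langle\zeta,\xi\rangle$, so it is enough to show $c\in\C 1_{L^2(A)}$ and compute the scalar. Because $\zeta,\xi\in P_n$ are central, $L(\zeta)=R(\zeta)$ and $L(\xi)=R(\xi)$ as maps $L^2(A)\to H^n$, whence $c=L(\xi)^*L(\zeta)=R(\xi)^*R(\zeta)$. Each operator $R(\eta)\colon L^2(A)\to H^n$ intertwines the \emph{left} $A$-actions, since $R(\eta)(b\widehat a)=(ba)\eta=b\,R(\eta)\widehat a$, and hence so does $R(\xi)^*R(\zeta)$; that is, $c$ commutes with left multiplication by $A$ on $L^2(A)$. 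As $c\in A$ and the standard representation is faithful, this forces $c\in Z(A)=\C 1$ because $A$ is a $II_1$ factor. To identify the scalar I would evaluate $\tr_A(c)=\langle\zeta,\xi\rangle_{H^n}$ — either by specializing the trace formula of (1) to $a=1$, or by polarizing $\tr_A(\langle\xi|\xi\rangle_A)=\|\xi\|_2^2$ from Definition \ref{defn:RightModule} — so $c=\langle\zeta,\xi\rangle_{H^n}\,1_{L^2(A)}$.

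For (3), centrality of $\zeta$ gives $L(\zeta)=R(\zeta)$, so $L(\zeta)L(\zeta)^*=R(\zeta)R(\zeta)^*$ and this is positive as an $SS^*$; it lies in $Q_n=C_n\cap C_n\op$ because $L(\zeta)$ always intertwines the right $A$-action (so $L(\zeta)L(\zeta)^*\in C_n=(A\op)'\cap B(H^n)$) while $R(\zeta)$ always intertwines the left $A$-action (so $R(\zeta)R(\zeta)^*\in C_n\op=A'\cap B(H^n)$). For (4), fix $\xi\in P_n$ and compute $L(\zeta)^*\xi\in L^2(A)$ by pairing with $\widehat a$: $\langle L(\zeta)^*\xi,\widehat a\rangle=\langle\xi,\zeta a\rangle_{H^n}=\langle a^*\xi,\zeta\rangle_{H^n}=\tr_A(\sb{A}\langle\xi,\zeta\rangle\,a^*)$, which by (2) equals $\langle\xi,\zeta\rangle_{H^n}\,\tr_A(a^*)=\langle\langle\xi,\zeta\rangle_{H^n}\widehat 1,\widehat a\rangle$. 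Hence $L(\zeta)^*\xi=\langle\xi,\zeta\rangle_{H^n}\widehat 1$ and $L(\zeta)L(\zeta)^*\xi=\langle\xi,\zeta\rangle_{H^n}\,\zeta$, i.e.\ on $P_n$ the operator is $\xi\mapsto\langle\xi,\zeta\rangle_{H^n}\zeta$, which for $\|\zeta\|_2=1$ is exactly the orthogonal projection onto $\C\zeta$. The one genuinely substantive step is the centrality claim in (2); beyond that the only hazard is bookkeeping with the two $A$-valued inner products and the opposite-algebra conventions (which slot is conjugate-linear, placement of $J$, and $(a\op)^*=(a^*)\op$), and once $L(\eta)=R(\eta)$ for $\eta\in P_n$ is in hand everything reduces to faithfulness of $\tr_A$ and factoriality of $A$.
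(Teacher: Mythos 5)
Your proof is correct and follows essentially the same route as the paper: parts (1) and (2) are the same matrix-coefficient/trace computations with the two $A$-valued inner products, using centrality of the vectors and $Z(A)=\C 1$ for the $II_1$-factor $A$, and (3)--(4) then follow just as in the paper. The only cosmetic difference is that you derive (2) and (3) from the identity $L(\zeta)=R(\zeta)$ for central vectors (which the paper itself records in the remark preceding the lemma) together with the left-intertwining property of $R(\eta)$, instead of the paper's direct two-sided bimodule computation; this changes nothing of substance.
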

\begin{proof}
\item[(1)]
Suppose $a_1,a_2\in A$. Then 
\begin{align*}
\langle \sb{A}\langle \zeta,\xi\rangle \widehat{a_1},\widehat{a_2}\rangle
&=\langle J R(\zeta)^*R(\xi)J \widehat{a_1}, \widehat{a_2}\rangle
=\langle \widehat{a_2^*}, R(\zeta)^*R(\xi) \widehat{a_1^*}\rangle
=\langle a_2^*\zeta,a_1^*\xi\rangle\\
&=\langle \zeta a_2^*, a_1^*\xi\rangle
=\langle a_1\zeta, \xi a_2\rangle
=\langle \zeta a_1,\xi a_2\rangle
=\langle L(\zeta)\widehat{a_1},L(\xi)\widehat{a_2}\rangle\\
&=\langle \langle \xi|\zeta\rangle_A \widehat{a_1},\widehat{a_2}\rangle.
\end{align*}
\item[(2)]
Since $\zeta,\xi \in P_n$, for all $a,b,a_1,a_2\in A$,
\begin{align*}
\langle \langle \xi|\zeta\rangle_A (a\widehat{a_1}b),\widehat{a_2}\rangle
& = \langle \zeta aa_1b,\xi a_2\rangle 
= \langle \zeta a_1, \xi a^* a_2 b^*\rangle\\
& =\langle \langle \xi|\zeta\rangle_A \widehat{a_1}, a^* \widehat{a_2} b^*\rangle
=\langle a(\langle \xi|\zeta\rangle_A \widehat{a_1})b, \widehat{a_2} \rangle,
\end{align*}
so $\langle \xi|\zeta\rangle_A \in Z(A)=\C 1_{A}$. Now setting $a=b=a_1=a_2=1_A$ gives the result.
\item[(3)] 
For $\xi\in B^n$, by (1),
$$
L(\zeta)L(\zeta)^*\xi 
= \zeta \langle \zeta|\xi\rangle_A 
= \langle \zeta|\xi\rangle_A \zeta 
= {\sb{A}\langle} \xi,\zeta\rangle \zeta 
= R(\zeta)R(\zeta)^* \xi,
$$
so the two are equal on $H^n$. We have $C_n\ni L(\zeta)L(\zeta)^* = R(\zeta)R(\zeta)^*\in C_n\op$, so $L(\zeta)L(\zeta)^*\in Q_n^+$. 
\item[(4)] Trivial from (2) and (3).
\end{proof}

\begin{thm}\label{thm:CloseOffZetas}
Suppose $\zeta\in P_n$ and $z\in \widehat{Q_n^+}$.
\item[(1)] $L(\zeta)^* z L(\zeta)=z(\omega_\zeta)1_{L^2(A)}=R(\zeta)^*zR(\zeta)$. We denote this diagrammatically as:
$$
\TripleInnerProduct{\zeta}{z}{\zeta}
$$
\item[(2)] In the notation of Theorem \ref{thm:BilinearExtension},
\begin{align*}
z(\omega_\zeta)
& = \tr_A(L(\zeta)^* z L(\zeta))
= \Tr_n(L(\zeta)L(\zeta)^*\cdot z)\\
& = \tr_{A\op}(R(\zeta)^*zR(\zeta))
= \Tr_n\op(z\cdot R(\zeta)R(\zeta)^*).
\end{align*}
In diagrams:
$$
\TripleInnerProduct{\zeta}{z}{\zeta}= \TraceOfTwoBig{\zeta}{\zeta}{z}= \TraceOfTwoBigOp{\zeta}{\zeta}{z}.
$$
\end{thm}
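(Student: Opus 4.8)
The plan is to establish (1) first and then read off (2). For (1), the key structural observation is that the operator $L(\zeta)=R(\zeta)\colon L^2(A)\to H^n$, $\widehat a\mapsto \zeta a=a\zeta$, is an $A$--$A$ bimodule map: it intertwines the right $A$-action because $\zeta$ is right $A$-bounded, and it intertwines the left $A$-action precisely because $\zeta\in P_n=A'\cap H^n$ is central. Consequently, for bounded $z\in Q_n^+$ — which commutes with both the left and right $A$-actions on $H^n$ — the composite $L(\zeta)^*zL(\zeta)$ is an $A$--$A$ bimodule endomorphism of $L^2(A)$, hence lies in $A'\cap(A\op)'\cap B(L^2(A))=Q_0=\C 1_{L^2(A)}$ as $A$ is a factor. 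Evaluating at $\widehat 1$, using $L(\zeta)\widehat 1=\zeta$ and $\tr_A(1)=1$, pins the scalar down to $\langle z\zeta,\zeta\rangle=z(\omega_\zeta)$. For general $z\in\widehat{Q_n^+}$ I would write $z=\sup_i z_i$ with $z_i\in Q_n^+$ increasing; then by Definition \ref{defn:UnboundedRN} and the definition of an increasing net in $\widehat{Q_n^+}$,
$$
L(\zeta)^*zL(\zeta)=\sup_i L(\zeta)^*z_iL(\zeta)=\sup_i z_i(\omega_\zeta)1_{L^2(A)}=z(\omega_\zeta)1_{L^2(A)}.
$$
Since $L(\zeta)=R(\zeta)$, the assertion for $R(\zeta)$ is literally the same statement.

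For (2), the first and third equalities are immediate: by (1), $L(\zeta)^*zL(\zeta)=z(\omega_\zeta)1_{L^2(A)}=R(\zeta)^*zR(\zeta)$, and applying $\tr_A$, resp.\ $\tr_{A\op}$, returns $z(\omega_\zeta)$. For the two trace formulas, by homogeneity in $\zeta$ I would reduce to the case $\|\zeta\|_2=1$ (both sides scale by $t^2$ under $\zeta\mapsto t\zeta$), so that $p:=L(\zeta)L(\zeta)^*=R(\zeta)R(\zeta)^*\in Q_n^+$ (Lemma \ref{lem:ZetaRelations}(3)) is a projection with $\Tr_n(p)=\Tr_n\op(p)=\|\zeta\|_2^2=1$, using the trace identities at the ends of Definitions \ref{defn:RightModule} and \ref{defn:LeftModule} applied to $H^n_A$ and $\sb{A}H^n$. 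For bounded $z\in Q_n^+$, part (1) gives
$$
pzp=L(\zeta)\bigl(L(\zeta)^*zL(\zeta)\bigr)L(\zeta)^*=z(\omega_\zeta)\,L(\zeta)L(\zeta)^*=z(\omega_\zeta)\,p,
$$
so $\Tr_n(L(\zeta)L(\zeta)^*\cdot z)=\Tr_n(p\cdot z)=\Tr_n(pzp)=z(\omega_\zeta)\Tr_n(p)=z(\omega_\zeta)$, and symmetrically with $R(\zeta)$ and $\Tr_n\op$. Passing to arbitrary $z\in\widehat{Q_n^+}$ then uses $z=\sup_i z_i$ together with the monotone continuity of $(x,y)\mapsto\Tr_n(x\cdot y)$ from Theorem \ref{thm:BilinearExtension} (holding $x=p$ fixed) on the left and $z(\omega_\zeta)=\sup_i z_i(\omega_\zeta)$ on the right; the identity $pzp=z(\omega_\zeta)p$ persists in $\widehat{Q_n^+}$ by the same supremum argument. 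The displayed diagrammatic equalities are then just re-readings of these formulas under the conventions of Theorem \ref{thm:ConePA}(4) and the notation $L(\zeta)L(\zeta)^*=\CentralVectorOperator{n}{\zeta}{\zeta}$.

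The step I expect to be the main obstacle is making the passage from bounded $z\in Q_n^+$ to general $z\in\widehat{Q_n^+}$ fully rigorous: one must check that $L(\zeta)^*zL(\zeta)$, $pzp$, and $\Tr_n(p\cdot z)$ are each genuinely computed by the ``obvious'' monotone limit and that these limits are mutually compatible, which amounts to stitching together Lemma \ref{lem:VectorStates}, Definition \ref{defn:UnboundedRN}, and the continuity clauses of Theorem \ref{thm:BilinearExtension}. A secondary point that requires care is the identification $L(\zeta)^*zL(\zeta)\in\C 1_{L^2(A)}$: it genuinely uses $z\in Q_n$ rather than merely $z\in C_n$, since it is the commutation of $z$ with the \emph{left} $A$-action that forces $L(\zeta)^*zL(\zeta)$ to be scalar.
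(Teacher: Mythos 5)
Your proof is correct, and it reaches the result by a somewhat different route than the paper. For (1), the paper does not use a commutant argument: it observes that $z^{1/2}\zeta\in P_n$ and applies Lemma \ref{lem:ZetaRelations}(2) with $\zeta_1=\zeta_2=z^{1/2}\zeta$, so that $L(\zeta)^*zL(\zeta)=L(z^{1/2}\zeta)^*L(z^{1/2}\zeta)=\langle z^{1/2}\zeta,z^{1/2}\zeta\rangle 1_{L^2(A)}$; your alternative --- noting that $L(\zeta)$ is an $A$--$A$ bimodule map, so $L(\zeta)^*zL(\zeta)\in A'\cap(A\op)'\cap B(L^2(A))=Z(A)=\C 1$ by factoriality, and then evaluating at $\widehat{1}$ --- is equally valid and makes explicit where centrality of $\zeta$ and the condition $z\in Q_n$ (rather than just $C_n$) enter. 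For (2), the paper again works directly with $\eta=z^{1/2}\zeta\in P_n$, computing $\Tr_n(z\cdot L(\zeta)L(\zeta)^*)=\Tr_n(L(\eta)L(\eta)^*)=\|\eta\|_2^2=\tr_A(L(\zeta)^*zL(\zeta))$ via the canonical trace identity $\Tr_n(L(\eta)L(\eta)^*)=\tr_A(\langle\eta|\eta\rangle_A)$, which handles arbitrary $\zeta$ in one step and does not pass through part (1); your version instead normalizes $\zeta$, uses that $p=L(\zeta)L(\zeta)^*$ is then a projection with $\Tr_n(p)=\Tr_n\op(p)=1$, and feeds part (1) into $pzp=z(\omega_\zeta)p$. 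Both arguments rest on the same ingredients (Lemma \ref{lem:ZetaRelations} and the canonical trace formulas of Definitions \ref{defn:LeftModule}--\ref{defn:RightModule}), and both handle the passage from $Q_n^+$ to $\widehat{Q_n^+}$ by the same monotone-sup device (Definition \ref{defn:UnboundedRN}, Lemma \ref{lem:VectorStates}, and the monotone-continuity clause of Theorem \ref{thm:BilinearExtension}), so your flagged ``main obstacle'' is exactly the step the paper also dispatches by taking sups; the only cost of your route is the extra (harmless) homogeneity reduction to $\|\zeta\|_2=1$, while the paper's computation with $z^{1/2}\zeta$ avoids it.
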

\begin{proof}
\item[(1)]
We show the first equality. If $z\in Q_n^+$, this is just (2) of Lemma \ref{lem:ZetaRelations} with $\zeta_1=\zeta_2=z^{1/2}\zeta$. Now for $z\in \widehat{Q_n^+}$, pick $(z_m)\subset Q_n^+$ with $z_m\nearrow z$ to get
$$
L(\zeta)^*zL(\zeta)=\lim_{m\to\I} L(\zeta)^*z_mL(\zeta)=\lim_{m\to\I} z_m(\omega_\zeta)1_{L^2(A)}= z(\omega_\zeta)1_{L^2(A)}.
$$
The second equality is similar.
\item[(2)]
We show the second equality. We may assume $z\in Q_n^+$, after which we may take sups to get the full result. Then as $z^{1/2}\zeta\in P_n$, we have
\begin{align*}
\Tr_n(z\cdot L(\zeta)L(\zeta)^*) 
& = \Tr_n(z^{1/2} L(\zeta)L(\zeta)^* z^{1/2})
= \Tr_n(L(z^{1/2}\zeta)L(z^{1/2}\zeta)^*)\\
& = \tr_A(L(z^{1/2}\zeta)^*L(z^{1/2}\zeta))
= \tr_A(L(\zeta)^* zL(\zeta)).
\end{align*}
The other equality is similar.
\end{proof}

\begin{rem} If $a\in Q_n$, $z\in \widehat{Q_n^+}$, and $\zeta\in P_n$, 
$$
\WideTripleInnerProduct{\zeta}{a^*za}{\zeta}=(a^*za)(\omega_\zeta)=z(\omega_{a\zeta})=\TripleInnerProduct{a\zeta}{z}{a\zeta}.
$$
\end{rem}

\begin{cor}\label{cor:RemoveSubdiagram} If $\zeta_1\in P_m$, $\zeta_2\in P_n$, $z_1\in Q_m^+$, and $z_2\in Q_n^+$, then
$$
\BigInnerProduct{\zeta_1\otimes\zeta_2}{z_1\otimes_A z_2}{\zeta_1\otimes\zeta_2}
=\langle (z_1\otimes_A z_2)( \zeta_1\otimes \zeta_2),(\zeta_1\otimes\zeta_2)\rangle
=\langle z_1 \zeta_1,\zeta_1\rangle\langle z_2\zeta_2,\zeta_2\rangle
=\TripleInnerProduct{\zeta_1}{z_1}{\zeta_1}\TripleInnerProduct{\zeta_2}{z_2}{\zeta_2}.
$$
For $z_1\in \widehat{Q_m^+}$, and $z_2\in\widehat{Q_n^+}$, taking sups gives
$$
\BigInnerProduct{\zeta_1\otimes\zeta_2}{z_1\otimes_A z_2}{\zeta_1\otimes\zeta_2}
=(z_1\otimes_A z_2)(\omega_{\zeta_1\otimes\zeta_2})
=z_1(\omega_{\zeta_1})z_2(\omega_{\zeta_2})
=\TripleInnerProduct{\zeta_1}{z_1}{\zeta_1}\TripleInnerProduct{\zeta_2}{z_2}{\zeta_2}.
$$
\end{cor}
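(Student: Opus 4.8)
The plan is to reduce the left-hand identity to the sesquilinear form defining $H^m\otimes_A H^n$ together with Lemma \ref{lem:ZetaRelations}(2), and then obtain the extended-cone version by a routine passage to suprema.

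\emph{Step 1 (the bounded case).} Suppose first $z_1\in Q_m^+$ and $z_2\in Q_n^+$. By Lemma \ref{lem:CentralBounded}, $\zeta_1\in P_m\subseteq D(H^m_A)$ and $\zeta_2\in P_n\subseteq D(\sb{A}H^n)$, so $\zeta_1\otimes\zeta_2$ is a genuine elementary tensor; moreover $z_1\zeta_1\in P_m$ and $z_2\zeta_2\in P_n$, a central vector acted on by an element of the centralizer algebra being again central. First I would verify, using the description of the fiber product (Definition \ref{defn:FiberProduct} and Lemma \ref{lem:binormal}), that the bounded operator $z_1\otimes_A z_2$ satisfies $(z_1\otimes_A z_2)(\zeta_1\otimes\zeta_2)=(z_1\zeta_1)\otimes(z_2\zeta_2)$ on this elementary tensor. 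Then, by the third description of the relative tensor product in Definition \ref{defn:RelativeTensorProduct},
$$
\langle (z_1\zeta_1)\otimes(z_2\zeta_2),\,\zeta_1\otimes\zeta_2\rangle=\langle\,\langle \zeta_1\,|\,z_1\zeta_1\rangle_A\,(z_2\zeta_2),\,\zeta_2\,\rangle .
$$
Since $\zeta_1,z_1\zeta_1\in P_m$, Lemma \ref{lem:ZetaRelations}(2) gives $\langle \zeta_1\,|\,z_1\zeta_1\rangle_A=\langle z_1\zeta_1,\zeta_1\rangle\,1_{L^2(A)}$, a non-negative scalar, and the right-hand side collapses to $\langle z_1\zeta_1,\zeta_1\rangle\langle z_2\zeta_2,\zeta_2\rangle$. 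Recognizing $\langle z_i\zeta_i,\zeta_i\rangle=z_i(\omega_{\zeta_i})=\TripleInnerProduct{\zeta_i}{z_i}{\zeta_i}$ by Theorem \ref{thm:CloseOffZetas}(1) finishes this case.

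\emph{Step 2 (the extended case).} For $z_1\in\widehat{Q_m^+}$ and $z_2\in\widehat{Q_n^+}$, I would pick increasing nets $z_{1,i}\nearrow z_1$ in $Q_m^+$ and $z_{2,j}\nearrow z_2$ in $Q_n^+$. By the construction of $\otimes_A$ on extended positive cones (Appendix \ref{sec:TensorUnbounded}, Theorem \ref{thm:UnboundedIncrease}), $z_{1,i}\otimes_A z_{2,j}\nearrow z_1\otimes_A z_2$; evaluating at the vector state $\omega_{\zeta_1\otimes\zeta_2}$ and applying Step 1,
$$
(z_1\otimes_A z_2)(\omega_{\zeta_1\otimes\zeta_2})=\sup_{i,j} z_{1,i}(\omega_{\zeta_1})\,z_{2,j}(\omega_{\zeta_2})=\Big(\sup_i z_{1,i}(\omega_{\zeta_1})\Big)\Big(\sup_j z_{2,j}(\omega_{\zeta_2})\Big)=z_1(\omega_{\zeta_1})\,z_2(\omega_{\zeta_2}),
$$
the middle equality being the elementary fact that the supremum of the product of two increasing nets of non-negative reals factors (cf.\ Remark \ref{rem:sup}). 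Interpreting the outer terms via Theorem \ref{thm:CloseOffZetas}(1) as $\BigInnerProduct{\zeta_1\otimes\zeta_2}{z_1\otimes_A z_2}{\zeta_1\otimes\zeta_2}$ and $\TripleInnerProduct{\zeta_i}{z_i}{\zeta_i}$ then yields the stated diagrammatic identities.

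\emph{Main obstacle.} The only non-formal point is the first displayed identity of Step 1, namely that the fiber-product operator $z_1\otimes_A z_2$ acts on the central elementary tensor $\zeta_1\otimes\zeta_2$ by the expected formula $(z_1\zeta_1)\otimes(z_2\zeta_2)$; everything else is bookkeeping. Here one must use that central vectors are left and right $A$-bounded (Lemma \ref{lem:CentralBounded}), so that $\zeta_1\otimes\zeta_2$ lies in the domain on which the elementary-tensor formula for $\otimes_A$ of bounded operators from Appendix \ref{sec:TensorUnbounded} (and Lemma \ref{lem:binormal}) is valid, together with the fact that $z_1\zeta_1$ and $z_2\zeta_2$ are again central so that Lemma \ref{lem:ZetaRelations}(2) applies.
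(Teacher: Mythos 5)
Your argument is correct and follows the same route the paper intends: in the bounded case the defining formula $(z_1\otimes_A z_2)(\zeta_1\otimes\zeta_2)=(z_1\zeta_1)\otimes(z_2\zeta_2)$ together with the relative-tensor-product sesquilinear form and Lemma \ref{lem:ZetaRelations}(2) (noting $z_i\zeta_i$ is again central) splits the inner product, and the extended case is exactly the paper's ``taking sups,'' justified by Theorem \ref{thm:increase}/\ref{thm:UnboundedIncrease} and the factorization of suprema of increasing nets in $[0,\I]$. No gaps; the only cosmetic quibble is that Remark \ref{rem:sup} concerns sums rather than products, but the product fact you invoke is elementary.
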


\begin{thm}[$P_\bullet$ acts on $\widehat{Q_\bullet^+}$]\label{thm:action}
Given a tangle $\cT\in \B\P$ with $2n$ boundary points and a $\zeta\in P_n$, we have 
$$
\TripleInnerProduct{\zeta}{\cT}{\zeta}:=\ev_{\omega_\zeta}\circ \cT \colon V_{i_1}\times\cdots\times V_{i_k}\to [0_\R,\I_\R].
$$
In this sense, we say $P_\bullet$ \underline{acts as weights} on $\widehat{Q_\bullet^+}$. By Theorems \ref{thm:BPRelations} and \ref{thm:CloseOffZetas} and Corollary \ref{cor:RemoveSubdiagram}, we may remove closed subdiagrams and multiply by the appropriate scalar in $[0,\I]$.
\end{thm}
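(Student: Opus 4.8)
The plan is to obtain the statement directly from the $\B\P$-algebra $\widehat{Q_\bullet^+}$ of Theorem \ref{thm:ConePA} and its compatibility with $P_\bullet$ established in Theorem \ref{thm:CloseOffZetas}. First I would recall that, by Theorem \ref{thm:ConePA}, a tangle $\cT\in\B\P$ with $2n$ boundary points and input disks of colours $i_1,\dots,i_k$ is sent by the partition function to a multilinear map
$$
Z(\cT)\colon V_{i_1}\times\cdots\times V_{i_k}=\widehat{Q_{i_1}^+}\times\cdots\times\widehat{Q_{i_k}^+}\longrightarrow \widehat{Q_n^+}
$$
which is well-behaved under composition. By Lemma \ref{lem:CentralBounded}, $\zeta\in P_n$ is a left- and right-$A$-bounded vector of $H^n$, so $\omega_\zeta=\langle\,\cdot\,\zeta,\zeta\rangle$ restricts to a normal positive functional on $Q_n$; by the defining property of the extended positive cone, $\omega_\zeta$ then extends to a map $\ev_{\omega_\zeta}\colon\widehat{Q_n^+}\to[0_\R,\I_\R]$, $z\mapsto z(\omega_\zeta)$, which is additive, homogeneous, monotone and preserves increasing suprema. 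Setting $\TripleInnerProduct{\zeta}{\cT}{\zeta}:=\ev_{\omega_\zeta}\circ Z(\cT)$ thus produces a multilinear map into $[0_\R,\I_\R]$, which is the first assertion.

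Next I would justify the diagrammatic reading behind ``$P_\bullet$ acts as weights''. Attaching the two $\zeta$-boxes to the $n$ top and $n$ bottom strings of $\cT$ means, after evaluating $\cT$ to some $z=Z(\cT)(\,\cdots)\in\widehat{Q_n^+}$, capping $z$ as $L(\zeta)^*\,z\,L(\zeta)$; by Theorem \ref{thm:CloseOffZetas}(1) this equals $z(\omega_\zeta)\,1_{L^2(A)}=R(\zeta)^*\,z\,R(\zeta)$, so the capped picture really does compute $\ev_{\omega_\zeta}\circ Z(\cT)$. Since every tangle reduces to a composite of the generators by the standard-form results of Appendix \ref{sec:BP} and $Z$ respects composition, this identification is consistent however one decomposes $\cT$.

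Finally, for the local simplification, any closed subdiagram of $\TripleInnerProduct{\zeta}{\cT}{\zeta}$ evaluates to an element of $\widehat{Q_0^+}=\widehat{Z(A)^+}=[0_\R,\I_\R]$: if it is built purely from $\B\P$-generators (i.e. closed loops created by the trace tangles) it is a scalar already by Theorem \ref{thm:ConePA} and the trace identities of Theorem \ref{thm:BPRelations}, together with the product formulae $\Tr_{m+n}(z_1\otimes z_2)=\Tr_m(z_1)\Tr_n(z_2)$ and $\Tr_{m+n}\op(z_1\otimes z_2)=\Tr_m\op(z_1)\Tr_n\op(z_2)$ of Corollary \ref{cor:BPRelationsCor}; and if it also involves a $\zeta$-box cap, Theorem \ref{thm:CloseOffZetas} (and Corollary \ref{cor:RemoveSubdiagram} when the capped box is itself a relative tensor product) again returns a scalar in $[0,\I]$. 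Such a scalar factors out of the surrounding expression by the homogeneity built into the $\B\P$-action and Theorem \ref{thm:BilinearExtension}. I do not expect a substantive obstacle, since all ingredients are in hand; the one point requiring care is that the extended positive cone admits the value $\I_\R$ and multiplication by $\I_\R$ is only partially defined (cf. Appendix \ref{sec:cones}), so one must check that pulling out a closed-subdiagram factor never forces an ill-defined product $0\cdot\I_\R$ — which is precisely what the homogeneity and normality conventions of the extended-positive-cone formalism are set up to preclude.
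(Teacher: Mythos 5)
Your proposal is correct and follows essentially the same route as the paper, which gives no separate proof but justifies the statement exactly by the $\B\P$-algebra structure of Theorem \ref{thm:ConePA}, the compatibility results of Theorem \ref{thm:CloseOffZetas} and Corollary \ref{cor:RemoveSubdiagram}, and the relations of Theorem \ref{thm:BPRelations}; your only extra ingredients (Lemma \ref{lem:CentralBounded} and the canonical extension of $\omega_\zeta$ to $\widehat{Q_n^+}$) are exactly what underlies the paper's notation $z(\omega_\zeta)$. The one cosmetic point is that multiplication by $\I_\R$ is not merely partially defined but made everywhere defined by the conventions of Appendix \ref{sec:cones} (in particular $0_\R\cdot v=0_V$), so the $0\cdot\I$ worry you flag is resolved by definition rather than needing a separate check.
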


\begin{rem}
If $A\subset (B,\tr_B)$ is an inclusion of $II_1$-factors and $H=L^2(B)$, then one can also define a shaded bimodule planar operad which works similarly to the above construction. This will be explored in a future paper.
\end{rem}

%%%%%%%%%%%%%%%%%%%%%%%%%%%%%%%%%%%%%%%%%%%%%%%%%%
\section{Extremality and rotations}\label{sec:ExtremalityRotations}
For this section, $A$ is a $II_1$-factor. Assume the notation of the last section.

%%%%%%%%%%%%%%%%%%%%%%%%%%%%%%%%%%%%%%%%%%%%%%%%%%
\subsection{Extremality}\label{sec:Extremality}

\begin{defn}
$H$ is \underline{approximately extremal with constant $\lambda>0$} if on $Q_1^+$,
$$\lambda^{-1} \Tr_1 \leq \Tr_1\op \leq \lambda\Tr_1.$$
$H$ is \underline{extremal} if $\Tr_1= \Tr_1\op $ on  $Q_1^+$.
\end{defn}

The following proposition is almost identical to Proposition 2.8 in \cite{MR1622812}.
\begin{prop}[Structure of $Q_n$] \label{prop:decompose} 
$Q_n = \a_{n}\oplus \b_{n}\oplus \b_{n}\op\oplus \c_{n}$ such that
\item[$\bullet$] $\a_{n}$ is a direct sum of type $I$ factors, and for each finite rank $p\in \a_{n}$, $pA\subset pC_np$ has finite index.
\item[$\bullet$] $\Tr_n|_{\a_{n}\oplus\b_{n}}$ and $\Tr_n\op|_{\a_{n}\oplus \b_{n}\op}$ are semifinite,
\item[$\bullet$] $\b_{n}\op\oplus\c_{n}\cap \m_{\Tr_n}=\{0\}=\b_{n}\oplus \c_{n}\cap \m_{\Tr_n\op}$, and
\item[$\bullet$] If $H^n$ is symmetric, then $j_n$ fixes $\a_{n},\c_{n}$ and $j_n(\b_{n})=\b_{n}\op$.
\end{prop}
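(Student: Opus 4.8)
Following the pattern of \cite[Proposition 2.8]{MR1622812}, the plan is to apply Haagerup's dichotomy, Lemma \ref{lem:SemifiniteRestriction}, to the two inclusions $A\subseteq C_n$ and $A\op\subseteq C_n\op$ --- whose relative commutants are \emph{both} equal to $Q_n$ (indeed $A'\cap C_n=(A\op)'\cap C_n\op=A'\cap(A\op)'\cap B(H^n)$) --- and then to translate the conclusions about operator valued weights into conclusions about the traces $\Tr_n,\Tr_n\op$. Applying Lemma \ref{lem:SemifiniteRestriction}(1) to $A\subseteq C_n$ produces a central projection $z_1\in Z(Q_n)$ with $\cP_0(pC_np,pA)=\emptyset$ for all $p\in Q_n$ with $p\leq 1-z_1$, with $\cP_0(z_1C_nz_1,z_1A)=\cP(z_1C_nz_1,z_1A)$, and such that for every $T\in\cP(C_n,A)$ one has $(1-z_1)Q_n\cap\m_T=\{0\}$ and $T|_{z_1Q_n}$ semifinite; likewise $A\op\subseteq C_n\op$ yields $z_2\in Z(Q_n)$ with the ``op'' analogues relative to $\Tr_n\op$. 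I would then put $\a_n=z_1z_2Q_n$, $\b_n=z_1(1-z_2)Q_n$, $\b_n\op=(1-z_1)z_2Q_n$, and $\c_n=(1-z_1)(1-z_2)Q_n$, a direct sum decomposition of $Q_n$ since $z_1,z_2$ are commuting central projections.

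The device for passing between operator valued weights and traces is the observation that the trace-preserving weight $T_\downarrow:=T_1\circ\cdots\circ T_n\in\cP(C_n,A)$ of Proposition \ref{prop:T} maps $Q_n^+$ into $\widehat{Q_0^+}=\widehat{Z(A)^+}=[0_\R,\I_\R]$ by iterating Remark \ref{rem:TForZ}, and there coincides with $\Tr_n|_{Q_n^+}$ (it is $\Tr_n$-to-$\tr_A$ preserving and $\tr_A$ is the identity on $\widehat{Z(A)^+}$); hence for $x\in Q_n$ one has $x\in\n_{T_\downarrow}\Leftrightarrow\Tr_n(x^*x)<\I\Leftrightarrow x\in\n_{\Tr_n}$, and similarly on the ``op'' side. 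From this I get the first three bulleted properties. For semifiniteness: the set $\n_{T_\downarrow}\cap z_1Q_n$ is $\sigma$-weakly dense in $z_1Q_n$ and, for $x$ in it, $\Tr_n(x^*x)=\tr_A(T_\downarrow(x^*x))<\I$, so $\n_{\Tr_n}\cap z_1Q_n$ is $\sigma$-weakly dense, i.e.\ $\Tr_n|_{\a_n\oplus\b_n}=\Tr_n|_{z_1Q_n}$ is semifinite, and symmetrically $\Tr_n\op|_{\a_n\oplus\b_n\op}$ is semifinite. For triviality of the $\m$-intersection: if $w\in(\b_n\op\oplus\c_n)\cap\m_{\Tr_n}=(1-z_1)Q_n\cap\m_{\Tr_n}$ then $|w|^{1/2}\in(1-z_1)Q_n$ with $\Tr_n\big((|w|^{1/2})^*|w|^{1/2}\big)=\Tr_n(|w|)<\I$, so $|w|^{1/2}\in\n_{T_\downarrow}$, whence $|w|\in(1-z_1)Q_n\cap\m_{T_\downarrow}=\{0\}$ and $w=0$; symmetrically $(\b_n\oplus\c_n)\cap\m_{\Tr_n\op}=\{0\}$.

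For the type $I$/finite-index statement (assume $\a_n\neq 0$) I would apply Lemma \ref{lem:SemifiniteRestriction}(2) to the reduced inclusion $\a_nA\subseteq\a_nC_n\a_n$, writing $\a_n$ also for the central projection $z_1z_2\in Q_n$, which commutes with $A$ and $A\op$. One computes $\a_nA\cong A$, $(\a_nA)'\cap\a_nC_n\a_n=\a_nQ_n$, $(\a_nA)'=\a_nC_n\op\a_n$, and $(\a_nC_n\a_n)'=\a_nA\op$ (using $C_n'=A\op$ and $A'\cap B(H^n)=C_n\op$), so the two hypotheses of Lemma \ref{lem:SemifiniteRestriction}(2) become $\cP_0(\a_nC_n\a_n,\a_nA)\neq\emptyset$ and $\cP_0(\a_nC_n\op\a_n,\a_nA\op)\neq\emptyset$. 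Both follow from the standard fact that when $N\subseteq M$ with $N$ a $II_1$-factor, $M$ semifinite, and $S\colon M\to N$ the trace-preserving operator valued weight, bimodularity and normality force $S(x)\in\widehat{Z(N)^+}=[0,\I]1_N$ for $x\in(N'\cap M)^+$ (since $uS(x)u^*=S(x)$ for all $u\in U(N)$), whence $S(x)=\Tr_M(x)1_N$ and $S|_{N'\cap M}$ is semifinite iff $\Tr_M|_{N'\cap M}$ is; here $\Tr_n|_{\a_nC_n\a_n}$ is semifinite because $\a_n\leq z_1$ is a supremum of $\Tr_n$-finite projections (by the previous paragraph), $\Tr_n|_{\a_nQ_n}$ is semifinite (again $\a_n\leq z_1$), and the ``op'' side uses $\a_n\leq z_2$ identically. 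Lemma \ref{lem:SemifiniteRestriction}(2) then gives exactly that $\a_n=\a_nQ_n$ is a direct sum of type $I$ factors and that $pA\subseteq pC_np$ has finite index for each finite rank $p\in\a_n$.

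Finally, when $H^n$ is symmetric, $j_n\colon x\mapsto J_nx^*J_n$ is a normal $*$-anti-automorphism of $Q_n$ carrying the inclusion $A\subseteq C_n$ onto $A\op\subseteq C_n\op$ and $\Tr_n$ onto $\Tr_n\op$ (Remark \ref{rem:symmetric}); since $z_1,z_2$ are the unique central projections singled out by the conditions of Lemma \ref{lem:SemifiniteRestriction}(1), which are invariant under $*$-(anti)isomorphisms, $j_n(z_1)=z_2$ and $j_n(z_2)=z_1$, so $j_n$ fixes $\a_n$ and $\c_n$ and swaps $\b_n$ with $\b_n\op$. The step I expect to require the most care is not invoking Lemma \ref{lem:SemifiniteRestriction} but bridging the gap between the operator valued weights it concerns and the traces $\Tr_n,\Tr_n\op$ appearing in the statement; this rests entirely on the ``scalar-valued on the relative commutant'' phenomenon (Remark \ref{rem:TForZ} together with the $II_1$-factor commutant argument above), while the commutant identifications feeding Lemma \ref{lem:SemifiniteRestriction}(2) are routine but must be set up carefully.
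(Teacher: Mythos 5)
Your proof is correct and takes essentially the same route as the paper: apply Lemma \ref{lem:SemifiniteRestriction} to the two inclusions $A\subset C_n$ and $A\op\subset C_n\op$ (both with relative commutant $Q_n$), take the resulting central projections $z_n,z_n\op\in Z(Q_n)$, and set $\a_n=z_nz_n\op Q_n$, $\b_n=z_n(1-z_n\op)Q_n$, $\b_n\op=(1-z_n)z_n\op Q_n$, $\c_n=(1-z_n)(1-z_n\op)Q_n$. The paper compresses everything after this into ``the rest follows immediately,'' and your additional work --- passing between operator valued weights and $\Tr_n,\Tr_n\op$ via their scalar values on the relative commutant, reducing by $z_nz_n\op$ to invoke part (2) of the lemma for the type $I$/finite-index claim, and transporting the uniqueness of $z_n,z_n\op$ through $j_n$ in the symmetric case --- is precisely the detail being left to the reader.
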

\begin{proof}
By Lemma \ref{lem:SemifiniteRestriction}, let $z_{n},z_{n}\op\in Q_n$ be the unique central projections corresponding to $A\subset C_n$ and $A\op\subset C_n\op$. Set
\begin{align*}
\a_{n}&= z_{n}z_{n}\op Q_n & \b_{n}&= z_{n}(1-z_{n}\op)Q_n\\
\b_{n}\op &= (1-z_{n})z_{n}\op Q_n & \c_{n} &= (1-z_{n})(1-z_{n}\op) Q_n,
\end{align*}
and the rest follows immediately.
\end{proof}

\begin{prop}  Let $Q_1 = \a_1\oplus\b_1\oplus\b_1\op\oplus\c_1$ as in Proposition \ref{prop:decompose}. The following are equivalent:
\item[(1)] $H$ is approximately extremal with constant $\lambda>0$, and
\item[(2)] $\b_1=\b_1\op=\{0\}$ and there is a $\lambda>0$ such that on $Q_1^+\cap \a_1$,
$$\lambda^{-1} \Tr_1 \leq \Tr_1\op \leq \lambda\Tr_1.$$
\item A similar result holds for the extremal case.
\end{prop}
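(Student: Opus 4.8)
The plan is to read the statement off the structure theorem (Proposition~\ref{prop:decompose}), exploiting the different behaviour of $\Tr_1$ and $\Tr_1\op$ on the four summands of $Q_1 = \a_1 \oplus \b_1 \oplus \b_1\op \oplus \c_1$. Recall from that proposition that $\Tr_1$ is semifinite on $\a_1 \oplus \b_1$, hence on the central summand $\b_1$, that $\Tr_1\op$ is semifinite on $\a_1 \oplus \b_1\op$, hence on $\b_1\op$, and that $(\b_1 \oplus \c_1) \cap \m_{\Tr_1\op} = \{0\} = (\b_1\op \oplus \c_1) \cap \m_{\Tr_1}$.

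First I would prove $(1) \Rightarrow (2)$. Assume $\lambda^{-1}\Tr_1 \le \Tr_1\op \le \lambda\Tr_1$ on $Q_1^+$. The crux is that on $\b_1$ the two traces are incompatible: since $\b_1 \cap \m_{\Tr_1\op} = \{0\}$, every nonzero projection $p \in \b_1$ satisfies $\Tr_1\op(p) = \I$ (otherwise $p = p^*p$ would lie in $\n_{\Tr_1\op}^*\n_{\Tr_1\op} = \m_{\Tr_1\op}$); on the other hand, semifiniteness of $\Tr_1$ on $\b_1$ produces, if $\b_1 \ne \{0\}$, a nonzero projection $p \in \b_1$ with $\Tr_1(p) < \I$. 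For this $p$ we get $\Tr_1\op(p) = \I > \lambda\Tr_1(p)$, contradicting the hypothesis; hence $\b_1 = \{0\}$. The identical argument with the roles of $\Tr_1$ and $\Tr_1\op$ interchanged (using $\Tr_1\op \ge \lambda^{-1}\Tr_1$ and $\b_1\op \cap \m_{\Tr_1} = \{0\}$) gives $\b_1\op = \{0\}$. The inequality on $Q_1^+$ then restricts in particular to $Q_1^+ \cap \a_1$, which is $(2)$.

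For $(2) \Rightarrow (1)$, suppose $\b_1 = \b_1\op = \{0\}$, so $Q_1 = \a_1 \oplus \c_1$ with $\a_1, \c_1$ complementary central summands; every $z \in Q_1^+$ then decomposes as $z = z_\a + z_\c$ with $z_\a \in \a_1^+$ and $z_\c \in \c_1^+$, and both $\Tr_1$ and $\Tr_1\op$ are additive across this splitting. On $\a_1^+$ the required two-sided bound is exactly the hypothesis of $(2)$. On $\c_1^+$ it is automatic: if $x \in \c_1^+$ is nonzero, choose $\varepsilon > 0$ with $p = \mathbf{1}_{[\varepsilon,\I)}(x) \ne 0$; since $p \notin \m_{\Tr_1}$ and $p \notin \m_{\Tr_1\op}$ we get $\Tr_1(p) = \Tr_1\op(p) = \I$, whence $\Tr_1(x) = \Tr_1\op(x) = \I$ (using $x \ge \varepsilon p$), while both vanish at $x = 0$. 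Adding the $\a_1$ and $\c_1$ contributions yields $\lambda^{-1}\Tr_1(z) \le \Tr_1\op(z) \le \lambda\Tr_1(z)$ for all $z \in Q_1^+$. The extremal case is the same argument with $\lambda = 1$ throughout.

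The only delicate point — and the thing I would be most careful about — is the interplay between finiteness of a trace on a projection and membership in the trace ideal $\m$: one must check that semifiniteness on the central summand $\b_1$ genuinely delivers a \emph{nonzero} finite projection there, and that the ideal conditions of Proposition~\ref{prop:decompose} are precisely what forbid such a projection from having finite trace for the other weight. Once those observations are in place, the rest is routine bookkeeping with central decompositions, so I do not anticipate any serious obstacle.
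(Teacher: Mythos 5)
Your proof is correct and follows essentially the same route as the paper: use Proposition \ref{prop:decompose} to produce a nonzero element of $\b_1$ (resp.\ $\b_1\op$) with finite $\Tr_1$ (resp.\ $\Tr_1\op$), contradict the ideal condition via the extremality inequality, and observe that on $\c_1^+$ both traces are identically $\I$ away from $0$ so the converse is immediate. The only cosmetic difference is that the paper works with a positive element of $\m_{\Tr_1}$ rather than a finite-trace projection, which skips your spectral-projection step but is the same idea.
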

\begin{proof}
\itm{(1)\Rightarrow(2)} Suppose $H$ is approximately extremal. We show $\b_1=\{0\}$. As $\Tr_1|_{\a_1\oplus\b_1}$ is semifinite by Proposition \ref{prop:decompose}, we choose $z\in \b_1$ such that $z\geq 0$ and $z\in \m_{\Tr_1}$. Then  $z\in \m_{\Tr_1\op}$, but $\b_1\cap \m_{\Tr_1\op}=\{0\}$.  
Similarly $\b_1\op=\{0\}$.
\itm{(2)\Rightarrow(1)} 
$\Tr_1|_{\c_1\cap Q_1^+}=\Tr_1\op|_{\c_1\cap Q_1^+}=\I$.
\end{proof}

\begin{cor}
$H$ is extremal if and only if for each Hilbert $A-A$ bimodule $K\subset H$, the left and right von Neumann dimensions agree.
\end{cor}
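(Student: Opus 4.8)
The plan is to reduce the assertion to comparing the canonical traces $\Tr_1$ and $\Tr_1\op$ on the projections of $Q_1$. First I would record the bijection between closed Hilbert $A-A$ sub-bimodules $K\subseteq H$ and projections $p\in Q_1$: if $K\subseteq H$ is a closed $A-A$ sub-bimodule, then $K^\perp$ is invariant under the adjoints of the left and right $A$-actions, hence under $A$ and $A\op$ themselves, so the orthogonal projection $p$ of $H$ onto $K$ lies in $A'\cap(A\op)'\cap B(H)=Q_1$; conversely every projection $p\in Q_1$ gives such a sub-bimodule $pH$. Under this correspondence the commutants of the one-sided $A$-actions on $K=pH$ are the reduced algebras $pC_1p$ and $pC_1\op p$, so by the standard identification of von Neumann dimension with the value of the canonical trace on the identity we have $\dim_{-A}(K)=\Tr_1(p)$ and $\dim_{A-}(K)=\Tr_1\op(p)$; this is consistent with $\dim_{-A}(H)=T_1(1)$ and $\dim_{A-}(H)=T_1\op(1)$ of Theorem \ref{thm:ConePA}(6) together with Corollary \ref{cor:Z1Traces}, the dimensions being normalized so that $L^2(A)$ has dimension $1$, which is exactly the normalization of $\Tr_1$ on $C_1$ and $\Tr_1\op$ on $C_1\op$.

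With this dictionary in hand the forward implication is immediate: if $H$ is extremal then $\Tr_1=\Tr_1\op$ on $Q_1^+$, in particular $\Tr_1(p)=\Tr_1\op(p)$ for every projection $p\in Q_1$, i.e.\ $\dim_{-A}(K)=\dim_{A-}(K)$ for every Hilbert $A-A$ sub-bimodule $K\subseteq H$. For the converse, the hypothesis says exactly that $\Tr_1$ and $\Tr_1\op$ agree on all projections of $Q_1$; since both are normal traces on $Q_1$ (restrictions to the von Neumann subalgebra $Q_1\subseteq C_1,C_1\op$ of the canonical traces), I would upgrade this to agreement on all of $Q_1^+$ by approximating an arbitrary $z\in Q_1^+$ from below by the dyadic sums $z_n=2^{-n}\sum_{k\geq1}\chi_{(k2^{-n},\I)}(z)$, which are finite positive combinations of spectral projections of $z$ (hence lie in $Q_1$) and increase to $z$; then $\Tr_1(z)=\sup_n\Tr_1(z_n)=\sup_n\Tr_1\op(z_n)=\Tr_1\op(z)$. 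Thus $H$ is extremal.

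The one delicate point is this last upgrade from projections to all of $Q_1^+$: neither $\Tr_1|_{Q_1}$ nor $\Tr_1\op|_{Q_1}$ need be semifinite — on the summand $\c_1$ of Proposition \ref{prop:decompose} both are identically $\I$, and on $\b_1$ (resp.\ $\b_1\op$) exactly one of them is semifinite while the other is identically $\I$ — but the dyadic-sum argument works uniformly precisely because it only ever evaluates the two traces on projections and then uses normality to pass to the supremum, never subtracting possibly-infinite values. (Alternatively, one may argue through Proposition \ref{prop:decompose} directly: the hypothesis already forces $\b_1=\b_1\op=\{0\}$, since $\Tr_1$ is semifinite on $\b_1$ whereas $\Tr_1\op$ is identically $\I$ there, and one is then left only to compare the semifinite traces $\Tr_1,\Tr_1\op$ on the type $I$ summand $\a_1$, where agreement on projections classically implies agreement everywhere.)
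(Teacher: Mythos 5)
Your argument is correct, and it is essentially the argument the paper leaves implicit (the corollary is stated without proof after the proposition on the decomposition of $Q_1$): identify sub-bimodules $K\subseteq H$ with projections $p\in Q_1$, note $\dim_{-A}(K)=\Tr_1(p)$ and $\dim_{A-}(K)=\Tr_1\op(p)$, and upgrade agreement on projections to agreement on $Q_1^+$ by normality. Your parenthetical alternative, ruling out $\b_1,\b_1\op$ and then comparing the two semifinite traces on $\a_1$, is exactly the route suggested by Proposition \ref{prop:decompose}, so both versions align with the paper's intended reasoning.
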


\begin{rem}
If $H$ has a two-sided basis $\{\gamma\}$, then $H$ is extremal as 
$$
\Tr_1 = \sum_\gamma \langle \,\cdot\,\gamma, \gamma\rangle = \Tr_1\op.
$$
\end{rem}

\begin{rem}\label{lem:SwitchBasis}
If $H$ is approximately extremal and $z\in \widehat{Q_1^+}$, then there is a $\lambda >0$ such that
$$
\lambda^{-1} \sum_\beta z(\omega_\beta) \leq  \sum_\alpha z(\omega_\alpha) \leq \lambda \sum_\beta z(\omega_\beta).
$$
If $H$ is extremal, then the above holds with $\lambda = 1$.
\end{rem}

\begin{thm}\label{thm:extremal}
\item[(1)] If $H$ is (approximately) extremal (with constant $\lambda>0$), then $H^n$ is (approximately) extremal for all $n\geq 1$ (with constant $\lambda^n$). 
\item[(2)] If $H^n$ is (approximately) extremal for some $n\geq 1$, then $H$ is (approximately) extremal.
\end{thm}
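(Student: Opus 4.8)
The plan is to prove (1) by induction on $n$ (the case $n=1$ being the hypothesis) and then to deduce (2) by a direct ``spacer'' argument. For the inductive step in (1), fix $z\in Q_n^+$ and use the decomposition $H^n=H^{n-1}\otimes_A H$, so that the last tensor factor is a copy of the bimodule $H$. Propositions \ref{prop:T} and \ref{prop:CrossT}, together with Remark \ref{rem:TForZ}, furnish two operator valued weights $Q_n^+\to\widehat{Q_{n-1}^+}$ that ``integrate out'' this last copy of $H$: the map $T_n$, built from an $H_A$-basis, which is compatible with the right-hand traces in the sense that $\Tr_n(z)=\Tr_{n-1}(T_n(z))$, and the map $\widetilde{T_n\op}$, built from an $\sb{A}H$-basis, which is compatible with the left-hand traces, $\Tr_n\op(z)=\Tr_{n-1}\op(\widetilde{T_n\op}(z))$. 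These two maps differ only in the basis chosen on the last $H$.

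The heart of the argument is the comparison
\[
\lambda^{-1}\,T_n(z)\ \le\ \widetilde{T_n\op}(z)\ \le\ \lambda\,T_n(z)\qquad\text{in }\widehat{Q_{n-1}^+},
\]
with equality in the extremal case. To prove it I would test on the vector states $\omega_\zeta$ with $\zeta$ ranging over the dense set $B^{n-1}\subseteq H^{n-1}$ (Lemma \ref{lem:VectorStates}); there $T_n(z)(\omega_\zeta)$ and $\widetilde{T_n\op}(z)(\omega_\zeta)$ become respectively the ``$\Tr_1$-type'' and ``$\Tr_1\op$-type'' basis sums of the single operator $L_\zeta^* z L_\zeta$ on the last copy of $H$, so the bound is exactly approximate extremality of $H$. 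The one technical point to handle with care is that $L_\zeta^* z L_\zeta$ lies a priori in $(A\op)'\cap B(H)$ but not necessarily in $Q_1$, so one needs the routine extension of Remark \ref{lem:SwitchBasis}: for approximately extremal $H$ the $H_A$- and $\sb{A}H$-basis sums of any positive element of $(A\op)'\cap B(H)$ are comparable up to $\lambda$. Granting the comparison, homogeneity and monotonicity of $\Tr_{n-1}\op$ on $\widehat{Q_{n-1}^+}$ (Theorem \ref{thm:BilinearExtension}) and the inductive hypothesis applied to $T_n(z)\in\widehat{Q_{n-1}^+}$ give
\[
\Tr_n\op(z)=\Tr_{n-1}\op(\widetilde{T_n\op}(z))\ \le\ \lambda\,\Tr_{n-1}\op(T_n(z))\ \le\ \lambda\cdot\lambda^{\,n-1}\Tr_{n-1}(T_n(z))\ =\ \lambda^{\,n}\Tr_n(z),
\]
and the reverse inequality is symmetric; when $\lambda=1$ this collapses to $\Tr_n=\Tr_n\op$ on $Q_n^+$. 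For $n=2$ the step is completely transparent, since then ``$H^{n-1}$'' is a single copy of $H$; one even has the exact identity $\Tr_1\op(T_2(z))=\Tr_1(\widetilde{T_2\op}(z))$, and the estimate reduces to two applications of approximate extremality of $H$.

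For (2), suppose $H^n$ is (approximately) extremal with constant $\mu$. For $w\in Q_1^+$ the operator $w\otimes_A\id_{H^{n-1}}$ lies in $Q_n^+$, and by multiplicativity of the canonical traces (Corollary \ref{cor:BPRelationsCor}) one has $\Tr_n(w\otimes_A\id_{H^{n-1}})=\dim_{A-}(H)^{\,n-1}\Tr_1(w)$ and $\Tr_n\op(w\otimes_A\id_{H^{n-1}})=\dim_{-A}(H)^{\,n-1}\Tr_1\op(w)$. Feeding $w\otimes_A\id_{H^{n-1}}$ into the (approximate) extremality of $H^n$ and cancelling the dimension factors yields the corresponding comparison of $\Tr_1$ and $\Tr_1\op$ on $Q_1^+$ whenever $\dim_{\pm A}(H)<\I$; if $\dim_{\pm A}(H)=\I$ one argues instead from Proposition \ref{prop:decompose}, since the same substitution then forces $\Tr_1$ and $\Tr_1\op$ to take only the values $0$ and $\I$ on $Q_1^+$, whence $\a_1\oplus\b_1\oplus\b_1\op=0$ and $\Tr_1=\Tr_1\op\equiv\I$ on $Q_1^+\setminus\{0\}$, i.e.\ $H$ is extremal.

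I expect the comparison $\lambda^{-1}T_n(z)\le\widetilde{T_n\op}(z)\le\lambda T_n(z)$ in (1) to be the main obstacle: one must pin down the two basis-dependent reductions of an arbitrary element of $Q_n^+$ precisely enough to see that passing from $H^{n-1}$ to $H^n$ costs exactly one extra factor of $\lambda$, and this is where the interplay of $C_n$, $C_n\op$, $Q_n$ and the well-definedness of everything over the extended positive cones has to be handled carefully.
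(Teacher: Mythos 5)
Both halves of your argument have genuine gaps. In (1), the crux is the operator inequality $\lambda^{-1}T_n(z)\le\widetilde{T_n\op}(z)\le\lambda T_n(z)$ in $\widehat{Q_{n-1}^+}$, and your justification---test on $\omega_\zeta$, reduce to $w=L_\zeta^*zL_\zeta\in((A\op)'\cap B(H))^+$, and invoke a ``routine extension'' of Remark \ref{lem:SwitchBasis} comparing its $\sb{A}H$- and $H_A$-basis sums---fails, because for $w\notin Q_1$ the sum $\sum_\alpha\langle w\alpha,\alpha\rangle$ is neither basis-independent nor controlled by extremality. Concretely, for the trivial (extremal, $\lambda=1$) bimodule $H=L^2(A)$ choose partial isometries with $x_1^*x_1=p$, $x_2^*x_2=1-p$, $x_1x_1^*=x_2x_2^*=q$, $\tr_A(p)=\tfrac12$; then $\{\widehat{x_1},\widehat{x_2}\}$ is an $\sb{A}H$-basis, and $\sum_i\langle w\widehat{x_i},\widehat{x_i}\rangle=2\tr_A(wq)$ vanishes at $w=1-q$ while $\Tr_1(1-q)=\tfrac12$. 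For your particular $w=L_\zeta^*zL_\zeta$ the $\alpha$-sum is basis-independent only because it equals $\widetilde{T_n\op}(z)(\omega_\zeta)$, i.e.\ exactly the quantity you are trying to bound, so the reduction is unsupported; and the natural alternative (uniqueness of trace-preserving operator valued weights) presupposes $\Tr_k=\Tr_k\op$ for $k=n-1,n$---indeed the paper obtains $T_n=\widetilde{T_n\op}$ in the extremal case as a corollary \emph{after} Theorem \ref{thm:extremal}, not on the way to it. The paper's induction never compares the two operator valued weights at the operator level: it writes $\Tr_n(z)=\Tr_{n-1}(T_n(z))$, applies extremality of $H^{n-1}$ to $T_n(z)\in\widehat{Q_{n-1}^+}$ (Remark \ref{rem:TForZ}), uses the exchange relation of Theorem \ref{thm:BPRelations} to rewrite $\Tr_{n-1}\op(T_n(z))=\Tr_1\bigl(T_2\op\cdots T_n\op(z)\bigr)$, and applies extremality of $H$ to $T_2\op\cdots T_n\op(z)\in\widehat{Q_1^+}$; in the approximate case the two flips cost $\lambda^{n-1}$ and $\lambda$. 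Every element that gets ``flipped'' stays in a centralizer cone, which is exactly where extremality has content.

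In (2), the test element $w\otimes_A\id_{H^{n-1}}$ only helps when $\dim_{\pm A}(H)<\I$. When both dimensions are infinite---the case this paper is about---one has $\Tr_n(w\otimes_A\id)=\I=\Tr_n\op(w\otimes_A\id)$ for every nonzero $w\in Q_1^+$ (the canonical traces are faithful), so the substitution carries no information at all; in particular it does not force $\Tr_1,\Tr_1\op$ to be $\{0,\I\}$-valued on $Q_1^+$, and that asserted dichotomy is not believable anyway (infinite index inclusions as in Example \ref{ex:InfiniteIndex}, or the examples with finite-dimensional $Q_n$ in Section \ref{sec:Examples}, have canonical traces with finite nonzero values on $Q_1$ while extremality remains a nontrivial condition). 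The paper's test element is $z\otimes_A\cdots\otimes_A z\in Q_n^+$, for which Corollary \ref{cor:BPRelationsCor} gives $\Tr_n(z\otimes_A\cdots\otimes_A z)=\Tr_1(z)^n$ and $\Tr_n\op(z\otimes_A\cdots\otimes_A z)=\Tr_1\op(z)^n$; comparing these and taking $n$th roots yields (approximate) extremality of $H$ uniformly, with no finiteness assumption on the dimensions.
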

\begin{proof}
We prove the extremal case, and the approximately extremal case is similar. 
\item[(1)]
We use strong induction on $n$. Suppose $H^1$ and $H^n$ are extremal. If $z\in Q_{n+1}^+$,
$$
\Tr_{n+1}(z)=\TraceEllipse{n+1}{z}=\ExtremalityOne{n}{z}=\ExtremalityTwo{n}{z}=\TraceOpEllipse{n+1}{z}=\Tr_{n+1}\op(z).
$$
Hence $H^{n+1}$ is extremal. 
\item[(2)] Suppose $H^n$ is extremal and $z\in Q_1^+$. Then $z\otimes_A \cdots \otimes_A z\in Q_n^+$. By the bimodule planar calculus,
\begin{align*}
\left(\begin{tikzpicture}[rectangular]
	\clip (1.25,1.5) --(-.55,1.5) -- (-.55,-1.5) -- (1.25,-1.5);
	\draw[] (.8,.5)--(1,.2)--(1.2,.5);
	\draw[] (0,.5) arc (180:0:.5cm) --(1,-.5) arc (0:-180:.5cm);
	\filldraw[thick, unshaded] (-.5,.5) --(-.5,-.5) -- (.5,-.5) -- (.5,.5)--(-.5,.5);
% 	\draw[ultra thick, unshaded] (.5,-1) ellipse (.65cm and .25cm);
	\node at (0,0) {$z$};
%	\node at (.5,-1.25) {{\scriptsize{$z$}}};
\end{tikzpicture}\right)^n
& =
\begin{tikzpicture}[rectangular]
	\clip (2.5,1.5) --(-3,1.5) -- (-3,-1.5) -- (2.5,-1.5);
	\draw[] (1.8,.5)--(2,.2)--(2.2,.5);
	\draw[] (-2,.5) .. controls ++(90:1.2cm) and ++(90:1.2cm) ..(2,.5)--(2,-.5) .. controls ++(270:1.2cm) and ++(270:1.2cm) ..(-2,-.5);
	\draw[] (.8,.5)--(1,.2)--(1.2,.5);
	\draw[] (0,.5) arc (180:0:.5cm) --(1,-.5) arc (0:-180:.5cm);
	\filldraw[thick, unshaded] (-2.5,.5) --(-2.5,-.5) -- (-1.5,-.5) -- (-1.5,.5)--(-2.5,.5);
	\filldraw[thick, unshaded] (-.5,.5) --(-.5,-.5) -- (.5,-.5) -- (.5,.5)--(-.5,.5);
% 	\draw[ultra thick, unshaded] (.5,-1) ellipse (.65cm and .25cm);
	\node at (0,0) {$z$};
	\node at (-2,0) {$z$};
	\node at (1.5,0) {{\scriptsize{$\cdots$}}};
	\node at (-1,0) {{\scriptsize{$\cdots$}}};
\end{tikzpicture}
=
\begin{tikzpicture}[rectangular]
	\clip (1.95,1.5) --(-1.5,1.5) -- (-1.5,-1.5) -- (1.95,-1.5);
	\draw[ultra thick] (1.5,.5)--(1.7,.2)--(1.9,.5);
	\draw[ultra thick] (.2,.5) arc (180:0:.75cm) --(1.7,-.5) arc (0:-180:.75cm);
	\filldraw[thick, unshaded] (-1.3,.5) --(-1.3,-.5) -- (1.3,-.5) -- (1.3,.5)--(-1.3,.5);
% 	\draw[ultra thick, unshaded] (.5,-1) ellipse (.65cm and .25cm);
	\node at (0,0) {{\scriptsize{$z\otimes_A\cdots\otimes_A z$}}};
	\node at (.4,-1.25) {{\scriptsize{$n$}}};
\end{tikzpicture}
\\
& =
\begin{tikzpicture}[rectangular]
	\clip (-1.95,1.5) --(1.5,1.5) -- (1.5,-1.5) -- (-1.95,-1.5);
	\draw[ultra thick] (-1.5,.5)--(-1.7,.2)--(-1.9,.5);
	\draw[ultra thick] (-.2,.5) arc (0:180:.75cm) --(-1.7,-.5) arc (-180:0:.75cm);
	\filldraw[thick, unshaded] (-1.3,.5) --(-1.3,-.5) -- (1.3,-.5) -- (1.3,.5)--(-1.3,.5);
% 	\draw[ultra thick, unshaded] (.5,-1) ellipse (.65cm and .25cm);
	\node at (0,0) {{\scriptsize{$z\otimes_A\cdots\otimes_A z$}}};
	\node at (-.4,-1.25) {{\scriptsize{$n$}}};
\end{tikzpicture}
=
\begin{tikzpicture}[rectangular]
	\clip (-2.5,1.5) --(3,1.5) -- (3,-1.5) -- (-2.5,-1.5);
	\draw[] (-1.8,.5)--(-2,.2)--(-2.2,.5);
	\draw[] (2,.5) .. controls ++(90:1.2cm) and ++(90:1.2cm) ..(-2,.5)--(-2,-.5) .. controls ++(270:1.2cm) and ++(270:1.2cm) ..(2,-.5);
	\draw[] (-.8,.5)--(-1,.2)--(-1.2,.5);
	\draw[] (0,.5) arc (0:180:.5cm) --(-1,-.5) arc (-180:0:.5cm);
	\filldraw[thick, unshaded] (2.5,.5) --(2.5,-.5) -- (1.5,-.5) -- (1.5,.5)--(2.5,.5);
	\filldraw[thick, unshaded] (-.5,.5) --(-.5,-.5) -- (.5,-.5) -- (.5,.5)--(-.5,.5);
% 	\draw[ultra thick, unshaded] (.5,-1) ellipse (.65cm and .25cm);
	\node at (0,0) {$z$};
	\node at (2,0) {$z$};
	\node at (-1.5,0) {{\scriptsize{$\cdots$}}};
	\node at (1,0) {{\scriptsize{$\cdots$}}};
\end{tikzpicture}
=
\left(\begin{tikzpicture}[rectangular]
	\clip (-1.25,1.5) --(.55,1.5) -- (.55,-1.5) -- (-1.25,-1.5);
	\draw[] (-.8,.5)--(-1,.2)--(-1.2,.5);
	\draw[] (0,.5) arc (0:180:.5cm) --(-1,-.5) arc (-180:0:.5cm);
	\filldraw[thick, unshaded] (-.5,.5) --(-.5,-.5) -- (.5,-.5) -- (.5,.5)--(-.5,.5);
% 	\draw[ultra thick, unshaded] (.5,-1) ellipse (.65cm and .25cm);
	\node at (0,0) {$z$};
%	\node at (.5,-1.25) {{\scriptsize{$z$}}};
\end{tikzpicture}\right)^n.
\end{align*}
In equations:
$$
\Tr_1(z)^n = \Tr_n (z\otimes_A \cdots \otimes_A z) = \Tr_n\op(z\otimes_A \cdots \otimes_A z)=\Tr_1\op(z)^n.
$$
Taking $n\thh$ roots gives the desired result.
\end{proof}

\begin{prop}
If $H$ is extremal and $z\in \widehat{Q_n^+}$, then $\sum_\beta R_\beta^* z R_\beta=\sum_\alpha R_\alpha^* zR_\alpha$ and $\sum_\alpha L_\alpha^* z L_\alpha = \sum_\beta L_\beta^*zL_\beta$.
\end{prop}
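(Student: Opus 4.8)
The plan is to recognize both sides as the trace‑preserving operator‑valued weight $Q_n\to Q_{n-1}$ for the inclusion $i_{n-1}\colon Q_{n-1}\hookrightarrow Q_n$ of Proposition~\ref{prop:include} (adjoining a strand on the right), the only difference being whether the normalization is taken against the traces $\Tr_\bullet$ or against the traces $\Tr_\bullet\op$; extremality is exactly what forces the two normalizations to coincide.

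First I would reduce to $z\in Q_n^+$: if $z_k\in Q_n^+$ increases to $z\in\widehat{Q_n^+}$, then by the definition of $R_\gamma^*zR_\gamma$ for unbounded $z$ (Definition~\ref{defn:UnboundedRN}) each $R_\gamma^*z_kR_\gamma$ increases to $R_\gamma^*zR_\gamma$, and interchanging this supremum with the (positive) sum over basis vectors gives $\sum_\gamma R_\gamma^*z_kR_\gamma\nearrow\sum_\gamma R_\gamma^*zR_\gamma$ (similarly for the $L$'s and the $\alpha$‑sums); by left--right symmetry it then suffices to prove $\sum_\beta R_\beta^*zR_\beta=\sum_\alpha R_\alpha^*zR_\alpha$ for $z\in Q_n^+$. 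Now $\sum_\beta R_\beta^*zR_\beta=T_n(z)$ by Proposition~\ref{prop:T}, this lies in $\widehat{Q_{n-1}^+}$ by Remark~\ref{rem:TForZ}, and since the operator‑valued‑weight identity $T_n(i_{n-1}(a)^*\,x\,i_{n-1}(a))=a^*T_n(x)a$ already holds for $a\in C_{n-1}\supseteq Q_{n-1}$, the restriction $T_n|_{Q_n^+}$ is an operator‑valued weight $Q_n\to Q_{n-1}$ for $i_{n-1}$ which is trace‑preserving for the pair $(\Tr_n,\Tr_{n-1})$. On the other hand $\sum_\alpha R_\alpha^*zR_\alpha=\widetilde{T_n\op}(z)$, which by Proposition~\ref{prop:CrossT} is \emph{the unique} operator‑valued weight $Q_n\to Q_{n-1}$ for the same inclusion $i_{n-1}$ that is trace‑preserving for the pair $(\Tr_n\op,\Tr_{n-1}\op)$.

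Since $H$ is extremal, Theorem~\ref{thm:extremal}(1) gives that $H^{n-1}$ and $H^n$ are extremal, i.e.\ $\Tr_{n-1}=\Tr_{n-1}\op$ on $\widehat{Q_{n-1}^+}$ and $\Tr_n=\Tr_n\op$ on $\widehat{Q_n^+}$. Hence $T_n|_{Q_n^+}$ is \emph{itself} a trace‑preserving operator‑valued weight for exactly the data characterizing $\widetilde{T_n\op}$, so the uniqueness clause of Proposition~\ref{prop:CrossT} forces $T_n|_{Q_n^+}=\widetilde{T_n\op}$, which is the first identity; the second follows by the symmetric argument with $T_n\op,\widetilde{T_n}$ in place of $T_n,\widetilde{T_n\op}$. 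In the case $n=1$ this is completely explicit: $\sum_\beta R_\beta^*zR_\beta=\Tr_1(z)1_{L^2(A)}$ by Corollary~\ref{cor:Z1Traces}, while $zR(\alpha)=R(z\alpha)$ (as $z$ commutes with the left $A$‑action) gives $\sum_\alpha R_\alpha^*zR_\alpha=J\bigl(\sum_\alpha\sb{A}\langle z^{1/2}\alpha,z^{1/2}\alpha\rangle\bigr)J$, whose coefficient --- the $A$‑valued density of the family $\{z^{1/2}\alpha\}$ --- depends only on $z$, hence is central with $\tr_A$‑value $\sum_\alpha\|z^{1/2}\alpha\|_2^2=\Tr_1\op(z)$, so $\sum_\alpha R_\alpha^*zR_\alpha=\Tr_1\op(z)1$ and the two agree because $\Tr_1=\Tr_1\op$.

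The step I expect to need the most care is the appeal to uniqueness of operator‑valued weights: $\Tr_n|_{Q_n}$ need not be semifinite --- it is totally infinite on the summand $\c_n$ of Proposition~\ref{prop:decompose} (extremality of $H^n$ forces $\b_n=\b_n\op=\{0\}$, so $Q_n=\a_n\oplus\c_n$) --- so Theorem~\ref{thm:Texists} does not literally apply to $Q_n$. I would handle this by running the uniqueness argument on the semifinite summand $\a_n$, where it applies verbatim, and checking the $\c_n$‑component by hand (there both weights send a nonzero positive element to one on which $\Tr_{n-1}$ is infinite); alternatively one sidesteps the issue by an induction on $n$, taking the explicit $n=1$ case above as the base and propagating it with the relations of Theorem~\ref{thm:BPRelations}.
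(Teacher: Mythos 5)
Your proposal is correct and is essentially the paper's own argument: the paper's entire proof reads ``Immediate from Propositions \ref{prop:T} and \ref{prop:CrossT},'' i.e.\ exactly your identification of $\sum_\beta R_\beta^* z R_\beta$ and $\sum_\alpha R_\alpha^* z R_\alpha$ with the canonical trace-preserving operator-valued weights, combined with Theorem \ref{thm:extremal} to identify the traces and the uniqueness clause to conclude equality. Your closing concern about semifiniteness is a point the paper simply absorbs into the uniqueness statement of Proposition \ref{prop:CrossT} itself (note only that your sketched check on the $\c_n$-summand---both images having infinite trace---would not by itself force the two weights to agree there), so as a proof of this Proposition at the paper's level of rigor your main argument coincides with the paper's.
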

\begin{proof}
Immediate from Propositions \ref{prop:T} and \ref{prop:CrossT}.
\end{proof}

%%%%%%%%%%%%%%%%%%%%%%%%%%%%%%%%%%%%%%%%%%%%%%%%%%
\subsection{Rotations}\label{sec:Rotations}

\begin{defn}[Inspired by \cite{burns}]\label{defn:rotation}
A \underline{Burns rotation} is a map $\rho\colon P_n\to P_n$ such that for all $\zeta\in P_n$ and $b_1,\dots,b_n\in B$,
\begin{equation}\label{eq:rotation}
\langle \rho(\zeta),b_1\otimes\cdots\otimes b_n\rangle = \langle \zeta, b_2\otimes\cdots\otimes b_n\otimes b_1\rangle.
\end{equation}
An \underline{opposite Burns rotation} is defined similarly:
$$
\langle \rho\op(\zeta),b_1\otimes\cdots\otimes b_n\rangle = \langle \zeta, b_n\otimes b_1\otimes\cdots\otimes b_{n-1}\rangle.
$$
\end{defn}

\begin{rem}\label{rem:RotationInverse}
Note that if such a $\rho$ exists, it is unique, and $\rho^n = \id_{P_n}$. In this case, $\rho\op=\rho^{-1}$.
\end{rem}

\begin{thm}[Essentially due to \cite{burns}]
If $\rho=\sum_{\beta} L_\beta R_\beta^*$ converges strongly on $P_n$, then $\rho$ is a Burns rotation. Similarly, if $\rho\op=\sum_{\alpha} R_\alpha L_\alpha^*$ converges strongly on $P_n$, then $\rho\op$ is an opposite Burns rotation.
\end{thm}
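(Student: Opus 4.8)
The plan is to verify the defining identity \eqref{eq:rotation} by expanding $\rho$ and computing directly, and then separately to check that $\rho$ maps $P_n$ into $P_n$ (the remaining requirement of Definition~\ref{defn:rotation}). I would treat $\rho=\sum_\beta L_\beta R_\beta^*$; the statement for $\rho\op$ follows by the analogous argument, interchanging the roles of $L$ and $R$, of the $\sb{A}H$-basis $\{\alpha\}$ and the $H_A$-basis $\{\beta\}$, of the two $A$-valued inner products, and using the $\sb{A}H$-basis relation $\sum_\alpha \sb{A}\langle\eta,\alpha\rangle\alpha=\eta$ in place of the $H_A$-basis relation.

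Fix $\zeta\in P_n$ and $b_1,\dots,b_n\in B$. By hypothesis $\rho(\zeta)=\sum_\beta L_\beta R_\beta^*\zeta$ is a convergent series in $\|\cdot\|_2$, so passing to adjoints inside the (continuous) inner product,
$$
\langle \rho(\zeta),b_1\otimes\cdots\otimes b_n\rangle=\sum_\beta \big\langle \zeta,\, R_\beta L_\beta^*(b_1\otimes\cdots\otimes b_n)\big\rangle.
$$
Here I would apply the formulas of Definition~\ref{defn:RelativeTensorProduct}: $L_\beta^*$ strips the first leg, $L_\beta^*(b_1\otimes(b_2\otimes\cdots\otimes b_n))=\langle\beta|b_1\rangle_A\,(b_2\otimes\cdots\otimes b_n)$, and $R_\beta$ appends $\beta$ on the last leg, so the $\beta$-summand is $\big\langle \zeta,\,(\langle\beta|b_1\rangle_A(b_2\otimes\cdots\otimes b_n))\otimes\beta\big\rangle$. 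Writing $a=\langle\beta|b_1\rangle_A\in A$, the vector $(a(b_2\otimes\cdots\otimes b_n))\otimes\beta$ equals $a\cdot\big((b_2\otimes\cdots\otimes b_n)\otimes\beta\big)$, so centrality of $\zeta$ (namely $a^*\zeta=\zeta a^*$) together with the adjoint relation for the right $A$-action moves $a$ from the first leg to the last:
$$
\big\langle \zeta,\,(\langle\beta|b_1\rangle_A(b_2\otimes\cdots\otimes b_n))\otimes\beta\big\rangle=\big\langle \zeta,\,(b_2\otimes\cdots\otimes b_n)\otimes(\beta\langle\beta|b_1\rangle_A)\big\rangle.
$$
Summing over $\beta$, I would then pull the sum inside using that $\eta\mapsto\langle\zeta,(b_2\otimes\cdots\otimes b_n)\otimes\eta\rangle$ is a bounded functional (it is $\langle L_{b_2\otimes\cdots\otimes b_n}^*\zeta,\eta\rangle$, since $b_2\otimes\cdots\otimes b_n\in B^{n-1}$ is right $A$-bounded) and the $H_A$-basis identity $\sum_\beta\beta\langle\beta|b_1\rangle_A=b_1$ (convergent in $\|\cdot\|_2$), landing on $\langle\zeta,b_2\otimes\cdots\otimes b_n\otimes b_1\rangle$, which is exactly \eqref{eq:rotation}.

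To see $\rho(\zeta)\in P_n$, by density of $B^n$ in $H^n$ it suffices to check $\langle\rho(\zeta),a\xi\rangle=\langle\rho(\zeta),\xi a\rangle$ for every $a\in A$ and $\xi=b_1\otimes\cdots\otimes b_n\in B^n$ (note $ab_1,b_na\in B$ since $B$ is an $A-A$ bimodule). Feeding $a\xi=(ab_1)\otimes b_2\otimes\cdots\otimes b_n$ and $\xi a=b_1\otimes\cdots\otimes(b_n a)$ into \eqref{eq:rotation} gives $\langle\zeta,b_2\otimes\cdots\otimes b_n\otimes(ab_1)\rangle$ and $\langle\zeta,b_2\otimes\cdots\otimes(b_n a)\otimes b_1\rangle$ respectively, and these coincide because $(b_n a)\otimes b_1=b_n\otimes(ab_1)$ across the relative tensor product. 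Hence $\rho(\zeta)$ commutes with $A$, so $\rho(\zeta)\in P_n$.

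I expect the computation itself to be routine bookkeeping with Definition~\ref{defn:RelativeTensorProduct}; the only genuine points of care are the two limiting steps — pulling the basis sum through the inner product, which is precisely where the hypothesis of strong convergence enters, and pulling the norm-convergent sum $\sum_\beta\beta\langle\beta|b_1\rangle_A$ out of $\langle\zeta,(b_2\otimes\cdots\otimes b_n)\otimes(\,\cdot\,)\rangle$ — together with keeping straight which $A$-valued inner product and which tensor leg each creation/annihilation operator refers to.
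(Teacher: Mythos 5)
Your argument for the identity \eqref{eq:rotation} is the same as the paper's: expand $\langle\rho(\zeta),b_1\otimes\cdots\otimes b_n\rangle$ into $\sum_\beta\langle\zeta,R_\beta L_\beta^*(b_1\otimes\cdots\otimes b_n)\rangle$, use centrality of $\zeta$ to move $\langle\beta|b_1\rangle_A$ to the last leg, and resum with $\sum_\beta\beta\langle\beta|b_1\rangle_A=b_1$; your explicit justification of the two limit interchanges is a welcome bit of extra care. Where you diverge is the verification that $\rho(\zeta)\in P_n$: you deduce it directly from the already-proved identity \eqref{eq:rotation}, testing against $a\xi$ and $\xi a$ for elementary tensors $\xi=b_1\otimes\cdots\otimes b_n$ and using $A$-middle linearity $(b_na)\otimes b_1=b_n\otimes(ab_1)$ plus density of the span of such tensors. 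The paper instead observes that \eqref{eq:rotation} makes $\rho(\zeta)$ independent of the choice of $H_A$-basis, and then conjugates by a unitary $u\in U(A)$: since $\{u\beta\}$ is again an $H_A$-basis (Lemma \ref{lem:UnitaryBases}), $u\rho(\zeta)u^*=\sum_\beta L_{u\beta}R_{u\beta}^*\zeta=\rho(\zeta)$, giving centrality. Both routes are correct and of comparable length; yours has the small advantage of not needing the basis-change lemma or the identity $uL_\beta R_\beta^*(\cdot)u^*=L_{u\beta}R_{u\beta}^*(\cdot)$, while the paper's makes the basis-independence of $\rho$ explicit, a fact it reuses implicitly elsewhere.
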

\begin{proof}
We must show that $\rho$ preserves $P_n$ and that $\rho$ satisfies Equation \eqref{eq:rotation}. The latter follows from:
\begin{align*}
\langle \rho(\zeta),b_1\otimes\cdots\otimes b_n\rangle 
&=\sum_{\beta}\langle \zeta,  R_\beta L_\beta^* (b_1\otimes \cdots\otimes b_n )\rangle\\
&=\sum_{\beta}\langle \zeta, \langle \beta|b_1\rangle_A b_2\otimes \cdots\otimes b_n\otimes \beta \rangle\\
&=\sum_{\beta}\langle \langle \beta|b_1\rangle_A^*\zeta, b_2\otimes \cdots\otimes b_n\otimes \beta \rangle\\
&=\sum_{\beta}\langle \zeta \langle \beta|b_1\rangle_A^* , b_2\otimes \cdots\otimes b_n\otimes \beta \rangle\\
&=\sum_{\beta}\langle \zeta, b_2\otimes \cdots\otimes b_n\otimes \beta \langle \beta|b_1\rangle_A\rangle\\
&= \langle \zeta, b_2\otimes\cdots\otimes b_n\otimes b_1\rangle.
\end{align*}
Now $\rho$ is independent of the choice of $\{\beta\}$. In particular, for any $u\in U(A)$, $\{u\beta\}$ is an $H_A$-basis, and
$$
u \rho(\zeta) u^* = u \left(\sum_\beta L_\beta R_\beta^* \zeta\right) u^*= \sum_\beta L_{u\beta} R_{u\beta}^* \zeta= \rho(\zeta)\in P_n.
$$
\end{proof}

%%%%%%%%%%%%%%%%%%%%%%%%%%%%%%%%%%%%%%%%%%%%%%%%%%
\subsection{Diagrammatic representation of the Burns rotation}\label{sec:DiagramsOfRotation}
For this section, we assume the Burns rotation $\rho$ exists on $P_n$ for all $n\geq 0$.  Recall for all $k\geq 0$, $\rho^{-k}=(\rho\op)^k$.

\begin{nota}
For $\zeta\in P_{m+n}$, we denote $\rho^m(\zeta)=(\rho\op)^n(\zeta)\in P_{m+n}$ by moving $m$ strings around the bottom counterclockwise or by moving $n$ strings around the bottom clockwise:
$$
\Rotation{m}{n}{\zeta}=\rho^m(\zeta)=(\rho\op)^n(\zeta)=\RotationOp{n}{m}{\zeta}
$$
\end{nota}

\begin{prop}\label{prop:RotateCentral}
If $\eta\in P_m$ and $\xi\in P_n$, then $\rho^n (\eta\otimes \xi) = \xi\otimes\eta$:
$$
\Switch{n}{\xi}{m}{\eta}=\TensorPn{n}{\xi}{\,\, m}{\eta}.
$$
\end{prop}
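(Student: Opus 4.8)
The plan is to prove the displayed identity $\rho^n(\eta\otimes\xi)=\xi\otimes\eta$ directly, by pairing both sides against the simple tensors $b_1\otimes\cdots\otimes b_{m+n}$ with $b_i\in B$, which are total in $H^{m+n}$. The only ingredients are: the defining property \eqref{eq:rotation} of the Burns rotation (iterated $n$ times); the formula for the relative tensor product inner product in Definition \ref{defn:RelativeTensorProduct}(1); the fact from Lemma \ref{lem:CentralBounded} that $\eta\in P_m$ and $\xi\in P_n$ are left and right $A$-bounded and commute with $A$ (so $\langle\mu|\eta\rangle_A,\langle\nu|\xi\rangle_A$ genuinely lie in $A$); and the traciality of $\tr_A$. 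Note $\eta\otimes\xi,\xi\otimes\eta\in P_{m+n}$ by the graded algebra structure, and $\rho^n(\eta\otimes\xi)$ makes sense since we are assuming $\rho$ exists on every $P_k$.

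First I would iterate \eqref{eq:rotation}: since $\rho$ maps $P_{m+n}$ into itself and \eqref{eq:rotation} holds for every element of $P_{m+n}$ and every simple tensor, a straightforward induction gives, for $\zeta\in P_{m+n}$,
\begin{equation*}
\langle\rho^n(\zeta),b_1\otimes\cdots\otimes b_{m+n}\rangle=\langle\zeta,b_{n+1}\otimes\cdots\otimes b_{m+n}\otimes b_1\otimes\cdots\otimes b_n\rangle.
\end{equation*}
Write $\mu=b_{n+1}\otimes\cdots\otimes b_{m+n}\in B^m$ and $\nu=b_1\otimes\cdots\otimes b_n\in B^n$. Taking $\zeta=\eta\otimes\xi$ and regarding $\eta\otimes\xi$ inside the bracketing $H^m\otimes_A H^n$ of $H^{m+n}$ (in which the test vector on the right is $\mu\otimes\nu$), Definition \ref{defn:RelativeTensorProduct}(1) gives $\langle\rho^n(\eta\otimes\xi),b_1\otimes\cdots\otimes b_{m+n}\rangle=\langle\langle\mu|\eta\rangle_A\,\xi,\nu\rangle_{H^n}$. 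Since $\xi\in P_n=A'\cap H^n$, the element $\langle\mu|\eta\rangle_A\in A$ commutes with $\xi$, so, using property (1) of the $A$-valued inner product and the identity $\langle\zeta_1,\zeta_2\rangle_{H^n}=\tr_A(\langle\zeta_2|\zeta_1\rangle_A)$ (polarize $\|\zeta\|_2^2=\tr_A(\langle\zeta|\zeta\rangle_A)$), this equals $\langle\xi\,\langle\mu|\eta\rangle_A,\nu\rangle_{H^n}=\tr_A\big(\langle\nu|\xi\rangle_A\langle\mu|\eta\rangle_A\big)$. On the other hand $\xi\otimes\eta\in P_{m+n}$ sits naturally inside the bracketing $H^n\otimes_A H^m$ of $H^{m+n}$, in which $b_1\otimes\cdots\otimes b_{m+n}=\nu\otimes\mu$; so, again by Definition \ref{defn:RelativeTensorProduct}(1) and now centrality of $\eta\in P_m=A'\cap H^m$,
\begin{equation*}
\langle\xi\otimes\eta,b_1\otimes\cdots\otimes b_{m+n}\rangle=\langle\langle\nu|\xi\rangle_A\,\eta,\mu\rangle_{H^m}=\langle\eta\,\langle\nu|\xi\rangle_A,\mu\rangle_{H^m}=\tr_A\big(\langle\mu|\eta\rangle_A\langle\nu|\xi\rangle_A\big).
\end{equation*}

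The two scalars $\tr_A\big(\langle\nu|\xi\rangle_A\langle\mu|\eta\rangle_A\big)$ and $\tr_A\big(\langle\mu|\eta\rangle_A\langle\nu|\xi\rangle_A\big)$ agree because $\tr_A$ is a trace, so $\langle\rho^n(\eta\otimes\xi),b_1\otimes\cdots\otimes b_{m+n}\rangle=\langle\xi\otimes\eta,b_1\otimes\cdots\otimes b_{m+n}\rangle$ for all $b_i\in B$; since such simple tensors are total in $H^{m+n}$ (density of $B$ in $H$ together with the construction of the relative tensor product), we conclude $\rho^n(\eta\otimes\xi)=\xi\otimes\eta$, which is exactly the asserted identity. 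There is no deep obstacle here; the points that require care are purely bookkeeping ones: keeping straight which block of $m$ and which block of $n$ tensor legs is the ``left'' factor in the two different bracketings $H^m\otimes_A H^n$ and $H^n\otimes_A H^m$ of $H^{m+n}$, and recognizing that the centrality of $\eta$ and $\xi$ is precisely what turns a left $A$-multiplication into a right one, so that the two $A$-valued inner products come out multiplied in opposite orders and traciality closes the argument.
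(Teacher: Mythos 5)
Your proof is correct and takes essentially the same route as the paper's: pair against simple tensors, apply the (iterated) defining relation of the Burns rotation, and use the relative tensor product inner product formulas together with centrality of $\eta$ and $\xi$ to shuffle the $A$-valued inner products. The only cosmetic difference is the last step, where you write both sides as $\tr_A$ of a product of $A$-valued inner products and invoke traciality, while the paper instead uses Lemma \ref{lem:ZetaRelations}(1) to identify $\langle \alpha|\eta\rangle_A$ with ${\sb{A}\langle}\eta,\alpha\rangle$ and reads off $\langle \xi\otimes\eta,\beta\otimes\alpha\rangle$ directly from the other form of the tensor inner product.
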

\begin{proof}
Suppose $\alpha\in B^m$ and $\beta\in B^n$. Then by (1) of Lemma \ref{lem:ZetaRelations},
$$
\langle \rho^n(\eta\otimes\xi), \beta\otimes\alpha\rangle = \langle \eta\otimes \xi, \alpha\otimes \beta \rangle = \langle \langle \alpha| \eta\rangle_A \xi,\beta\rangle = \langle \xi{\sb{A}\langle} \eta,\alpha\rangle,\beta\rangle =\langle \xi\otimes \eta, \beta\otimes \alpha\rangle.
$$
\end{proof}

\begin{defn}
For $0\leq j< m$, define $\mu_j\colon P_m\times P_n \to P_{m+n}$ by $\mu_j (\eta,\xi)= \rho^{-j} ( \rho^j(\eta)\otimes \xi)$. We represent $\mu_{j}$ diagrammatically as follows:
$$
\mu_j(\eta,\xi)=\Insert{n}{\xi}{m-j}{j}{\eta}\,.
$$
Well-definition of this diagram relies on the following proposition.
\end{defn}

\begin{prop}\label{prop:insert}
The $\mu_i$'s are associative, i.e., if $\sigma\in P_\ell$, $\eta\in P_m$, and $\xi\in P_n$, and $i\leq \ell$, $j\leq m$, then
$$
\mu_i(\kappa,\mu_j(\eta,\xi))=\mu_{i+j}(\mu_i(\kappa,\eta),\xi).
$$
\end{prop}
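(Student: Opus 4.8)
The plan is to reduce the identity to a single ``insertion commutes with left tensoring'' statement and then verify that statement at the level of tensor components.

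\textbf{Step 1: reduction.} First I would unwind the definition $\mu_i(\kappa,\,\cdot\,)=\rho^{-i}(\rho^i(\kappa)\otimes\,\cdot\,)$ on both sides. Writing $c:=\rho^i(\kappa)\in P_\ell$ and using the periodicity $\rho^N=\id_{P_N}$ of Remark \ref{rem:RotationInverse} together with the identity $\rho^i(\mu_i(\kappa,\eta))=\rho^i(\kappa)\otimes\eta=c\otimes\eta$ (immediate from the definition), one gets
$$
\mu_i(\kappa,\mu_j(\eta,\xi))=\rho^{-i}\bigl(c\otimes\mu_j(\eta,\xi)\bigr),
\qquad
\mu_{i+j}(\mu_i(\kappa,\eta),\xi)=\rho^{-i}\bigl(\mu_j(c\otimes\eta,\xi)\bigr),
$$
since $\rho^{i+j}(\mu_i(\kappa,\eta))=\rho^j(c\otimes\eta)$ and $\mu_j(w,\xi)=\rho^{-j}(\rho^j(w)\otimes\xi)$ for any central vector $w$. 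Thus it suffices to prove, for $c\in P_\ell$, $\eta\in P_m$, $\xi\in P_n$ and $0\le j<m$, the special case
$$
c\otimes\mu_j(\eta,\xi)=\mu_j(c\otimes\eta,\xi),
$$
i.e.\ that inserting $\xi$ into $\eta$ at depth $j$ is unaffected by first tensoring a central vector $c$ onto the left of $\eta$.

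\textbf{Step 2: the special case.} Both sides are central vectors in $P_{\ell+m+n}$, so it is enough to check that they have the same inner product with every simple tensor $b_1\otimes\cdots\otimes b_{\ell+m+n}$, $b_i\in B$ (these span a dense subspace of $H^{\ell+m+n}$). On each side I would iterate the defining Burns-rotation identity \eqref{eq:rotation} and its opposite to peel off the rotations $\rho^{\pm j}$; then apply the inner-product formulas of Definition \ref{defn:RelativeTensorProduct} together with Lemma \ref{lem:ZetaRelations}(1) to rewrite each relative tensor product of central vectors as an ordinary inner product twisted by an $A$-valued inner product; and finally use the $A$-bimodule structure of the $B^k$, the balancing relation $xa\otimes y=x\otimes ay$ of $\otimes_A$, and the centrality of the vectors in $P_k$ (which lets $A$-coefficients be slid past them) to put the two expressions into a common form. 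The outcome is that on both sides $c$ is paired with $b_1,\dots,b_\ell$, the ``inserted block'' $\xi$ is paired with the $n$ consecutive slots $b_{\ell+m-j+1},\dots,b_{\ell+m-j+n}$, and $\eta$ is paired with the remaining slots $b_{\ell+1},\dots,b_{\ell+m-j}$ and $b_{\ell+m-j+n+1},\dots,b_{\ell+m+n}$, with matching $A$-valued coefficients; hence the two inner products agree.

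\textbf{Where the difficulty lies.} The entire content sits in Step 2, and the obstacle is bookkeeping rather than conceptual. A partial rotation $\rho^k$, with $0<k$ strictly less than the rank, does not commute with left- or right-tensoring, so the special case cannot be obtained by formal manipulation of $\rho$ and $\otimes_A$ using only the structural relations available (associativity of $\otimes_A$, periodicity $\rho^N=\id$, and $\rho^n(\eta\otimes\xi)=\xi\otimes\eta$ for $\xi\in P_n$ from Proposition \ref{prop:RotateCentral}); one genuinely has to work with individual tensor slots. The care required is in carrying the $A$-valued inner products through the computation --- using balancing and centrality to keep track of which slot each $A$-coefficient lives on --- after which the two sides coincide by inspection.
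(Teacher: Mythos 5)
Your proposal is correct and takes essentially the same route as the paper: the entire content is your Step 2, which is exactly the paper's argument of pairing both sides against simple tensors of vectors in $B$ and peeling off the rotations with the defining identity \eqref{eq:rotation} and its opposite, while tracking the $A$-valued inner products via Definition \ref{defn:RelativeTensorProduct} and Lemma \ref{lem:ZetaRelations}. Your Step 1 reduction (conjugating by $\rho^{-i}$ to reduce to the case $c\otimes\mu_j(\eta,\xi)=\mu_j(c\otimes\eta,\xi)$, i.e.\ the $i=0$ instance) is valid and tidies the bookkeeping, but it only repackages the outermost applications of $\rho^{\pm i}$ that the paper carries out as the first and last steps of its single chain of equalities, so the computational core is identical.
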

\begin{proof}
Suppose $\alpha\in B^{\ell-i}$, $\beta\in B^{m-j}$, $\gamma\in B^n$, $\delta\in B^j$, and $\varepsilon\in B^i$. Then 
{\fontsize{10}{10}{
\begin{align*}
\langle \mu_i(\kappa,\mu_j(\eta,\xi)), \alpha\otimes\beta\otimes\gamma\otimes \delta\otimes\varepsilon\rangle 
&=\left\langle \rho^{-i} \big(\rho^i(\kappa) \otimes \rho^{-j}(\rho^j(\eta)\otimes \xi) \big),\alpha\otimes\beta\otimes\gamma\otimes \delta\otimes\varepsilon\right\rangle \\
&=\left\langle  \rho^i(\kappa) \otimes \rho^{-j}\big(\rho^j(\eta)\otimes \xi\big), \varepsilon\otimes\alpha\otimes\beta\otimes\gamma\otimes \delta\right\rangle\\
&=\left\langle  \rho^{-j}\big(\rho^j(\eta)\otimes \xi\big), \langle \rho^i(\kappa) |\varepsilon\rangle_A\alpha\otimes\beta\otimes\gamma\otimes \delta\right\rangle\\
&=\left\langle  \rho^j(\eta)\otimes \xi, \delta\otimes\langle \rho^i(\kappa) |\varepsilon\rangle_A\alpha\otimes\beta\otimes\gamma\right\rangle\\
&=\left\langle  \rho^j(\eta), \delta\otimes\langle \rho^i(\kappa) |\varepsilon\rangle_A\alpha\otimes\beta{\sb{A}\langle}\gamma,\xi\rangle\right\rangle\\
&=\left\langle  \eta,\langle \rho^i(\kappa) |\varepsilon\rangle_A\alpha\otimes\beta{\sb{A}\langle}\gamma,\xi\rangle\otimes \delta\right\rangle\\
&=\left\langle  \rho^i(\kappa)\otimes \eta,\varepsilon\otimes\alpha\otimes\beta{\sb{A}\langle}\gamma,\xi\rangle\otimes \delta\right\rangle\\
&= \left\langle\rho^{j} \big( \rho^i(\kappa)\otimes\eta\big),\delta\otimes\varepsilon\otimes\alpha\otimes\beta{\sb{A}\langle}\gamma,\xi\rangle\right\rangle\\
&= \left\langle\rho^{j} \big( \rho^i(\kappa)\otimes\eta\big)\otimes \xi,\delta\otimes\varepsilon\otimes\alpha\otimes\beta\otimes\gamma\right\rangle\\
&=\left\langle \rho^{-i-j} \big(\rho^{i+j} ( \rho^{-i}(\rho^i(\kappa)\otimes\eta))\otimes \xi\big),\alpha\otimes\beta\otimes\gamma\otimes \delta\otimes\varepsilon\right\rangle\\
&=\langle \mu_{i+j}(\mu_i(\kappa,\eta),\xi), \alpha\otimes\beta\otimes\gamma\otimes \delta\otimes\varepsilon\rangle.
\end{align*}}}
\end{proof}

\begin{cor}\label{cor:POperad}
$P_\bullet$ naturally forms an algebra over the operad generated by the unshaded, oriented tangles 
$$
\TensorPn{m}{}{n}{},\Rotation{m}{n}{}
$$
for $m,n\geq 0$ up to planar isotopy.
\end{cor}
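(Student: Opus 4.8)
The plan is to exhibit $P_\bullet$ as an algebra over the operad $\mathcal{O}$ generated by the tensor and rotation tangles above, in three steps: (i) put every tangle of $\mathcal{O}$ into a normal form, (ii) check that the relations forced by that normal form already hold for the operations $\otimes$ and $\rho$ on $P_\bullet$, and (iii) conclude that the rule $\cT\mapsto Z(\cT)$ is a well-defined partition function, compatible with operadic composition.

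For step (i): a tangle $\cT\in\mathcal{O}$ with input disks carrying $n_1,\dots,n_k$ strands necessarily has output disk carrying $n=\sum_i n_i$ strands, and, up to planar isotopy, it is built from juxtapositions and partial rotations alone. I would show that such a $\cT$ is encoded up to isotopy by a planar rooted forest on the input disks together with, at each non-root node, the position in the parent's strand bundle at which that disk is inserted, plus one overall outer rotation; equivalently, $\cT$ is an iterated composite of the insertion maps $\mu_j$ from the definition preceding Proposition~\ref{prop:insert}, possibly post-composed with a power of $\rho$. The local moves needed to pass between two such presentations of a given isotopy class of tangle are: associativity of $\otimes$; the full-loop identities $\rho^\ell=\id_{P_\ell}$ and $\rho\op=\rho^{-1}$; the ``slide a box around a full twist'' identity $\rho^n(\eta\otimes\xi)=\xi\otimes\eta$; and associativity of insertion, $\mu_i(\kappa,\mu_j(\eta,\xi))=\mu_{i+j}(\mu_i(\kappa,\eta),\xi)$.

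For step (ii), each of these relations is already in hand for $P_\bullet$: it is a graded algebra under $\otimes$ (Subsection~\ref{sec:CentralVectors}), so juxtaposition is associative; $\rho^\ell=\id_{P_\ell}$ and $\rho\op=\rho^{-1}$ by Remark~\ref{rem:RotationInverse}; $\rho^n(\eta\otimes\xi)=\xi\otimes\eta$ by Proposition~\ref{prop:RotateCentral}; and insertion associativity by Proposition~\ref{prop:insert}. For step (iii), since these are exactly the relations needed for the normal form to be independent of the chosen decomposition into generators, the assignment sending the tensor tangle to $\otimes$ and a rotation tangle to the corresponding power of $\rho$ extends uniquely to a partition function on all of $\mathcal{O}$; operadic composition corresponds to grafting of forests, handled once more by Proposition~\ref{prop:insert} and the rotation identities, while $P_0=\C\widehat1$ serves as the unit object. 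Planar isotopy invariance is then part of the construction.

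The hard part is step (i): the topological assertion that the ``insertion forest plus outer rotation'' data is a complete invariant of a tangle built from tensor and rotation boxes, and that any isotopy between two composites of generators factors through the four moves above. This is combinatorial bookkeeping of the same flavor as the generating-set and standard-form lemmas for $\B\P$ collected in Appendix~\ref{sec:BP}, specialized to the sub-operad in play here; once it is in place, the algebraic content is immediate from the propositions already proved, so I would either cite that appendix or run the short parallel argument.
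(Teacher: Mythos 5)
Your proposal is correct and follows essentially the same route as the paper: the paper states Corollary~\ref{cor:POperad} without proof, treating it as an immediate consequence of exactly the ingredients you invoke --- the graded multiplication of Subsection~\ref{sec:CentralVectors}, Remark~\ref{rem:RotationInverse}, Proposition~\ref{prop:RotateCentral}, and the insertion associativity of Proposition~\ref{prop:insert} --- with the isotopy/normal-form bookkeeping left implicit in the style of Appendix~\ref{sec:BP}. You simply make that bookkeeping step explicit (and correctly flag it as the only part needing real work), which matches the paper's intent.
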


The Burns rotation is also compatible with the $\B\P$-algebra $\widehat{Q_\bullet^+}$.

\begin{thm}\label{thm:MoveAround}
\item[(1)]
For all $\zeta\in P_{m+n}$ and $x\in Q_m$, and $y\in Q_n$, $\rho^n((x\otimes_A y) \zeta) = (y\otimes_A x) \rho^n(\zeta)$:
$$
\begin{tikzpicture}[rectangular,baseline=-1cm]
	\clip (-1.65,.5) --(2.2,.5) -- (2.2,-3) -- (-1.65,-3);
	\draw[ultra thick] (-.6,-1.5) arc (0:180:.4cm) -- (-1.4,-2) .. controls ++(270:1.4cm) and ++(270:2.2cm) ..  (1.5,-1);
	\draw[ultra thick] (0,1)--(0,-2);
	\draw[ultra thick] (1.5,1)--(1.5,-1);
	\draw[ultra thick] (-1.2,-1.7)--(-1.4,-2)--(-1.6,-1.7);
	\filldraw[thick, unshaded] (-.5,0)--(-.5,-1)--(.5,-1)--(.5,0)--(-.5,0);
	\filldraw[thick, unshaded] (2,0)--(2,-1)--(1,-1)--(1,0)--(2,0);
	\filldraw[thick, unshaded] (.5,-2.5)--(.5,-1.5)--(-1,-1.5)--(-1,-2.5)--(.5,-2.5);
	\node at (-.25,-2) {$\zeta$};
%	\draw[ultra thick, unshaded] (0,.5) circle (.25cm);
	\node at (0,-.5) {$y$};
	\node at (0.2,.25) {{\scriptsize{$n$}}};
%	\draw[ultra thick, unshaded] (1.5,.5) circle (.25cm);
	\node at (1.5,-.5) {$x$};
	\node at (1.75,.25) {{\scriptsize{$m$}}};
\end{tikzpicture}
=
\begin{tikzpicture}[rectangular,baseline=-1cm]
	\clip (1.9,.5) --(-1.85,.5) -- (-1.85,-3.2) -- (1.9,-3.2);
	\draw[ultra thick] (-1.7,-1)--(-1.5,-1.3)--(-1.3,-1);
	\draw[ultra thick] (.7,1)--(.7,-2);
	\draw[ultra thick] (-.7,0)--(-.7,-2);
	\draw[ultra thick] (-.7,0) arc (0:180:.4cm) -- (-1.5,-2) .. controls ++(270:1.4cm) and ++(270:1.4cm) ..  (1.5,-2)--(1.5,1);
	\filldraw[thick, unshaded] (-.2,0)--(-.2,-1)--(-1.2,-1)--(-1.2,0)--(-.2,0);
	\filldraw[thick, unshaded] (.2,0)--(.2,-1)--(1.2,-1)--(1.2,0)--(.2,0);
	\filldraw[thick, unshaded] (1.2,-2.5)--(1.2,-1.5)--(-1.2,-1.5)--(-1.2,-2.5)--(1.2,-2.5);
	\node at (0,-2) {$\zeta$};
%	\draw[ultra thick, unshaded] (.7,.5) circle (.25cm);
	\node at (.7,-.5) {$y$};
	\node at (.9,.25) {{\scriptsize{$n$}}};
%	\draw[ultra thick, unshaded] (1.5,.5) circle (.25cm);
	\node at (-.7,-.5) {$x$};
	\node at (1.75,.25) {{\scriptsize{$m$}}};
\end{tikzpicture}
=
\begin{tikzpicture}[rectangular,baseline=-1cm]
	\clip (1.85,.5) --(-1.85,.5) -- (-1.85,-3.2) -- (1.85,-3.2);
	\draw[ultra thick] (1.7,-1)--(1.5,-1.3)--(1.3,-1);
	\draw[ultra thick] (-.7,1)--(-.7,-2);
	\draw[ultra thick] (.7,0)--(.7,-2);
	\draw[ultra thick] (.7,0) arc (180:0:.4cm) -- (1.5,-2) .. controls ++(270:1.4cm) and ++(270:1.4cm) ..  (-1.5,-2)--(-1.5,1);
	\filldraw[thick, unshaded] (-.2,0)--(-.2,-1)--(-1.2,-1)--(-1.2,0)--(-.2,0);
	\filldraw[thick, unshaded] (.2,0)--(.2,-1)--(1.2,-1)--(1.2,0)--(.2,0);
	\filldraw[thick, unshaded] (1.2,-2.5)--(1.2,-1.5)--(-1.2,-1.5)--(-1.2,-2.5)--(1.2,-2.5);
	\node at (0,-2) {$\zeta$};
%	\draw[ultra thick, unshaded] (-.7,.5) circle (.25cm);
	\node at (-.7,-.5) {$x$};
	\node at (-.45,.25) {{\scriptsize{$m$}}};
%	\draw[ultra thick, unshaded] (-1.5,.5) circle (.25cm);
	\node at (.7,-.5) {$y$};
	\node at (-1.3,.25) {{\scriptsize{$n$}}};
\end{tikzpicture}
=
\begin{tikzpicture}[rectangular,baseline=-1cm]
	\clip (1.65,.5) --(-2.2,.5) -- (-2.2,-3) -- (1.65,-3);
	\draw[ultra thick] (.6,-1.5) arc (180:0:.4cm) -- (1.4,-2) .. controls ++(270:1.4cm) and ++(270:2.2cm) ..  (-1.5,-1);
	\draw[ultra thick] (0,1)--(0,-2);
	\draw[ultra thick] (-1.5,1)--(-1.5,-1);
	\draw[ultra thick] (1.2,-1.7)--(1.4,-2)--(1.6,-1.7);
	\filldraw[thick, unshaded] (.5,0)--(.5,-1)--(-.5,-1)--(-.5,0)--(.5,0);
	\filldraw[thick, unshaded] (-2,0)--(-2,-1)--(-1,-1)--(-1,0)--(-2,0);
	\filldraw[thick, unshaded] (-.5,-2.5)--(-.5,-1.5)--(1,-1.5)--(1,-2.5)--(-.5,-2.5);
	\node at (.25,-2) {$\zeta$};
%	\draw[ultra thick, unshaded] (0,.5) circle (.25cm);
	\node at (0,-.5) {$x$};
	\node at (.25,.25) {{\scriptsize{$m$}}};
%	\draw[ultra thick, unshaded] (-1.5,.5) circle (.25cm);
	\node at (-1.5,-.5) {$y$};
	\node at (-1.3,.25) {{\scriptsize{$n$}}};
\end{tikzpicture}.
$$
\item[(2)] If $\rho$ is unitary, then for all $\zeta\in P_{m+n}$ and $x\in \widehat{Q_m^+}$, and $y\in \widehat{Q_n^+}$, $(y\otimes_A x)(\omega_{\rho^n \zeta}) = (x\otimes_A y)(\omega_\zeta)$:
$$
\begin{tikzpicture}[rectangular]
	\clip (-1.6,2.6) --(2.2,2.6) -- (2.2,-2.6) -- (-1.6,-2.6);
	\draw[ultra thick] (-.6,1) arc (0:-180:.4cm) -- (-1.4,1.5) .. controls ++(90:1.4cm) and ++(90:2.2cm) ..  (1.5,.5)--(1.5,-.5);
	\draw[ultra thick] (-.6,-1) arc (0:180:.4cm) -- (-1.4,-1.5) .. controls ++(270:1.4cm) and ++(270:2.2cm) ..  (1.5,-.5);
	\draw[ultra thick] (0,2)--(0,-2);
	\draw[ultra thick] (-1.25,1.5)--(-1.4,1.2)--(-1.55,1.5);
	\draw[ultra thick] (-1.25,-1.2)--(-1.4,-1.5)--(-1.55,-1.2);
	\filldraw[thick, unshaded] (-.5,.5)--(-.5,-.5)--(.5,-.5)--(.5,.5)--(-.5,.5);
	\filldraw[thick, unshaded] (2,.5)--(2,-.5)--(1,-.5)--(1,.5)--(2,.5);
	\filldraw[thick, unshaded] (.5,2)--(.5,1)--(-1,1)--(-1,2)--(.5,2);
	\filldraw[thick, unshaded] (.5,-2)--(.5,-1)--(-1,-1)--(-1,-2)--(.5,-2);
	\node at (-.25,1.5) {$\zeta$};
	\node at (-.25,-1.5) {$\zeta$};
%	\draw[ultra thick, unshaded] (0,1) circle (.25cm);
%	\draw[ultra thick, unshaded] (0,-1) circle (.25cm);
	\node at (0,0) {$y$};
	\node at (0.2,.75) {{\scriptsize{$n$}}};
	\node at (0.2,-.75) {{\scriptsize{$n$}}};
%	\draw[ultra thick, unshaded] (1.5,1) circle (.25cm);
%	\draw[ultra thick, unshaded] (1.5,-1) circle (.25cm);
	\node at (1.5,0) {$x$};
	\node at (1.75,.75) {{\scriptsize{$m$}}};
	\node at (1.75,-.75) {{\scriptsize{$m$}}};
\end{tikzpicture}
=
\begin{tikzpicture}[rectangular]
	\clip (1.3,2.6) --(-1.3,2.6) -- (-1.3,-2.6) -- (1.3,-2.6);
	\draw[ultra thick] (.7,1.5)--(.7,-1.5);
	\draw[ultra thick] (-.7,1.5)--(-.7,-1.5);
	\filldraw[thick, unshaded] (-.2,.5)--(-.2,-.5)--(-1.2,-.5)--(-1.2,.5)--(-.2,.5);
	\filldraw[thick, unshaded] (.2,.5)--(.2,-.5)--(1.2,-.5)--(1.2,.5)--(.2,.5);
	\filldraw[thick, unshaded] (1.2,2)--(1.2,1)--(-1.2,1)--(-1.2,2)--(1.2,2);
	\filldraw[thick, unshaded] (1.2,-2)--(1.2,-1)--(-1.2,-1)--(-1.2,-2)--(1.2,-2);
	\node at (0,1.5) {$\zeta$};
	\node at (0,-1.5) {$\zeta$};
%	\draw[ultra thick, unshaded] (.7,1) circle (.25cm);
%	\draw[ultra thick, unshaded] (.7,-1) circle (.25cm);
	\node at (.7,0) {$y$};
	\node at (.9,.75) {{\scriptsize{$n$}}};
	\node at (.9,-.75) {{\scriptsize{$n$}}};
%	\draw[ultra thick, unshaded] (-.7,1) circle (.25cm);
%	\draw[ultra thick, unshaded] (-.7,-1) circle (.25cm);
	\node at (-.7,0) {$x$};
	\node at (-.45,.75) {{\scriptsize{$m$}}};
	\node at (-.45,-.75) {{\scriptsize{$m$}}};
\end{tikzpicture}
=
\begin{tikzpicture}[rectangular]
	\clip (1.6,2.6) --(-2.2,2.6) -- (-2.2,-2.6) -- (1.6,-2.6);
	\draw[ultra thick] (.6,1) arc (180:360:.4cm) -- (1.4,1.5) .. controls ++(90:1.4cm) and ++(90:2.2cm) ..  (-1.5,.5)--(-1.5,-.5);
	\draw[ultra thick] (.6,-1) arc (180:0:.4cm) -- (1.4,-1.5) .. controls ++(270:1.4cm) and ++(270:2.2cm) ..  (-1.5,-.5);
	\draw[ultra thick] (0,2)--(0,-2);
	\draw[ultra thick] (1.25,1.5)--(1.4,1.2)--(1.55,1.5);
	\draw[ultra thick] (1.25,-1.2)--(1.4,-1.5)--(1.55,-1.2);
	\filldraw[thick, unshaded] (-.5,.5)--(-.5,-.5)--(.5,-.5)--(.5,.5)--(-.5,.5);
	\filldraw[thick, unshaded] (-2,.5)--(-2,-.5)--(-1,-.5)--(-1,.5)--(-2,.5);
	\filldraw[thick, unshaded] (-.5,2)--(-.5,1)--(1,1)--(1,2)--(-.5,2);
	\filldraw[thick, unshaded] (-.5,-2)--(-.5,-1)--(1,-1)--(1,-2)--(-.5,-2);
	\node at (.25,1.5) {$\zeta$};
	\node at (.25,-1.5) {$\zeta$};
%	\draw[ultra thick, unshaded] (0,1) circle (.25cm);
%	\draw[ultra thick, unshaded] (0,-1) circle (.25cm);
	\node at (0,0) {$x$};
	\node at (0.25,.75) {{\scriptsize{$m$}}};
	\node at (0.25,-.75) {{\scriptsize{$m$}}};
%	\draw[ultra thick, unshaded] (-1.5,1) circle (.25cm);
%	\draw[ultra thick, unshaded] (-1.5,-1) circle (.25cm);
	\node at (-1.5,0) {$y$};
	\node at (-1.3,.75) {{\scriptsize{$n$}}};
	\node at (-1.3,-.75) {{\scriptsize{$n$}}};
\end{tikzpicture}.
$$
\end{thm}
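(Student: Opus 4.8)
The plan is to prove (1) first as an identity of vectors in $P_{m+n}$, by pairing both sides against a total family of elementary tensors, and then to deduce (2): for bounded boxes it will follow from (1) together with unitarity of $\rho^n$, and for arbitrary extended positive boxes by passing to suprema. The two displayed chains of tangle identities are the diagrammatic transcription of (1) and (2): in each chain the two outermost diagrams are literally the two sides of the algebraic identity (read off using Theorem \ref{thm:action} and the conventions of Subsection \ref{sec:DiagramsOfRotation}), while the intermediate diagrams differ from them only by a planar isotopy, which is legitimate because $P_\bullet$ acts over the operad generated by the tensor and rotation tangles (Corollary \ref{cor:POperad}), Proposition \ref{prop:RotateCentral} being used where a rotation is dragged past a tensor.

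\textbf{Step 1: an extended rotation identity.} First I would record the strengthening of \eqref{eq:rotation} that for every $\psi\in P_{m+n}$, $\mu\in B^m$ and $\nu\in B^n$,
$$\langle \rho^n(\psi),\nu\otimes\mu\rangle = \langle \psi,\mu\otimes\nu\rangle.$$
When $\mu$ and $\nu$ are elementary tensors of vectors of $B$, this is \eqref{eq:rotation} iterated $n$ times together with associativity of $\otimes_A$. To pass to a general $\mu\in B^m$ (keeping $\nu$ elementary), I would observe that both sides are bounded conjugate-linear functionals of $\mu\in H^m$: this uses that $\nu\in B^n=D(\sb{A}H^n)\cap D(H^n_A)$, so that $\mu\mapsto\mu\otimes\nu$ and $\mu\mapsto\nu\otimes\mu$ are the bounded operators $R_\nu$ and $L_\nu$; since the two functionals agree on the dense span of elementary tensors, they agree for all $\mu\in H^m$. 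Fixing $\mu\in B^m$ and varying $\nu\in H^n$ the same way (now $\mu\in B^m$ bounds $L_\mu$ and $R_\mu$) gives the identity in full. This density/continuity argument is the only genuinely technical point; it is the exact analogue of the bilinearity computation in Proposition \ref{prop:RotateCentral}.

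\textbf{Step 2: proof of (1).} Fix $\zeta\in P_{m+n}$, $x\in Q_m$, $y\in Q_n$. Since $x\otimes_A y\in Q_{m+n}$, the vector $(x\otimes_A y)\zeta$ again lies in $P_{m+n}$, and $x^*\mu\in B^m$, $y^*\nu\in B^n$ for $\mu\in B^m$, $\nu\in B^n$ by Remark \ref{rem:StayBounded}. The heart of the argument is then the chain
\begin{align*}
\langle \rho^n((x\otimes_A y)\zeta),\nu\otimes\mu\rangle
&= \langle (x\otimes_A y)\zeta,\mu\otimes\nu\rangle
= \langle \zeta,(x^*\mu)\otimes(y^*\nu)\rangle\\
&= \langle \rho^n(\zeta),(y^*\nu)\otimes(x^*\mu)\rangle
= \langle (y\otimes_A x)\rho^n(\zeta),\nu\otimes\mu\rangle,
\end{align*}
where the first and third equalities are Step 1 (applied to $\psi=(x\otimes_A y)\zeta$, and to $\psi=\zeta$ with $x^*\mu,y^*\nu$ in place of $\mu,\nu$), and the second and fourth move the bounded box across the inner product using $(x\otimes_A y)^*=x^*\otimes_A y^*$. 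The elementary tensors $b_1\otimes\cdots\otimes b_{m+n}$ with $b_i\in B$ are of the form $\nu\otimes\mu$ and span a dense subspace of $H^{m+n}$, so the two vectors coincide, which is (1).

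\textbf{Step 3: proof of (2) and main obstacle.} Assume $\rho$, hence $\rho^n$, unitary. For $x\in Q_m^+$ and $y\in Q_n^+$ the operator $x\otimes_A y$ lies in $Q_{m+n}^+$, so (1) and unitarity of $\rho^n$ give
\[
(y\otimes_A x)(\omega_{\rho^n\zeta}) = \langle (y\otimes_A x)\rho^n(\zeta),\rho^n(\zeta)\rangle = \langle \rho^n((x\otimes_A y)\zeta),\rho^n(\zeta)\rangle = \langle (x\otimes_A y)\zeta,\zeta\rangle = (x\otimes_A y)(\omega_\zeta).
\]
For general $x\in\widehat{Q_m^+}$, $y\in\widehat{Q_n^+}$, I would pick $x_i\nearrow x$ in $Q_m^+$ and $y_j\nearrow y$ in $Q_n^+$; then $x_i\otimes_A y_j\nearrow x\otimes_A y$ and $y_j\otimes_A x_i\nearrow y\otimes_A x$ by the behaviour of the relative tensor product of extended positive cones under increasing nets (Appendix \ref{sec:TensorUnbounded}; cf. Theorem \ref{thm:UnboundedIncrease}), and evaluation at the fixed vector states $\omega_\zeta$, $\omega_{\rho^n\zeta}$ preserves these suprema by Lemma \ref{lem:VectorStates}, so the bounded case passes to the limit. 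The hard part is confined to Step 1: once \eqref{eq:rotation} is upgraded to the extended identity for arbitrary $\mu\in B^m$, $\nu\in B^n$ — which requires the density of elementary tensors and the boundedness of $L_\mu,R_\mu,L_\nu,R_\nu$ — everything afterward is formal, and the diagrammatic chains then follow from (1), (2), Corollary \ref{cor:POperad} and Proposition \ref{prop:RotateCentral}.
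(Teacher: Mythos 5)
Your proposal is correct and follows essentially the same route as the paper: the same chain of inner-product identities against bounded vectors $\xi\in B^m$, $\eta\in B^n$ using $(x\otimes_A y)^*=x^*\otimes_A y^*$ for (1), and for (2) the bounded case via unitarity of $\rho^n$ followed by approximation with increasing sequences and Theorem \ref{thm:increase}. Your Step 1, which upgrades Equation \eqref{eq:rotation} from elementary tensors to arbitrary $\mu\in B^m$, $\nu\in B^n$ by density and boundedness of $L_\nu,R_\nu$, merely makes explicit a step the paper uses implicitly; otherwise the arguments coincide.
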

\begin{proof}
\item[(1)]
For $\eta\in B^n$ and $\xi\in B^m$,
\begin{align*}
\langle \rho^n((x\otimes_A y) \zeta) , \eta\otimes\xi \rangle 
& = \langle (x\otimes_A y) \zeta , \xi\otimes \eta\rangle
= \langle  \zeta, (x^*\otimes_A y^*)( \xi\otimes \eta) \rangle \\
& = \langle  \zeta,(x^*\xi)\otimes (y^*\eta)\rangle
= \langle  \rho^n (\zeta), (y^*\eta)\otimes (x^*\xi) \rangle \\
& = \langle (y\otimes_A x) \rho^n(\zeta), \eta\otimes\xi\rangle.
\end{align*}
\item[(2)] Pick $(x_i)\subset Q_m^+$ and $(y_j)\subset Q_n^+$ with $x_i\nearrow x$ and $y_j\nearrow y$. Then by (1), for all $i$,
\begin{align*}
(y_j\otimes_A x_i)(\omega_{\rho^n \zeta})
& = \| (y_j^{1/2}\otimes_A x_i^{1/2}) \rho^n\zeta\|_2^2
= \|\rho^n((x_i^{1/2}\otimes_A y_j^{1/2})\zeta)\|_2^2 \\
& = \|(x_i^{1/2}\otimes_A y_j^{1/2})\zeta\|_2^2
= (x_i\otimes_A y_j)(\omega_\zeta).
\end{align*}
We are finished by Theorem \ref{thm:increase}, since $x_i\otimes_A y_j \nearrow x\otimes_A y$ and $y_j\otimes_A x_i\nearrow y\otimes_A x$.
\end{proof}

\begin{rem}
When the operads for $P_\bullet$ and $\widehat{Q_\bullet^+}$ interact as in Theorem \ref{thm:action}, we may remove closed subdiagrams and multiply by the appropriate scalar in $[0,\I]$ by Corollary \ref{cor:POperad} and Theorem \ref{thm:MoveAround}.
\end{rem}

%%%%%%%%%%%%%%%%%%%%%%%%%%%%%%%%%%%%%%%%%%%%%%%%%%
\subsection{Extremality implies the existence of the Burns rotation}\label{sec:ExtremalityImpliesRotations}

We will show in the next lemma and theorem that (approximate) extremality implies the existence of a Burns rotation. The intuition comes from the bimodule planar calculus. In diagrams, for the extremal case, we have:
$$
\begin{tikzpicture}[rectangular,baseline=-.8cm]
	\clip (1.6,1) --(-2.2,1) -- (-2.2,-3) -- (1.6,-3);
	\draw[] (.6,-.5) arc (180:360:.4cm) -- (1.4,0) .. controls ++(90:1.4cm) and ++(90:2.2cm) ..  (-1.5,-1);
	\draw[] (.6,-1.5) arc (180:0:.4cm) -- (1.4,-2) .. controls ++(270:1.4cm) and ++(270:2.2cm) ..  (-1.5,-1);
	\draw[ultra thick] (0,-2)--(0,0);
	\draw[] (1.25,0)--(1.4,-.3)--(1.55,0);
	\draw[] (1.25,-1.7)--(1.4,-2)--(1.55,-1.7);

	\filldraw[thick, unshaded] (-.5,.5)--(-.5,-.5)--(1,-.5)--(1,.5)--(-.5,.5);
	\filldraw[thick, unshaded] (-.5,-2.5)--(-.5,-1.5)--(1,-1.5)--(1,-2.5)--(-.5,-2.5);
	\node at (.25,0) {$\zeta$};
	\node at (.25,-2) {$\zeta$};
%	\draw[ultra thick, unshaded] (0,-1) ellipse (.65cm and .25cm);
	\node at (-.5,-1) {{\scriptsize{$n-1$}}};
\end{tikzpicture}
=
\begin{tikzpicture}[rectangular,baseline=1.4cm]
	\clip (-2.2,3.7) --(1.45,3.7) -- (1.45,-.2) -- (-2.2,-.2);
	\draw[ultra thick] (-1,1.3)--(-1.2,1)--(-1.4,1.3);
	\draw[] (1,1.3)--(1.2,1)--(1.4,1.3);
	\draw[ultra thick] (-.2,3) arc (0:180:.5cm) --(-1.2,1) arc (-180:0:.5cm);
	\draw[] (.2,3) arc (180:0:.5cm) --(1.2,1) arc (0:-180:.5cm);
	\filldraw[thick, unshaded] (-.5,1) --(-.5,3) -- (.5,3) -- (.5,1)--(-.5,1);
	\draw[thick] (-.5,2)--(.5,2);
	\node at (0,2.5) {$\zeta$};
	\node at (0,1.5) {$\zeta$};
%	\draw[ultra thick, unshaded] (-1.4,2.3) ellipse (.65cm and .25cm);
	\node at (-1.75,2.3) {{\scriptsize{$n-1$}}};
\end{tikzpicture}
=
\begin{tikzpicture}[rectangular,baseline=1.4cm]
	\clip (-1,3.7) --(1.35,3.7) -- (1.35,-.2) -- (-1,-.2);
	\draw[ultra thick] (.8,1.3)--(1,1)--(1.2,1.3);
	\draw[ultra thick] (0,3) arc (180:0:.5cm) --(1,1) arc (0:-180:.5cm);
	\filldraw[thick, unshaded] (-.5,1) --(-.5,3) -- (.5,3) -- (.5,1)--(-.5,1);
	\draw[thick] (-.5,2)--(.5,2);
	\node at (0,2.5) {$\zeta$};
	\node at (0,1.5) {$\zeta$};
%	\draw[ultra thick, unshaded] (1,2.3) circle (.25cm);
	\node at (1.2,2.3) {{\scriptsize{$n$}}};
\end{tikzpicture}.
$$
Although these diagrams are not yet well-defined, they tell us how to proceed. They become well-defined after the Burns rotation exists by Theorems \ref{thm:CloseOffZetas} and \ref{thm:MoveAround}.

\begin{lem}\label{lem:RotationLemma} Let $p_n$ be the projection in $B(H^n)$ with range $P_n$.
\item[(1)]
If $H$ is approximately extremal with constant $\lambda>0$, then 
$$\left(\sum_\beta  p_nR_\beta R_\beta^*p_n\right)\leq \lambda^{n-1} p_n \text{ and }\left(\sum_\alpha p_nL_\alpha L_\alpha^* p_n\right) \leq \lambda^{n-1} p_n.$$
\item[(2)]
If $H$ is extremal, then on $P_n$, $\sum_\beta p_nR_\beta R_\beta^*p_n = p_n=\sum_\alpha p_nL_\alpha L_\alpha^*p_n$.
\end{lem}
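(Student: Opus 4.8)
The plan is to deduce both parts from a single computation: (2) will be the case $\lambda=1$ of the (equality version of the) estimate proved for (1). Since $\sum_\beta p_nR_\beta R_\beta^*p_n$ is a positive operator supported on $P_n$, Lemma \ref{lem:VectorStates}(1) reduces (1) to the inequalities $\sum_\beta\|R_\beta^*\zeta\|_2^2\leq\lambda^{n-1}\|\zeta\|_2^2$ and $\sum_\alpha\|L_\alpha^*\zeta\|_2^2\leq\lambda^{n-1}\|\zeta\|_2^2$ for every $\zeta\in P_n$.

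First I would rewrite the $R$-sum so that it ``sees'' only the last tensor factor. Expanding $R_\beta^*\zeta\in H^{n-1}$ against an $\sb{A}H^{n-1}$-basis $\{\alpha^{n-1}\}$ via $\sum_{\alpha^{n-1}}R(\alpha^{n-1})R(\alpha^{n-1})^*=1_{H^{n-1}}$ and using $R(\alpha^{n-1})^*R_\beta^*=R(\alpha^{n-1}\otimes\beta)^*$ gives $\|R_\beta^*\zeta\|_2^2=\sum_{\alpha^{n-1}}\|\sb{A}\langle\zeta,\alpha^{n-1}\otimes\beta\rangle\|_2^2$. Here centrality of $\zeta$ enters through Lemma \ref{lem:ZetaRelations}(1): $\sb{A}\langle\zeta,\alpha^{n-1}\otimes\beta\rangle=\langle\alpha^{n-1}\otimes\beta|\zeta\rangle_A$; since $L(\alpha^{n-1}\otimes\beta)=L_{\alpha^{n-1}}L(\beta)$, summing over the $H_A$-basis $\{\beta\}$ collapses $\sum_\beta L(\beta)L(\beta)^*=1_H$ and yields $\sum_\beta\|R_\beta^*\zeta\|_2^2=\sum_{\alpha^{n-1}}\|L_{\alpha^{n-1}}^*\zeta\|_2^2=\langle\big(\sum_{\alpha^{n-1}}L_{\alpha^{n-1}}L_{\alpha^{n-1}}^*\big)\zeta,\zeta\rangle$. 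A direct computation (iterating $\sum_\alpha L_\alpha X L_\alpha^*=g\otimes_A X$) identifies $\sum_{\alpha^{n-1}}L_{\alpha^{n-1}}L_{\alpha^{n-1}}^*=g\otimes_A\cdots\otimes_A g\otimes_A\id_H$ with $n-1$ copies of $g:=\sum_\alpha L(\alpha)L(\alpha)^*$ (sum over an $\sb{A}H$-basis), a basis-independent element of $\widehat{Q_1^+}$; likewise $\sum_\alpha L_\alpha L_\alpha^*=g\otimes_A\id_{H^{n-1}}$, so the $L$-statement needs no separate reduction.

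The crux is then to bound $g$. Using $z^{1/2}L(\alpha)=L(z^{1/2}\alpha)$ for $z\in Q_1^+$ together with the norm formula of Definition \ref{defn:RightModule} one computes $\Tr_1(g\cdot z)=\sum_\alpha\|z^{1/2}\alpha\|_2^2=\Tr_1\op(z)$ for all $z\in\widehat{Q_1^+}$; that is, $g$ is the Radon--Nikodym derivative of $\Tr_1\op$ with respect to $\Tr_1$ on $Q_1$. Hence, if $H$ is approximately extremal with constant $\lambda$ (we may assume $\lambda\geq1$), then $\Tr_1(g\cdot z)=\Tr_1\op(z)\leq\lambda\Tr_1(z)$ for all $z$, so $g\leq\lambda\cdot 1_H$ by the order-isomorphism in Theorem \ref{thm:BilinearExtension}; and if $H$ is extremal then $g=1_H$. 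Since $\otimes_A$ is monotone on the extended positive cones (Appendix \ref{sec:TensorUnbounded}), $g\otimes_A\cdots\otimes_A g\otimes_A\id_H\leq\lambda^{n-1}\,1_{H^n}$ (and $=1_{H^n}$ in the extremal case), whence $\sum_\beta\|R_\beta^*\zeta\|_2^2\leq\lambda^{n-1}\|\zeta\|_2^2$ with equality when $H$ is extremal; the $L$-side follows the same way (in fact with $\lambda$ in place of $\lambda^{n-1}$). The case $n=1$ is handled directly: for central $\zeta$ one has $\sum_\beta\|R(\beta)^*\zeta\|_2^2=\sum_\alpha\|L(\alpha)^*\zeta\|_2^2=\|\zeta\|_2^2$ using Lemma \ref{lem:ZetaRelations}(1) and $\sum_\alpha R(\alpha)R(\alpha)^*=1_H=\sum_\beta L(\beta)L(\beta)^*$, so $n=1$ holds even without extremality.

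I expect the main obstacle to be the bookkeeping around the mismatch between $\sb{A}H$-bases and $H_A$-bases: one must apply centrality of $\zeta$ at precisely the right moment to turn the ``wrong-type'' sum $\sum_\beta R_\beta R_\beta^*$ into a clean operator, and then recognize the leftover $g$ as a Radon--Nikodym derivative so that extremality can be invoked. The ancillary identities of the form $L(\xi\otimes\eta)=L_\xi L(\eta)$ and the left/right $A$-boundedness of the vectors $\alpha^{n-1}\otimes\beta$ and $R_\beta^*\zeta$ also require some care, but are routine.
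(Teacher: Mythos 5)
Your reduction to the quadratic-form inequality on $P_n$, and the rewriting of $\sum_\beta\|R_\beta^*\zeta\|_2^2$ as $\sum_{\alpha^{n-1}}\|L_{\alpha^{n-1}}^*\zeta\|_2^2$ via Lemma \ref{lem:ZetaRelations}(1), are correct and run parallel to the paper's computation. The gap is the treatment of $g=\sum_\alpha L(\alpha)L(\alpha)^*$. This operator is in general neither basis-independent nor affiliated with $A'$, so it is not an element of $\widehat{Q_1^+}$ (and then the iterated product $g\otimes_A\cdots\otimes_A g\otimes_A\id_H$ is not even well formed in the sense of Lemma \ref{lem:binormal} and Appendix \ref{sec:TensorUnbounded}, whose interior factors must commute with both actions). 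More seriously, the inference ``$\Tr_1(g\cdot z)\le\lambda\Tr_1(z)$ for all $z\in Q_1^+$ implies $g\le\lambda 1_H$'' is invalid: the order isomorphism of Theorem \ref{thm:BilinearExtension} would require the comparison of weights on all of $C_1^+$, whereas approximate extremality gives it only on $Q_1^+$; and one cannot instead argue inside $Q_1$, since $g\notin\widehat{Q_1^+}$ and $\Tr_1|_{Q_1}$ need not be semifinite (Proposition \ref{prop:decompose}). In fact the claimed bound is false. Take $A=R$ and $H=L^2(R)\oplus L^2(R)$, which has a two-sided basis and hence is extremal, so $\lambda=1$. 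Choose projections $f,f',f'',e\in R$ of equal trace ($\le 1/2$) with $f''\le 1-f'$, $f''\neq 0$, and partial isometries $a,b,c,d\in R$ with $a^*a=1-f$, $aa^*=1-f'$, $b^*b=e$, $bb^*=f'$, $c^*c=f$, $cc^*=f''$, $d^*d=1-e$, $dd^*=1-f''$. Then $W=\left(\begin{smallmatrix}a&b\\ c&d\end{smallmatrix}\right)$ satisfies $W^*W=1$ in $M_2(R)$, which says exactly that $\alpha_1=(\widehat a,\widehat b)$, $\alpha_2=(\widehat c,\widehat d)$ is an $\sb{A}H$-basis contained in $B$. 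For this (perfectly legal) choice, the $(1,1)$-corner of $g$ is left multiplication by $aa^*+cc^*=(1-f')+f''$, which acts as $2$ on $f''L^2(R)$; hence $g\not\le\lambda 1_H$ and $g\neq 1_H$, refuting both the approximate and the extremal form of your key claim --- while a direct check of the four corner traces shows $p_1gp_1=p_1$, so the Lemma itself still holds for this basis. The compression by $p_n$ (equivalently, pairing only against central vectors) is therefore essential, and your route cannot establish the statement.

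The repair is exactly what the paper does: keep the extremality hypothesis inside the centralizer algebras, where it is a hypothesis. Since $\zeta\in P_n$, one has $L(\zeta)L(\zeta)^*=R(\zeta)R(\zeta)^*\in Q_n^+$ by Lemma \ref{lem:ZetaRelations}, and $\sum_\beta R_\beta^*\,L(\zeta)L(\zeta)^*\,R_\beta\in\widehat{Q_{n-1}^+}$ by Remark \ref{rem:TForZ}; then
$\sum_\beta\|R_\beta^*\zeta\|_2^2=\Tr_{n-1}\op\bigl(\textstyle\sum_\beta R_\beta^* L(\zeta)L(\zeta)^* R_\beta\bigr)\le\lambda^{n-1}\Tr_{n-1}\bigl(\textstyle\sum_\beta R_\beta^* L(\zeta)L(\zeta)^* R_\beta\bigr)=\lambda^{n-1}\Tr_n\bigl(L(\zeta)L(\zeta)^*\bigr)=\lambda^{n-1}\|\zeta\|_2^2$,
using approximate extremality of $H^{n-1}$ (Theorem \ref{thm:extremal}) and the trace-preserving property of the operator valued weight (Proposition \ref{prop:T}). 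Alternatively, your own regrouping through the $\alpha^{n-1}$ can be salvaged by writing $\sum_{\alpha^{n-1}}\|L_{\alpha^{n-1}}^*\zeta\|_2^2=\Tr_1\bigl((T_2\op\circ\cdots\circ T_n\op)(L(\zeta)L(\zeta)^*)\bigr)$ with $(T_2\op\circ\cdots\circ T_n\op)(L(\zeta)L(\zeta)^*)\in\widehat{Q_1^+}$, and invoking approximate extremality of $H$ itself together with $\Tr_1\op$-preservation; this even yields the constant $\lambda$ in place of $\lambda^{n-1}$. With that step replaced, the remainder of your outline --- the $L$-side handled symmetrically, and part (2) as the $\lambda=1$ case followed by polarization --- is fine.
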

\begin{proof}
\item[(1)]
We prove the first inequality. Note that $R_{\beta}^*\zeta\in D(\sb{A}H^{n-1})$, and $R(R_\beta^*\zeta)=R_\beta^* R(\zeta)\colon L^2(A)\to H^{n-1}$. Since $H$ is (approximately) extremal, so is $H^{n-1}$ with constant $\lambda^{n-1}$, and
\begin{align*}
\left\langle\left( \sum_\beta  p_nR_\beta R_\beta^*p_n\right) \zeta,\zeta\right\rangle_{P_n}
& = \sum_{\beta} \|R_{\beta}^* \zeta\|_2^2 
= \sum_{\beta} \tr_{A}\left({\sb{A}\langle}R_{\beta}^* \zeta,R_{\beta}^* \zeta\rangle \right) \\
& = \sum_{\beta} \Tr_{n-1}\op\left(R_{\beta}^* R(\zeta)R(\zeta)^*R_{\beta} \right)  
= \Tr_{n-1}\op T_{n-1} (R(\zeta)R(\zeta)^*)\\
& \leq \lambda^{n-1} \Tr_{n-1} T_{n-1} (L(\zeta)L(\zeta)^*)
= \lambda^{n-1}\Tr_n(L(\zeta)L(\zeta)^*)\\
& = \lambda^{n-1}\|\zeta\|_2^2
= \langle (\lambda^{n-1}p_n) \zeta,\zeta\rangle_{P_n}.
\end{align*}
\item[(2)] As $\lambda=1$, by (1),
$$
\left\langle \left(\sum_\beta p_n R_\beta R_\beta^* p_n\right) \zeta, \zeta\right\rangle = \langle \zeta, \zeta\rangle
$$
for all $\zeta\in P_n$, and the result follows from polarization.
\end{proof}

\begin{thm}\label{thm:rotation}
Suppose $H$ is approximately extremal. Then $\rho=\sum_{\beta} L_\beta R_\beta^* $ converges strongly on $P_n$. Moreover if $H$ is extremal, $\rho$ is unitary. A similar result holds for $\rho\op=\sum_\alpha R_\alpha L_\alpha^*$.
\end{thm}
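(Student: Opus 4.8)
The idea is to reduce the convergence assertion to two ingredients. The first is the square-summability estimate $\sum_\beta \|R_\beta^*\zeta\|_2^2\le \lambda^{n-1}\|\zeta\|_2^2$ for $\zeta\in P_n$, which is exactly Lemma~\ref{lem:RotationLemma}(1) (the computation there identifies $\langle(\sum_\beta p_nR_\beta R_\beta^*p_n)\zeta,\zeta\rangle$ with $\sum_\beta\|R_\beta^*\zeta\|_2^2$). The second is a soft functional-analytic fact: for any family $(\eta_\beta)$ indexed by the $H_A$-basis $\{\beta\}$ with $\sum_\beta\|\eta_\beta\|_2^2<\infty$, the net of finite partial sums $\big(\sum_{\beta\in F}L_\beta\eta_\beta\big)_F$ is norm-Cauchy in $H^n$. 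Applying the second fact with $\eta_\beta=R_\beta^*\zeta\in H^{n-1}$ (legitimate since $\beta\in B\subseteq D(H_A)$ makes $L_\beta$ bounded) then produces the strong limit $\rho(\zeta)=\sum_\beta L_\beta R_\beta^*\zeta$, with $\|\rho(\zeta)\|_2\le\lambda^{(n-1)/2}\|\zeta\|_2$.

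For the soft fact I would estimate, for finite $F$ and $\psi\in H^n$,
\[
\Big|\Big\langle\textstyle\sum_{\beta\in F}L_\beta\eta_\beta,\psi\Big\rangle\Big|=\Big|\textstyle\sum_{\beta\in F}\langle\eta_\beta,L_\beta^*\psi\rangle\Big|\le\Big(\textstyle\sum_{\beta\in F}\|\eta_\beta\|_2^2\Big)^{1/2}\Big(\textstyle\sum_{\beta\in F}\|L_\beta^*\psi\|_2^2\Big)^{1/2}\le\Big(\textstyle\sum_{\beta\in F}\|\eta_\beta\|_2^2\Big)^{1/2}\|\psi\|_2,
\]
by Cauchy--Schwarz together with $\sum_{\beta\in F}L_\beta L_\beta^*\le\sum_\beta L_\beta L_\beta^*=1_{H^n}$; hence $\big\|\sum_{\beta\in F}L_\beta\eta_\beta\big\|_2\le\big(\sum_{\beta\in F}\|\eta_\beta\|_2^2\big)^{1/2}$. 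Since $\sum_\beta\|\eta_\beta\|_2^2<\infty$, the tails of this series over complements of finite sets are arbitrarily small, so the net of partial sums is Cauchy, hence convergent, and the limit is independent of how $\{\beta\}$ is exhausted. Knowing that $\rho$ converges strongly on $P_n$, the criterion of Subsection~\ref{sec:Rotations} (essentially \cite{burns}) then shows $\rho$ preserves $P_n$ and is a Burns rotation. The same argument applied to the $_AH$-basis $\{\alpha\}$, using $\sum_\alpha R_\alpha R_\alpha^*=1_{H^n}$ and the second inequality of Lemma~\ref{lem:RotationLemma}(1), gives the analogous statement for $\rho\op=\sum_\alpha R_\alpha L_\alpha^*$ (now with coefficients $\eta_\alpha=L_\alpha^*\zeta$).

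For the unitarity claim, suppose $H$ is extremal. Then $\lambda=1$, and in fact Lemma~\ref{lem:RotationLemma}(2) gives $\sum_\beta\|R_\beta^*\zeta\|_2^2=\|\zeta\|_2^2$, so the estimate above yields $\|\rho(\zeta)\|_2\le\|\zeta\|_2$: $\rho$ is a contraction of $P_n$. By Remark~\ref{rem:RotationInverse}, $\rho^n=\id_{P_n}$, so for every $\zeta\in P_n$,
\[
\|\zeta\|_2=\|\rho^n\zeta\|_2\le\|\rho^{n-1}\zeta\|_2\le\cdots\le\|\rho\zeta\|_2\le\|\zeta\|_2,
\]
forcing equality throughout; thus $\rho$ is an isometry, and it is onto since $\rho^n=\id_{P_n}$, hence unitary. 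The same reasoning, with $\rho\op$ in place of $\rho$, handles $\rho\op$.

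\textbf{Main obstacle.} Given the machinery already in place (Lemma~\ref{lem:RotationLemma}, Remark~\ref{rem:RotationInverse}, and the strong-convergence criterion of Subsection~\ref{sec:Rotations}), the genuinely new point is the passage from square-summability of the ``coefficients'' $R_\beta^*\zeta$ to norm convergence of the operator sum $\sum_\beta L_\beta R_\beta^*$; this rests entirely on the partition-of-unity identity $\sum_\beta L_\beta L_\beta^*=1_{H^n}$. One should also be careful that $L_\beta R_\beta^*\zeta$ is a bona fide bounded expression and that ``strong convergence on $P_n$'' is read as convergence of the net of finite partial sums, so that no ordering of $\{\beta\}$ is implicitly used.
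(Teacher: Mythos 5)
Your proposal is correct and takes essentially the same approach as the paper: both arguments rest on the square-summability $\sum_\beta\|R_\beta^*\zeta\|_2^2\leq\lambda^{n-1}\|\zeta\|_2^2$ from Lemma \ref{lem:RotationLemma} plus a Bessel-type bound coming from $\sum_\beta L_\beta L_\beta^*=1_{H^n}$ (the paper phrases this as the Gram matrix $(L_{\beta_j}^*L_{\beta_i})$ being a projection, hence dominated by the identity, which is the same underlying fact), and on $\|\rho\|\leq 1$ together with $\rho^n=\id_{P_n}$ for unitarity in the extremal case. Your formulation with nets of finite partial sums and the explicit isometry chain merely make explicit what the paper leaves implicit.
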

\begin{proof}
We begin as in the proof of Proposition 3.3.19 of \cite{burns}, but as we do not have Jones projections, we use Lemma \ref{lem:RotationLemma}.

Suppose $\zeta\in P_n$, and enumerate $\{\beta\}=\{\beta_i\}_{i\in\N}$. We will show 
$$
\left\|\sum_{i=r}^s L_{\beta_i}R_{\beta_i}^* \zeta\right\|_2^2\to 0 \text{ as } r,s\to \I.
$$
First note that the infinite matrix $(L_{\beta_j}^* L_{\beta_i})$ is a projection, so it is dominated by $1=\delta_{i,j}$. Hence each corner $(L_{\beta_j}^* L_{\beta_i})_{i,j=r}^s$ is dominated by $1=\delta_{i,j}$, and 
$$
 \left\| \sum_{i=r}^s L_{\beta_i} R_{\beta_i}^* \zeta\right\|_2^2 
 = \sum_{i,j=r}^s\left\langle (L_{\beta_j}^* L_{\beta_i}) R_{\beta_i}^*\zeta, R_{\beta_j}^*\zeta\right\rangle
 \leq \sum_{i=r}^s \langle R_{\beta_i}^* \zeta, R_{\beta_i}^* \zeta\rangle.
$$
We need to show that the right hand side tends to zero, which is certainly true if the infinite sum $\sum_{\beta} \|R_{\beta}^* \zeta\|_2^2$ converges. But this follows immediately from Lemma \ref{lem:RotationLemma}. Hence $\rho$ converges and $\|\rho\|\leq \sqrt{\lambda^{n-1}}$ (where $\lambda$ is the approximate extremality constant). If $\lambda = 1$, then $\|\rho\|\leq 1$ and $\rho^n=\id_{P_n}$, so $\rho$ is necessarily isometric and thus unitary.
\end{proof}

%%%%%%%%%%%%%%%%%%%%%%%%%%%%%%%%%%%%%%%%%%%%%%%%%%
\subsection{Symmetric bimodules and a converse of Theorem \ref{thm:rotation}}\label{sec:symmetric}
To prove a converse of Theorem \ref{thm:rotation}, we need additional structure on $H$ due to Example \ref{ex:NoCentralVectors}.

\begin{rem}
For the rest of this section, we assume $H$ is symmetric (see Remark \ref{rem:symmetric}).
\end{rem}

\begin{lem}\label{lem:switch}
For all $\eta,\xi\in B^n$, $\langle \eta |\xi\rangle_A = {\sb{A}\langle} J\eta, J\xi\rangle$.
\end{lem}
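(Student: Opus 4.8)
**The plan is to prove the identity $\langle \eta \mid \xi\rangle_A = {}_A\langle J\eta, J\xi\rangle$ by unwinding both sides against an arbitrary pair of vectors $\widehat{a_1},\widehat{a_2}\in L^2(A)$, using the defining relations of $L(\cdot)$, $R(\cdot)$, and the bounded-vector inner products from Definitions \ref{defn:LeftModule} and \ref{defn:RightModule}, together with the compatibility $J(a\xi b)=b^*(J\xi)a^*$ of the conjugation $J$ (applied to $J_n$ on $H^n$, which satisfies the analogous relation by Remark \ref{rem:symmetric}).**

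First I would reduce to the case $n=1$: since $J_n$ is the tensor product of the $J$'s and the $A$-valued inner products factor appropriately through the tensor, it suffices to check the claim on $B = B^1$; alternatively one simply carries out the computation directly on $B^n$ with $J_n$, which behaves formally exactly like $J$. Then, for $\eta,\xi\in B^n$ and $a_1,a_2\in A$, I would compute $\langle \langle\eta\mid\xi\rangle_A \widehat{a_1},\widehat{a_2}\rangle_{L^2(A)} = \langle L(\eta)^*L(\xi)\widehat{a_1},\widehat{a_2}\rangle = \langle \xi a_1, \eta a_2\rangle_{H^n}$. On the other side, $\langle {}_A\langle J\eta, J\xi\rangle \widehat{a_1},\widehat{a_2}\rangle_{L^2(A)} = \langle J_n R(J\eta)^* R(J\xi) J_n \widehat{a_1},\widehat{a_2}\rangle$; pushing the conjugate-linear $J_n$ across turns this into $\langle \widehat{a_2^*}, R(J\eta)^*R(J\xi)\widehat{a_1^*}\rangle = \langle a_1^*(J\xi), a_2^*(J\eta)\rangle_{H^n}$ (using $R(\zeta)\widehat{a} = a\zeta$ and noting $J$ on $L^2(A)$ sends $\widehat{a}\mapsto \widehat{a^*}$). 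Now $a_1^*(J\xi) = a_1^*(J_n\xi)$; by the symmetry relation, $a_1^*(J_n\xi) = J_n(\xi a_1)$ and likewise $a_2^*(J_n\eta) = J_n(\eta a_2)$, so $\langle a_1^*(J\xi), a_2^*(J\eta)\rangle = \langle J_n(\xi a_1), J_n(\eta a_2)\rangle = \langle \eta a_2, \xi a_1\rangle$ since $J_n$ is an anti-unitary — wait, that gives the complex conjugate, so I would instead track the conjugation carefully and land on $\langle \xi a_1, \eta a_2\rangle$, matching the left side.

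The one subtle point — and the step I expect to be the main obstacle — is bookkeeping the conjugate-linearity: $J_n$ reverses inner products ($\langle J_n u, J_n v\rangle = \langle v, u\rangle = \overline{\langle u,v\rangle}$), and $J$ on $L^2(A)$ does the same, so one must be careful that these two reversals compose to the identity rather than to a spurious complex conjugation. The cleanest way to handle this is to mirror the computation already done in the proof of part (1) of Lemma \ref{lem:ZetaRelations}, which performs essentially the same manipulation (there with $R(\zeta)^*R(\xi)$ and the $J$ on $L^2(A)$), and to invoke the identity $R(\eta)\widehat{a} = a\eta$ from Definition \ref{defn:LeftModule} together with $L(\xi)\widehat{a} = \xi a$ from Definition \ref{defn:RightModule}. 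Once the conjugations are pinned down, both sides equal $\langle \xi a_1, \eta a_2\rangle_{H^n}$ for all $a_1,a_2\in A$, and since $\widehat{A}$ is dense in $L^2(A)$ this forces $\langle\eta\mid\xi\rangle_A = {}_A\langle J\eta, J\xi\rangle$ as elements of $A$.
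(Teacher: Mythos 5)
Your proposal is correct and is essentially the paper's proof: evaluate both $A$-valued inner products against $\widehat{a_1},\widehat{a_2}\in L^2(A)$, use $L(\xi)\widehat{a}=\xi a$, $R(\zeta)\widehat{a}=a\zeta$, and the symmetry relation $J(a\xi b)=b^*(J\xi)a^*$. The conjugation bookkeeping you flag resolves exactly as in the paper: moving the $L^2(A)$-conjugation across the outer inner product swaps the two vectors (via $\langle Ju,v\rangle=\langle Jv,u\rangle$), yielding $\langle a_2^*J\eta,\,a_1^*J\xi\rangle=\langle J(\eta a_2),J(\xi a_1)\rangle=\langle \xi a_1,\eta a_2\rangle$, so the two anti-linear flips cancel and no stray complex conjugate appears.
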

\begin{proof}
Suppose $a_1,a_2\in A$. Then 
\begin{align*}
\big\langle {\sb{A}\langle} J\eta,J\xi\rangle \widehat{a_1},\widehat{a_2}\big\rangle 
& = \langle J R(J\eta)^* R(J\xi) J \widehat{a_1}, \widehat{a_2}\rangle
= \langle \widehat{a_2^*} , R(J\eta)^* R(J\xi) \widehat{a_1^*}\rangle = \langle a_2^* J\eta , a_1^* J\xi\rangle\\
& = \langle J(\eta a_2) , J (\xi a_1)\rangle 
= \langle \xi a_1, \eta a_2\rangle 
= \big\langle \langle \eta| \xi\rangle_A \widehat{a_1} , \widehat{a_2}\big\rangle.
\end{align*}
\end{proof}

\begin{defn}
Using Lemma \ref{lem:switch}, we define an algebra structure on $B^n\otimes_A B^n$ as follows: if $\eta_1,\eta_2,\xi_1,\xi_2\in B^n$, then
$$
(\eta_1\otimes \xi_1)( \eta_2\otimes \xi_2) = \eta_1 \langle J\xi_1 | \eta_2\rangle_A \otimes \xi_2 = \eta_1 {\sb{A}\langle} \xi_1,J\eta_2\rangle \otimes \xi_2.
$$
\end{defn}

\begin{prop}[\cite{MR703809,MR1055223}]\label{prop:ModuleIso}
The map $B^n\otimes_A B^n\to C_n$ by $\eta\otimes J_n\xi \mapsto L(\eta)L(\xi)^*$ gives a $*$-algebra isomorphism onto its image, and it extends to a $C_n-C_n$ bimodule isomorphism $\theta_n\colon H^{2n}\to L^2(C_n,\Tr_n)$. The same result holds swapping $\OP$.
\end{prop}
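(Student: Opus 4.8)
The plan is to build $\theta_n$ by hand on elementary tensors and then extend by continuity. \textbf{Step 1 (well-definedness as a $*$-map).} Identify $H^{2n}$ with $H^n\otimes_A H^n$ via associativity of the relative tensor product, and set $\theta_n(\eta\otimes\mu)=L(\eta)L(J_n\mu)^*$ for $\eta,\mu\in B^n$, so that $\theta_n(\eta\otimes J_n\xi)=L(\eta)L(\xi)^*$ as in the statement. Right $A$-linearity of the operator puts $L(\eta)L(J_n\mu)^*$ in $C_n=(A\op)'\cap B(H^n)$; the only nontrivial point is $A$-balancedness, which follows from the direct computation $L(\eta a)L(\xi)^*=L(\eta)L(\xi a^*)^*$ together with the symmetry relation $J_n(\xi a^*)=a\,J_n\xi$ (a consequence of Definition~\ref{defn:symmetric} and Remark~\ref{rem:symmetric}). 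Lemma~\ref{lem:remove} then gives $\big(L(\eta_1)L(J_n\xi_1)^*\big)\big(L(\eta_2)L(J_n\xi_2)^*\big)=L\big(\eta_1\langle J_n\xi_1|\eta_2\rangle_A\big)L(J_n\xi_2)^*$, which is exactly $\theta_n$ applied to the product $(\eta_1\otimes\xi_1)(\eta_2\otimes\xi_2)$ defined above the statement (the two forms of that product agreeing by Lemma~\ref{lem:switch}), and $\big(L(\eta)L(J_n\xi)^*\big)^*=L(J_n\xi)L(\eta)^*=\theta_n(J_n\xi\otimes J_n\eta)$, so $\theta_n$ intertwines the natural involution on $B^n\otimes_A B^n$.

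\textbf{Step 2 (isometry).} Here is the computational heart. Polarizing the identity $\Tr_n(L(\xi)L(\xi)^*)=\|\xi\|_2^2$ from Definition~\ref{defn:RightModule} (applied to $H^n_A$) yields $\Tr_n(L(\sigma)L(\tau)^*)=\langle\sigma,\tau\rangle_{H^n}$. Using this, the tracial property of $\Tr_n$ on $C_n$, Lemma~\ref{lem:remove}, the identity $L(\sigma a)=L(\sigma)\circ(\text{left mult.\ by }a)$ on $L^2(A)$, and $J_n(\sigma b)=b^*J_n\sigma$, one computes $\Tr_n\big(\theta_n(\eta_2\otimes\mu_2)^*\theta_n(\eta_1\otimes\mu_1)\big)=\langle\langle\eta_2|\eta_1\rangle_A\mu_1,\mu_2\rangle_{H^n}$; by Definition~\ref{defn:RelativeTensorProduct} this is precisely $\langle\eta_1\otimes\mu_1,\eta_2\otimes\mu_2\rangle_{H^n\otimes_A H^n}$. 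Hence $\theta_n$ is isometric on the algebraic tensor product $B^n\odot_A B^n$ — in particular injective there, giving the ``$*$-algebra isomorphism onto its image'' — and extends to an isometry $H^{2n}\to L^2(C_n,\Tr_n)$.

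\textbf{Step 3 (surjectivity and bimodule structure).} Density of the range reduces to the standard fact that the linear span of $\{L(\eta)L(\xi)^*:\eta,\xi\in B^n\}$ is $\|\cdot\|_2$-dense in $L^2(C_n,\Tr_n)$ (equivalently $\sigma$-weakly dense in $C_n$), for which one uses $\sum_{\beta^n}L(\beta^n)L(\beta^n)^*=1_{H^n}$ and a truncation argument, cf.\ \cite{MR703809,MR1055223}; a little care is needed since $x\beta^n$ is only right $A$-bounded for $x\in C_n$. An isometry with dense range is a unitary, so $\theta_n$ is a Hilbert space isomorphism. For the bimodule structure, the left $C_n$-action on $H^{2n}$ (acting on the first $n$ strings) is intertwined because $L(x\eta)=xL(\eta)$ for $x\in C_n$, whence $\theta_n((x\eta)\otimes\mu)=x\,\theta_n(\eta\otimes\mu)$; the right $C_n$-action is the action of $C_n\op=A'\cap B(H^n)$ on the last $n$ strings transported through the anti-isomorphism $j_n$ (where symmetry is used again), for which $\theta_n(\eta\otimes(j_n(y)\mu))=L(\eta)L(J_n\mu)^*y=\theta_n(\eta\otimes\mu)\,y$; it suffices to verify this on the dense subspace generated using $Q_n$, where all vectors stay in $B^n$ by Remark~\ref{rem:StayBounded}. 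The ``swapping $\OP$'' version is proved identically with $R(\,\cdot\,)$ in place of $L(\,\cdot\,)$ and $C_n\op,\Tr_n\op$ in place of $C_n,\Tr_n$.

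\textbf{Main obstacle.} Most of this is bookkeeping; the genuine difficulties are (a) keeping the $J_n$-twist and the left/right module conventions exactly straight in the isometry computation of Step~2, and (b) the density of the simple tensors $L(\eta)L(\xi)^*$ in $L^2(C_n,\Tr_n)$ in the presence of the unboundedness issue ($x\beta^n\notin B^n$ in general) — this is where one genuinely leans on the symmetry hypothesis and on the module-theoretic input of the cited papers rather than on formal manipulation.
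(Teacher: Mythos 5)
Your proposal is correct and follows essentially the same route as the paper: check $A$-middle linearity using $L(\eta a)L(\xi)^*=L(\eta)L(\xi a^*)^*$ and the $J_n$-relation, verify the trace/inner-product identity $\Tr_n\big(\theta_n(\eta_2\otimes\mu_2)^*\theta_n(\eta_1\otimes\mu_1)\big)=\langle\eta_1\otimes\mu_1,\eta_2\otimes\mu_2\rangle_{H^{2n}}$ to get an isometry, and then extend to a $C_n$--$C_n$ bimodule isomorphism. You simply spell out the multiplicativity, $*$-compatibility, density/surjectivity, and bimodule intertwining that the paper treats as ``clear'' or defers to the cited references, so there is nothing to flag.
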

\begin{proof}
The map is well defined as it is $A$-middle linear:
\begin{align*}
\eta a \otimes J_n\xi 
& \mapsto L(\eta a)L(\xi)^* 
= L(\eta) a L(\xi)^* 
= L(\eta) L(\xi a^*)^*
\text{ and }\\
\eta\otimes a J_n\xi 
& \mapsto L(\eta) L(J_n(aJ_n\xi))^* = L(\eta)L(\xi a^*)^*.
\end{align*}
The map clearly preserves the multiplicative structure and is isometric by construction. If $\eta_1,\eta_2,\xi_1,\xi_2\in B^n$, then
\begin{align*}
\langle L(\eta_1) L(\xi_1)^* , L(\eta_2)L(\xi_2)^* \rangle_{L^2(C_n,\Tr_n)}
& = \Tr_n \left(L(\xi_2)L(\eta_2)^*L(\eta_1) L(\xi_1)^*\right)\\
& = \Tr_n \left(L(\xi_2)\langle \eta_2| \eta_1\rangle_A L(\xi_1)^*\right)\\
& = \Tr_n \left(L(\xi_2\langle \eta_2| \eta_1\rangle_A) L(\xi_1)^*\right)\\
& = \langle \xi_2\langle \eta_2| \eta_1\rangle_A,\xi_1\rangle_{H^n} \\
&= \langle J_n\xi_1 , J_n(\xi_2\langle \eta_2| \eta_1\rangle_A)\rangle_{H^n}\\
& = \langle J_n\xi_1 ,  \langle \eta_1| \eta_2\rangle_AJ_n\xi_2\rangle_{H^n}\\
&= \langle \eta_1\otimes J_n\xi_1 , \eta_2\otimes J_n \xi_2 \rangle_{H^{2n}}.
\end{align*}
Hence it clearly extends to a $C_n-C_n$ bilinear bimodule isomorphism.
\end{proof}

\begin{cor}
$C_{n-k}\subseteq C_n\subseteq C_{n+k}$ is standard (isomorphic to the basic construction) for all $n,k\geq 0$.
\end{cor}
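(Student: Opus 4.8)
The plan is to read this off Proposition~\ref{prop:ModuleIso}, which does the essential work. Recall that for an inclusion $N\subseteq M$ with a fixed semifinite trace on $M$, the basic construction is by definition the von Neumann algebra generated by $M$ and the Jones projection $e_N$ acting on $L^2(M)$, and that it equals $(J_MNJ_M)'$, i.e.\ the commutant of the right $N$-action on $L^2(M)$ (the algebra of right-$N$-linear bounded operators); this is defined with no finiteness hypothesis on $N\subseteq M$. So to prove $C_{n-k}\subseteq C_n\subseteq C_{n+k}$ is standard I need a $*$-isomorphism $C_{n+k}\xrightarrow{\sim}\big(J_{C_n}C_{n-k}J_{C_n}\big)'$ that is the identity on $C_n$ (hence on $C_{n-k}$) and carries $\Tr_{n+k}$ to the dual trace coming from $\Tr_n$.

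First I would invoke Proposition~\ref{prop:ModuleIso} to identify $L^2(C_n,\Tr_n)$ with $H^{2n}$ as a $C_n$--$C_n$ bimodule via $\theta_n$. Under $\theta_n$ the left $C_n$-action becomes the standard inclusion $C_n\hookrightarrow C_{2n}$, $x\mapsto x\otimes_A\id_n$, acting on the first $n$ tensor legs of $H^{2n}$, since $L(x\eta)L(\xi)^*=(x\otimes_A\id_n)(\eta\otimes J_n\xi)$. Next I would transport the right $C_{n-k}$-action: writing the inclusion $C_{n-k}\hookrightarrow C_n$ as $y\mapsto y\otimes_A\id_k$, computing $L(\eta)L(\xi)^*(y\otimes_A\id_k)=L(\eta)L\big((y^*\otimes_A\id_k)\xi\big)^*$, and using $J_n=J_{n-k}\otimes J_k$ together with $J_nC_nJ_n=C_n\op$ (Remark~\ref{rem:symmetric}), one gets that the right $C_{n-k}$-action becomes $\id_n\otimes_A j_n(C_{n-k}\otimes_A\id_k)$, a subalgebra of $\id_n\otimes_A C_n\op=(C_n\otimes_A\id_n)'$; concretely it is ``$C_{n-k}\op$ acting on the middle $n-k$ legs'' of $H^{2n}=H^n\otimes_A H^{n-k}\otimes_A H^k$. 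Then I would compute its commutant in $B(H^{2n})$ with the fiber product identities of Definition~\ref{defn:FiberProduct} and Remark~\ref{rem:FiberProduct}: I expect it to be generated by the left $C_n$-action $C_n\otimes_A\id_n$, by $A$ acting on those middle legs, and by $C_k\op$ on the last $k$ legs, assembling into a copy of $C_{n+k}$ distributed over the legs $\{1,\dots,n\}\cup\{2n-k+1,\dots,2n\}$. Flipping the block $H^{n-k}$ past the block $H^k$ to make those legs contiguous --- which is exactly where symmetry of $H$ (the standing hypothesis of this subsection) is needed --- then yields the desired $*$-isomorphism $C_{n+k}\cong\langle C_n,e_{C_{n-k}}\rangle$. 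Since this flip only permutes legs beyond the first $n$, it fixes $C_n\otimes_A\id_n$, so the isomorphism is the identity on $C_n$, and a fortiori on $C_{n-k}$.

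For the traces, I would argue that both $\Tr_{n+k}$ on $C_{n+k}$ and the dual trace on $\langle C_n,e_{C_{n-k}}\rangle$ are characterized by the same property: they are the unique traces making the natural operator valued weight down to $(\widehat{C_n^+},\Tr_n)$ trace-preserving (Theorem~\ref{thm:Texists}), which for $C_{n+k}$ is $T_{n+1}\circ\cdots\circ T_{n+k}$ of Proposition~\ref{prop:T}. Hence the isomorphism above is automatically trace-preserving, and $C_{n-k}\subseteq C_n\subseteq C_{n+k}$ is standard.

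The main obstacle I anticipate is the tensor-leg bookkeeping in the commutant step: keeping track of which $A$- or $A\op$-action sits on which legs, and in particular correctly identifying and inserting the flip that reorders the inner $H^{n-k}$ past $H^k$. This is also the precise point where one must use that $H$ is symmetric --- without it there is no such flip, and no reason for $C_{n+k}$ to be the basic construction. A secondary, more conceptual point (which the $L^2$-route handles automatically) is that $C_{n-k}\subseteq C_n$ is typically of infinite index, so there is no conditional expectation $C_n\to C_{n-k}$; one has to run the argument through Haagerup's operator valued weights and the commutant description of the basic construction rather than through a Pimsner-Popa basis and Jones projections, which is consistent with the paper's policy of avoiding the basic construction machinery.
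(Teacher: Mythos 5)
Your overall route is the paper's: identify $L^2(C_n,\Tr_n)$ with $H^{2n}$ via $\theta_n$ from Proposition \ref{prop:ModuleIso}, transport the $C_{n-k}$-action, and compute a commutant using Remark \ref{rem:FiberProduct}; indeed the paper's entire proof is the single chain $J_{2n}(C_{n-k}\otimes_A\id_{n+k})'J_{2n}=J_{2n}(\id_{n-k}\otimes_A C_{n+k}\op)J_{2n}=C_{n+k}\otimes_A\id_{n-k}$. The genuine problem is your ``middle legs plus flip'' step. You take the formula $J_n=J_{n-k}\otimes J_k$ at face value, but the non-reversing formula of Remark \ref{rem:symmetric} is not middle-$A$-linear (it sends $\xi a\otimes\eta$ and $\xi\otimes a\eta$ to vectors that are not identified in the relative tensor product), so it does not define a map on $H^n$; the conjugation that actually exists reverses the tensor factors, $J_n(\xi_1\otimes\cdots\otimes\xi_n)=J\xi_n\otimes\cdots\otimes J\xi_1$, and this is what the computation inside Proposition \ref{prop:ModuleIso} (the step $J_n(aJ_n\xi)=\xi a^*$) and the corollary's displayed equality implicitly use. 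With the correct $J_n$ one has $j_n(C_{n-k}\otimes_A\id_k)=\id_k\otimes_A C_{n-k}\op$, so the transported right $C_{n-k}$-action is $\id_{n+k}\otimes_A C_{n-k}\op$ on the \emph{last} $n-k$ legs --- already contiguous --- and its commutant is $C_{n+k}\otimes_A\id_{n-k}$ at once from Remark \ref{rem:FiberProduct} by the bicommutant theorem; no flip is needed, and this is exactly the paper's computation read through $(J_{C_n}C_{n-k}J_{C_n})'=J_{C_n}C_{n-k}'J_{C_n}$.

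Moreover, the flip you invoke is not available: over a noncommutative $A$ there is no $A$-middle-linear unitary exchanging the blocks $H^{n-k}$ and $H^k$ inside a relative tensor product, and symmetry of $H$ only supplies the conjugate-linear $J$'s, not such a linear block swap. Relatedly, ``$C_{n-k}\op$ acting on the middle $n-k$ legs of $H^n\otimes_A H^{n-k}\otimes_A H^k$'' is not a well-defined algebra of operators, since elements of $C_{n-k}\op$ need not be right $A$-linear and hence do not descend to an interior tensor factor; the subsequent commutant guess built on that picture is therefore unfounded. So the step meant to reassemble $C_{n+k}$ fails as written, although your target statement is correct and, once the convention for $J_n$ is fixed, your argument collapses to the paper's one-line proof. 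Two minor points: symmetry enters only through the existence of $J_n$ (hence $\theta_n$), not through any flip; and your trace paragraph (which the paper omits) is circular as phrased, since by Theorem \ref{thm:Texists} \emph{every} n.f.s. trace on the larger algebra is preserved by some operator valued weight onto $(C_n,\Tr_n)$, so that property alone does not single out $\Tr_{n+k}$ --- one would instead verify directly that the identification matches the basis definitions of the canonical traces.
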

\begin{proof}
By Remark \ref{rem:FiberProduct} and Proposition \ref{prop:ModuleIso},
$$
J_{2n}(C_{n-k}\otimes_A \id_{n+k})'J_{2n}=J_{2n}(\id_{n-k}\otimes_A C_{n+k}\op)J_{2n}=C_{n+k}\otimes_A \id_{n-k}.
$$
\end{proof}

\begin{lem}[\cite{burns}, Theorem 3.3.13]\label{lem:BurnsL2Lemma}
Let $N$ be a von Neumann subalgebra of a semifinite von Neumann algebra $M$ with n.f.s. trace $\Tr_M$. Then
\item[(1)] $N'\cap L^2(M)=\overline{N'\cap \n_{\Tr_M}}^{\|\cdot\|_2}$
\item[(2)] $(N'\cap L^2(M))^\perp = \overline{[N,\n_{\Tr_M}]}^{\|\cdot\|_2}$, the closure of the span of the commutators in $L^2(M)$.
\end{lem}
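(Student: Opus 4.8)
The plan is to establish (2) and (1) independently, getting (2) by a direct orthogonality computation in $L^2(M)=L^2(M,\Tr_M)$ and (1) by a spectral-truncation argument; neither statement seems to be needed for the other. Throughout I would put $M$ in standard form with modular conjugation $J$, so that the right action is $\widehat x a=Ja^*J\widehat x$, and I would use that $\n_{\Tr_M}$ is a two-sided ideal (since $\Tr_M$ is a trace), so $ax-xa\in\n_{\Tr_M}$ for $a\in N$ and $x\in\n_{\Tr_M}$, and that $\widehat{\n_{\Tr_M}}$ is $\|\cdot\|_2$-dense in $L^2(M)$.

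For (2): First, $\overline{[N,\n_{\Tr_M}]}^{\|\cdot\|_2}\subseteq(N'\cap L^2(M))^\perp$, because for $\zeta\in N'\cap L^2(M)$, $a\in N$, $x\in\n_{\Tr_M}$ one has $\langle\widehat{ax-xa},\zeta\rangle=\langle\widehat x,a^*\zeta\rangle-\langle\widehat x,\zeta a^*\rangle=\langle\widehat x,a^*\zeta-\zeta a^*\rangle=0$ using $a^*\zeta=\zeta a^*$. Conversely, if $\xi$ is orthogonal to every such commutator then $\langle\widehat x,a^*\xi-\xi a^*\rangle=0$ for all $x\in\n_{\Tr_M}$; density of $\widehat{\n_{\Tr_M}}$ forces $a^*\xi=\xi a^*$ for every $a\in N$, i.e.\ $\xi\in N'\cap L^2(M)$. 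Passing to orthogonal complements gives the stated identity. This half is routine.

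For (1): The inclusion $\overline{N'\cap\n_{\Tr_M}}^{\|\cdot\|_2}\subseteq N'\cap L^2(M)$ is immediate, since $x\in N'\cap\n_{\Tr_M}$ gives $a\widehat x=\widehat{ax}=\widehat{xa}=\widehat x a$ for $a\in N$ and $N'\cap L^2(M)$ is $\|\cdot\|_2$-closed. The real content, and the step I expect to be the only genuine obstacle, is the reverse inclusion: given $\xi\in N'\cap L^2(M)$ I must produce $N$-central elements of $\n_{\Tr_M}$ converging to it, and for that I need to upgrade the $L^2$-level relations $a\xi=\xi a$ to an operator-level statement. Concretely I would identify $\xi$ with the closed densely-defined operator affiliated with $M$ that it represents under the standard identification of $L^2(M)$ with the square-$\Tr_M$-integrable operators affiliated with $M$; then $a\xi=\xi a$ in $L^2(M)$ reads $\overline{a\xi}=\overline{\xi a}$ as operators, and taking $a=u\in\cU(N)$ gives $u^*\xi u=\xi$, so the bounded spectral data of $\xi$ lie in $M$ and commute with $\cU(N)$, hence with $N$; that is, $\xi$ is affiliated with $N'\cap M$.

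With that in hand the rest is bounded-operator bookkeeping. Writing the polar decomposition $\xi=v|\xi|$, the partial isometry $v$ and the spectral projections $e_n$ of $|\xi|$ for the intervals $[0,n]$ all lie in $N'\cap M$; the truncations $x_n:=\xi e_n=v|\xi|e_n$ are then bounded, lie in $N'\cap M$, and satisfy $\Tr_M(x_n^*x_n)=\Tr_M(|\xi|^2e_n)\le\Tr_M(|\xi|^2)=\|\xi\|_2^2<\I$, so $x_n\in N'\cap\n_{\Tr_M}$; and $\|\xi-\widehat{x_n}\|_2^2=\Tr_M(|\xi|^2(1-e_n))\to 0$ by normality of $\Tr_M$. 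Hence $\xi\in\overline{N'\cap\n_{\Tr_M}}^{\|\cdot\|_2}$, which completes (1). The only delicate point, as noted, is the careful passage to the unbounded affiliated operator in the reverse inclusion of (1); everything else is elementary.
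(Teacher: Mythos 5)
Your proof is correct, but note that the paper itself gives no argument for this lemma: it is quoted verbatim from Burns' thesis (Theorem 3.3.13), so there is nothing internal to compare against. Your part (2) is the same routine orthogonality computation one would expect anywhere: showing $[N,\n_{\Tr_M}]^\perp=N'\cap L^2(M)$ using density of $\widehat{\n_{\Tr_M}}$ and then taking complements is exactly right. For part (1), your hard direction goes through the identification of $L^2(M,\Tr_M)$ with the square-integrable $\Tr_M$-measurable operators affiliated with $M$; this works, but it is the one place where you are leaning on a genuinely nontrivial piece of machinery (Segal--Nelson noncommutative integration) that you should cite rather than call ``the standard identification'': you need that the left and right module actions correspond to the strong product of measurable operators, so that $a\xi=\xi a$ in $L^2$ really does give $u\xi u^*=\xi$ for $u\in\cU(N)$, and hence, by uniqueness of the polar decomposition, that $v$ and the spectral projections $e_n$ of $|\xi|$ lie in $N'\cap M$. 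Granting that, the truncation estimate $\Tr_M(x_n^*x_n)=\Tr_M(|\xi|^2e_n)\leq\|\xi\|_2^2$ and the convergence $\Tr_M(|\xi|^2(1-e_n))\to 0$ by normality are fine, so the proof is complete.

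It is worth knowing the alternative route that avoids unbounded operators entirely (and is closer in spirit to how such statements are usually proved in the subfactor literature): given $\xi\in N'\cap L^2(M)$ and $x\in\n_{\Tr_M}$ with $\|\xi-\widehat{x}\|_2<\varepsilon$, take the unique element of minimal $\|\cdot\|_2$-norm in the $\|\cdot\|_2$-closed convex hull of $\{\widehat{uxu^*}\colon u\in\cU(N)\}$. Since $u\xi u^*=\xi$, every element of the hull is within $\varepsilon$ of $\xi$; since the convex combinations are uniformly bounded in operator norm by $\|x\|_\I$ and the operator-norm ball of $M$ intersected with $L^2(M)$ is $\|\cdot\|_2$-closed, the minimal element is $\widehat{y}$ for some $y\in\n_{\Tr_M}$, and uniqueness of the minimal element forces $uyu^*=y$ for all $u\in\cU(N)$, i.e.\ $y\in N'\cap\n_{\Tr_M}$. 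Both arguments are legitimate; yours buys a concrete approximating sequence (spectral truncations of $\xi$ itself), the averaging argument buys independence from measurable-operator theory.
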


\begin{rem}
By Proposition \ref{prop:ModuleIso} and Lemma \ref{lem:BurnsL2Lemma}, $\theta_n$ yields an isomorphsim
$$P_{2n} = A'\cap H^{2n}\cong A'\cap L^2(C_n,\Tr_n) = \overline{ A'\cap \n_{\Tr_n}}^{\|\cdot\|_2}=\overline{C_n\op \cap \n_{\Tr_n}}^{\|\cdot\|_2}=L^2(Q_n,\Tr_n)$$ 
of $Q_n- Q_n$ bimodules. A similar result holds swapping $\OP$.
\end{rem}

\begin{thm}\label{thm:RotationConverse}
If $\rho$ exists on $P_{2n}$, then $H^n$ is approximately extremal. If $\rho$ is unitary, then $H^n$ is extremal.
\end{thm}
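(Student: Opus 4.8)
The plan is to transport everything to the central $L^{2}$-vectors via the two realizations of $P_{2n}$ coming from Proposition \ref{prop:ModuleIso}. By the Remark immediately preceding the theorem, $\theta_{n}$ restricts to a $Q_{n}$--$Q_{n}$ bimodule unitary $\Theta\colon P_{2n}\to L^{2}(Q_{n},\Tr_{n})$, and the $\OP$-version restricts to a bimodule unitary $\Theta\op\colon P_{2n}\to L^{2}(Q_{n},\Tr_{n}\op)$. Under $\Theta$ the action $z\otimes_{A}\id_{n}$ ($z\in Q_{n}$) becomes left multiplication and $\id_{n}\otimes_{A}z$ becomes right multiplication by $j_{n}(z)$; under $\Theta\op$ the roles of left and right are interchanged. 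I will fix $z\in Q_{n}^{+}$ and first treat the case $\Tr_{n}\op(z)<\I$, the general case following by an increasing-limit argument using that $\Tr_{n},\Tr_{n}\op$ extend to $\widehat{Q_{n}^{+}}$ and that both sides of the desired (in)equality are normal in $z$.

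Set $\zeta:=(\Theta\op)^{-1}(\widehat{z^{1/2}})\in P_{2n}$, so that $\|\zeta\|_{2}^{2}=\Tr_{n}\op(z)$; a direct computation with $\Theta\op$ (using traciality) also gives $(\id_{n}\otimes_{A}x)(\omega_{\zeta})=\Tr_{n}\op(x\cdot z)$ for every $x\in\widehat{Q_{n}^{+}}$. Now apply $\rho^{n}$. The crucial step is to identify the operator $\Theta\op\circ\rho^{n}\circ(\Theta\op)^{-1}$ on $L^{2}(Q_{n})$: the intertwining relations of Theorem \ref{thm:MoveAround}(1), together with $\rho^{2n}=\id_{P_{2n}}$ (Remark \ref{rem:RotationInverse}), force it to be "$j_{n}$-twisted", and feeding in the explicit formulas for $\theta_{n}$, $\theta_{n}\op$, $\rho^{n}$ and Lemma \ref{lem:switch} one checks it is exactly the map $\widehat{a}\mapsto\widehat{j_{n}(a)}$. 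Hence $\Theta\op(\rho^{n}\zeta)=\widehat{j_{n}(z^{1/2})}$, and since $j_{n}(z^{1/2})^{*}j_{n}(z^{1/2})=j_{n}(z)$ and $\Tr_{n}\op\circ j_{n}=\Tr_{n}$ (Remark \ref{rem:symmetric}),
$$\|\rho^{n}\zeta\|_{2}^{2}=\Tr_{n}\op\bigl(j_{n}(z)\bigr)=\Tr_{n}(z).$$

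From here the conclusion is immediate. If $\rho$ is unitary then $\|\rho^{n}\zeta\|_{2}^{2}=\|\zeta\|_{2}^{2}$, so $\Tr_{n}(z)=\Tr_{n}\op(z)$ for all $\Tr_{n}\op$-finite $z\in Q_{n}^{+}$, whence $\Tr_{n}=\Tr_{n}\op$ on $Q_{n}^{+}$ after the limiting step, i.e. $H^{n}$ is extremal. If $\rho$ merely exists then $\rho^{n}$ is a bounded invertible operator on $P_{2n}$ with $(\rho^{n})^{-1}=\rho^{n}$, so with $\lambda:=\|\rho^{n}\|^{2}$ one has $\lambda^{-1}\|\zeta\|_{2}^{2}\le\|\rho^{n}\zeta\|_{2}^{2}\le\lambda\|\zeta\|_{2}^{2}$, i.e. $\lambda^{-1}\Tr_{n}\op(z)\le\Tr_{n}(z)\le\lambda\Tr_{n}\op(z)$; thus $\lambda^{-1}\Tr_{n}\le\Tr_{n}\op\le\lambda\Tr_{n}$ on $Q_{n}^{+}$ and $H^{n}$ is approximately extremal.

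The main obstacle is the middle step: showing that $\Theta\op\circ\rho^{n}\circ(\Theta\op)^{-1}$ is precisely the $j_{n}$-flip, with no spurious central unitary twist. The abstract intertwining relations alone only determine it up to such a twist, so one must use the explicit form of the maps $\theta_{n},\theta_{n}\op$ and the symmetry of $H$ (via $J_{n}$ and Lemma \ref{lem:switch}) to kill it; this is the analogue, in the present operator-algebraic language, of the diagrammatic identities sketched at the beginning of Subsection \ref{sec:ExtremalityImpliesRotations}. Everything else is routine bookkeeping with the extended positive cones and the two canonical traces.
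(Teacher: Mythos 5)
Your route is in substance the paper's: transport $P_{2n}$ to $L^2(Q_n)$ via Proposition \ref{prop:ModuleIso}, identify $\rho^n$ under this identification with the $j_n$-flip, and read off the trace comparison from $\|\rho^n\|$. Your ``crucial middle step'' is exactly Lemma \ref{lem:BurnsLemma} (Burns 3.3.21(ii)), stated in its opposite form; the paper does not reprove it either (it is cited from Burns), so leaving it as a sketched computation puts you at the same level of detail as the text. Your packaging --- working with $\widehat{z^{1/2}}$, and using $\rho^{2n}=\id_{P_{2n}}$ so that $(\rho^n)^{-1}=\rho^n$ and both inequalities come from a single norm estimate --- is a harmless cosmetic variant of the paper's two mirror computations.

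One step, however, does not work as written: the reduction of general $z\in Q_n^+$ to the case $\Tr_n\op(z)<\I$ ``by an increasing-limit argument.'' The trace $\Tr_n\op$ need not be semifinite on $Q_n$: by Proposition \ref{prop:decompose}, $Q_n=\a_n\oplus\b_n\oplus\b_n\op\oplus\c_n$ with $(\b_n\oplus\c_n)\cap\m_{\Tr_n\op}=\{0\}$, and these summands are cut by central projections of $Q_n$, so if $z$ has a nonzero component in $\b_n\oplus\c_n$ then every $y\in Q_n^+$ with $y\le z$ and $\Tr_n\op(y)<\I$ has vanishing component there; hence $z$ is not the supremum of an increasing net of $\Tr_n\op$-finite elements of $Q_n^+$, and normality of the traces cannot deliver the inequality $\Tr_n\op(z)\le\lambda\,\Tr_n(z)$ when $\Tr_n\op(z)=\I$. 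The fix is the one the paper uses implicitly (``Similarly \dots''): each inequality is trivial when its dominating side is infinite, so run your argument a second time with $\zeta=\Theta^{-1}(\widehat{z^{1/2}})$ for $\Tr_n(z)<\I$ (the non-opposite identification you already set up), obtaining $\lambda^{-1}\Tr_n(z)\le\Tr_n\op(z)\le\lambda\,\Tr_n(z)$ in that case; the two finite cases together with the doubly infinite case then cover all of $Q_n^+$, and the same remark repairs the unitary case (if $\Tr_n\op(z)=\I$ but $\Tr_n(z)<\I$, the mirror computation gives a contradiction). With that replacement your proof is correct and coincides in substance with the paper's.
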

\begin{proof} 
The main step is to show the following lemma, whose proof is essentially the same as in \cite{burns}.

\begin{lem}[3.3.21.(ii) of \cite{burns}]\label{lem:BurnsLemma}
If $\rho$ exists on $P_{2n}$, then for all $x\in C_n\op\cap \n_{\Tr_n}$, $\rho^n(\theta_n^{-1}(\widehat{x}))=\theta_n^{-1}(\widehat{j_n(x)})\in C_n\op\cap \n_{\Tr_n}$. In particular, $C_n\op\cap \n_{\Tr_n}=\n_{\Tr_n\op}\cap \n_{\Tr_n}$. A similar result holds swapping $\OP$.
\end{lem}

Using this lemma, Burns' proof shows $\Tr_n\op\leq \|\rho^n\| \Tr_n$ on $Q_n^+$. Suppose $z\in Q_n$. If $\Tr_n(z^*z)=\I$, we are finished. Otherwise, $z\in C_n\op\cap \n_{\Tr_n}=\n_{\Tr_n\op}\cap \n_{\Tr_n}$, and
\begin{align*}
\Tr_n\op (z^*z) 
& = \Tr_n\circ j_n (z^*z) 
= \Tr_n (j_n(z)^* j_n(z)) 
= \left\langle \widehat{j_n(z)},\widehat{j_n(z)}\right\rangle_{L^2(Q_n,\Tr_n)} \\
& = \left\langle \theta_n^{-1}(\widehat{j_n(z)}),\theta_n^{-1}(\widehat{j_n(z)})\right\rangle_{P_n}
= \left\langle \rho^n (\theta_n^{-1}(\widehat{z})),  \rho^n (\theta_n^{-1}(\widehat{z}))\right\rangle_{P_n}\\
& = \|\rho^n(\theta_n^{-1}(\widehat{z}))\|^2_{P_n}
\leq \|\rho^n\|^2 \|\theta_n^{-1}(\widehat{z})\|^2_{P_n} 
= \|\rho^n\|^2 \|\widehat{z}\|^2_{L^2(Q_n,\Tr_n)} \\
& = \|\rho^n\|^2 \Tr_n(z^*z).
\end{align*}
Similarly $\Tr_n \leq \|\rho^n\|^2 \Tr_n\op$ on $Q_n^+$, and $H^n$ is approximately extremal. In particular, if $\|\rho\|=1$, $H^n$ is extremal.
\end{proof}

\begin{rem}
Theorem \ref{thm:MainTheorem} now follows immediately from Theorems \ref{thm:extremal}, \ref{thm:rotation}, and \ref{thm:RotationConverse}.
\end{rem}

%%%%%%%%%%%%%%%%%%%%%%%%%%%%%%%%%%%%%%%%%%%%%%%%%%
\section{Examples}\label{sec:Examples}

\begin{ex}[Bifinite bimodules]
In the case that $H$ is a symmetric, bifinite $A-A$ bimodule, then the $\B\P$-algebra structure encodes the $C^*$-tensor category whose objects are the sub-bimodules of $H^n$ for some $n$ and whose morphisms are intertwiners.
\end{ex}

\begin{ex}\label{ex:InfiniteIndex}
Suppose $A_0=A\subset B=A_1$ is an infinite index inclusion of $II_1$-factors. Then $H=L^2(B)$ gives an $A-A$ bimodule. In this case, letting $A_{n+1}$ be the $n\thh$ iterated basic construction of $A_{n-1}\subset A_n$, we have
\item[$\bullet$] $H^n\cong L^2(A_n,\Tr_n)$,
\item[$\bullet$] $C_n,C_n\op$ is the left,right action respectively of $A_{2n}$, and
\item[$\bullet$] $Q_n = A_0'\cap A_{2n}$.
\item
Theorem \ref{thm:MainTheorem} was proven for this case by \cite{burns}.
\end{ex}

\begin{ex}\label{ex:NoCentralVectors}
Suppose $A$ is a $II_1$-factor, and $\sigma\in \Aut(A)$. Define $H_\sigma = {\sb{A}L^2}(A)_{\sigma(A)}$ by $a\widehat{b}c = \widehat{ab\sigma(c)}$ for all $a,b,c,\in A$. Suppose that $\sigma$ is outer and not periodic, and $\sigma^n$ is outer for all $n\in\N$. Then $H_\sigma^n \cong H_{\sigma^n}$ is extremal and $P_n=(0)$ for all $n\geq 1$.
\end{ex}

\begin{ex}[Group actions]\label{ex:GroupActions}
Suppose $G$ is a countable i.c.c. group, and $\pi\colon G\to U(K)$ is a unitary representation. We can define two bimodules:
\item[(1)] $H=K\otimes_\C \ell^2(G)$ where the left action is given by the diagonal action $\pi\otimes \lambda$ and the right action is given by $1\otimes \rho$ where $\lambda,\rho$ are the left,right regular representation of $G$ on $\ell^2(G)$. Hence $K\otimes_\C \ell^2(G)$ gives an $A-A$ bimodule where $A=LG$. Then we may identify
$$
H^n = K^n\otimes_\C \ell^2(G)
$$
where we write $K^n=K^{\otimes_\C n}$, and the left action is the diagonal action $\pi^n\otimes \lambda$ and the right action is $1_n\otimes \rho$. It is clear that projections in $Q_n$ correspond to $LG-LG$ invariant subspaces of $H^n$. Every $G$-invariant subspace of $K^n $ yields such a subspace, but in general, they do not exhaust all possible subspaces.
\item[(2)] To fix this problem, we use an idea of Richard Burstein and add a copy of the hyperfinite $II_1$-factor $R$. Suppose $\alpha\colon G\to \Aut(R)$ is an outer action, so $A=R\rtimes_\alpha G$ is a $II_1$-factor. Set $H=K\otimes_\C L^2(R) \otimes_\C \ell^2(G)$, and consider the left and right actions where 
\begin{align*}
r_1 (k\otimes \widehat{r_2} \otimes \delta_g) r_3 &= k \otimes \widehat{r_1 r_2 \alpha_{g}^{-1}(r_3)}\otimes \delta_{g} \\
g_1(k\otimes \widehat{r} \otimes \delta_{g_2}) g_3 &= (\pi_{g_1}k)\otimes \widehat{\alpha_{g_1}(r)} \otimes \delta_{g_1g_2g_3}
\end{align*}
for $r,r_i\in R$ and $g,g_i\in G$ for $i=1,2,3$. Hence $g\in G$ acts on the left by $\pi_g\otimes \alpha_g \otimes \lambda_g$ and on the right by $1\otimes 1\otimes \rho_g$.  Then similarly we may identify
$$H^n=K^n  \otimes_\C L^2(R)\otimes_\C \ell^2(G).$$
\end{ex}

\begin{thm}\label{thm:invariants}
For $A=R\rtimes_\alpha G$ and $H^n$ as above, $A-A$ invariant subspaces of $H^n$ correspond to $G$-invariant subspaces of $K^n$.
\end{thm}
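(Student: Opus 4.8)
The plan is to compute the centralizer algebra $Q_n=C_n\cap C_n\op$ explicitly, since the closed $A$-$A$ invariant subspaces of $H^n$ are precisely the ranges of the projections in $Q_n$. First I would rewrite the Hilbert space: under the identification $\widehat r\otimes\delta_g\leftrightarrow\widehat{ru_g}$, the tensorand $L^2(R)\otimes_\C\ell^2(G)$, equipped with the $R$- and $G$-actions prescribed in Example \ref{ex:GroupActions}, is just the standard $A$-$A$ bimodule $L^2(A)$ (a routine check against the displayed formulas). Hence
\[
H^n\cong K^n\otimes_\C L^2(A),
\]
in which the right $A$-action is the identity on $K^n$ and is right multiplication on the $L^2(A)$-tensorand; the left action of $r\in R$ is the identity on $K^n$ and is left multiplication on the $L^2(A)$-tensorand; and the left action of $u_g$ is $\pi_g^{\otimes n}$ on $K^n$ tensored with left multiplication by $u_g$ on $L^2(A)$, where $\pi^{\otimes n}\colon G\to U(K^n)$ denotes the diagonal representation $g\mapsto\pi_g\otimes\cdots\otimes\pi_g$ (the ``$\pi^n$'' of the example).

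Then I would identify $Q_n$ in three steps, using throughout that $B(K^n)$ has trivial commutant, so that every von Neumann algebra in sight is the commutant of one of the form $1_{K^n}\otimes(\,\cdot\,)$ and intersections of such behave perfectly (commutation theorem). (i) Commuting with the right $A$-action means lying in $B(K^n)\,\overline{\otimes}\,A$, where $A$ now acts on $L^2(A)$ by left multiplication, since in standard form the commutant of the right multiplication action is the left multiplication action. (ii) Among $T\in B(K^n)\,\overline{\otimes}\,A$, commuting in addition with $1_{K^n}\otimes r$ for all $r\in R$ forces $T\in B(K^n)\,\overline{\otimes}\,(R'\cap A)$; and since $\alpha$ is outer, $R'\cap A=Z(R)=\C 1$ (the standard crossed-product computation), so $T=S\otimes 1_{L^2(A)}$ for some $S\in B(K^n)$. (iii) Such a $T$ commutes with the remaining generators $\pi_g^{\otimes n}\otimes u_g$ of the left action if and only if $S$ commutes with $\pi_g^{\otimes n}$ for every $g\in G$. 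Hence $Q_n=\pi^{\otimes n}(G)'\otimes 1_{L^2(A)}$, and taking ranges of projections yields the asserted bijection: a $G$-invariant subspace $V\subseteq K^n$ corresponds to the $A$-$A$ invariant subspace $V\otimes_\C L^2(A)\subseteq H^n$, and conversely every $A$-$A$ invariant subspace of $H^n$ arises in this way.

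I expect the only genuinely delicate point to be step (ii): making rigorous the reduction to $B(K^n)\,\overline{\otimes}\,(R'\cap A)$ — either by a slice-map argument (for each normal functional $\omega$ on $B(K^n)$, the slice $(\omega\otimes\id)(T)$ lies in $A$ and commutes with $R$, hence in $R'\cap A$, whence $T\in B(K^n)\,\overline{\otimes}\,(R'\cap A)$) or by writing $B(K^n)\,\overline{\otimes}\,X=(1_{K^n}\otimes X')'$ and intersecting commutants — together with correctly invoking outerness of $\alpha$ to get $R'\cap A=\C 1$. The bimodule identification of the first paragraph is essentially bookkeeping already carried out in the statement of Example \ref{ex:GroupActions}, and steps (i) and (iii) are immediate (the commutation theorem, and a direct computation with the matrix coefficients of $\pi^{\otimes n}$, respectively).
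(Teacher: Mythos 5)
Your proposal is correct and follows essentially the same route as the paper: the paper's proof also observes that the projection onto an $A$-$A$ invariant subspace lies in $(1_{K^n}\otimes R)'\cap(1_{K^n}\otimes A\op)'=B(K^n)\otimes(R'\cap A)=B(K^n)\otimes 1_{L^2(A)}$ (using the tensor-commutant identities and outerness of $\alpha$), and then that the resulting $q\in B(K^n)$ must commute with the diagonal $G$-representation. Your extra care with the slice-map step and the explicit identification $L^2(R)\otimes_\C\ell^2(G)\cong L^2(A)$ just fills in bookkeeping the paper leaves implicit.
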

\begin{proof}
First, if $L_0\subset K^n$ is a $G$-invariant subspace, then $L_0\otimes L^2(A)$ is an $A-A$ invariant subspace of $H^n$.

Now suppose $L\subset H^n$ is an $A-A$ invariant subspace, and let $p\in Q_n$ be the projection onto $L$. Note that 
\begin{align*}
p&\in \bigg(1_{K^n}\otimes R\bigg)' \cap \bigg(1_{K^n} \otimes A\op\bigg)' \\
&= \bigg(B(K^n)\otimes (R'\cap B(L^2(A)))\bigg)\cap \bigg(B(K^n)\otimes A\bigg)\\
&=B(K^n)\otimes ( R'\cap A)=B(K^n)\otimes 1_{L^2(A)}.
\end{align*}
Hence there is a $q\in B(K^n)$ such that $p=q\otimes 1_{L^2(A)}$. But since $q$ commutes with the left $G$-action on $H^n$, we have $q\in \pi(G)'\cap B(K^n)$.
\end{proof}

\begin{cor}
$A-A$ invariant vectors of $H^n$ correspond to $G$-invariant vectors of $K^n$.
\end{cor}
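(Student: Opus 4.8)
The plan is to identify the $A$--$A$ invariant vectors of $H^n$ with the space $P_n=A'\cap H^n$ and to compute it directly, using the identification $H^n=K^n\otimes_\C L^2(A)$ from Example~\ref{ex:GroupActions}(2). Under this identification $R\subseteq A$ acts trivially on the $K^n$--factor and by the standard left (resp.\ right) multiplication on $L^2(A)$, while the canonical unitary $u_g\in A$ (for $g\in G$) acts on the left by $\pi^n_g\otimes(\text{left mult.\ by }u_g)$ and on the right by $1_{K^n}\otimes(\text{right mult.\ by }u_g)$. The idea is to impose $A$--$A$ centrality in two stages: first for the subalgebra $R$, then for the unitaries $u_g$.

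First I would handle $R$. If $\zeta\in P_n$ and $(e_i)$ is an orthonormal basis of $K^n$, write $\zeta=\sum_i e_i\otimes\xi_i$ with $\xi_i\in L^2(A)$; since $R$ acts only on the second tensorand, the relation $r\zeta=\zeta r$ for all $r\in R$ forces each $\xi_i\in R'\cap L^2(A)$. The key input is that $R'\cap L^2(A)=\C\widehat 1$: indeed $R'\cap A=Z(R)=\C 1$ since $R$ is a factor and $\alpha$ is outer, and Lemma~\ref{lem:BurnsL2Lemma}(1) applied to $R\subseteq A$ (with $\Tr_A=\tr_A$, so $\n_{\Tr_A}=A$) gives $R'\cap L^2(A)=\overline{R'\cap A}^{\|\cdot\|_2}=\C\widehat 1$. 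Hence $\zeta=k\otimes\widehat 1$ for a unique $k\in K^n$. I expect this step --- pinning down $R'\cap L^2(A)$ --- to be the crux; the rest is a direct manipulation.

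Finally I would impose centrality with respect to the $u_g$. Using $u_g\widehat 1=\widehat{u_g}=\widehat 1\,u_g$ in $L^2(A)$, the relation $u_g\zeta=\zeta u_g$ becomes $(\pi^n_g k)\otimes\widehat{u_g}=k\otimes\widehat{u_g}$, i.e.\ $\pi^n_g k=k$ for all $g\in G$; conversely, reversing these identities shows $k\otimes\widehat 1\in P_n$ whenever $k$ is $G$-invariant. Thus $\zeta\mapsto k$ is a (unitary) bijection from $P_n$ onto the space of $G$-invariant vectors of $K^n$, which is the assertion. The same conclusion also follows from Theorem~\ref{thm:invariants} by applying it to the $A$--$A$ invariant subspace $\overline{A\zeta}\subseteq H^n$, which has right $A$-dimension $1$ by Lemma~\ref{lem:ZetaRelations} and hence corresponds to a one-dimensional $G$-invariant subspace of $K^n$; but since the direct computation above also identifies the vector, I would present that.
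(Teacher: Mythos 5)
Your direct computation is correct, but it is not the route the paper takes: the paper states the corollary with no separate proof, as an immediate consequence of Theorem \ref{thm:invariants} (an $A-A$ invariant vector generates an $A-A$ invariant subspace, which the theorem matches with a $G$-invariant subspace of $K^n$), whereas you recompute $P_n$ by hand. Your argument is in effect the vector-level analogue of the proof of Theorem \ref{thm:invariants}: where that proof uses $R'\cap A=Z(R)=\C 1$ to force the projection into $B(K^n)\otimes 1_{L^2(A)}$, you use Lemma \ref{lem:BurnsL2Lemma} for $R\subseteq A$ (with $\n_{\tr_A}=A$) to get $R'\cap L^2(A)=\overline{R'\cap A}^{\|\cdot\|_2}=\C\widehat 1$, forcing $\zeta=k\otimes\widehat 1$, after which $u_g$-centrality gives $\pi^n_g k=k$. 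What this buys is an explicit unitary identification $k\mapsto k\otimes\widehat 1$ of the $G$-invariant vectors of $K^n$ with $P_n$ -- exactly what Corollary \ref{cor:InfiniteSymmetric} uses -- rather than a bare bijection deduced from the subspace correspondence; the only small debt is the converse, where commutation with $R$ and the $u_g$'s must be promoted to commutation with all of $A$ (standard, by normality of the left and right actions and $\sigma$-strong$^*$ density of the generated $*$-algebra), which deserves one sentence. One caution about your parenthetical alternative: Theorem \ref{thm:invariants} plus right dimension one (via Lemma \ref{lem:ZetaRelations}) only gives $\overline{A\zeta}=\C k_0\otimes L^2(A)$ with $\C k_0$ a $\pi^n(G)$-stable line, i.e.\ $k_0$ a character eigenvector rather than an invariant vector; killing the character again requires the $R'\cap L^2(A)=\C\widehat 1$ computation, so that route is no shorter than the argument you present.
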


\begin{ex}[Group-subgroup]\label{ex:GroupSubgroup}
Suppose $G_0\subseteq G_1$ is an inclusion of countable i.c.c. groups, and let $K=\ell^2(G_1/G_0)$. As in Example \ref{ex:GroupActions}, we consider two cases:
\item[(1)] $A_0=LG_0$, $A_1= LG_1$, and $H=K\otimes_\C \ell^2(G_1)$.
\item[(2)] $A_0= R\rtimes G_0$, $A_1= R\rtimes G_1$, and $H=K\otimes_\C L^2(R) \otimes \ell^2(G_1)$.
\item Note that in either case, $H^n \cong L^2(A_{n+1})$, where $A_{n+1}=J_n A_{n-1}' J_n$ is the basic construction of $A_{n-1}\subset A_{n}$. As in the usual subfactor treatment, we can consider $H^n$ as an $A_i-A_j$ bimodule for $i,j\in \{0,1\}$.
\end{ex}

\begin{thm}
Let $G_1=S_\I$, the group of finite permutations of $\N$, and let $G_0=\Stab(1)$ be the permutations which fix $1$. Let $A_0=R\rtimes G_0$ and $A_1=R\rtimes G_1$, and let $H=K\otimes_\C L^2(R)\otimes \ell^2(G_1)$ as in (2) of Example \ref{ex:GroupSubgroup}. Then considering $H^n$ as an $A_0-A_0$ or as an $A_1-A_1$ bimodule, we have that $\dim(Q_n)<\I$ for all $n\in\N$.
\end{thm}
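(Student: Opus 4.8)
The plan is to reduce the statement, in both bimodule pictures, to the finite-dimensionality of the commutant of a tensor power of an \emph{oligomorphic} permutation representation, and then to settle that by an orbit count.

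First, consider $H^n$ as an $A_1-A_1$ bimodule. By Example~\ref{ex:GroupActions}(2), the bimodule $H=K\otimes_\C L^2(R)\otimes_\C\ell^2(G_1)$ with $K=\ell^2(G_1/G_0)$ is exactly of the form to which Theorem~\ref{thm:invariants} applies, for the group $G_1=S_\I$ and the representation $\pi=\lambda_{G_1/G_0}$ (the quasi-regular representation). The proof of Theorem~\ref{thm:invariants} in fact identifies all of $Q_n$, not just its projections, so it yields
\[
Q_n\ \cong\ \pi^{\otimes n}(G_1)'\cap B(K^{\otimes n})\ =\ \End_{G_1}\!\big(\ell^2(G_1/G_0)^{\otimes n}\big).
\]
For $H^n$ as an $A_0-A_0$ bimodule I would restrict the actions to $A_0=R\rtimes G_0\subseteq A_1$, use $H^n\cong L^2(A_{n+1})$ (Example~\ref{ex:GroupSubgroup}), and observe that $Q_n=A_0'\cap(A_0\op)'\cap B(L^2(A_{n+1}))$ is a higher relative commutant of the group-subgroup subfactor $A_0\subseteq A_1$ (cf.\ Example~\ref{ex:InfiniteIndex}). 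Exactly as in the finite-index group-subgroup case, writing out the basic-construction tower of $R\rtimes G_0\subseteq R\rtimes G_1$ as iterated crossed products shows that such relative commutants are of the form $\End_{G_1}\big(\ell^2(G_1/G_0)^{\otimes k}\big)$ for suitable $k$ linear in $n$; as this computation only involves commutants of $R$ and of the acting group, it goes through unchanged in infinite index. Thus in both pictures $Q_n\cong\End_G\big(\ell^2(G_1/G_0)^{\otimes k}\big)$ for some $G\in\{G_0,G_1\}$ and $k\geq1$, with $G_0$ acting on $\ell^2(G_1/G_0)$ by the restriction of $\pi$.

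Next I identify the permutation action: since $S_\I$ acts transitively on $\N$ and $G_0=\Stab(1)$ is the point stabilizer of $1$, the orbit map $g\mapsto g(1)$ is a $G_1$-equivariant bijection $G_1/G_0\cong\N$. Hence $\ell^2(G_1/G_0)\cong\ell^2(\N)$ carries the natural permutation representation of $S_\I$, and $G_0$ acts by its restriction, fixing $\delta_1$ and permuting $\{\delta_2,\delta_3,\dots\}$ (so $\N$ splits into the two $G_0$-orbits $\{1\}$ and $\{2,3,\dots\}$). Now comes the finiteness. For $G\in\{G_1,G_0\}$ write $\ell^2(\N)^{\otimes k}=\ell^2(\N^k)$ with $G$ acting diagonally. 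A bounded $G$-equivariant operator $T$ on $\ell^2(\N^k)$ has matrix entries $T_{\mathbf x,\mathbf y}=\langle T\delta_{\mathbf y},\delta_{\mathbf x}\rangle$ satisfying $T_{g\mathbf x,g\mathbf y}=T_{\mathbf x,\mathbf y}$, so $T\mapsto(T_{\mathbf x,\mathbf y})$ embeds $\End_G(\ell^2(\N^k))$ linearly into the space of $G$-invariant functions on $\N^k\times\N^k=\N^{2k}$. But $S_\I\curvearrowright\N$ is oligomorphic: the $S_\I$-orbit of a tuple in $\N^m$ is determined by the partition of $\{1,\dots,m\}$ recording which coordinates coincide (every pattern occurs since $\N$ is infinite, and two tuples with the same pattern differ by a finite permutation), so there are at most $B_m$ (the $m$-th Bell number) of them; for $G_0=\Stab(1)$ one records in addition which block, if any, equals $1$, still finitely many. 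Hence $\dim\End_G\big(\ell^2(\N)^{\otimes k}\big)\leq\#(\N^{2k}/G)<\I$, and so $\dim(Q_n)<\I$ in both pictures.

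The main obstacle is the $A_0-A_0$ reduction above: for $A_1-A_1$ the identification of $Q_n$ with a group-endomorphism algebra comes free from Theorem~\ref{thm:invariants}, but for $A_0-A_0$ one must pass to the subgroup and check that the standard group-subgroup computation of higher relative commutants survives into infinite index (equivalently, decompose the restricted bimodule over $G_0$ with all multiplicities kept under control). After that, the finite-dimensionality is exactly the orbit count above, whose content is the oligomorphicity of the defining action of $S_\I$ on $\N$.
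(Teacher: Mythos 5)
You handle the $A_1-A_1$ case essentially as the paper does: Theorem \ref{thm:invariants} identifies $Q_n$ with the commutant of the diagonal $G_1$-action on $K^n$ (and you are right that the commutant computation in its proof applies to arbitrary elements of $Q_n$, not just projections). Where the paper then simply cites \cite{MR0286940}, you substitute a direct orbit count: an equivariant bounded operator is determined by its matrix kernel, a $G$-invariant function on $\N^k\times\N^k$, and $S_\I$ (or $\Stab(1)$) is oligomorphic, with at most Bell-number-many orbits on $\N^{2k}$, so $\dim\End_G(K^{\otimes k})<\I$. That substitution is correct and makes this half of the argument self-contained, which is a genuine (if modest) improvement over the bare citation.

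The genuine gap is the $A_0-A_0$ case, and it is exactly where you point: you assert that the finite-index group--subgroup computation of higher relative commutants, via writing the basic-construction tower as iterated crossed products, "goes through unchanged in infinite index," but you neither carry it out nor even settle whether the resulting algebra is $\End_{G_0}$ or $\End_{G_1}$ of which tensor power; in infinite index the tower consists of semifinite algebras and conditional expectations are replaced by operator valued weights, so this is not a step one can wave through. The paper avoids it altogether: by \cite{MR1387518} the shift isomorphisms $A_i'\cap A_j\cong A_{i+2}'\cap A_{j+2}$ hold in infinite index, and together with $A_1'\cap A_{2n+1}\cong\End_{A_1-A_1}(H^n)$ (also from \cite{MR1387518}) they dispose of the $A_0-A_0$ case with no new group-theoretic input: the $A_0-A_0$ centralizer algebras are even relative commutants $A_0'\cap A_{2m}$, hence isomorphic to $A_2'\cap A_{2m+2}$, hence subalgebras of $A_1'\cap A_{2m+3}$, which the $A_1-A_1$ argument already shows is finite dimensional. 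So the repair is not to redo the crossed-product analysis but to invoke the shift isomorphism. Note also that if "$H^n$ as an $A_0-A_0$ bimodule" means $\bigotimes_{A_0}^n H$, the underlying space is $L^2(A_{2n})$ rather than $L^2(A_{n+1})$, a bookkeeping point your sketch would have to track but which the shift-isomorphism route renders harmless.
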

\begin{proof}
Since $A_i'\cap A_j \cong A_{i+2}'\cap A_{j+2}$ for all $i,j\geq 0$ by \cite{MR1387518}, it suffices to show that $\dim(A_1'\cap A_{2n+1})<\I$ for all $n\geq 0$. Also by \cite{MR1387518}, 
$$A_1'\cap A_{2n+1}\cong \End_{A_1-A_1}(L^2(A_{n+1}))\cong \End_{A_1-A_1}(H^n).$$
By Theorem \ref{thm:invariants}, $A_1-A_1$ invariant subspaces of $H^n$ correspond to $G_1$-invariant subspaces of $K^{n}$. The result now follows by \cite{MR0286940}.
\end{proof}

\begin{cor}\label{cor:FiniteDimensional}
The infinite index $II_1$-subfactor $R\rtimes G_0 \subset R\rtimes G_1$ for $G_0=\Stab(1)\subset S_\I=G_1$ has finite dimensional higher relative commutants.
\end{cor}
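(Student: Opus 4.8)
The plan is to read the corollary off the preceding theorem: all of its content --- identifying the $A_0-A_0$ (and $A_1-A_1$) invariant subspaces of $H^n$ with the $G_1$-invariant subspaces of $K^n$, where $K=\ell^2(S_\I/\Stab(1))$, via Theorem \ref{thm:invariants}, and then invoking \cite{MR0286940} to see that there are only finitely many of these --- already gives $\dim(Q_n)<\I$ for every $n$. What remains is the purely formal step of checking that this controls \emph{every} higher relative commutant of the subfactor $A_0=R\rtimes G_0\subset R\rtimes G_1=A_1$.

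First I would fix the iterated basic construction tower $A_0\subset A_1\subset A_2\subset\cdots$ (available as in Examples \ref{ex:InfiniteIndex} and \ref{ex:GroupSubgroup}), so that the higher relative commutants are exactly the algebras $A_i'\cap A_j$ for $0\le i\le j$. Then I would record two trivial inclusions: since $A_0\subseteq A_i$, any $x\in A_j$ commuting with all of $A_i$ commutes with all of $A_0$, so $A_i'\cap A_j\subseteq A_0'\cap A_j$; and since the tower is increasing, $A_j\subseteq A_{2n}$ whenever $2n\ge j$, so $A_0'\cap A_j\subseteq A_0'\cap A_{2n}$. By the identification in Example \ref{ex:InfiniteIndex} (applied to $A_0\subset A_1$), $A_0'\cap A_{2n}$ is, up to isomorphism, one of the centralizer algebras $Q_n$, which the preceding theorem shows to be finite-dimensional. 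Hence each $A_i'\cap A_j$ is a von Neumann subalgebra of a finite-dimensional von Neumann algebra, so $\dim(A_i'\cap A_j)<\I$. One could instead route this through the shift isomorphism $A_i'\cap A_j\cong A_{i+2}'\cap A_{j+2}$ of \cite{MR1387518} to reduce to $i\in\{0,1\}$ and then compare parities; the monotonicity of the tower used above makes that case analysis unnecessary, and absorbs the harmless shift between the level in the tower and the number of tensor factors in $H^n$.

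That $R\rtimes G_0\subset R\rtimes G_1$ is genuinely an infinite-index inclusion of $II_1$-factors is part of the group-subgroup construction in Example \ref{ex:GroupSubgroup}, since $\Stab(1)$ is i.c.c. and has infinite index in $S_\I=G_1$. So there is no real obstacle at this stage: the substance is entirely in the preceding theorem, and the corollary is just the remark that a subspace of a finite-dimensional space is finite-dimensional.
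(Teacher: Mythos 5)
Your proposal is correct and matches the paper's (implicit) argument: the corollary is stated there as an immediate consequence of the preceding theorem, and the bookkeeping you supply --- every higher relative commutant $A_i'\cap A_j$ ($i=0,1$) is contained in some $A_0'\cap A_{2m}$, which via the identifications used in that theorem's proof (and up to the index shift coming from $H^{m-1}\cong L^2(A_m)$) is one of the finite-dimensional centralizer algebras $Q_n$ for the $A_0-A_0$ structure --- is exactly the intended reasoning. Substituting monotonicity of the tower for the shift isomorphisms $A_i'\cap A_j\cong A_{i+2}'\cap A_{j+2}$ is only a cosmetic difference.
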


\begin{thm}
Suppose $G_0\subset G_1$ and $K$ are as in Example \ref{ex:GroupSubgroup} such that 
$[G_1\colon G_0]=\I$ and
$\#G_0\backslash G_1/G_0=2$.
Then 
\item[(1)] the space of $G_0$-invariant vectors in $K^n$ is one dimensional, and
\item[(2)] zero is the only $G_1$-invariant vector in $K^n$.
\end{thm}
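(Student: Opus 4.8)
The plan is to unwind the definitions so that $K^n$ becomes a permutation representation, and then reduce both assertions to counting finite orbits. Write $X=(G_1/G_0)^n$; by the description in Example~\ref{ex:GroupActions}, $K^n=\ell^2(X)$ with $G_1$ acting diagonally through the permutation action on $G_1/G_0$ (so $g\cdot\delta_{(h_1G_0,\dots,h_nG_0)}=\delta_{(gh_1G_0,\dots,gh_nG_0)}$). The first step is the elementary observation that, decomposing $X$ into $G_0$-orbits and writing $\ell^2(X)=\bigoplus_{O}\ell^2(O)$, a $G_0$-fixed vector must be constant on each orbit, and a nonzero constant function on an orbit $O$ lies in $\ell^2(O)$ only when $O$ is finite. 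Hence the space of $G_0$-invariant vectors in $K^n$ has dimension equal to the number of \emph{finite} $G_0$-orbits on $X$, with basis $\{\mathbf 1_O : |O|<\infty\}$.

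Next I would prove the geometric input: the only finite $G_0$-orbit on $X$ is the singleton $\{(eG_0,\dots,eG_0)\}$. For a point $x=(g_1G_0,\dots,g_nG_0)$ the $G_0$-stabilizer is $G_0\cap\bigcap_i g_iG_0g_i^{-1}$, so $|G_0\cdot x|=[G_0:G_0\cap\bigcap_i g_iG_0g_i^{-1}]$. If $x$ is not the base point, then $g_j\notin G_0$ for some $j$ and the orbit is infinite: the left $G_0$-orbit of $g_jG_0$ in $G_1/G_0$ has cardinality $[G_0:G_0\cap g_jG_0g_j^{-1}]$, and the hypothesis $\#G_0\backslash G_1/G_0=2$ forces $G_1=G_0\sqcup G_0g_jG_0$, so that orbit is $(G_1/G_0)\setminus\{eG_0\}$, which is infinite since $[G_1:G_0]=\infty$; intersecting with the other conjugates only shrinks the stabilizer, so $|G_0\cdot x|$ is still infinite. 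Combined with the first step this gives (1): the $G_0$-invariant vectors in $K^n$ are exactly $\C\,\delta_{(eG_0,\dots,eG_0)}$, a one-dimensional space.

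Finally, (2) follows formally: a $G_1$-invariant vector is in particular $G_0$-invariant, hence a scalar multiple of $\delta_{(eG_0,\dots,eG_0)}$ by (1); but for any $g\in G_1\setminus G_0$ one has $g\cdot(eG_0,\dots,eG_0)=(gG_0,\dots,gG_0)\neq(eG_0,\dots,eG_0)$, so the only $G_1$-fixed multiple is $0$. The argument is essentially bookkeeping with double cosets; the one point requiring a little care is the passage from ``$G_0$-invariant'' (which does not a priori mean finitely supported) to the orbit count, and that is exactly what the decomposition $\ell^2(X)=\bigoplus_O\ell^2(O)$ settles. I do not anticipate a genuine obstacle here.
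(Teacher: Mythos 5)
Your argument is correct and is essentially the paper's proof: the paper fixes coset representatives and notes that a $G_0$-invariant vector would have infinitely many equal nonzero coefficients (contradicting square-summability) unless it is supported on $\delta_{G_0}\otimes\cdots\otimes\delta_{G_0}$, which is exactly your observation that the hypotheses $\#G_0\backslash G_1/G_0=2$ and $[G_1\colon G_0]=\infty$ force every $G_0$-orbit in $(G_1/G_0)^n$ other than the base point to be infinite. Your orbit-decomposition phrasing is just a cleaner packaging of the same idea, and part (2) is handled identically in both.
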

\begin{proof}
Let $\{g_i\}_{i\geq 0}$ be a set of coset representatives for $G_1/G_0$ with $g_0=e$. Since $\# G_0\backslash G_1/ G_0 = 2$, for $i,j\geq 1$, there are $h_{i,j}\in G_0$ such that $h_{i,j} g_i G_0 = g_j G_0$.
\item[(1)]
Suppose 
$$
\xi = \sum_{i_1,\dots, i_n} \lambda_{i_1,\dots, i_n} \delta_{g_{i_1}G_0}\otimes\cdots \otimes \delta_{g_{i_n}G_0} \in K^n
$$ 
is $G_0$-invariant. Then since $\pi_{h_{i,j}}\xi = \xi$ for all $i,j\geq 1$, we must have $\lambda_{i_1,\dots, i_n}=0$ unless $i_j=0$ for all $j=1,\dots, n$. (Otherwise, there would be infinitely many coefficients which would be nonzero and equal, a contradiction to $\xi\in K^n \cong \ell^2((G_1/G_0)^n)$.) Hence $\xi\in \spann\{ \delta_{G_0}\otimes\cdots \otimes\delta_{G_0}\}$.
\item[(2)]
Since $\delta_{G_0}\otimes\cdots \otimes\delta_{G_0}$ is not $G_1$-invariant, the result follows from (1).
\end{proof}

\begin{cor}\label{cor:InfiniteSymmetric}
Let $G_0=\Stab(1)\subset S_\I= G_1$. Let $A_i= R\rtimes G_i$ for $i=0,1$, and let $K=\ell^2(G_1/G_0)$.
\item[(1)] When we consider $H=K\otimes_C L^2(R)\otimes_\C \ell^2(G_1)$ as an $A_1-A_1$ bimodule, $P_n=(0)$. 
\item[(2)] When we consider $H=L^2(A_1)=L^2(R)\otimes_\C \ell^2(G_1)$ as an $A_0-A_0$ bimodule, 
$$
H^n\cong L^2(A_n)\cong K^{n-1}\otimes_\C L^2(R)\otimes_\C\ell^2(G_1),
$$
and for all $n\geq 0$, $P_n$ is one-dimensional and spanned by 
$$
\widehat{1}\otimes \cdots \otimes \widehat{1}\in \bigotimes_{A_0}^n L^2(A_1) \cong L^2(A_{n}).
$$ 
\end{cor}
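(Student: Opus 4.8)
The plan is to reduce both parts to the group-representation statement of the theorem immediately preceding this corollary, via the bimodule pictures of Examples~\ref{ex:GroupActions} and~\ref{ex:GroupSubgroup}. First one checks that theorem applies: $S_\I$ acts transitively on $\N$ with point stabiliser $G_0$, so $G_1/G_0\cong\N$ and $[G_1:G_0]=\I$, and the double cosets in $G_0\backslash G_1/G_0$ are the $G_0$-orbits $\{1\}$ and $\N\setminus\{1\}$ on $\N$, so $\#G_0\backslash G_1/G_0=2$; with $K=\ell^2(G_1/G_0)=\ell^2(\N)$ this gives $\dim(K^m)^{G_0}=1$ and $(K^m)^{G_1}=(0)$ for all $m$. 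Part~(1) is then immediate: $H=K\otimes_\C L^2(R)\otimes_\C\ell^2(G_1)$ is exactly the $A_1$--$A_1$ bimodule of Example~\ref{ex:GroupActions}(2) (acting group $G_1$, outer action $\alpha$ on $R$, $\pi$ the quasi-regular representation on $K$), so by the corollary following Theorem~\ref{thm:invariants} the space $P_n=A_1'\cap H^n$ is identified with the $G_1$-invariant vectors of $K^n$, which is $(0)$.

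For part~(2), $H=L^2(A_1)$ as an $A_0$--$A_0$ bimodule. I would first record the identification $H^n=\bigotimes_{A_0}^n L^2(A_1)\cong L^2(A_n)\cong K^{n-1}\otimes_\C L^2(R)\otimes_\C\ell^2(G_1)$ (as in Examples~\ref{ex:InfiniteIndex} and~\ref{ex:GroupSubgroup}): an orthonormal Pimsner--Popa basis $\{u_g\}_{g\in G_1/G_0}$ of $A_1$ over $A_0$ (so $\langle u_g|u_h\rangle_{A_0}=E_{A_0}(u_g^*u_h)=\delta_{g,h}$) gives $L^2(A_1)\otimes_{A_0}L^2(A_1)\cong K\otimes_\C L^2(A_1)$, and, after conjugating the left $A_1$-action by the diagonal unitary $\delta_{gG_0}\otimes\xi\mapsto\delta_{gG_0}\otimes u_g\xi$, this becomes the diagonal action; iterating (and using $L^2(A_1)\otimes_{A_1}L^2(A_1)\cong L^2(A_1)$) realises $H^n$, as an $A_1$--$A_1$ and hence $A_0$--$A_0$ bimodule, as the $(n-1)$-fold bimodule of Example~\ref{ex:GroupActions}(2) for $G_1$ acting on $K^{n-1}$. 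I would then compute $P_n=A_0'\cap H^n$ by rerunning the proof of Theorem~\ref{thm:invariants} and its corollary with $A_0=R\rtimes G_0$ in the role of $A_1$: writing a central vector as $\zeta=\sum_{g\in G_1}\zeta_g\otimes\delta_g$ with $\zeta_g\in K^{n-1}\otimes_\C L^2(R)$, commutation with $R\subset A_0$ forces each $\zeta_g$ to be central for $K^{n-1}\otimes{}_{R}L^{2}(R)_{\alpha_g^{-1}(R)}$, which is $(0)$ for $g\neq e$ since $\alpha_g$ is outer (the argument of Example~\ref{ex:NoCentralVectors}, using Lemma~\ref{lem:CentralBounded} to reduce to bounded vectors) and is $K^{n-1}\otimes(R'\cap L^2(R))\cong K^{n-1}$ for $g=e$; commutation with $u_h$ ($h\in G_0$) then forces $\pi^{n-1}_h\zeta_e=\zeta_e$. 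Hence $P_n\cong(K^{n-1})^{G_0}$, which is one-dimensional by the preceding theorem ($P_0=\C\widehat 1$ being clear). Finally $\widehat 1\otimes\cdots\otimes\widehat 1\in\bigotimes_{A_0}^n L^2(A_1)$ lies in $P_n$ --- for $a\in A_0$, $A_0$-middle linearity slides $\widehat a$ from the first factor to the last, so $a\cdot(\widehat 1\otimes\cdots\otimes\widehat 1)=(\widehat 1\otimes\cdots\otimes\widehat 1)\cdot a$ --- and is nonzero (its $\|\cdot\|_2$ is $1$), so it spans the one-dimensional $P_n$.

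The main obstacle I anticipate is \emph{not} part~(1), which is a direct citation, but the bookkeeping in part~(2): carefully pinning down the $A_0$--$A_0$ bimodule structure on $K^{n-1}\otimes_\C L^2(R)\otimes_\C\ell^2(G_1)$ after the Pimsner--Popa identification --- in particular verifying that the left $A_0$-action is the restriction of the diagonal $G_1$-action of Example~\ref{ex:GroupActions}(2), which is where the conjugating diagonal unitary enters --- and then making the commutation computation valid for arbitrary, not necessarily bounded, central vectors, which is exactly the point where Lemma~\ref{lem:CentralBounded} and the outerness of $\alpha$ do the work.
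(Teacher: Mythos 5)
Your argument is correct and is essentially the route the paper intends: the corollary is stated there without proof, as a direct consequence of the preceding theorem on $G_0$- and $G_1$-invariant vectors of $K^m$ combined with the invariant-vector correspondence of Theorem \ref{thm:invariants} and the identifications of Examples \ref{ex:InfiniteIndex} and \ref{ex:GroupSubgroup}. In part (2) you are merely filling in the bookkeeping the paper leaves implicit --- the Pimsner--Popa identification of $H^n$ with $K^{n-1}\otimes_\C L^2(R)\otimes_\C \ell^2(G_1)$ as an $A_0$--$A_0$ bimodule, the component-wise reduction using outerness as in Example \ref{ex:NoCentralVectors} (with Lemma \ref{lem:CentralBounded}), and the explicit central vector $\widehat{1}\otimes\cdots\otimes\widehat{1}$ --- and that filling-in is sound.
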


In joint work with Steven Deprez, we have shown an even stronger result:
\begin{thm}\label{thm:InfiniteSymmetricGroup}
The algebras $Q_n$ for the bimodules in (1) and (2) in Example \ref{ex:GroupSubgroup} are finite dimensional, and the dimensions grow super-factorially. 
\end{thm}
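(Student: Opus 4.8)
The plan is to identify $Q_n$, in each of the pictures in question, with the endomorphism algebra of a tensor power of the natural permutation representation of $S_\infty$, and then to read off both finiteness and the growth rate from the orbit structure of the action of $S_\infty$ on $\N^n$. Throughout, $G_0=\Stab(1)\subset S_\infty=G_1$ as in Corollary~\ref{cor:InfiniteSymmetric}, and $K=\ell^2(G_1/G_0)\cong\ell^2(\N)$ carries the natural permutation representation $\pi$ of $S_\infty$ (which is easily checked to be irreducible). The first task is the reduction. For the bimodule of Example~\ref{ex:GroupSubgroup}(2) (over $A=R\rtimes_\alpha G_1$) this is essentially Theorem~\ref{thm:invariants}: every projection of $Q_n$ has the form $q\otimes 1_{L^2(A)}$ with $q\in\pi^{\otimes n}(G_1)'\cap B(K^{\otimes n})$, and since a von Neumann algebra is the weakly closed linear span of its projections and $q\mapsto q\otimes 1$ is a normal $*$-homomorphism onto a von Neumann subalgebra, we get $Q_n\cong\End_{G_1}(K^{\otimes n})$; the $A_0$--$A_0$ picture agrees via the shift isomorphism $A_i'\cap A_j\cong A_{i+2}'\cap A_{j+2}$ of \cite{MR1387518}. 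For the bimodule of Example~\ref{ex:GroupSubgroup}(1) (over $A=LG_1$) one carries out the analogous Mackey-type analysis of $Q_n\subseteq B(K^{\otimes n})\otimes LG_1$; the ``extra'' centralizing operators allowed in general by Example~\ref{ex:GroupActions}(1) would require $K^{\otimes n}$ to fail to decompose discretely with finite multiplicities as a $G_1$-representation, which (as the next paragraph records) does not occur here, so that $V\mapsto V\otimes\ell^2(G_1)$ is fully faithful on finite-multiplicity representations of the i.c.c.\ group $G_1$ — the off-diagonal Fourier coefficients of a bounded bimodule map vanishing because every nontrivial conjugacy class in $S_\infty$ is infinite. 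Hence $Q_n\cong\End_{S_\infty}(K^{\otimes n})$ in all cases.

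The next step is an orbit decomposition. Writing $K^{\otimes n}=\ell^2(\N^n)=\bigoplus_{P\in\Pi_n}\ell^2(O_P)$, where $\Pi_n$ is the set of set partitions of $\{1,\dots,n\}$ and $O_P\subseteq\N^n$ is the $S_\infty$-orbit of the tuples whose coordinate-coincidence pattern is $P$, one notes that if $P$ has $k$ blocks then $O_P\cong S_\infty/H_k$ with $H_k$ the subgroup fixing $k$ chosen points pointwise (so $H_k\cong S_\infty$); consequently $\ell^2(O_P)\cong\ell^2(O_{P'})$ as $S_\infty$-representations whenever $P$ and $P'$ have the same number of blocks, and there are $S(n,k)$ partitions with $k$ blocks, $S(n,k)$ the Stirling number of the second kind. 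By \cite{MR0286940} each quasi-regular representation $\ell^2(S_\infty/H_k)$ is a \emph{finite} direct sum of irreducibles with finite multiplicities; therefore every $\operatorname{Hom}_{S_\infty}(\ell^2(O_{P'}),\ell^2(O_P))$ is finite-dimensional, and so is $\End_{S_\infty}(K^{\otimes n})=\bigoplus_{P,P'\in\Pi_n}\operatorname{Hom}_{S_\infty}(\ell^2(O_{P'}),\ell^2(O_P))$. This proves that each $Q_n$ is finite-dimensional.

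For the growth statement I would give a super-factorial lower bound. When $P,P'$ have the same number of blocks the isomorphism $\ell^2(O_P)\cong\ell^2(O_{P'})$ forces $\operatorname{Hom}_{S_\infty}(\ell^2(O_{P'}),\ell^2(O_P))\neq 0$, whence
\[
\dim Q_n=\dim\End_{S_\infty}(K^{\otimes n})\ \ge\ \sum_{k=1}^{n}S(n,k)^2\ \ge\ \frac1n\Bigl(\sum_{k=1}^{n}S(n,k)\Bigr)^2=\frac{B_n^2}{n},
\]
with $B_n$ the $n$-th Bell number and the last inequality Cauchy--Schwarz. Since $\log B_n\sim n\log n$ while $\log n!\sim n\log n$, one gets $\log\dim Q_n\ge 2\log B_n-\log n\sim 2n\log n$, which dominates $\log n!$; thus $\dim Q_n/n!\to\infty$, i.e.\ the dimensions grow super-factorially.

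The main obstacle is the reduction for case~(1): unlike case~(2), where the hyperfinite amplification underlying Theorem~\ref{thm:invariants} does the work cleanly, the $LG_1$-bimodule requires excluding the extra centralizing operators of Example~\ref{ex:GroupActions}(1), which rests on the discrete, finite-multiplicity decomposition of $K^{\otimes n}$ supplied by \cite{MR0286940} together with the (standard but not wholly routine) full faithfulness of $V\mapsto V\otimes\ell^2(G_1)$ for i.c.c.\ $G_1$. Once the identification $Q_n\cong\End_{S_\infty}(K^{\otimes n})$ and the finiteness are in hand, the super-factorial estimate is essentially immediate.
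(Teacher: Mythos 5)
First, a caveat: the paper does not actually prove this theorem — it is stated as joint work with Steven Deprez and the proof is deferred — so there is no in-text argument to compare yours against; I can only assess your proposal on its own terms. Your handling of case (2) of Example \ref{ex:GroupSubgroup} and of the growth estimate is sound: Theorem \ref{thm:invariants} does give $Q_n\cong\End_{G_1}(K^{\otimes n})$ there (all projections are of the form $q\otimes 1$ and projections generate), the orbit decomposition $\ell^2(\N^n)=\bigoplus_P\ell^2(O_P)$ with $\ell^2(O_P)\cong\ell^2(S_\infty/H_k)$ is correct, finiteness of each Hom-space follows from \cite{MR0286940}, and the bound $\dim Q_n\ge\sum_k S(n,k)^2\ge B_n^2/n$ together with $\log B_n=n\log n-n\log\log n-n+o(n)$ does dominate $n!$. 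Note also that this lower bound only uses the easy inclusion $\End_{G_1}(K^{\otimes n})\otimes 1\subseteq Q_n$, so it is unaffected by what follows.

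The genuine gap is case (1), which is exactly the difficulty the paper flags in Example \ref{ex:GroupActions}(1): for $H=K\otimes_\C\ell^2(G_1)$ over $LG_1$, the $G_1$-invariant operators on $K^{\otimes n}$ need not exhaust $Q_n$, and this is the whole content of the hard half of the theorem. Your proposed mechanism — that $V\mapsto V\otimes\ell^2(G_1)$ is fully faithful on finite-multiplicity representations of an i.c.c.\ group because infinite conjugacy classes force the off-diagonal Fourier coefficients of a bounded bimodule map to vanish — is false as a general principle. Writing an element of $Q_n$ as $\sum_h x_h\otimes\lambda_h$ with $x_h\in B(K^{\otimes n})$, commutation with the diagonal left action only forces the covariance $x_{ghg^{-1}}=\pi^{\otimes n}_g x_h\pi^{\otimes n}_{g^{-1}}$, and the boundedness constraint over a class reads $\sum_{gZ(h)\in G_1/Z(h)}\|x_h\,\pi^{\otimes n}_{g^{-1}}\xi\|^2<\infty$ for all $\xi$; since the vectors $\pi^{\otimes n}_{g^{-1}}\xi$ move with $g$, this is not automatically violated by infinitude of the class. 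Indeed one can build genuine counterexamples to your principle: take an i.c.c.\ group $G$, an element $h_0\neq e$, and an irreducible $\pi=\operatorname{Ind}_{Z(h_0)}^G\theta$ induced from the centralizer (e.g.\ $G=\Z\wr\Z$, $Z(h_0)$ the abelian base, $\theta$ a character with free orbit); the fiber projections of the induced representation assemble into a bounded (even isometric) $LG$-bimodular operator on $K_\pi\otimes\ell^2(G)$ whose Fourier coefficient at $h_0$ is nonzero, so the endomorphism algebra strictly contains $\End_G(\pi)\otimes\C$ even though $\pi$ is irreducible. Hence for case (1) one needs an argument specific to $S_\infty$ and its tame representations, not i.c.c.-ness: for $h\neq e$ the centralizer contains $S_{\N\setminus\supp(h)}\cong S_\infty$, so $x_h$ lies in the finite-dimensional algebra $\End_{Z(h)}(K^{\otimes n})$ spanned by explicit partition-type operators, and one must check that for every nonzero such operator the sum over the infinite class diverges (for $n=1$ and $h$ a transposition this is a short computation: any $x_h=a(e_{11}+e_{22})+b(e_{12}+e_{21})+c\,1_{\N\setminus\{1,2\}}$ gives $\sum_{(ij)}\|x_{(ij)}e_k\|^2=\infty$ unless $a=b=c=0$). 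Your proposal omits exactly this verification, so the finite-dimensionality claim in case (1) — and with it the corollary about $LG_0\subset LG_1$ — is not established.
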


\begin{cor}
The infinite index $II_1$-subfactor $LG_0\subset LG_1$ where $G_0=\Stab(1)\subset S_\I = G_1$ has finite dimensional higher relative commutants.
\end{cor}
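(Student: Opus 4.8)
The plan is to derive this from Theorem~\ref{thm:InfiniteSymmetricGroup}; the only extra ingredients are the standard dictionary between higher relative commutants and the centralizer algebras $Q_n$, and the elementary fact that a von Neumann subalgebra of a finite dimensional algebra is finite dimensional.

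Write $A_0=LG_0\subset A_1=LG_1$ and let $A_0\subset A_1\subset A_2\subset\cdots$ be the Jones tower, $A_{k+1}=J_kA_{k-1}'J_k$. By \cite{MR1387518} the shift isomorphisms $A_i'\cap A_j\cong A_{i+2}'\cap A_{j+2}$ remain valid in infinite index, so up to isomorphism every higher relative commutant occurs among the entries of the first two rows $(A_0'\cap A_j)_{j\geq 0}$ and $(A_1'\cap A_j)_{j\geq 0}$, and it suffices to show each of these is finite dimensional. By Example~\ref{ex:GroupSubgroup}(1), with $K=\ell^2(G_1/G_0)$ and $H=K\otimes_\C\ell^2(G_1)$, one has $H^n\cong L^2(A_{n+1})$ compatibly with the $A_i$-$A_j$ bimodule structures for all $i,j\in\{0,1\}$; hence, exactly as in the proof of the theorem immediately preceding Corollary~\ref{cor:FiniteDimensional},
\begin{align*}
A_0'\cap A_{2n+2}&\cong\End_{A_0\text{-}A_0}(H^n)=Q_n\quad(H\text{ with its }A_0\text{-}A_0\text{ structure}),\\
A_1'\cap A_{2n+1}&\cong\End_{A_1\text{-}A_1}(H^n)=Q_n\quad(H\text{ with its }A_1\text{-}A_1\text{ structure}),
\end{align*}
for all $n\geq 0$, while $A_0'\cap A_0=Z(A_0)=\C$ since $G_0=\Stab(1)$ is i.c.c.

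By Theorem~\ref{thm:InfiniteSymmetricGroup} every $Q_n$ appearing on the right above is finite dimensional. Every remaining entry embeds into one of these: since $A_j\subseteq A_{j+1}$ we have $A_0'\cap A_j\subseteq A_0'\cap A_{j+1}$ for $j$ odd, and $A_1'\cap A_j\subseteq A_1'\cap A_{j+1}$ for $j$ even, and the two lowest exceptions $A_1'\cap A_0\subseteq A_1'\cap A_1\cong Q_0$ and $A_0'\cap A_0=\C$ are trivially finite dimensional. A subalgebra of a finite dimensional algebra being finite dimensional, all higher relative commutants of $LG_0\subset LG_1$ are finite dimensional.

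All the substantive content is absorbed into the cited Theorem~\ref{thm:InfiniteSymmetricGroup}, which in turn rests on Theorem~\ref{thm:invariants} (reducing $A$-central structure in $H^n$ to $S_\I$-invariant structure in $K^n$) and the Deprez--Penneys analysis of the representations of $S_\I$ on tensor powers of $\ell^2(S_\I/\Stab(1))$; the argument above is merely the bookkeeping that organizes the two towers of relative commutants into these centralizer algebras and propagates finite dimensionality across a single parity step. Accordingly there is no genuine obstacle here beyond correctly matching the grid of relative commutants to the algebras $Q_n$ — the only point demanding care is verifying the four bimodule-structure identifications and the direction of the inclusions $A_i'\cap A_j\subseteq A_i'\cap A_{j+1}$.
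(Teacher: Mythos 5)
Your argument is correct and is essentially the paper's own (implicit) route: the corollary is intended to follow from Theorem \ref{thm:InfiniteSymmetricGroup} exactly as Corollary \ref{cor:FiniteDimensional} follows from its preceding theorem, namely by identifying $A_1'\cap A_{2n+1}\cong \End_{A_1-A_1}(H^n)$ and $A_0'\cap A_{2n+2}\cong \End_{A_0-A_0}(H^n)$ via \cite{MR1387518}, invoking the finite dimensionality of these centralizer algebras, and absorbing the remaining entries through the trivial inclusions $A_i'\cap A_j\subseteq A_i'\cap A_{j+1}$. The only (harmless) looseness, shared with the paper's own usage before Corollary \ref{cor:FiniteDimensional}, is calling $\End_{A_0-A_0}(H^n)$ a ``$Q_n$'' while taking $H^n\cong L^2(A_{n+1})$ with restricted actions rather than $\bigotimes_{A_0}^n H$; this is exactly the sense in which Theorem \ref{thm:InfiniteSymmetricGroup} is being applied, so no gap results.
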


%%%%%%%%%%%%%%%%%%%%%%%%%%%%%%%%%%%%%%%%%%%%%%%%%
\appendix
%%%%%%%%%%%%%%%%%%%%%%%%%%%%%%%%%%%%%%%%%%%%%%%%%%
\section{Relative tensor products of extended positive cones}\label{sec:TensorUnbounded}

\begin{nota}
For this section, let $H_A$ be a right Hilbert $A$-module, $\sb{A}K_B$ be a Hilbert $A-B$ bimodule, and $\sb{B}L$ be a left Hilbert $B$-module where $A,B$ are finite von Neumann algebras. We write:
\item[$\bullet$] $X=(A\op)'\cap B(H)$,
\item[$\bullet$] $\sb{A}K$ when we ignore the right $B$-action,
\item[$\bullet$] $Y_0 = A'\cap B(K)$,
\item[$\bullet$] $Y=A'\cap (B\op)'\cap B(K)$, 
\item[$\bullet$] $Z=B'\cap B(L)$,
\item[$\bullet$] $X\otimes_A Y_0 = \set{x\otimes_A y}{x\in X\text{ and } y\in Y_0}''$, and
\item[$\bullet$] $X\otimes_A Y \otimes_B Z = \set{x\otimes_A y\otimes_B z}{x\in X,\, y\in Y, \text{ and } z\in Z}''$.
\end{nota}

The goal of this section is to define the operator $x\otimes_A y \in \widehat{(X\otimes_A Y_0)^+}$ for $x\in \widehat{X^+}$ and $y\in \widehat{Y_0^+}$ such that certain properties, e.g., associativity, are satisfied.

The next three lemmata are straightforward, but we include some proofs for completeness and for the convenience of the reader.

\begin{lem}\label{lem:WeakStrong}
Suppose $x\in M^+$ and $(x_i)_{i\in I}\subset M^+$ is a directed net, with $x_i\leq x$ for all $i\in I$. The following are equivalent:
\item[(1)] $x_i\to x$ strongly (if and only if $\sigma$-strongly as $\|x_i\|_\I\leq \|x\|_\I$ for all $i$)
\item[(2)] $x_i\to x$ weakly (if and only if $\sigma$-weakly as $\|x_i\|_\I\leq \|x\|_\I$ for all $i$)
\item[(3)] $x_i\nearrow x$, i.e., $x_i(\omega_\xi)\nearrow x(\omega_\xi)$ for all $\xi\in H$,
\item[(4)] $x_i(\omega_\xi)\nearrow x(\omega_\xi)$ for all $\xi$ in a dense subspace $D$ of $H$.
\end{lem}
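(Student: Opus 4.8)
The plan is to prove the cycle of equivalences (1)$\Rightarrow$(2)$\Rightarrow$(3)$\Rightarrow$(4)$\Rightarrow$(1), keeping everything at the level of positive bounded operators dominated by a fixed $x\in M^+$, where the order structure forces all these notions to coincide. First I would note the trivial implications: (1)$\Rightarrow$(2) because strong convergence always implies weak convergence, and (3)$\Rightarrow$(4) by restricting the pointwise monotone convergence to the dense subspace $D$. Also (2)$\Rightarrow$(3) requires only a moment: for any $\xi\in H$, the net $(x_i(\omega_\xi))_{i\in I}=(\langle x_i\xi,\xi\rangle)_{i\in I}$ is increasing (since $i\leq j$ implies $x_i\leq x_j$, as the net is directed and bounded above with $x_i\to x$ weakly forcing $x_i\leq x_j$ whenever $i\leq j$ — actually this monotonicity is part of the hypothesis that $(x_i)$ is directed, or follows from $x_i\nearrow$) and bounded by $\langle x\xi,\xi\rangle$; weak convergence identifies the supremum of this increasing bounded net of reals as $\langle x\xi,\xi\rangle$, which is exactly $x_i(\omega_\xi)\nearrow x(\omega_\xi)$. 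Here I should be slightly careful about whether "directed net with $x_i\leq x$" already encodes that $i\leq j\Rightarrow x_i\leq x_j$; if not, I would extract the relevant monotonicity from the hypotheses of whichever clause I am currently assuming, or simply observe that weak convergence of a bounded net to $x$ together with $x_i\le x$ gives $\langle x_i\xi,\xi\rangle\to\langle x\xi,\xi\rangle$ regardless, and the sup of a net converging to its limit from below is that limit.

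The substantive implication is (4)$\Rightarrow$(1): recovering strong convergence from weak (really, from pointwise quadratic-form convergence on a dense subspace). The standard trick is the Cauchy–Schwarz / operator-monotonicity estimate: for $i\leq j$ and $\xi\in D$, writing $y_{ij}=x_j-x_i\geq 0$, one has
\[
\|(x_j-x_i)\xi\|^2 = \|y_{ij}\xi\|^2 = \langle y_{ij}^2\xi,\xi\rangle \leq \|y_{ij}\|\,\langle y_{ij}\xi,\xi\rangle \leq \|x\|\,\big(\langle x_j\xi,\xi\rangle - \langle x_i\xi,\xi\rangle\big).
\]
Since $\langle x_i\xi,\xi\rangle\nearrow\langle x\xi,\xi\rangle$ is a convergent (hence Cauchy) net of reals, the right-hand side tends to $0$, so $(x_i\xi)_{i\in I}$ is Cauchy in $H$ for every $\xi\in D$; as the net $(x_i)$ is uniformly bounded by $\|x\|$, an $\varepsilon/3$ argument extends strong convergence from $D$ to all of $H$. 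Call the strong limit $T\in M^+$ (it lies in $M$ since $M$ is strongly closed and $0\le T\le x$); then for $\xi\in D$, $\langle T\xi,\xi\rangle = \lim\langle x_i\xi,\xi\rangle = \langle x\xi,\xi\rangle$, and since $D$ is dense and both $T,x$ are bounded, polarization gives $T=x$. Finally the parenthetical equivalences with $\sigma$-strong and $\sigma$-weak convergence are immediate from the uniform norm bound $\|x_i\|_\infty\leq\|x\|_\infty$, which lets one pass between the locally convex topologies and their $\sigma$-counterparts on bounded sets.

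I do not anticipate a genuine obstacle here — the result is a packaging of well-known facts about monotone nets of positive operators — so the only thing to watch is bookkeeping: making sure at each stage I only invoke the monotonicity/boundedness actually available from the clause being assumed, and correctly citing that on norm-bounded sets the strong and $\sigma$-strong (resp. weak and $\sigma$-weak) topologies agree. The cleanest writeup is simply (1)$\Rightarrow$(2)$\Rightarrow$(3)$\Rightarrow$(4)$\Rightarrow$(1) with the displayed estimate above as the heart of the last step.
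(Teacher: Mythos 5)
Your proposal is correct, but it closes the circle of implications differently from the paper. The paper takes $(1)\Rightarrow(2)\Rightarrow(3)\Rightarrow(4)$ as immediate and then proves two separate substantive steps: $(3)\Rightarrow(1)$ by the square-root trick ($\|\sqrt{x-x_i}\,\xi\|^2=(x-x_i)(\omega_\xi)\to 0$, so $\sqrt{x-x_i}\to 0$ strongly, and then $x-x_i\to 0$ strongly because multiplication is strongly continuous on bounded sets), and $(4)\Rightarrow(3)$ by expanding an arbitrary $\xi$ in an orthonormal basis chosen inside $D$ and controlling the tail with the uniform bound $\|x_i\|_\I\le\|x\|_\I$. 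You instead prove $(4)\Rightarrow(1)$ in one stroke, using the estimate $\|(x_j-x_i)\xi\|^2\le\|x_j-x_i\|\,\langle (x_j-x_i)\xi,\xi\rangle\le\|x\|\bigl(\langle x_j\xi,\xi\rangle-\langle x_i\xi,\xi\rangle\bigr)$ to get Cauchyness of $(x_i\xi)$ on $D$, an $\varepsilon/3$ argument with the same uniform bound to extend to $H$, and polarization plus density to identify the strong limit with $x$. Both are standard packagings; your route needs positivity of the differences $x_j-x_i$ (which under $(4)$ holds because the quadratic form is nonnegative on the dense subspace $D$, hence everywhere) and completeness for Cauchy nets, but avoids invoking strong continuity of multiplication on bounded sets and avoids a separate $(4)\Rightarrow(3)$ step; the paper's version of $(3)\Rightarrow(1)$ uses only $x-x_i\ge 0$ and no monotonicity along the net, with the density bookkeeping isolated in $(4)\Rightarrow(3)$. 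One small slip in your sketch: weak convergence of the net certainly does not ``force'' $x_i\le x_j$ for $i\le j$ — you retract this yourself, and the monotonicity of the numerical nets is in any case built into the meaning of ``$\nearrow$'' in clauses (3) and (4) (the paper is equally brief on this point), so it does not affect the argument.
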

\begin{proof}
Clearly $(1)\Rightarrow(2)\Rightarrow(3)\Rightarrow(4)$. 
\itm{(3)\Rightarrow (1)}
Suppose $(x-x_i)(\omega_\xi)\to 0$ for all $\xi \in H$. Then $\|\sqrt{x-x_i}\xi\|_2\to 0$, so $\sqrt{x-x_i}\to 0$ strongly. Hence $x_i\to x$ strongly as multiplication is strongly continuous on bounded sets.
\itm{(4)\Rightarrow(3)}
Choose an orthonormal basis $\{e_n\}_{n\geq 1}\subset D$ for $H$. Suppose $\xi=\sum_n \lambda_n e_n \in H\setminus\{0\}$, and let $\varepsilon>0$. Then there is an $N>0$ such that
$$
\xi_N := \sum_{n>N} \lambda_n e_n \Longrightarrow \|\xi_N\|_2^2 = \sum_{n>N} |\lambda_n|^2 <\frac{\varepsilon^2}{16\|x\|_\I^2 \|\xi\|_2^2}.
$$
For $n=1,\dots,N$, there are $i_n\in I$ such that $i > i_n$ implies 
$$
|\langle (x-x_i) \lambda_n e_n,\xi\rangle|\leq \|(x-x_i) \lambda_n e_n\|_2\|\xi\|_2< \frac{\varepsilon}{2^{n+1}}. 
$$
Now choose $i'>i_n$ for all $n=1,\dots,N$. We calculate that for $i>i'$, 
\begin{align*}
(x-x_i)(\omega_\xi)
&= \langle (x-x_i) \xi,\xi\rangle 
\leq \sum_{n=1}^N |\langle (x-x_i) \lambda_n e_n,\xi\rangle| + \left|\left\langle (x-x_i)\xi_N,\xi\right\rangle\right|\\
& \leq \sum_{n=1}^N \frac{\varepsilon}{2^{n+1}} + \left|\left\langle x\xi_N,\xi\right\rangle\right|+\left|\left\langle x_i\xi_N,\xi\right\rangle\right|
\leq \sum_{n=1}^N \frac{\varepsilon}{2^{n+1}} +2\|x\|_\I \|\xi_N\|_2 \|\xi\|_2\\
&<  \frac{\varepsilon}{2}+2\mu \frac{\varepsilon}{4\|x\|_\I \|\xi\|_2} \|\xi\|_2 = \varepsilon.
\end{align*}
As $\varepsilon$ was arbitrary, we are finished.
\end{proof}

\begin{lem}\label{lem:CommuteStrong}
If $x,y \in M^+$, and $(x_i)_{i\in I},(y_j)_{j\in J}\subset M^+$ are directed nets of increasing operators such that 
\item[$\bullet$] any two elements in $\{x,y\}\cup \set{x_i}{i\in I}\cup \set{y_j}{j\in J}$ commute and
\item[$\bullet$] $x_i\nearrow x$ and $y_j\nearrow y$, 
\item then $x_iy_j\nearrow xy$ (and Lemma \ref{lem:WeakStrong} applies).
\end{lem}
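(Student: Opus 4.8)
The plan is to first observe that all the products in sight are positive operators, then establish that the doubly-indexed net $(x_iy_j)$ is increasing (in the product order on $I\times J$), and finally prove strong convergence $x_iy_j\to xy$ by a short telescoping estimate; the statement $x_iy_j\nearrow xy$ then follows from Lemma \ref{lem:WeakStrong}.

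First I would record that, since $y$ commutes with $x$, it commutes with $x^{1/2}$, so $xy=x^{1/2}yx^{1/2}\in M^+$; likewise each $x_iy_j=x_i^{1/2}y_jx_i^{1/2}\in M^+$, using that $y_j$ commutes with $x_i^{1/2}$. Next, to see monotonicity, suppose $i\le i'$ and $j\le j'$. Since $x_{i'}-x_i\ge 0$ commutes with $y_j\ge 0$, hence with $y_j^{1/2}$, we get
$$
x_{i'}y_j-x_iy_j=(x_{i'}-x_i)y_j=y_j^{1/2}(x_{i'}-x_i)y_j^{1/2}\ge 0,
$$
and a symmetric computation (commuting $x_{i'}$ past $y_{j'}-y_j$) gives $x_{i'}y_{j'}\ge x_{i'}y_j$, so $x_iy_j\le x_{i'}y_{j'}$. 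Running the same computations with $x$ in place of $x_{i'}$ and $y$ in place of $y_{j'}$ shows $x_iy_j\le xy$ for all $i,j$; in particular $\|x_i\|_\I\le\|x\|_\I$.

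For convergence, fix $\xi\in H$ and write
$$
(xy-x_iy_j)\xi=(x-x_i)(y\xi)+x_i(y-y_j)\xi.
$$
By hypothesis $x_i\nearrow x$ and $y_j\nearrow y$, so by Lemma \ref{lem:WeakStrong} (the implication $(3)\Rightarrow(1)$) we have $x_i\to x$ and $y_j\to y$ strongly. Hence $\|(x-x_i)(y\xi)\|\to 0$, while $\|x_i(y-y_j)\xi\|\le\|x_i\|_\I\,\|(y-y_j)\xi\|\le\|x\|_\I\,\|(y-y_j)\xi\|\to 0$. Given $\varepsilon>0$, choosing $i_0$ making the first term $<\varepsilon/2$ for $i\ge i_0$ and $j_0$ making the second term $<\varepsilon/2$ for $j\ge j_0$ yields $\|(xy-x_iy_j)\xi\|<\varepsilon$ for all $(i,j)\ge(i_0,j_0)$, so $x_iy_j\to xy$ strongly. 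Since $0\le x_iy_j\le xy$ and the net $(x_iy_j)$ is directed and increasing, Lemma \ref{lem:WeakStrong} applies and gives $x_iy_j(\omega_\xi)\nearrow xy(\omega_\xi)$ for all $\xi$, i.e. $x_iy_j\nearrow xy$, together with all the equivalent formulations of that lemma. There is no genuine obstacle here beyond bookkeeping with the product net; the one point needing a little care is the commutation of the increments $x_{i'}-x_i$ with $y_j^{1/2}$ (and of $x$ with $y^{1/2}$), which is what makes the relevant differences manifestly positive.
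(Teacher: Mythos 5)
Your proof is correct; the paper actually states Lemma \ref{lem:CommuteStrong} without proof (grouping it among the lemmata it calls straightforward), and your argument supplies exactly the intended standard one: positivity and monotonicity of the products $x_iy_j$ via commutation with square roots, the telescoping identity $xy-x_iy_j=(x-x_i)y+x_i(y-y_j)$ giving strong convergence of the doubly indexed net, and then the equivalence $(1)\Leftrightarrow(3)$ of Lemma \ref{lem:WeakStrong} applied to the directed net $(x_iy_j)_{(i,j)\in I\times J}$, which is legitimate since you verified $0\le x_iy_j\le xy$. I see no gaps.
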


\begin{lem}\label{lem:binormal}
Suppose $x\in X$ and $y\in Y_0$. Then $x\otimes_A y\colon H\otimes_A K\to H\otimes_A K$ given by the unique extension of $\xi\otimes \eta\mapsto (x\xi)\otimes(y\eta)$ where $\xi\in D(H_A)$ and $\eta\in D(\sb{A}K)$ is well-defined and bounded, and $\|x\otimes_A y\|_\I\leq \|x\|_\I\|y\|_\I$. Hence the $*$-algebra map $x\odot_\C y\mapsto x\otimes_A y$ is a binormal representation of $X\odot_\C Y_0$ on $H\otimes_A K$.
\end{lem}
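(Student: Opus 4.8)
The plan is to reduce the whole statement to a single norm estimate: once we know $\|x\otimes_A y\|_\I\leq\|x\|_\I\|y\|_\I$ on the dense subspace of $H\otimes_A K$ spanned by simple tensors $\xi\otimes\eta$ with $\xi\in D(H_A)$ and $\eta\in K$ (description (1) of Definition \ref{defn:RelativeTensorProduct}), well-definedness, boundedness, and the binormal-representation claim all follow formally. First observe that if $x\in X=(A\op)'\cap B(H)$ then $x$ commutes with the right $A$-action, so $x\xi\in D(H_A)$ whenever $\xi\in D(H_A)$, and the associated left-action operator factors as $L(x\xi)=xL(\xi)$; likewise $y\eta\in K$ for $y\in Y_0=A'\cap B(K)$. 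Hence for finite families $\xi_1,\dots,\xi_n\in D(H_A)$ and $\eta_1,\dots,\eta_n\in K$, writing $\zeta=\sum_i\xi_i\otimes\eta_i$ and $\underline{y\eta}=(y\eta_1,\dots,y\eta_n)$,
\[
\Bigl\|\sum_i(x\xi_i)\otimes(y\eta_i)\Bigr\|^2
=\sum_{i,j}\bigl\langle\langle x\xi_i|x\xi_j\rangle_A\,(y\eta_j),\,(y\eta_i)\bigr\rangle_K
=\bigl\langle [\,\langle x\xi_i|x\xi_j\rangle_A\,]\,\underline{y\eta},\,\underline{y\eta}\bigr\rangle ,
\]
where the $n\times n$ matrix, with entries in $A$, acts on $\bigoplus_1^n K$ through the left $A$-action.

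The key step is the operator-matrix inequality. Writing $R_0=(L(\xi_1),\dots,L(\xi_n))\colon\bigoplus_1^n L^2(A)\to H$ for the associated row operator, one has $[\,\langle x\xi_i|x\xi_j\rangle_A\,]=(xR_0)^*(xR_0)$ and $[\,\langle\xi_i|\xi_j\rangle_A\,]=R_0^*R_0$; since each $L(\xi_i)$ is right $A$-linear and $x$ commutes with the right action, these matrices genuinely have entries in $(A\op)'\cap B(L^2(A))=A$. Therefore $[\,\langle x\xi_i|x\xi_j\rangle_A\,]\leq\|x\|_\I^2\,[\,\langle\xi_i|\xi_j\rangle_A\,]$ in $\Mat_n(A)$, and applying the (finite) ampliation of the normal left representation $A\to B(K)$ the inequality persists in $B(\bigoplus_1^n K)$. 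Finally $\underline{y\eta}=\widetilde y\,\underline\eta$ with $\widetilde y=\operatorname{diag}(y,\dots,y)$, and since $y\in A'$ the operator $\widetilde y$ commutes with the positive matrix $[\,\langle\xi_i|\xi_j\rangle_A\,]$; two commuting positive operators satisfy $[\,\langle\xi_i|\xi_j\rangle_A\,]\,\widetilde y^*\widetilde y\leq\|y\|_\I^2\,[\,\langle\xi_i|\xi_j\rangle_A\,]$, so
\[
\Bigl\|\sum_i(x\xi_i)\otimes(y\eta_i)\Bigr\|^2
\leq\|x\|_\I^2\|y\|_\I^2\,\bigl\langle [\,\langle\xi_i|\xi_j\rangle_A\,]\,\underline\eta,\underline\eta\bigr\rangle
=\|x\|_\I^2\|y\|_\I^2\,\|\zeta\|^2 .
\]
Applied to a finite collection representing the zero vector, this shows $\sum_i(x\xi_i)\otimes(y\eta_i)$ depends only on $\zeta$, so $x\otimes_A y$ is well defined on the dense subspace, and the same estimate yields the unique bounded extension with $\|x\otimes_A y\|_\I\leq\|x\|_\I\|y\|_\I$.

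It remains to check that the linear extension of $x\otimes y\mapsto x\otimes_A y$ is a binormal $*$-representation of $X\odot_\C Y_0$. Multiplicativity $(x_1\otimes_A y_1)(x_2\otimes_A y_2)=(x_1x_2)\otimes_A(y_1y_2)$ is immediate on simple tensors, and $(x\otimes_A y)^*=x^*\otimes_A y^*$ follows from property (3) of the $A$-valued inner products — $\langle x\xi_1|\xi_2\rangle_A=\langle\xi_1|x^*\xi_2\rangle_A$ for $x\in(A\op)'\cap B(H)$, Definition \ref{defn:RightModule} — together with $y$ commuting with the left $A$-action; hence it is a $*$-algebra map, as simple tensors generate $X\odot_\C Y_0$. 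For separate normality, fix $y$; for $\zeta=\sum_i\xi_i\otimes\eta_i$ and $\zeta'=\sum_k\xi_k'\otimes\eta_k'$ in the dense subspace, the map
\[
x\longmapsto\langle(x\otimes_A y)\zeta,\zeta'\rangle=\sum_{i,k}\bigl\langle\bigl(L(\xi_k')^*xL(\xi_i)\bigr)(y\eta_i),\,\eta_k'\bigr\rangle_K
\]
is a normal functional, being assembled from the $\sigma$-weakly continuous maps $x\mapsto L(\xi_k')^*xL(\xi_i)\in A$, the normal representation $A\to B(K)$, and normal functionals on $B(K)$; the uniform bound $\|x\otimes_A y\|_\I\leq\|x\|_\I\|y\|_\I$ together with norm-density of the simple tensors in $H\otimes_A K$ and norm-closedness of the predual promotes this to all $\zeta,\zeta'$, so $x\mapsto x\otimes_A y$ is $\sigma$-weakly continuous; the argument for $y\mapsto x\otimes_A y$ is symmetric (with $R_0$ and $L$ replaced by their left-module counterparts).

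The only genuine difficulty is the operator-matrix inequality and the associated bookkeeping: one must be careful that the row operator $R_0$ produces matrix entries lying in $A$ itself (not merely in $B(L^2(A))$), that the inequality transfers intact through the amplified left representation on $K$, and that $\widetilde y$ really commutes with the full amplification $[\,\langle\xi_i|\xi_j\rangle_A\,]$. Everything else is routine manipulation with Definitions \ref{defn:RightModule} and \ref{defn:RelativeTensorProduct}.
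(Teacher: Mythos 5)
Your proof is correct, and its skeleton matches the paper's: reduce everything to the estimate $\bigl\|\sum_i(x\xi_i)\otimes(y\eta_i)\bigr\|\leq\|x\|_\I\|y\|_\I\bigl\|\sum_i\xi_i\otimes\eta_i\bigr\|$ on finite sums of simple tensors, and deduce separate normality from density of such sums. The execution differs in two respects, both legitimate. For the estimate, the paper works with $\eta_i\in D(\sb{A}K)$ (description (3) of Definition \ref{defn:RelativeTensorProduct}) and uses positivity of \emph{two} Gram matrices over $A$, namely $({\sb{A}\langle}y\eta_i,y\eta_j\rangle)_{i,j}$ and $(\langle\xi_j|\xi_i\rangle_A)_{i,j}$ (quoting an external positivity lemma), peeling off $\|x\|_\I$ and then $\|y\|_\I$ by sliding the diagonal operators past the matrix square roots acting on $H^{\oplus k}$ and $K^{\oplus k}$; you stay entirely on the $K$ side with $\eta_i\in K$ arbitrary (description (1)), obtain the single matrix inequality $(\langle x\xi_i|x\xi_j\rangle_A)\leq\|x\|_\I^2(\langle\xi_i|\xi_j\rangle_A)$ from the row-operator factorization $R_0^*x^*xR_0\leq\|x\|_\I^2R_0^*R_0$, transport it through the amplified (order-preserving) left representation, and dispose of $y$ via the commuting-positives inequality $N\widetilde{y}^*\widetilde{y}\leq\|y\|_\I^2N$. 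This is self-contained — no appeal to the cited positivity lemma — at the cost of the commutation bookkeeping you flag, and it needs only one Gram matrix. For binormality, the paper tests bounded increasing nets against vector states on the dense subspace via Lemma \ref{lem:WeakStrong}(4), whereas you check that each vector functional pulls back to a normal functional and upgrade to all of $H\otimes_A K$ by the uniform bound and norm-closedness of the predual; for a bounded $*$-homomorphism these arguments yield the same conclusion. Both routes prove the lemma: yours is a bit longer but independent of the external reference and of left-boundedness of the $\eta_i$, while the paper's square-root manipulation is more compact.
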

\begin{proof}
\item[(1)]
Fix $\xi_1,\dots,\xi_k\in D(H_A)$ and $\eta_1,\dots,\eta_k\in D(\sb{A}K)$, and let $\xi=(\xi_1,\dots,\xi_k)$ and $\eta=(\eta_,\dots,\eta_k)$. Since the matrices $m=({\sb{A}\langle} y\eta_i,y\eta_j\rangle)_{i,j}, n=(\langle \xi_j,\xi_i\rangle_A)_{i,j}\in M_k(A)$ are positive (see Lemma 1.8 of \cite{MR1424954}), we have
\begin{align*}
\left\| \sum_{i=1}^k (x\xi_i)\otimes(y\eta_i)\right\|_2^2
& = \sum_{i,j=1}^k \langle (x\xi_i)\otimes(y\eta_i), (x\xi_j)\otimes(y\eta_j)\rangle \\
& = \sum_{i,j=1}^k \langle (x\xi_i){\sb{A}\langle} y\eta_i,y\eta_j\rangle, (x\xi_j)\rangle  
= \langle (x\xi) n,( x\xi)\rangle 
= \|(x\xi)n^{1/2}\|_2^2\\
& =\| x(\xi n^{1/2})\|_2^2\leq \|x\|_\I^2 \|\xi n^{1/2}\|_2^2
= \|x\|_\I^2 \sum_{i,j=1}^k \langle \xi_i{\sb{A}\langle} y\eta_i,y\eta_j\rangle, \xi_j\rangle \\
& = \|x\|_\I^2 \sum_{i,j=1}^k \langle \langle\xi_j,\xi_i\rangle_A ( y\eta_i),(y\eta_j)\rangle 
= \|x\|_\I^2  \|m^{1/2}(y\eta)\|_2^2\\
& = \|x\|_\I^2  \|y(m^{1/2}\eta)\|_2^2
\leq \|x\|_\I^2 \|y\|_\I^2 \|m^{1/2}\eta\|_2^2\\
& = \|x\|_\I^2\|y\|_\I^2\left\| \sum_{i=1}^k \xi_i\otimes\eta_i\right\|_2^2.
\end{align*}
\item[(2)]
That $x\mapsto x\otimes_A 1_K$ is a normal representation of $X$ follows from the density of $D(H_A)\otimes_A K$ and (4) of Lemma \ref{lem:WeakStrong}. Similar for $y\mapsto 1_H\otimes_A y$. 
\end{proof}

\begin{nota}\label{nota:spectral}
Let $\cB$ be the Borel $\sigma$-algebra of subsets of $[0_\R,\I_\R]$. For a spectral measure $E \colon \cB\to P(H)$, we use the conventions $E_\lambda=E([0,\lambda])$, so $E_\I=1$, and $E^\I=E(\{\I\})$ (in general, our spectral measures on $\cB$ have non-trivial mass at $\I$). 
\end{nota}

\begin{lem}\label{lem:SpectralConvergence}
Suppose $E \colon \cB\to P(X)\subset B(H_A) $ is a spectral measure. Suppose $f\colon [0,\I]\to [0,\I)$ is a bounded Borel-measurable function, and $(\varphi_n)$ is a sequence of positive simple functions increasing pointwise to $f$. Then
$$
\int_0^\I f(\lambda)\, dE_\lambda := \sup_n\int_0^\I \varphi_n(\lambda) \, dE_\lambda
$$
is well defined.
\end{lem}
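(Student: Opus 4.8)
The plan is to show two things: that the supremum on the right-hand side exists, and that it is independent of the choice of approximating sequence $(\varphi_n)$.

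First I would record the elementary properties of the spectral integral on positive simple functions. For a positive simple function $\varphi = \sum_{k=1}^{\ell} c_k \mathbf{1}_{B_k}$ with $c_k\geq 0$ and $B_k\in\cB$, set $\int_0^\I \varphi\, dE_\lambda = \sum_k c_k E(B_k)\in X^+$; a routine finite-additivity argument shows this does not depend on the chosen representation of $\varphi$, and that $\varphi\leq\varphi'$ pointwise implies $\int_0^\I \varphi\, dE_\lambda\leq \int_0^\I\varphi'\, dE_\lambda$. Consequently $\bigl(\int_0^\I\varphi_n\, dE_\lambda\bigr)_n$ is an increasing sequence in $X^+$, and since $\varphi_n\leq f\leq \|f\|_\I\mathbf{1}$ pointwise it is bounded above by $\|f\|_\I 1_H$. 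Hence the directed family $F=\set{\int_0^\I\varphi_n\, dE_\lambda}{n\geq 1}$ has a supremum $\sup F\in\widehat{X^+}$ by Lemma \ref{lem:QuadraticForm}, and because $F$ is uniformly bounded this supremum is in fact a bounded operator in $X^+$ (of norm at most $\|f\|_\I$), by Lemma \ref{lem:VectorStates}(1).

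For independence, let $(\varphi_n)$ and $(\psi_m)$ be two sequences of positive simple functions increasing pointwise to $f$. For each $\xi\in H$ let $\mu_\xi$ be the finite positive Borel measure on $[0_\R,\I_\R]$ given by $\mu_\xi(B)=\langle E(B)\xi,\xi\rangle$, so that $\bigl(\int_0^\I\varphi_n\, dE_\lambda\bigr)(\omega_\xi)=\int_0^\I\varphi_n\, d\mu_\xi$ and likewise for $\psi_m$. The scalar monotone convergence theorem gives $\int_0^\I\varphi_n\, d\mu_\xi\nearrow \int_0^\I f\, d\mu_\xi$ and $\int_0^\I\psi_m\, d\mu_\xi\nearrow\int_0^\I f\, d\mu_\xi$ (both finite since $f$ is bounded), whence
$$
\sup_n\Bigl(\int_0^\I\varphi_n\, dE_\lambda\Bigr)(\omega_\xi)=\int_0^\I f\, d\mu_\xi=\sup_m\Bigl(\int_0^\I\psi_m\, dE_\lambda\Bigr)(\omega_\xi)
$$
for every $\xi\in H$. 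By Lemma \ref{lem:VectorStates}(2), the supremum of an increasing net of positive operators is determined by its values on the vector states $\omega_\xi$, so $\sup_n\int_0^\I\varphi_n\, dE_\lambda=\sup_m\int_0^\I\psi_m\, dE_\lambda$, which is the asserted well-definedness.

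The only point requiring care is the passage between the operator-level supremum and the pointwise (on vector states) supremum, which a priori are different notions; this is exactly what Lemma \ref{lem:VectorStates} supplies, and everything else is standard measure-theoretic bookkeeping for spectral integrals. I would also remark that the boundedness of $f$ is not essential to the argument: dropping it, the right-hand side still exists in $\widehat{X^+}$ and is independent of $(\varphi_n)$, though it need no longer be a bounded operator.
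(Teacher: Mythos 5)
Your proof is correct and follows essentially the same route as the paper: evaluate the spectral integrals against the vector states $\omega_\xi$, observe that $\omega_\xi\circ E$ is a (finite) Borel measure, and invoke scalar monotone convergence to see that the supremum is independent of the approximating sequence, with Lemma \ref{lem:VectorStates} transferring this back to the operator level. The additional details you supply (monotonicity of the simple-function integral, boundedness of the supremum, and the remark that boundedness of $f$ is inessential) are consistent elaborations of the paper's shorter argument.
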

\begin{proof}
Suppose $\xi\in H$. Then as $\omega_\xi$ is normal,  $\omega_\xi \circ E$ is a Borel measure, and
$$
\int_0^\I f(\lambda)\, d(\omega_\xi(E_\lambda))=\sup_n \int_0^\I \varphi_n(\lambda) \, d(\omega_\xi(E_\lambda))
$$
is independent of the choice of positive simple functions $\varphi_n$ increasing to $f$.
\end{proof}

\begin{prop}\label{prop:TensorSpectral}
Suppose 
\begin{align*}
E \colon \cB&\longrightarrow P(X)\subset B(H_A) \text{ and }\\
F \colon \cB&\longrightarrow P(Y_0)\subset B(\sb{A}K)
\end{align*}
are spectral measures.
\item[(1)] The map $E \otimes_A F\colon \cB\otimes \cB\longrightarrow P(X\otimes_A Y_0)$ by
$$
(I_1,I_2)\longmapsto \int_{I_1\times I_2} \, d(E_\lambda\otimes_A F_\mu) := E(I_1)\otimes_A F (I_2)
$$
extends uniquely to a spectral measure by countable additivity.
\item[(2)] If $\varphi,\psi\colon [0,\I]\to [0,\I)$ are positive simple functions, then
{\fontsize{10}{10}{
$$
\int_{0}^\I\int_0^\I \varphi(\lambda) \psi(\mu) \, d(E_\lambda\otimes_A F_\mu)
= \left(\int_0^\I \varphi(\lambda) \, dE_\lambda\right) \otimes_A \left( \int_0^\I \psi(\mu) \, dF_\mu \right)\in X\otimes_A Y_0.
$$
}}
\item[(3)]
If $f,g$ are bounded, $\cB$-measurable functions and $(\varphi_m),(\psi_n)$ are sequences of positive simple functions increasing to $f,g$, then
{\fontsize{10}{10}{
$$
\sup_{m,n} \int_{0}^\I\int_0^\I \varphi_m(\lambda) \psi_n(\mu) \, d(E_\lambda\otimes_A F_\mu)
= \left(\int_0^\I f(\lambda) \, dE_\lambda\right) \otimes_A \left( \int_0^\I g(\mu) \, dF_\mu \right)\in X\otimes_A Y_0.
$$
}}
\end{prop}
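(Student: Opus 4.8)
The plan is to handle the three parts in order, with part (1) furnishing the spectral measure that makes the double integrals in (2) and (3) meaningful. For (1), first note that by Lemma \ref{lem:binormal} the maps $\pi_X\colon x\mapsto x\otimes_A 1_K$ and $\pi_Y\colon y\mapsto 1_H\otimes_A y$ are normal unital $*$-homomorphisms of $X$ and $Y_0$ into $X\otimes_A Y_0$ with commuting ranges. Hence $\pi_X\circ E$ and $\pi_Y\circ F$ are commuting $P(X\otimes_A Y_0)$-valued spectral measures on $\cB$, and $E(I_1)\otimes_A F(I_2)=\pi_X(E(I_1))\,\pi_Y(F(I_2))$ is a projection in $X\otimes_A Y_0$. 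I would then carry out (or cite) the standard construction of the product of two commuting spectral measures: the set function on the algebra of finite disjoint unions of measurable rectangles given by $\sum_k E(I_k^1)\otimes_A F(I_k^2)$ is finitely additive and, using $\sigma$-weak continuity of $\pi_X,\pi_Y$ (equivalently strong continuity on the relevant bounded increasing families, cf. Lemma \ref{lem:WeakStrong}) together with the countable additivity of $E$ and $F$, it is countably additive in the strong operator topology; it therefore extends to a countably additive $P(X\otimes_A Y_0)$-valued measure on $\cB\otimes\cB$ by the usual Hahn/Kolmogorov extension, and the multiplicativity $(E\otimes_A F)(S\cap S')=(E\otimes_A F)(S)(E\otimes_A F)(S')$ propagates from rectangles to all of $\cB\otimes\cB$ by a monotone-class argument. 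Uniqueness is immediate, since $\cB\otimes\cB$ is generated by the rectangles.

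For (2), write $\varphi=\sum_i c_i\chi_{I_i}$ and $\psi=\sum_j d_j\chi_{J_j}$ with $\{I_i\}$, $\{J_j\}$ finite measurable partitions of $[0,\I]$ and $c_i,d_j\geq 0$. Then $\varphi(\lambda)\psi(\mu)=\sum_{i,j}c_id_j\,\chi_{I_i\times J_j}(\lambda,\mu)$, so by the definition in (1),
\[
\int_0^\I\!\!\int_0^\I \varphi(\lambda)\psi(\mu)\,d(E_\lambda\otimes_A F_\mu)=\sum_{i,j}c_id_j\,E(I_i)\otimes_A F(J_j)=\left(\sum_i c_iE(I_i)\right)\otimes_A\left(\sum_j d_jF(J_j)\right),
\]
where the last equality uses that $x\odot_\C y\mapsto x\otimes_A y$ is bilinear (indeed a $*$-algebra homomorphism) by Lemma \ref{lem:binormal}; the two factors are exactly $\int_0^\I\varphi\,dE_\lambda$ and $\int_0^\I\psi\,dF_\mu$, proving the claim.

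For (3), choose positive simple functions $\varphi_m\nearrow f$ and $\psi_n\nearrow g$ pointwise; then $\varphi_m(\lambda)\psi_n(\mu)\nearrow f(\lambda)g(\mu)$ pointwise, and by Lemma \ref{lem:SpectralConvergence} the nets $\int_0^\I\varphi_m\,dE_\lambda$ and $\int_0^\I\psi_n\,dF_\mu$ increase strongly to $\int_0^\I f\,dE_\lambda$ and $\int_0^\I g\,dF_\mu$ (bounded increasing self-adjoint nets, Lemma \ref{lem:WeakStrong}). By (2), $\int\!\!\int \varphi_m\psi_n\,d(E_\lambda\otimes_A F_\mu)=\left(\int_0^\I\varphi_m\,dE_\lambda\right)\otimes_A\left(\int_0^\I\psi_n\,dF_\mu\right)$. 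Now the operators $\int\varphi_m\,dE$ and $\int f\,dE$ all commute (they are functions of the common spectral measure $E$), likewise the operators $\int\psi_n\,dF$ and $\int g\,dF$, and every element of $X\otimes_A 1_K$ commutes with every element of $1_H\otimes_A Y_0$ by Lemma \ref{lem:binormal}; hence Lemma \ref{lem:CommuteStrong} applies and gives
\[
\left(\int_0^\I \varphi_m\,dE_\lambda\right)\otimes_A\left(\int_0^\I \psi_n\,dF_\mu\right)\ \nearrow\ \left(\int_0^\I f\,dE_\lambda\right)\otimes_A\left(\int_0^\I g\,dF_\mu\right)
\]
strongly. Thus the supremum over $(m,n)$ of the left-hand side equals the right-hand side; since that limit is bounded (by $\|f\|_\I\|g\|_\I$, Lemma \ref{lem:binormal}), lies in the von Neumann algebra $X\otimes_A Y_0$, and does not depend on the approximating sequences, the supremum in the statement is well defined with the asserted value.

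The main obstacle is the construction in part (1): although the countable additivity of the rectangle premeasure is a short consequence of the normality of $\pi_X,\pi_Y$ and the countable additivity of $E$ and $F$, carefully carrying out the extension to $\cB\otimes\cB$ so that the result is genuinely projection-valued and multiplicative is the one place where something beyond bookkeeping is required; once (1) is in hand, parts (2) and (3) are essentially formal consequences of Lemmas \ref{lem:binormal}, \ref{lem:WeakStrong}, \ref{lem:CommuteStrong}, and \ref{lem:SpectralConvergence}.
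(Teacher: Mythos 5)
Your proposal is correct and follows essentially the same route as the paper: countable additivity checked on measurable rectangles (via the normality of $x\mapsto x\otimes_A 1_K$ and $y\mapsto 1_H\otimes_A y$ from Lemma \ref{lem:binormal}) for (1), bilinearity for (2), and the monotone-convergence Lemmas \ref{lem:CommuteStrong}, \ref{lem:SpectralConvergence} (with \ref{lem:WeakStrong}) for (3). You have simply fleshed out the details the paper dismisses as ``straightforward,'' ``obvious,'' and ``immediate.''
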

\begin{proof}
\item[(1)]
One simply needs to check countable additivity (pointwise on $H\otimes_A K$), which follows from countably additivity on products of intervals, which is straightforward.
\item[(2)] Obvious.
\item[(3)] Immediate from (2) together with Lemmas \ref{lem:CommuteStrong} and \ref{lem:SpectralConvergence}.
\end{proof}

\begin{lem}\label{lem:TensorSpectralAssociative}
The relative tensor product of spectral measures as in Proposition \ref{prop:TensorSpectral} is associative, i.e., if
\begin{align*}
E \colon \cB&\longrightarrow P(X)\subset B(H_A), \\
F\colon \cB&\longrightarrow P(Y)\subset B(\sb{A}K_B), \text{ and }\\
G \colon \cB&\longrightarrow P(Z)\subset B(\sb{B}L)
\end{align*}
are spectral measures on $\cB$, then $(E\otimes_A F)\otimes_B G=E\otimes_A(F\otimes_B G)$. Moreover, if $f,g,h\colon [0,\I]\to [0,\I)$ are bounded $\cB$-measurable functions, and $(\varphi_m),(\psi_n),(\gamma_k)$ are positive simple functions increasing to $f,g,h$ respectively, then
{\fontsize{10}{10}{
\begin{align*}
&\sup_{m,n,k} \int_{0}^\I\int_0^\I\int_0^\I \varphi_m(\lambda) \psi_n(\mu)  \gamma_\ell(\nu) \, d(E_\lambda\otimes_A F_\mu\otimes_B G_\nu)=\\
&\hspace{.5in}  = \left(\int_0^\I f(\lambda) \, dE_\lambda\right) \otimes_A \left( \int_0^\I g(\mu) \, dF_\mu \right)\otimes_B \left( \int_0^\I h(\nu) \, dG_\nu \right)\in X\otimes_A Y\otimes_B Z.
\end{align*}
}}
\end{lem}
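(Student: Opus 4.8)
The plan is to reduce everything to the associativity of the relative tensor product of Hilbert modules together with the binormality established in Lemma~\ref{lem:binormal}. Let $U\colon (H\otimes_A K)\otimes_B L\to H\otimes_A(K\otimes_B L)$ be the canonical unitary, determined on elementary tensors of bounded vectors by $(\xi\otimes\eta)\otimes\zeta\mapsto\xi\otimes(\eta\otimes\zeta)$. First I would verify that under $U$ the operators $(x\otimes_A y)\otimes_B z$ and $x\otimes_A(y\otimes_B z)$ coincide for all $x\in X$, $y\in Y$, $z\in Z$; this is immediate from two applications of Lemma~\ref{lem:binormal}, checked on the dense set of elementary tensors of $A$- and $B$-bounded vectors. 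Consequently $U$ carries $(X\otimes_A Y)\otimes_B Z$ onto $X\otimes_A(Y\otimes_B Z)$, and we may write $X\otimes_A Y\otimes_B Z$ for this common von Neumann algebra without ambiguity.

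Next I would establish the associativity of the spectral measures. By Proposition~\ref{prop:TensorSpectral}(1), each of $(E\otimes_A F)\otimes_B G$ and $E\otimes_A(F\otimes_B G)$ is the unique countably additive extension of its restriction to boxes $I_1\times I_2\times I_3$ with $I_i\in\cB$. On such a box the first measure equals $(E(I_1)\otimes_A F(I_2))\otimes_B G(I_3)$ and the second equals $E(I_1)\otimes_A(F(I_2)\otimes_B G(I_3))$; these agree by the previous paragraph, so the two spectral measures on $\cB\otimes\cB\otimes\cB$ are equal.

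For the integration formula, I would first treat the case of positive simple functions $\varphi,\psi,\gamma$: iterating Proposition~\ref{prop:TensorSpectral}(2) and using the associativity just proved to drop brackets gives
\[
\int_0^\I\!\!\int_0^\I\!\!\int_0^\I \varphi(\lambda)\psi(\mu)\gamma(\nu)\,d(E_\lambda\otimes_A F_\mu\otimes_B G_\nu)=\left(\int_0^\I\varphi\,dE\right)\otimes_A\left(\int_0^\I\psi\,dF\right)\otimes_B\left(\int_0^\I\gamma\,dG\right).
\]
For the general case, take $(\varphi_m),(\psi_n),(\gamma_k)$ increasing pointwise to $f,g,h$. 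The families $\{\int_0^\I\varphi_m\,dE\}_m$, $\{\int_0^\I\psi_n\,dF\}_n$, $\{\int_0^\I\gamma_k\,dG\}_k$ consist of mutually commuting positive operators (after identifying via $U$, they are functions of the three pairwise-commuting spectral measures $E$, $F$, $G$), and each increases to $\int_0^\I f\,dE$, $\int_0^\I g\,dF$, $\int_0^\I h\,dG$ respectively by Lemma~\ref{lem:SpectralConvergence}. Two applications of Lemma~\ref{lem:CommuteStrong} then show that the triple tensor products increase to $\int_0^\I f\,dE\otimes_A\int_0^\I g\,dF\otimes_B\int_0^\I h\,dG$, giving the claimed identity.

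The main obstacle will be the bookkeeping around the isomorphism $U$: one must check that it simultaneously intertwines all the relevant operators, so that both the algebra $X\otimes_A Y\otimes_B Z$ and the triple spectral measure are genuinely independent of how one brackets the tensor factors. Once that is in place, the rest is a mechanical iteration of the two-fold statements in Proposition~\ref{prop:TensorSpectral} together with Lemmas~\ref{lem:CommuteStrong} and~\ref{lem:SpectralConvergence}.
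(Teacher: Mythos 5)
Your proposal is correct and follows essentially the same route as the paper: the paper's proof is the one-line remark that the lemma is immediate from associativity of the relative tensor product together with Proposition \ref{prop:TensorSpectral}, and your argument (canonical unitary intertwining the two bracketings, agreement on boxes, then iterating parts (2)--(3) via Lemmas \ref{lem:CommuteStrong} and \ref{lem:SpectralConvergence}) is just the natural expansion of exactly that reasoning.
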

\begin{proof}
Immediate from associativity of the relative tensor product and Proposition \ref{prop:TensorSpectral}.
\end{proof}

\begin{defn}\label{defn:TensorCones}
Suppose $x\in \widehat{X^+}$ and $y\in \widehat{Y_0^+}$ have spectral resolutions
$$
x = \int_{[0,\I)} \lambda \, d E_\lambda + \I E^\I\text{ and } y = \int_{[0,\I)} \mu \, dF_\mu+\I F^\I
$$
(recall Notation \ref{nota:spectral}). Then
\begin{align*}
E \colon \cB&\longrightarrow P(X)\subset B(H_A) \text{ and }\\
F \colon \cB&\longrightarrow P(Y_0)\subset B(\sb{A}K)
\end{align*}
are two spectral measures as in Proposition \ref{prop:TensorSpectral}. For $m,n\in\N$, set
$$
x_m = \int_{[0,m]} \lambda \, dE_\lambda +mE^\I \text{ and } y_n = \int_{[0,n]} \mu \, dF_\mu + nF^\I.
$$
Applying Lemma \ref{lem:QuadraticForm} to the directed set
$$
\cF=\set{x_m\otimes_A y_n}{m,n\in \N}\subset (X\otimes_A Y_0)^+,
$$
we get a positive, self-adjoint operator affiliated to $X\otimes_A Y_0$ and densely-defined in an affiliated subspace of $X\otimes_A Y_0$. We denote this operator as $x\otimes_A y$.
\end{defn}

\begin{rem}\label{rem:ThreeProjections} Assume the notation of Definition \ref{defn:TensorCones}. When we work with $x\otimes_A y$, it helps to consider the following $3$ projections:
\begin{align*}
p_0 & = (E_0\otimes_A 1_K)\vee (1_H\otimes F_0),\\
p_\I & = \bigg((1-E_0)\otimes_A F^\I\bigg)+\bigg(E^\I\otimes_A(1-F_0)\bigg)+E^\I\otimes_A F^\I,\text{ and} \\
p_f &= \sup_{\lambda,\mu<\I} E_\lambda\otimes_A F_\mu = (1-E^\I)\otimes_A (1-F^\I),
\end{align*}
which we should think of as having the following ``supports" given by the shaded areas in $[0_\R,\I_\R]^2$ below:
$$
p_0=
\begin{tikzpicture}[rectangular,baseline=0cm]
	\clip (2.05,2.05) --(-2.6,2.05) -- (-2.6,-2.6) -- (2.05,-2.6);
	\filldraw[shaded] (-2,-2)--(-2,2)--(-.8,2)--(-.8,-.8)--(2,-.8)--(2,-2);
	\filldraw[shaded] ;
	\draw[thick] (-2,-.8)--(2,-.8);
	\draw[thick] (-2,.8)--(2,.8);
	\draw[thick] (.8,-2)--(.8,2);
	\draw[thick] (-.8,-2)--(-.8,2);
	\draw[thick] (-2,-2)--(-2,2)--(2,2)--(2,-2)--(-2,-2);
	\node at (-2.3,-1.4) {$0$};
	\node at (-2.3,-.8) {$\vee$};	
	\node at (-2.3,0) {$\mu$};
	\node at (-2.3,.8) {$\vee$};
	\node at (-2.3,1.4) {$\I$};
	\node at (-1.4,-2.3) {$0$};
	\node at (-.8,-2.3) {$<$};	
	\node at (0,-2.3) {$\lambda$};
	\node at (.8,-2.3) {$<$};
	\node at (1.4,-2.3) {$\I$};
\end{tikzpicture}, \, p_\I=
\begin{tikzpicture}[rectangular,baseline=0cm]
	\clip (2.05,2.05) --(-2.6,2.05) -- (-2.6,-2.6) -- (2.05,-2.6);
	\filldraw[shaded] (.8,-.8)--(.8,.8)--(-.8,.8)--(-.8,2)--(2,2)--(2,-.8);
	\filldraw[shaded] ;
	\draw[thick] (-2,-.8)--(2,-.8);
	\draw[thick] (-2,.8)--(2,.8);
	\draw[thick] (.8,-2)--(.8,2);
	\draw[thick] (-.8,-2)--(-.8,2);
	\draw[thick] (-2,-2)--(-2,2)--(2,2)--(2,-2)--(-2,-2);
	\node at (-2.3,-1.4) {$0$};
	\node at (-2.3,-.8) {$\vee$};	
	\node at (-2.3,0) {$\mu$};
	\node at (-2.3,.8) {$\vee$};
	\node at (-2.3,1.4) {$\I$};
	\node at (-1.4,-2.3) {$0$};
	\node at (-.8,-2.3) {$<$};	
	\node at (0,-2.3) {$\lambda$};
	\node at (.8,-2.3) {$<$};
	\node at (1.4,-2.3) {$\I$};
\end{tikzpicture},\, p_f=
\begin{tikzpicture}[rectangular,baseline=0cm]
	\clip (2.05,2.05) --(-2.6,2.05) -- (-2.6,-2.6) -- (2.05,-2.6);
	\filldraw[shaded] (-2,-2)--(-2,.8)--(.8,.8)--(.8,-2);
	\filldraw[shaded] ;
	\draw[thick] (-2,-.8)--(2,-.8);
	\draw[thick] (-2,.8)--(2,.8);
	\draw[thick] (.8,-2)--(.8,2);
	\draw[thick] (-.8,-2)--(-.8,2);
	\draw[thick] (-2,-2)--(-2,2)--(2,2)--(2,-2)--(-2,-2);
	\node at (-2.3,-1.4) {$0$};
	\node at (-2.3,-.8) {$\vee$};	
	\node at (-2.3,0) {$\mu$};
	\node at (-2.3,.8) {$\vee$};
	\node at (-2.3,1.4) {$\I$};
	\node at (-1.4,-2.3) {$0$};
	\node at (-.8,-2.3) {$<$};	
	\node at (0,-2.3) {$\lambda$};
	\node at (.8,-2.3) {$<$};
	\node at (1.4,-2.3) {$\I$};
\end{tikzpicture}.
$$
\item[$\bullet$] These three projections commute with $x\otimes_A y$. 
\item[$\bullet$] $\Dom((x\otimes_A  y)^{1/2})\subset (1-p_\I)(H\otimes_A K)$, and $(x\otimes_A y)(1-p_\I)$ is densely defined on $(1-p_\I)(H\otimes_A K)$.
\item[$\bullet$] $(x\otimes_A y)p_f = \sup_{m,n<\I} \int_{[0,m]}\int_{[0,n]} \lambda \mu \, d(E_\lambda\otimes_A F_\mu)$.
\item[$\bullet$]$(x\otimes_A y)p_0=0$.
\end{rem}

\begin{lem}\label{lem:LessThan}
Let $x\in\widehat{X^+}$ and $y\in\widehat{Y_0^+}$, and assume the notation of Definition \ref{defn:TensorCones} and Remark \ref{rem:ThreeProjections}. Suppose $x'\in X^+$, $y'\in Y_0^+$ with $x'\leq x$ and $y'\leq y$. Then 
\item[(1)] $(x'\otimes_A y')p_0=p_0(x'\otimes_A y')=0$,
\item[(2)] for all $\xi \in H\otimes_A K$, $(x\otimes_A y)(\omega_\xi)=(x\otimes_A y)(\omega_{(1-p_0)\xi})$, and
\item[(3)] $x'\otimes_A y'\leq x\otimes_A y$.
\end{lem}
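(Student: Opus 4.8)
My plan is to prove (1) and (2) by spectral bookkeeping and to deduce (3) from a factorization of $x'\otimes_A y'$ together with the monotonicity of the operations on extended positive cones. For (1) I would first record that $x'E_0=E_0x'=0$ and $y'F_0=F_0y'=0$: if $\xi\in E_0H$ then $\omega_\xi(E_\lambda)=\|\xi\|^2$ for every $\lambda\ge 0$ and $\omega_\xi(E^\I)=0$, so the spectral resolution of $x$ gives $x(\omega_\xi)=0$; since $x'\in X^+$ and $x'\le x$ this forces $\langle x'\xi,\xi\rangle=0$, whence $x'E_0=0$ and $E_0x'=0$ by self-adjointness, and the argument for $y'$ is identical. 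Now $x'\otimes_A y'$ is a bounded positive operator by Lemma \ref{lem:binormal}, and since $z\odot_\C w\mapsto z\otimes_A w$ is a $*$-homomorphism we get $(x'\otimes_A y')(E_0\otimes_A 1_K)=(x'E_0)\otimes_A y'=0$ and $(x'\otimes_A y')(1_H\otimes_A F_0)=x'\otimes_A(y'F_0)=0$. Hence the kernel of $x'\otimes_A y'$, a closed subspace, contains the ranges of $E_0\otimes_A 1_K$ and $1_H\otimes_A F_0$, therefore the range of their join $p_0$; so $(x'\otimes_A y')p_0=0$, and $p_0(x'\otimes_A y')=0$ by taking adjoints.

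For (2), the same computation applied to the truncations shows $x_mE_0=E_0x_m=0$ (each $x_m$ is a bounded Borel function of $x$ vanishing at $0$, and $E_0$ is a spectral projection of $x$) and likewise $y_nF_0=F_0y_n=0$. Since $E_0\otimes_A 1_K$ and $1_H\otimes_A F_0$ commute (their product is $E_0\otimes_A F_0$ either way), $p_0=(E_0\otimes_A 1_K)+(1_H\otimes_A F_0)-(E_0\otimes_A F_0)$, and the homomorphism property gives $(x_m\otimes_A y_n)p_0=p_0(x_m\otimes_A y_n)=0$. Consequently $\langle(x_m\otimes_A y_n)\xi,\xi\rangle=\langle(x_m\otimes_A y_n)(1-p_0)\xi,(1-p_0)\xi\rangle$ for every $\xi$, and since $(x\otimes_A y)(\omega_\xi)=\sup_{m,n}\langle(x_m\otimes_A y_n)\xi,\xi\rangle$ by Definition \ref{defn:TensorCones} and Lemma \ref{lem:QuadraticForm}, taking the supremum over $m,n$ yields $(x\otimes_A y)(\omega_\xi)=(x\otimes_A y)(\omega_{(1-p_0)\xi})$.

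For (3) I would prove the stronger statement $x'\otimes_A y'\le x\otimes_A y$ in $\widehat{(X\otimes_A Y_0)^+}$, which gives the asserted inequality of weights (and which, with (1) and (2), even permits restricting attention to $(1-p_0)\xi$). Using that the tensor map is a homomorphism, write $x'\otimes_A y'=((x')^{1/2}\otimes_A 1_K)(1_H\otimes_A y')((x')^{1/2}\otimes_A 1_K)$. The map $w\mapsto 1_H\otimes_A w$ is a normal $*$-homomorphism $Y_0\to X\otimes_A Y_0$ (Lemma \ref{lem:binormal}), so it extends, as in \cite{MR534673}, to an order-preserving map of extended positive cones with $1_H\otimes_A(\sup_n y_n)=\sup_n(1_H\otimes_A y_n)$; hence $y'\le y$ gives $1_H\otimes_A y'\le 1_H\otimes_A y$. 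Applying Lemma \ref{lem:VectorStates}(3) with the positive operator $a=(x')^{1/2}\otimes_A 1_K$ (monotonicity of $z\mapsto a^*za$, immediate from its definition, and its commutation with suprema) we obtain
$$
x'\otimes_A y'\ \le\ ((x')^{1/2}\otimes_A 1_K)(1_H\otimes_A y)((x')^{1/2}\otimes_A 1_K)\ =\ \sup_n(x'\otimes_A y_n)\ =\ x'\otimes_A y,
$$
the last equality because $x'$ is bounded, so its truncations stabilize and Definition \ref{defn:TensorCones} collapses to $\sup_n(x'\otimes_A y_n)$. Running the symmetric argument in the first variable — writing $x'\otimes_A y_n=(1_H\otimes_A y_n^{1/2})(x'\otimes_A 1_K)(1_H\otimes_A y_n^{1/2})$, using $x'\otimes_A 1_K\le x\otimes_A 1_K=\sup_m(x_m\otimes_A 1_K)$ and Lemma \ref{lem:VectorStates}(3) again — gives $x'\otimes_A y_n\le\sup_m(x_m\otimes_A y_n)$, whence $x'\otimes_A y=\sup_n(x'\otimes_A y_n)\le\sup_{m,n}(x_m\otimes_A y_n)=x\otimes_A y$. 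The only delicate point is precisely this passage from Lemma \ref{lem:binormal}, which lives at the level of bounded operators, up to the extended positive cones: one must know that tensoring with $1$ on one factor is order-preserving and intertwines the defining suprema of Definition \ref{defn:TensorCones}, and that $z\mapsto a^*za$ does the same. These follow from the normality in Lemma \ref{lem:binormal} together with Lemma \ref{lem:VectorStates}(3) and Remark \ref{rem:sup}; everything else is routine spectral-calculus bookkeeping.
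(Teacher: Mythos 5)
Your (1) and (2) are correct and essentially the paper's own argument: you extract $x'E_0=E_0x'=0$ and $y'F_0=F_0y'=0$ from $x'\le x$, $y'\le y$ exactly as the paper does, and merely repackage the conclusion $(x'\otimes_A y')p_0=0$ through the multiplicativity in Lemma \ref{lem:binormal} and the closedness of the kernel (hence of the join $p_0$) instead of evaluating on a dense set of elementary tensors; (2) is the same cross-term/supremum computation. For (3), however, you take a genuinely different route. The paper reduces to vectors $\xi=p_f\xi$ in $\Dom((x\otimes_A y)^{1/2})$ and runs an $\varepsilon$-argument with the cutoffs $E_\lambda\otimes_A F_\mu\nearrow p_f$, dominating $E_Nx'E_N\otimes_A F_Ny'F_N$ by $xE_N\otimes_A yF_N\le x\otimes_A y$; in particular its proof of (3) uses (1) and (2). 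You instead factor $x'\otimes_A y'=a^*(1_H\otimes_A y')a$ with $a=(x')^{1/2}\otimes_A 1_K$, compare one tensor factor at a time, and use Lemma \ref{lem:VectorStates}(3) to pass conjugation through the suprema of Definition \ref{defn:TensorCones}; this avoids $p_f$ and all $\varepsilon$-estimates and does not even need (1) or (2). The one step you rightly flag as delicate --- that $y'\le y$ gives $1_H\otimes_A y'\le \sup_n(1_H\otimes_A y_n)$, and its analogue in the first factor --- is not literally contained in Lemma \ref{lem:binormal}, Lemma \ref{lem:VectorStates}(3), or Remark \ref{rem:sup}, so spell it out; fortunately it is one line: writing $\pi(w)=1_H\otimes_A w$, for each $\xi\in H\otimes_A K$ the functional $\omega_\xi\circ\pi$ is a normal positive functional on $Y_0$ by the normality in Lemma \ref{lem:binormal}, so $(1_H\otimes_A y')(\omega_\xi)=y'(\omega_\xi\circ\pi)\le y(\omega_\xi\circ\pi)=\sup_n y_n(\omega_\xi\circ\pi)=\sup_n(1_H\otimes_A y_n)(\omega_\xi)$, and Lemma \ref{lem:VectorStates}(1) concludes. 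With that inserted your argument is complete; what it buys is a shorter, more structural proof of (3), while the paper's version buys staying entirely inside the spectral-truncation bookkeeping it has already set up, which is the same machinery it then reuses for Theorem \ref{thm:increase}.
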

\begin{proof}
\item[(1)] 
Suppose $\eta\in D((E_0H)_A)$ and $\kappa\in D(\sb{A} K)$ (recall $E_0\in X$ and $F^\I\in Y_0$). Then since $x'\leq x$, we must have 
$$
\|(x')^{1/2}\eta\|_H^2=\langle x'\eta,\eta\rangle = x'(\omega_{\eta})\leq x(\omega_\eta)=x(\omega_{E_0\eta})=xE_0(\omega_\eta)=0.
$$
But this implies $x'\eta=0$. Hence we have
$$
(x'\otimes_A y') (\eta\otimes\kappa) = 0.
$$
Similarly, for all $\eta\in D(H_A)$ and $\kappa\in D(\sb{A}(F_0 K))$, $(x'\otimes_A y')(\eta\otimes\kappa)=0$. By density of $D(H_A)\otimes_A D(\sb{A}K)$, we have $(x'\otimes_A y') p_0=0$. Taking adjoints gives $p_0(x'\otimes_A y')=0$.
\item[(2)]
By (1), for all $m,n>0$, $p_0(x_m\otimes_A y_n)=(x_m\otimes_A y_n)p_0=0$, so
\begin{align*}
(x\otimes_A y)(\omega_\xi)
& = \sup_{m,n} (x_m\otimes_A y_n)(\omega_\xi)\\
& = \sup_{m,n} \bigg( (x_m\otimes_A y_n)(\omega_{(1-p_0)\xi})+\langle (x_m\otimes_A y_n)p_0\xi,p_0\xi\rangle\\
& \hspace{.2in}+\langle (x_m\otimes_A y_n) p_0 \xi,\xi\rangle + \langle (x_m\otimes_A y_n) \xi,p_0\xi\rangle\bigg)\\
& = \sup_{m,n} (x_m\otimes_A y_n )(\omega_{(1-p_0)\xi})
= (x\otimes_A y)(\omega_{(1-p_0)\xi}).
\end{align*}
\item[(3)]
By (2), it suffices to show that for all $\xi \in \Dom((x\otimes_A y)^{1/2})$ with $\xi=p_f\xi$,
$$
\bigg(p_f(x'\otimes_A y')p_f\bigg)(\omega_\xi)=(x'\otimes_A y')(\omega_\xi) 
\leq (x\otimes_A y) (\omega_\xi)=\bigg(p_f(x\otimes_A y)p_f\bigg)(\omega_{\xi}).
$$
Fix such a $\xi$, and let $\varepsilon>0$. As $E_\lambda\otimes_A F_\mu \to p_f$ strongly as $\lambda,\mu\to \I$ from below, there is an $N>0$ such that for all $\lambda,\mu >N$,
$$
\bigg(p_f(x'\otimes_A y')p_f - (E_\lambda x' E_{\lambda}\otimes_A F_\mu y' F_{\mu} )\bigg) (\omega_{\xi}) < \varepsilon.
$$
Since $x'\leq x$ and $y'\leq y$, we have $E_N x'E_N\leq xE_N$, $F_N y'F_N\leq yF_N$ by Lemma \ref{lem:VectorStates}, so $E_N x' E_N \otimes_A F_N y' F_N\leq xE_N \otimes_A yF_N$ as all these operators mutually commute. Hence 
\begin{align*}
\bigg(p_f(x'\otimes y')p_f\bigg)(\omega_{\xi}) 
& = \bigg(p_0(x_m\otimes_A y_n)p_0 -(E_Nx' E_{N}\otimes_A F_N y' F_{N} )\bigg)(\omega_{\xi})\\
&\hspace{1in}+(E_N x' E_{N}\otimes_A F_N y' F_{N} )(\omega_\xi) \\
& < \varepsilon +(x E_{N}\otimes_A yF_{N} )(\omega_{\xi}) \leq \varepsilon +(x \otimes_A y )(\omega_{\xi}).
\end{align*}
Since $\varepsilon$ was arbitrary, the result follows.
\end{proof}

\begin{lem}\label{lem:corners}
Suppose $(x_j')_{j\in J}\subset \widehat{X^+}$ increases to $x\in \widehat{X^+}$. Suppose $p,q\in P(X)$ are spectral projections of $x$ such that $p+q=1$. Then $\langle x_j' p\xi,q\xi\rangle \to 0$ for all $\xi\in \Dom(x^{1/2})$.
\end{lem}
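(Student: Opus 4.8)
The plan is to reduce the claim to the polarization identity applied to the quadratic forms $x_j'(\omega_{\,\cdot\,})$. First I would fix the interpretation of the pairing: read $\langle x_j'p\xi,q\xi\rangle$ as the sesquilinear form $\langle (x_j')^{1/2}p\xi,(x_j')^{1/2}q\xi\rangle$, which is meaningful as soon as $p\xi,q\xi\in\Dom((x_j')^{1/2})$. Since $(x_j')$ increases to $x$ in the sense of Lemma \ref{lem:VectorStates}, the sup of the net dominates each term, so $x_j'\leq x$ for every $j$; hence for any $\eta\in\Dom(x^{1/2})$ we get $x_j'(\omega_\eta)\leq x(\omega_\eta)<\I$, i.e.\ $\eta\in\Dom((x_j')^{1/2})$. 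Because $q=1-p$ and $p,q$ are spectral projections of $x$, the elements $u_k:=p+i^kq$ ($k=0,1,2,3$) are unitaries commuting with $x^{1/2}$ — note $(p+i^kq)(p+i^{-k}q)=p+q=1$ and $x^{1/2}u_k=u_kx^{1/2}$ on $\Dom(x^{1/2})$ — so $u_k\xi\in\Dom(x^{1/2})\subseteq\Dom((x_j')^{1/2})$ whenever $\xi\in\Dom(x^{1/2})$. Thus all vectors occurring below are legitimately in the relevant domains.

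Next I would expand by polarization in the Hilbert space, applied to $a=(x_j')^{1/2}p\xi$ and $b=(x_j')^{1/2}q\xi$, using $(x_j')^{1/2}u_k\xi=a+i^kb$:
\[
\langle x_j'p\xi,q\xi\rangle=\frac14\sum_{k=0}^3 i^k\,x_j'(\omega_{u_k\xi}).
\]
(Alternatively this follows from two applications of the parallelogram identity of Lemma \ref{lem:parallelogram}, one for the real and one for the imaginary part, using $\|u_k\xi\|=\|\xi\|$.) Since $(x_j')$ increases to $x$ we have $x_j'(\omega_{u_k\xi})\nearrow x(\omega_{u_k\xi})<\I$ for each $k$, so the right-hand side is a finite sum of convergent nets of scalars, and
\[
\lim_j\langle x_j'p\xi,q\xi\rangle=\frac14\sum_{k=0}^3 i^k\,x(\omega_{u_k\xi})=\langle x^{1/2}p\xi,x^{1/2}q\xi\rangle,
\]
the last equality being the same polarization run backwards, now using that $u_k$ commutes with $x^{1/2}$ so that $x^{1/2}u_k\xi=x^{1/2}p\xi+i^kx^{1/2}q\xi$.

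Finally, since $p,q$ commute with $x^{1/2}$ and $pq=0$ (both are projections with $p+q=1$), $\langle x^{1/2}p\xi,x^{1/2}q\xi\rangle=\langle px^{1/2}\xi,qx^{1/2}\xi\rangle=\langle x^{1/2}\xi,pq\,x^{1/2}\xi\rangle=0$, which is the assertion. I expect there is no genuine obstacle here: the only point requiring care is the domain bookkeeping — making sense of $\langle x_j'p\xi,q\xi\rangle$ when $x_j'$ is an unbounded element of $\widehat{X^+}$ and checking that each $\omega_{u_k\xi}$ is finite on $x$ — and this is exactly what the hypotheses $\xi\in\Dom(x^{1/2})$ and "$p,q$ spectral projections of $x$" provide.
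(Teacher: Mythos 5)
Your proof is correct and follows essentially the same route as the paper: polarization applied to the vectors $p\xi+i^kq\xi$, monotone convergence of the quadratic forms $x_j'(\omega_{\,\cdot\,})$ to $x(\omega_{\,\cdot\,})$, and the final vanishing because $p,q$ commute with $x^{1/2}$ and $pq=0$. The extra domain bookkeeping you supply (that $x_j'\leq x$ gives $\Dom(x^{1/2})\subseteq\Dom((x_j')^{1/2})$, and that $p\xi+i^kq\xi\in\Dom(x^{1/2})$) is exactly what the paper's terser argument implicitly uses.
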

\begin{proof}
For $k=0,1,2,3$, $p\xi + i^k q\xi \in \Dom(x^{1/2})\subseteq \Dom((x_j')^{1/2})$ for all $j\in J$. Since $x_j'$ increases to $x$, by polarization 
\begin{align*}
\lim_{j\in J} \langle (x_j')^{1/2} p\xi,(x_j')^{1/2}q\xi\rangle 
&= \lim_{j\in J} \frac{1}{4}\sum_{k=0}^3 i^k x_j'(\omega_{p\xi + i^k q\xi})
= \frac{1}{4} \sum_{k=0}^3 i^k x(\omega_{p\xi + i^k q\xi})\\
& = \langle x^{1/2} p\xi, x^{1/2} q\xi\rangle 
= 0
\end{align*}
as $p,q$ commute with $x^{1/2}$.
\end{proof}

\begin{thm}\label{thm:increase}
Let $x\in\widehat{X^+}$ and $y\in\widehat{Y_0^+}$, and assume the notation of Definition \ref{defn:TensorCones} and Remark \ref{rem:ThreeProjections}. Suppose there are sequences $(x_m')\subset X^+$, $(y_n')\subset Y_0^+$ which increase to $x, y$ respectively. Then $x_m'\otimes_A y_n'$ increases to $x\otimes_A y$.
\end{thm}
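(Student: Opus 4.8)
\emph{The plan.} First, the net $(x_m'\otimes_A y_n')_{m,n}$ is increasing: for $m\le m'$, $n\le n'$,
\[
x_{m'}'\otimes_A y_{n'}'-x_m'\otimes_A y_n'=(x_{m'}'-x_m')\otimes_A y_{n'}'+x_m'\otimes_A(y_{n'}'-y_n')\ \ge\ 0,
\]
since $\,\cdot\odot_\C\cdot\mapsto\cdot\otimes_A\cdot$ is a $*$-representation (Lemma~\ref{lem:binormal}) and hence carries pairs of positive operators to positive operators; and $x_m'\otimes_A y_n'\le x\otimes_A y$ by Lemma~\ref{lem:LessThan}(3). So by Lemma~\ref{lem:VectorStates} it suffices to prove $\sup_{m,n}(x_m'\otimes_A y_n')(\omega_\xi)\ge(x\otimes_A y)(\omega_\xi)$ for every $\xi\in H\otimes_A K$. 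I would then bring in the three commuting spectral projections $p_0,p_f,p_\I$ of $x\otimes_A y$ from Remark~\ref{rem:ThreeProjections}: Lemma~\ref{lem:LessThan}(1)--(2) shows both sides are unchanged on replacing $\xi$ by $(1-p_0)\xi$, and the parallelogram identity (Lemma~\ref{lem:parallelogram}) reduces matters to the two cases $\xi=p_f\xi$ and $\xi=p_\I\xi$.

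The \emph{base case} is when $x\in X^+$, $y\in Y_0^+$ are bounded: then $x_m'\to x$, $y_n'\to y$ strongly (Lemma~\ref{lem:WeakStrong}), so $x_m'\otimes_A y_n'=(x_m'\otimes_A 1_K)(1_H\otimes_A y_n')\to(x\otimes_A 1_K)(1_H\otimes_A y)=x\otimes_A y$ strongly, using normality of the coordinate representations (Lemma~\ref{lem:binormal}) and strong continuity of multiplication on bounded sets; the same argument gives $(a_m\otimes_A b_n)(\omega_\xi)\nearrow(a\otimes_A b)(\omega_\xi)$ for all bounded increasing nets $a_m\nearrow a$, $b_n\nearrow b$ with $a,b$ bounded.

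For $\xi=p_f\xi$ with $(x\otimes_A y)(\omega_\xi)<\I$: writing $p=E_{[0,\lambda]}$, $q=F_{[0,\mu]}$ for the spectral projections of $x,y$, Remark~\ref{rem:ThreeProjections} gives $(x\otimes_A y)(\omega_\xi)=\sup_{\lambda,\mu}\big((pxp)\otimes_A(qyq)\big)(\omega_\xi)$, and since $px_m'p\nearrow pxp$, $qy_n'q\nearrow qyq$ are bounded, the base case gives $\sup_{m,n}\big((px_m'p)\otimes_A(qy_n'q)\big)(\omega_\xi)=\big((pxp)\otimes_A(qyq)\big)(\omega_\xi)$. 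Expanding $\xi=(p\otimes_A q)\xi+\zeta$ with $\zeta=(1-p\otimes_A q)\xi$, and using $(p\otimes_A q)(x_m'\otimes_A y_n')(p\otimes_A q)=(px_m'p)\otimes_A(qy_n'q)$,
\[
(x_m'\otimes_A y_n')(\omega_\xi)\ \ge\ \big((px_m'p)\otimes_A(qy_n'q)\big)(\omega_\xi)\ -\ 2\big|\big\langle(x_m'\otimes_A y_n')(p\otimes_A q)\xi,\zeta\big\rangle\big|,
\]
and by Cauchy--Schwarz together with $x_m'\otimes_A y_n'\le x\otimes_A y$ (Lemma~\ref{lem:LessThan}(3)) the error is at most $\big((pxp)\otimes_A(qyq)\big)(\omega_\xi)^{1/2}\cdot(x\otimes_A y)(\omega_\zeta)^{1/2}$, which is independent of $m,n$ and tends to $0$ as $\lambda,\mu\to\I$ since $p\otimes_A q$ commutes with $x\otimes_A y$ and $(x\otimes_A y)(p\otimes_A q)\nearrow(x\otimes_A y)p_f$ while $\xi=p_f\xi$ (the same cross-term control is what Lemma~\ref{lem:corners} is set up to provide). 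Choosing $\lambda,\mu$ large and then $m,n$ appropriately yields $\sup_{m,n}(x_m'\otimes_A y_n')(\omega_\xi)\ge(x\otimes_A y)(\omega_\xi)$.

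It remains to treat $\xi=p_\I\xi$, and the subcase $\xi=p_f\xi$ with $(x\otimes_A y)(\omega_\xi)=\I$; here the left side must be shown to be $+\I$, and I expect this to be the main obstacle. The corner-compression argument only recovers the $p_f$-part of $x\otimes_A y$ through finitely truncated spectral projections, whereas the infinite mass of $x\otimes_A y$ sits at the ``point at infinity'' of the joint spectrum, where such compressions cannot reach and where the cross-term estimate degenerates. My remedy is to prove the theorem first under the additional hypothesis that one of $x,y$ is bounded: in that case, after removing $p_0$ the residual infinite mass is $\overline{\mathrm{ran}}\,x\otimes_A F^\I$ (or its mirror image), which by Definition~\ref{defn:TensorCones} and Remark~\ref{rem:ThreeProjections} applied to $x\otimes_A y_n'$ one analyzes at the level of simple tensors $\phi\otimes_A\kappa$ with $\kappa\in F^\I K$ — using $\langle\phi\,|\,x_m'\phi\rangle_A\nearrow\langle\phi\,|\,x\phi\rangle_A$ and that the resulting ranges exhaust that subspace — to conclude $\sup_{m,n}(x_m'\otimes_A y_n')(\omega_\xi)=\I$ whenever $p_\I\xi\ne 0$; one then bootstraps to the general case by iterating in one variable at a time, $\sup_{m,n}(x_m'\otimes_A y_n')=\sup_n\big(\sup_m x_m'\otimes_A y_n'\big)=\sup_n(x\otimes_A y_n')=x\otimes_A y$. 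The delicate point throughout is to check that, once $p_0$ has been removed, the complementary directions are genuinely not annihilated by the other operator, so that the net actually detects the infinite mass; everything else is the routine repackaging above.
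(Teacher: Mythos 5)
Your reduction and your treatment of the finite part are fine: the monotonicity and domination via Lemma \ref{lem:LessThan}, the removal of $p_0$, and the corner-compression estimate for $\xi=p_f\xi$ with $(x\otimes_A y)(\omega_\xi)<\infty$ run parallel to the paper's second case (your Cauchy--Schwarz bound on the cross term, using that $(1-p\otimes_A q)(x\otimes_A y)^{1/2}\xi\to 0$, is a workable substitute for the role Lemma \ref{lem:corners} plays there). But the part you defer --- showing that $\sup_{m,n}(x_m'\otimes_A y_n')(\omega_\xi)=\infty$ whenever $(x\otimes_A y)(\omega_\xi)=\infty$ --- is not a loose end; it is the heart of the theorem, and your proposed remedy does not close it. Knowing that the supremum is $+\infty$ on simple tensors $\phi\otimes\kappa$ with $\kappa\in F^\infty K$, together with the fact that such tensors span a dense subset of the relevant subspace, does not force the supremum to be $+\infty$ at an arbitrary $\xi$ with $p_\infty\xi\neq 0$: the supremum of an increasing family of bounded positive forms is only a lower semicontinuous form, and such a form can be $+\infty$ on a dense subset of a subspace while remaining finite at other vectors of that subspace (this is exactly what happens for $\|T^{1/2}\cdot\|^2$ with $T$ unbounded). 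So density arguments point the wrong way here; a priori the sup could be a strictly smaller element of the extended cone, differing from $x\otimes_A y$ only on its infinite part, and that is precisely what must be ruled out. Moreover your sketch addresses only the $p_\infty$ mass, while the subcase $\xi=p_f\xi$ with $(x\otimes_A y)(\omega_\xi)=\infty$ (possible since the $p_f$ part is an unbounded operator) is untouched --- as you note yourself, your cross-term estimate degenerates there. Finally, the bootstrap $\sup_{m,n}=\sup_n\sup_m$ quietly uses a one-variable monotone continuity statement in which one factor is a general element of the extended cone, not a bounded operator; this can be patched via the canonical truncations of Definition \ref{defn:TensorCones}, but it is an additional step you would have to supply.

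What is missing is an argument that works at a fixed vector $\xi$ rather than on a dense set. The paper's proof does this directly: given $\xi\notin\Dom((x\otimes_A y)^{1/2})$ and $M>0$, it first picks canonical truncations with $(x_{N}\otimes_A y_{N})(\omega_\xi)>M$, then replaces them by the given sequences one variable at a time, using an $\sb{A}K$-basis (resp.\ $H_A$-basis) and the identity $\sum_i R_{\alpha_i}R_{\alpha_i}^*=1$ to write $(x_N\otimes_A 1)(\omega_\eta)=\sum_i x_N(\omega_{R_{\alpha_i}^*\eta})$ with $\eta=(1\otimes_A y_N^{1/2})\xi$, truncating to finitely many terms still exceeding $M$, and only then invoking the pointwise convergence $x_m'(\omega_{R_{\alpha_i}^*\eta})\nearrow x(\omega_{R_{\alpha_i}^*\eta})\geq x_N(\omega_{R_{\alpha_i}^*\eta})$ on those finitely many vectors (together with a small nonvanishing claim guaranteeing the intermediate vectors are nonzero). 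Some device of this kind --- reducing the infinite value at the given $\xi$ to finitely many evaluations on which monotone convergence of $x_m'$ and $y_n'$ can be applied --- is what your proposal still needs.
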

\begin{proof}
\itt{Case 1} 
Suppose $\xi \notin  \Dom((x\otimes_A y)^{1/2})$ and $M>0$. Since $\sup_{m,n} x_m\otimes_A y_n=x\otimes_A y$, there is an $N_0\in\N$ such that for all $m,n\geq N_0$, $(x_m\otimes_A y_n)(\omega_\xi)>M$. Since $p_0\xi\neq \xi$ by Lemma \ref{lem:LessThan}, we must have
$$
(1_H\otimes_A (1_K-F_0)) \xi \neq 0 \text{ and } ((1_H-E_0)\otimes_A 1_K)\xi \neq 0.
$$ 
\itt{Claim} There is an $N_1>N_0$ such that $(x_m'\otimes 1_K)\xi\neq 0 \neq (1_H\otimes_A y_n')\xi$ for all $m,n>N_1$. 
\begin{proof}
We prove the second non-equality. Suppose not. Then for each $n>0$, there is an $k>n$ such that $(1\otimes_A y_k')\xi = 0$. But then
$$
(1_H\otimes_A y_n')(\omega_\xi)\leq (1_H\otimes_A y_k')(\omega_\xi)=0,
$$
so $(1_H\otimes_A y_n')\xi=0$ for all $n\in\N$. Since $(1_H\otimes_A (1-F_0))\xi\neq 0$, and $D(H_A)\otimes_A D(\sb{A}((1-F_0)K))$ is dense in $H\otimes_A ((1_K-F_0)K)$, there is an $\eta\in D(H_A)$ such that $L_\eta^*\xi \in ((1_K-F_0)K)\setminus\{0\}$ and $L_\eta L_\eta^* \leq 1_H\otimes_A 1_K$. Now since $y_n'$ increases to $y$, and $y(\omega_{L_\eta^* \xi})>0$, there is an $N'>0$ such that for all $n>N'$,
\begin{align*}
0 &< y_n'(\omega_{L_\eta^* \xi})
= (L_\eta y_n L_\eta^*)(\omega_\xi)
= \bigg(L_\eta L_\eta^* (1_H\otimes_A y_n')\bigg)(\omega_\xi) 
\leq (1_H\otimes_A y_n')(\omega_\xi) =0,
\end{align*}
a contradiction.
\end{proof}
Choose $N_1$ as in the claim, and suppose $n>N_1$. Let $\{\alpha_i\}\subset D(\sb{A}K)$ be an $\sb{A}K$-basis, and let $\eta = (1_H\otimes_A (y_{N_1})^{1/2})\xi\neq 0$, and note $(x_{N_1}\otimes_A 1_K)(\omega_\eta)>M$. Then 
\begin{align*}
M & < (x_{N_1}\otimes1_K)(\omega_\eta) 
= \left((x_{N_1}\otimes_A 1_K) \left( \sum_i R_{\alpha_i} R_{\alpha_i}^*\right)\right)(\omega_\eta)
= \sum_i (R_{\alpha_i} (x_{N_1}) R_{\alpha_i}^*)(\omega_\eta),
\end{align*}
so there is an $N_2>0$ such that
$$
M 
< \sum_{i=1}^{N_2} (R_{\alpha_i} x_{N_1} R_{\alpha_i}^*)(\omega_\eta)
= \sum_{i=1}^{N_2} x_{N_1} (\omega_{R_\alpha^* \eta})
\leq \sum_{i=1}^{N_2} x (\omega_{R_\alpha^* \eta}).
$$
Now as $x_m'$ increases to $x$, there is an $N_3>N_1$ such that $m>N_3$ implies
\begin{align*}
M
& < \sum_{i=1}^{N_2} x_m' (\omega_{R_\alpha^* \eta})
= \sum_{i=1}^{N_2} (R_{\alpha_i} x_m' R_{\alpha_i}^*)(\omega_\eta)
\leq \sum_{i} (R_{\alpha_i} x_m' R_{\alpha_i}^*)(\omega_\eta) \\
& = \left((x_m'\otimes_A 1_K) \left(\sum_i R_{\alpha_i} R_{\alpha_i}^*\right)\right)(\omega_\eta)
= (x_m'\otimes y_{N_1})(\omega_\xi).
\end{align*}
Repeating the above argument for $y_n'$ yields an $N_4$ such that $m,n>N_4$ implies $M < (x_m'\otimes_A y_n') (\omega_\xi)$.
\itt{Case 2}
Suppose $\xi\in \Dom((x\otimes_A y)^{1/2})$. Then $\xi=(1-p_\I)\xi$. We want to show
$$
\sup_{m,n}( x_m'\otimes_A y_n' )(\omega_\xi) = (x\otimes_A y)(\omega_\xi) =\sup_{m,n} (x_m\otimes_A y_n)(\omega_\xi),
$$
so by Lemma \ref{lem:LessThan}, we may assume $\xi=(1-p_0)\xi$, and thus $\xi=p_f\xi$. Let $\varepsilon>0$. Since
$$
p_f(x\otimes_A y)p_f = \sup_{\lambda,\mu<\I} x E_{\lambda}\otimes_A y F_{\mu},
$$
there is an $N_0\in\N$ such that for all $\lambda,\mu\geq N_0$,
$$
\bigg((x\otimes_A y) - (x E_{\lambda}\otimes_A y F_{\mu} )\bigg) (\omega_\xi) < \frac{\varepsilon}{4}.
$$
By Lemma \ref{lem:LessThan}, $x_m'\otimes_A y_n' \leq x\otimes_A y$ for all $m,n$, so using Lemma \ref{lem:VectorStates}, we have
\begin{align*}
\bigg((x_m'\otimes_Ay_n') - (E_{N_0} x_m' E_{N_0})\otimes_A (F_{N_0} y_n' F_{N_0})]\bigg)
& \leq \bigg((x\otimes_A y) - (x E_{N_0}\otimes_A y F_{N_0} )\bigg) \text{ and }\\
E_{N_0} x_m' E_{N_0}\otimes_A F_{N_0} y_n'F_{N_0}
& \leq x E_{N_0}\otimes_A y F_{N_0} 
\end{align*}
by multiplying on either side by $1_{H\otimes_A K}-(E_{N_0}\otimes_A F_{N_0})$ and $E_{N_0}\otimes_A F_{N_0}$ respectively.
Now since $x_m',y_n'$ increase to $x,y$ respectively,  by Lemma \ref{lem:VectorStates}, $E_{N_0} x_m' E_{N_0}, F_{N_0} y_n' F_{N_0}$ increases to $x E_{N_0}, y F_{N_0}$ respectively. Thus $E_{N_0} x_m' E_{N_0}\otimes_A F_{N_0} y_n' F_{N_0}$ increases to $x E_{N_0}\otimes_A y F_{N_0}$ by Lemma \ref{lem:CommuteStrong}, and there is an $N_1>N_0$ such that for all $m,n\geq N_1$,
$$
\bigg((x E_{N_0}\otimes_A y F_{N_0}) - (E_{N_0} x_m' E_{N_0}\otimes_A F_{N_0} y_n' F_{N_0}) \bigg)(\omega_\xi) < \frac{\varepsilon}{4}.
$$
By Lemma \ref{lem:corners}, there is an $N_2>N_1$ such that for all $m,n>N_2$, 
$$
\left|\bigg\langle (x_m'\otimes y_n')(1_{H\otimes_A K}-E_{N_0}\otimes_A F_{N_0}) \xi,(E_{N_0}\otimes_A F_{N_0})\xi\bigg\rangle\right|< \frac{\varepsilon}{4}.
$$
Now we calculate that for all $m,n> N_2$,
{\fontsize{10}{10}{
\begin{align*}
(x\otimes_A y - x_m'\otimes y_n')(\omega_\xi)
& = 
(1-E_{N_0}\otimes_A F_{N_0})(x\otimes_A y - x_m'\otimes y_n')(1-E_{N_0}\otimes_A F_{N_0})(\omega_\xi)\\
&\hspace{.4in} +(1_{H\otimes_A K}-E_{N_0}\otimes_A F_{N_0})(x\otimes_A y - x_m'\otimes y_n')(E_{N_0}\otimes_A F_{N_0})(\omega_\xi)\\
&\hspace{.4in} +(E_{N_0}\otimes_A F_{N_0})(x\otimes_A y - x_m'\otimes y_n')(1_{H\otimes_A K}-E_{N_0}\otimes_A F_{N_0})(\omega_\xi)\\
&\hspace{.4in} +(E_{N_0}\otimes_A F_{N_0})(x\otimes_A y - x_m'\otimes y_n')(E_{N_0}\otimes_A F_{N_0})(\omega_\xi)\\
& \leq \bigg((x\otimes_A y) - (x E_{N_0}\otimes_A y F_{N_0} )\bigg) (\omega_\xi)\\
&\hspace{.4in} +|((1_{H\otimes_A K}-E_{N_0}\otimes_A F_{N_0})(x_m'\otimes_A y_n')(E_{N_0}\otimes_A F_{N_0})(\omega_\xi)|\\
&\hspace{.4in} +|(E_{N_0}\otimes_A F_{N_0})(x_m'\otimes_A y_n')(1-E_{N_0}\otimes_A F_{N_0})(\omega_\xi)|\\
&\hspace{.4in} +\bigg((x E_{N_0}\otimes_A y F_{N_0}) - (E_{N_0} x_m'E_{N_0}\otimes_A F_{N_0} y_n' F_{N_0}) \bigg)(\omega_\xi) \\
& < \frac{\varepsilon}{4}+\frac{\varepsilon}{4}+\frac{\varepsilon}{4}+\frac{\varepsilon}{4} = \varepsilon.
\end{align*}}}
\end{proof}

\begin{cor}\label{cor:associative}
If $x\in \widehat{X^+}$, $y\in \widehat{Y^+}$, and $z\in \widehat{Z^+}$, then $(x\otimes_A y)\otimes_B z = x\otimes_A (y\otimes_B z)$.
\end{cor}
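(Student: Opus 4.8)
The plan is to reduce the assertion to the bounded case by spectral truncation, settle the bounded case with Lemma \ref{lem:TensorSpectralAssociative}, and then pass to the limit by applying Theorem \ref{thm:increase} twice, so as to exhibit both sides as the same supremum.

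Concretely, I would fix spectral resolutions $x=\int_{[0,\I)}\lambda\,dE_\lambda+\I E^\I$, $y=\int_{[0,\I)}\mu\,dF_\mu+\I F^\I$, and $z=\int_{[0,\I)}\nu\,dG_\nu+\I G^\I$, and set $x_m=\int_{[0,m]}\lambda\,dE_\lambda+mE^\I$ and likewise $y_n,z_k$, so that $x_m\in X^+$, $y_n\in Y^+$, $z_k\in Z^+$ increase to $x,y,z$. Applying Lemma \ref{lem:TensorSpectralAssociative} to the spectral measures $E,F,G$ gives $(E\otimes_A F)\otimes_B G=E\otimes_A(F\otimes_B G)$, and evaluating bounded Borel functions against these measures yields
$$(x_m\otimes_A y_n)\otimes_B z_k=x_m\otimes_A(y_n\otimes_B z_k)\qquad\text{for all }m,n,k,$$
an equality of genuine bounded operators on $H\otimes_A K\otimes_B L$ (recall $x_m\otimes_A y_n\in(X\otimes_A Y)^+$ by Lemma \ref{lem:binormal}, with $X\otimes_A Y$ acting on the right Hilbert $B$-module $H\otimes_A K$, and symmetrically $y_n\otimes_B z_k\in(Y\otimes_B Z)^+\subseteq (A'\cap B(K\otimes_B L))^+$).

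Next I would identify both sides of the asserted identity with the common supremum $\sup_{m,n,k}x_m\otimes_A y_n\otimes_B z_k$. For the left-hand side: Theorem \ref{thm:increase} gives that $x_m\otimes_A y_n$ increases to $x\otimes_A y$; a second application of Theorem \ref{thm:increase}, now with $H\otimes_A K$ (and its right $B$-module structure) in the role of the first module and $\sb{B}L$ in the role of the second, gives that $(x_m\otimes_A y_n)\otimes_B z_k$ increases to $(x\otimes_A y)\otimes_B z$. Symmetrically, $y_n\otimes_B z_k$ increases to $y\otimes_B z$, and then $x_m\otimes_A(y_n\otimes_B z_k)$ increases to $x\otimes_A(y\otimes_B z)$. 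Since the two increasing families agree term by term by the displayed bounded identity, their suprema coincide, which is precisely $(x\otimes_A y)\otimes_B z=x\otimes_A(y\otimes_B z)$.

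The only point requiring care — and the main obstacle — is the bookkeeping surrounding these applications of Theorem \ref{thm:increase}: that theorem is phrased for sequences $(x_m')$, whereas above I feed it doubly- and triply-indexed increasing families. This is harmless, since monotonicity of the relative tensor product of cones in each variable (Lemma \ref{lem:LessThan}(3), together with Lemma \ref{lem:VectorStates}) shows that the supremum of such a family equals the supremum of its diagonal subsequence, so one may run the argument with the sequences $x_m\otimes_A y_m$ and $y_m\otimes_B z_m$ and re-expand at the end. One must also check that the intermediate objects are affiliated to the right algebras so that Definition \ref{defn:TensorCones} and Theorem \ref{thm:increase} apply at the outer step — namely that $x_m\otimes_A y_n$ (and hence $x\otimes_A y$) is affiliated to $X\otimes_A Y\subseteq(B\op)'\cap B(H\otimes_A K)$ and that $y_n\otimes_B z_k$ (and hence $y\otimes_B z$) is affiliated to $Y\otimes_B Z\subseteq A'\cap B(K\otimes_B L)$ — but both inclusions are immediate from the $A$--$B$ bimodule structure of $K$ and the definitions of $X$, $Y$, and $Z$.
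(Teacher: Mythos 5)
Your proposal is correct and follows essentially the same route as the paper: approximate $x,y,z$ from below by bounded positive operators, use associativity of the relative tensor product in the bounded case, and pass to suprema via Theorem \ref{thm:increase} (the paper's one-line proof does exactly this, merely more tersely). Your extra care with the multi-indexed families and the affiliation of the intermediate products is a welcome elaboration of details the paper leaves implicit, not a different argument.
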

\begin{proof}
Take sequences $(x_m)\subset X^+$, $(y_n)\subset Y^+$, and $(z_\ell)\subset Z^+$ which increase to $x,y,z$ respectively. Then
$$
(x\otimes_A y)\otimes_B z =\sup_{m,n,\ell} (x_m\otimes_A y_n) \otimes_B z_\ell=\sup_{m,n,\ell} x_m\otimes_A (y_n \otimes_B z_\ell) = x\otimes_A (y\otimes_B z).
$$
\end{proof}

\begin{cor}\label{cor:TensorConeMap}
If $x,w \in \widehat{X^+}$, $y\in \widehat{Y_0^+}$, and $\lambda\in [0,\I]$, then $(\lambda x+w)\otimes_A y = \lambda (x\otimes_A y)+(w\otimes_A y)$.
\end{cor}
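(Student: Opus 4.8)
The plan is to reduce everything to the bounded situation covered by Lemma \ref{lem:binormal}, and then pass to the limit using the monotone approximations of Definition \ref{defn:TensorCones}. The starting point is the observation that for \emph{bounded} $x',w'\in X^+$, $y'\in Y_0^+$ and a finite scalar $\mu\in[0,\I)$ one has $(\mu x'+w')\otimes_A y' = \mu(x'\otimes_A y')+w'\otimes_A y'$: this is immediate from Lemma \ref{lem:binormal}, since $x\odot_\C y\mapsto x\otimes_A y$ is a $*$-homomorphism out of the algebraic tensor product $X\odot_\C Y_0$ and $\odot_\C$ is bilinear. In particular $\otimes_A$ is monotone and positively linear in each bounded variable, which is what lets the suprema below be taken coordinatewise.

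Next I would treat $\lambda\in[0,\I)$. Choose increasing sequences $(x_m),(w_m)\subset X^+$ and $(y_n)\subset Y_0^+$ with $x_m\nearrow x$, $w_m\nearrow w$, $y_n\nearrow y$ (possible by the structure of $\widehat{X^+}$). By Remark \ref{rem:sup}, $(\lambda x_m+w_m)$ is an increasing sequence of bounded operators with $\lambda x_m+w_m\nearrow \lambda x+w$. Applying Theorem \ref{thm:increase} to the pairs $\big((\lambda x_m+w_m),(y_n)\big)$, $\big((x_m),(y_n)\big)$ and $\big((w_m),(y_n)\big)$, together with the bounded distributivity above, gives
$$
(\lambda x+w)\otimes_A y = \sup_{m,n}\big((\lambda x_m+w_m)\otimes_A y_n\big) = \sup_{m,n}\big(\lambda(x_m\otimes_A y_n)+(w_m\otimes_A y_n)\big).
$$
Since $(x_m\otimes_A y_n)_{m,n}$ and $(w_m\otimes_A y_n)_{m,n}$ are increasing (hence directed) families in $(X\otimes_A Y_0)^+$ converging, by Theorem \ref{thm:increase}, to $x\otimes_A y$ and $w\otimes_A y$, one more application of Remark \ref{rem:sup}, this time inside $X\otimes_A Y_0$, identifies the right-hand side with $\lambda(x\otimes_A y)+w\otimes_A y$. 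This settles the finite-$\lambda$ case, exactly paralleling the proof of Corollary \ref{cor:associative}.

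The remaining case $\lambda=\I_\R$ is the one genuinely subtle point, and it is precisely the sort of $\I$-multiplication bookkeeping that the paper isolates in Appendix \ref{sec:cones}: here the would-be approximants $\I x_m$ are already unbounded, so Theorem \ref{thm:increase} does not apply verbatim. The remedy is to run the finite case along $\lambda=k\to\I$: for each $k\in\N$ we have $(kx+w)\otimes_A y = k(x\otimes_A y)+w\otimes_A y$, the sequence $(kx+w)_k$ increases to $\I x+w$ in $\widehat{X^+}$ with the conventions of Appendix \ref{sec:cones}, and $\otimes_A y$ carries this increasing limit to the increasing limit of the right-hand sides (the monotone-continuity property underlying Theorem \ref{thm:increase}, i.e.\ the unbounded increase theorem \ref{thm:UnboundedIncrease}); hence $(\I x+w)\otimes_A y = \sup_k\big(k(x\otimes_A y)+w\otimes_A y\big)=\I(x\otimes_A y)+w\otimes_A y$. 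I expect this last step — checking that suprema commute with $\otimes_A$ at the level of extended positive cones, and that $\sup_k(kz+z')=\I z+z'$ in the axiomatized sense — to be the only real obstacle; everything else is a mechanical unwinding of Lemma \ref{lem:binormal}, Remark \ref{rem:sup}, and Theorem \ref{thm:increase}.
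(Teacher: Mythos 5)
Your argument is correct and is essentially the paper's proof: approximate $x,w,y$ by bounded positive elements, use the bilinearity of the bounded relative tensor product from Lemma~\ref{lem:binormal}, and pass to suprema via Remark~\ref{rem:sup} and Theorem~\ref{thm:increase}. The only difference is that you treat $\lambda=\I_\R$ as a separate limiting case via Theorem~\ref{thm:UnboundedIncrease}, a point the paper's one-line proof leaves implicit in the same sup machinery, so this is added care rather than a different route.
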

\begin{proof}
Choose $X^+ \ni x_m,w_n\nearrow x,w \in \widehat{X^+}$ respectively and $\widehat{Y_0^+}\ni y_\ell \nearrow y\in \widehat{Y_0^+}$. Then $(\lambda x_m+w_n)\otimes_A y_\ell = \lambda (x_m\otimes_A y_\ell)+(w_n\otimes_A y_\ell)$, and the result follows by Remark \ref{rem:sup} and Theorem \ref{thm:increase}.
\end{proof}

By taking sups appropriately, and with a little more care, Lemma \ref{lem:LessThan} and Theorem \ref{thm:increase} can be generalized to prove:
\begin{thm}\label{thm:UnboundedIncrease}
Let $x\in\widehat{X^+}$ and $y\in\widehat{Y_0^+}$. Suppose there are nets $(x_i)_{i\in I}\subset \widehat{X^+}$, $(y_j)_{j\in J}\subset \widehat{Y_0^+}$ which increase to $x, y$ respectively. Then $x_i\otimes_A y_j\nearrow x\otimes_A y$.
\end{thm}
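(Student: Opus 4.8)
The plan is to bootstrap from Lemma \ref{lem:LessThan} and Theorem \ref{thm:increase} in two stages: first upgrading monotonicity of $\otimes_A$ to the whole extended cone, then proving the supremum statement by a truncation argument.

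\emph{Stage 1 (monotonicity).} First I would prove the following generalization of Lemma \ref{lem:LessThan}(3): if $x'\in\widehat{X^+}$ and $y'\in\widehat{Y_0^+}$ satisfy $x'\le x$ and $y'\le y$, then $x'\otimes_A y'\le x\otimes_A y$. Choose bounded sequences $(x'_m)\subset X^+$ and $(y'_n)\subset Y_0^+$ increasing to $x'$ and $y'$ (every element of an extended positive cone is such a limit). Since $x'_m\le x$ and $y'_n\le y$, Lemma \ref{lem:LessThan}(3) gives $x'_m\otimes_A y'_n\le x\otimes_A y$ for all $m,n$; by Theorem \ref{thm:increase}, $\sup_{m,n} x'_m\otimes_A y'_n=x'\otimes_A y'$, so comparing on vector states via Lemma \ref{lem:VectorStates} yields $x'\otimes_A y'\le x\otimes_A y$. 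Applied to the given nets, for $i\le i'$ and $j\le j'$ we get $x_i\otimes_A y_j\le x_{i'}\otimes_A y_{j'}\le x\otimes_A y$, so $\{x_i\otimes_A y_j\}$ is a directed family; set $z=\sup_{i,j}x_i\otimes_A y_j\in\widehat{(X\otimes_A Y_0)^+}$ (Lemma \ref{lem:QuadraticForm}), and Stage 1 gives $z\le x\otimes_A y$.

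\emph{Stage 2 (the reverse inequality).} It remains to show $z\ge x\otimes_A y$. By Definition \ref{defn:TensorCones}, $x\otimes_A y=\sup_{m,n}x_m\otimes_A y_n$ over the bounded spectral truncations $x_m,y_n$, so it suffices to show $x_m\otimes_A y_n\le z$ for each fixed $m,n$. I would obtain this by rerunning the proof of Theorem \ref{thm:increase}, now with the approximating nets $(x_i),(y_j)$ in place of the bounded approximating sequences and with the fixed bounded $x_m,y_n$ tested against the ambient $x,y$. Three observations make the argument survive the passage from $X^+,Y_0^+$ to the extended cones: (i) for a bounded spectral projection $E_N$ of $x$, the operator $E_Nx_iE_N$ is a genuine element of $X^+$, being dominated by the bounded operator $E_NxE_N$ (using $x_i\le x$, hence $\Dom(x^{1/2})\subseteq\Dom(x_i^{1/2})$), and by Lemma \ref{lem:VectorStates}(3) the net $(E_Nx_iE_N)_i$ increases strongly to $xE_N$, and likewise $(F_Ny_jF_N)_j\nearrow yF_N$; (ii) for an $\sb{A}K$-basis $\{\alpha_i\}$ and $\eta\in H\otimes_A K$ the identity $\sum_i R_{\alpha_i}R_{\alpha_i}^*=1$ still lets one write $(x_i\otimes_A 1_K)(\omega_\eta)$ as a sum of the corresponding states of the possibly unbounded $x_i$; (iii) Lemmas \ref{lem:WeakStrong}, \ref{lem:CommuteStrong}, \ref{lem:corners} are already formulated for nets, and since $x\otimes_A 1$ and $1\otimes_A y$ commute, $(E_Nx_iE_N)\otimes_A(F_Ny_jF_N)\nearrow xE_N\otimes_A yF_N$. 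With these, the Case 1 / Case 2 dichotomy of the proof of Theorem \ref{thm:increase} transfers, giving $(x_m\otimes_A y_n)(\omega_\xi)\le z(\omega_\xi)$ for all $\xi$, hence $x_m\otimes_A y_n\le z$; taking $\sup_{m,n}$ and combining with Stage 1 gives $x_i\otimes_A y_j\nearrow x\otimes_A y$.

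\emph{Main obstacle.} The delicate part is Stage 2, and within it the analogue of Case 1 of Theorem \ref{thm:increase} (where $(x\otimes_A y)(\omega_\xi)=\infty$): in the original proof the operators inserted inside the creation-operator estimates were bounded truncations of the \emph{limit}, whereas here only the net elements $x_i$ are available and they may themselves be unbounded, so one must first cut each $x_i$ down by a bounded spectral projection of $x$ via observation (i) before the basis estimate (ii) applies. The bookkeeping cost is that one repeatedly interchanges suprema over $i$, over $j$, and over the truncation parameters — legitimate by Remark \ref{rem:sup} together with Lemmas \ref{lem:VectorStates} and \ref{lem:QuadraticForm} — and this interchange is where the care alluded to in the statement is actually needed.
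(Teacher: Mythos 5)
Your Stage 1 is correct and is exactly the ``taking sups'' half of the paper's one-line argument: bounded approximation from below, Lemma \ref{lem:LessThan}(3), and Theorem \ref{thm:increase} give $x'\otimes_A y'\le x\otimes_A y$ whenever $x'\le x$ and $y'\le y$ in the extended cones, hence the inequality $\sup_{i,j}(x_i\otimes_A y_j)(\omega_\xi)\le (x\otimes_A y)(\omega_\xi)$. The overall shape of Stage 2 (rerun the two cases of Theorem \ref{thm:increase} for nets) is also reasonable, and your observation (i) is genuinely the right ingredient for the Case 2 corner argument, where $E_{N_0}x_iE_{N_0}\otimes_A F_{N_0}y_jF_{N_0}$ is the $p$-$p$ corner of $x_i\otimes_A y_j$, is bounded, and increases to $xE_{N_0}\otimes_A yF_{N_0}$.

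The gap is in the mechanism you single out for the hard step, the Case 1 analogue where $(x\otimes_A y)(\omega_\xi)=\I$. There you must produce a \emph{lower} bound: one pair $(i,j)$ with $(x_i\otimes_A y_j)(\omega_\xi)>M$ (monotonicity then does the rest). Compressing $x_i$ by a spectral projection $E_N$ of $x$ yields an operator $E_Nx_iE_N$ dominated by $xE_N$ but \emph{not} dominated by $x_i$: compression by a projection that does not commute with $x_i$ is not monotone downward, and indeed $(E_Nx_iE_N\otimes_A F_Ny_jF_N)(\omega_\xi)=(x_i\otimes_A y_j)(\omega_{(E_N\otimes_A F_N)\xi})$ can exceed $(x_i\otimes_A y_j)(\omega_\xi)$. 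So estimates proved for the cut-down operators do not transfer to the quantity Case 1 must bound from below, and you also leave untouched the one place in the original Case 1 where boundedness of the approximants is essential, namely the ``repeating the above argument'' step that forms $(x_m'^{1/2}\otimes_A 1)\xi$ to pass from $x_m'\otimes_A y_{N_1}$ to $x_m'\otimes_A y_n'$. The correct device is approximation from \emph{below}: since $x_i\otimes_A y_{N_1}=\sup_m x_{i,m}\otimes_A y_{N_1}$ for bounded $x_{i,m}\nearrow x_i$ (Theorem \ref{thm:increase}), pick a bounded $x'\le x_i$ with $(x'\otimes_A y_{N_1})(\omega_\xi)>M$, run the original bounded argument with $x'$ against the net $(y_j)$, and finish with your Stage 1 inequality $x'\otimes_A y_j\le x_i\otimes_A y_j$. (Alternatively one can bypass the two-case analysis: by iterated suprema it suffices to vary one factor at a time, and for a single factor the identity $(w\otimes_A 1)(\omega_\eta)=\sum_\alpha w(\omega_{R_\alpha^*\eta})$, extended to $\widehat{X^+}$ by sups, together with interchange of increasing suprema gives $x_i\otimes_A y\nearrow x\otimes_A y$ directly.) As written, then, the proposal fails at its most delicate point, even though the surrounding scaffolding matches what the paper intends.
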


%%%%%%%%%%%%%%%%%%%%%%%%%%%%%%%%%%%%%%%%%%%%%%%%%%%%%
\section{The operad $\B\P$}\label{sec:BP}
To show the action of $\B\P$ is well-defined in Theorem \ref{thm:ConePA}, we show each connected/internally connected $\B\P$-tangle has a unique standard form that behaves well under composition, analogous to the methods of \cite{0912.1320}. The existence of this standard form is thoroughly sketched, and the behavior under composition is briefly sketched.

\begin{defn} 
We will again define $\B\P$, an operad of unshaded, oriented tangles up to planar isotopy, but in more detail. First, we require for tangles $\cT\in\B\P$
\item[$\bullet$] $\cT$ has an external disk $D_0$ and internal disks $D_1,\dots, D_s$, each with an even number $2k_i$ of market boundary points and a distinguished interval marked $*$.
\item[$\bullet$] Each boundary point of $\cT$ is connected to exactly one oriented string. Each oriented string is either a closed loop, or it is attached to two distinct boundary points.
\item[$\bullet$] For $i=1,\dots,s$, reading counter-clockwise from $*$, the strings attached to the first $k_i$ boundary points of $D_i$ are oriented away from $D_i$, and the second $k_i$ strings are oriented toward $D_i$,
\item[$\bullet$] Reading counter-clockwise from $*$, the strings attached to the first $k_0$ boundary points of $D_0$ are oriented toward $D_0$, and the second $k_0$ strings are oriented away from $D_0$,
\item
$\B\P$ is the operad generated by the following tangles:
\item[(1)] For $n\geq 0$, the ``Temperley-Lieb" tangle $1_n$ with no inputs and $2n$ boundary points:
$$1_n = \idn{n},$$
\item[(2)] For $n\geq 0$, the unique tangles $t_{n+1},t_{n+1}\op$ with $2n+2$ internal boundary points and $2n$ external boundary points and only one right, left cap respectively:
$$
t_{n+1} =\OperatorValuedWeight{n}{}\,\text{ and }t_{n+1}\op=\OperatorValuedWeightOp{n}{},
$$
\item[(3)] For $m,n\geq 0$, the tangles $\otimes_{m,n}$ with internal disks $D_1,D_2$ with $2m,2n$ internal boundary points and $2(m+n)$ external boundary points as follows:
$$\otimes_{m,n}=\tensor{m}{}{n}{}.$$ 
\item[(4)]
For $n\geq 1$, the tangles  $\tau_n,\tau_n\op$ with two input disks, each with $2n$ internal boundary points, and no external boundary points such that boundary point $m$ of input disk $D_1$ is connected to boundary point $2n-m+1$ of input disk $D_2$ for each $m=1,\dots, 2n$ as follows: 
$$
\tau_n = \TraceOfTwo{n}{}{}\, \text{ and } \tau_n\op = \TraceOfTwoOp{n}{}{}.
$$
\item
A tangle $\cT\in \B\P$ with disks $\{D_i\}_{i=0}^s$ and strings $\{S_j\}_{j=1}^t$ is called: 
\item[$\bullet$] \underline{connected} if $\{D_i\}_{i=0}^s\cup\{S_j\}_{j=1}^t$ is a connected in $\R^2$, and
\item[$\bullet$] \underline{internally connected} if $\cT$ has no external boundary points and $\{D_i\}_{i=1}^s\cup\{S_j\}_{j=1}^t$ is  connected in $\R^2$.
\end{defn}

\begin{thm}\label{thm:relations}
The following relations hold in $\B\P$ for $m,n\geq 0$ (compare with Theorem \ref{thm:BPRelations}):
\item[(1)] $t_m t_{m+1}\op = t_{m}\op t_{m+1}$,
\item[(2)] $\otimes_{\ell,m+n}(-,\otimes_{m,n}(-,-))=\otimes_{\ell+m,n}(\otimes_{\ell,m}(-,-),-)$,
\item[(3)] $\otimes_{m,n-1}(-,t_{n}(-))=t_{m+n}(\otimes_{m,n}(-,-))$ and $\otimes_{m-1,n}(t_m\op(-),-)=t_{m+n}\op(\otimes_{m,n}(-,-))$, and
\item[(4)] $\tau_n(\cT_1(-),\cT_2(-))=\tau_n(\cT_2(-),\cT_1(-))$ and $\tau_n\op(\cT_1(-),\cT_2(-))=\tau_n\op(\cT_2(-),\cT_1(-))$ for all $\cT_1,\cT_2\in\B\P$ up to reindexing internal disks.
\end{thm}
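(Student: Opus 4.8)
The plan is to prove all four relations by the same elementary argument: unwind the operadic compositions on each side into explicit tangle pictures and check that the two pictures agree up to planar isotopy (and, for (4), up to relabelling the two input disks). Everything here is planar topology, so I will only indicate the isotopies and the orientation bookkeeping rather than draw every diagram.

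For (1), note that $t_m\circ t_{m+1}\op$ is the tangle on one input disk with $2m+2$ marked points and $2m-2$ external points obtained by attaching a left cup (from $t_{m+1}\op$) to one pair of adjacent points of the input disk and a right cap (from $t_m$) to another, disjoint pair; the composition $t_m\op\circ t_{m+1}$ attaches a right cap and a left cap to the same two pairs, in the other order. Since the cap and the cup are attached to disjoint strands they can be drawn at disjoint heights, so the two tangles are planar isotopic --- in fact equal once both are put in the obvious normal position --- and the outgoing/incoming orientation pattern on every disk is the same on both sides. This is the simplest of the four and sets the template.

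For (2), both $\otimes_{\ell,m+n}(-,\otimes_{m,n}(-,-))$ and $\otimes_{\ell+m,n}(\otimes_{\ell,m}(-,-),-)$ are the tangle with three input disks of sizes $\ell,m,n$ placed left to right, joined to the external disk by parallel upward strands; the two parenthesizations give the same tangle up to a trivial isotopy. For (3), the rightmost pair of points of $\otimes_{m,n}(-,-)$ is literally the rightmost pair of its second input disk, so applying $t_{m+n}$ to the tensor is planar isotopic to applying $t_n$ to the second disk and then tensoring; the identity $\otimes_{m-1,n}(t_m\op(-),-)=t_{m+n}\op(\otimes_{m,n}(-,-))$ is the left-right mirror of this, using that the leftmost pair of $\otimes_{m,n}(-,-)$ is the leftmost pair of its first disk.

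For (4), I would use the defining connection pattern of $\tau_n$: boundary point $m$ of the first input disk is joined to boundary point $2n-m+1$ of the second, $m=1,\dots,2n$. This pairing is symmetric in the two disks --- it equally states that point $m$ of the second disk is joined to point $2n-m+1$ of the first --- and it is compatible with the orientation convention, since for $m\le n$ it pairs an outgoing point of one disk with an incoming point ($2n-m+1\ge n+1$) of the other. Hence swapping the two input disks yields the same underlying tangle with its two input labels interchanged, which is exactly $\tau_n(\cT_1(-),\cT_2(-))=\tau_n(\cT_2(-),\cT_1(-))$ up to reindexing internal disks; the statement for $\tau_n\op$ is identical using its mirror-image pairing. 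The only real obstacle throughout is bookkeeping: verifying in each case that the $*$-markings and the outgoing/incoming requirements on every disk are respected by the stated isotopy, and spelling out precisely what ``up to reindexing internal disks'' means as a statement about operad composition. Once the diagrams are drawn these checks are routine, and the theorem follows; this is the topological counterpart of Theorem~\ref{thm:BPRelations}.
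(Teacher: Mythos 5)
Your proposal is correct and is essentially the paper's own argument: the paper disposes of Theorem \ref{thm:relations} with ``Clear by drawing pictures,'' and your write-up simply makes explicit the planar isotopies (disjoint caps commute, reassociating parallel strands, capping the rightmost/leftmost pair, and the symmetry of the trace pairing) that those pictures encode. No gap; the orientation and $*$-interval bookkeeping you flag is indeed the only thing to check, and it works as you describe.
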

\begin{proof}
Clear by drawing pictures.
\end{proof}

\begin{prop}\label{prop:BPprime}
If $\cT\in \B\P$ with external disk $D_0$ and internal disks $\{D_i\}_{i=1}^s$, then the strings can only connect the $D_i$'s in the following ways:
\item[(1)] If $D_i$ is connected by a string to $D_0$ where $1\leq i\leq s$, then any other string connected to $D_i$ must only be connected to $D_i$ or $D_0$.  
\item[(2)] If $D_i$ and $D_j$ are connected by a string where $1\leq i,j\leq s$, then no string of $D_i$ or $D_j$ connects to $D_0$.
\item[(3)] For each $i=0,\dots,s$, if the string $S$ connects boundary points $m$ and $n$ of $D_i$, then $m=2k_i-n+1$. Such a string is called an \underline{$i$-cap} of $\cT$. We call the $i$-cap a \underline{left $i$-cap} if when we connect boundary points $n$ and $2k_i-n+1$ by an imaginary string $S'$ inside $D_i$, the loop $S\cup S'$ contains the distinguished interval of $D_i$. The $i$-cap is a \underline{right $i$-cap} otherwise.
\item[(4)] If the string $S$ connects boundary point $m$ of $D_i$ to boundary point $n$ of $D_j$ where $0\leq i<j\leq s$, then there is a string $S'$ connecting boundary point $2k_i-m+1$ of $D_i$ to boundary point $2k_j-n+1$ of $D_j$. If $i>0$, we call $S\cup S'$ an \underline{$i,j$-cap} of $\cT$. In this case, if we connect boundary points $m$ and $2k_i-m+1$ of $D_i$ and boundary points $n$ and $2k_j-n+1$ of $D_j$ by imaginary strings $S_i, S_j$ inside $D_i,D_j$ respectively, then if the loop $S\cup S'\cup S_i\cup S_j$ contains the distinguished intervals of $D_i$ and $D_j$, then we call the $i,j$-cap a \underline{left $i,j$-cap}. Otherwise, the distinguished intervals of $D_i,D_j$ must lie outside $S\cup S'\cup S_i\cup S_j$, and we have a \underline{right $i,j$-cap}.
\item[(5)] Suppose there is a right (respectively left) $i,j$-cap where $i\neq j$. If $\cS$ is the collection of all disks and strings which can be connected to $D_i$ or $D_j$ through other disks and strings, then we may consider $\cS$ as an internally connected tangle, and all $k,\ell$-caps in $\cS$ ($k\neq \ell$) must form concentric circles. (We will show in Theorem \ref{thm:IJcaps} that all such $k,\ell$-caps are also right (respectively left) caps.) 
\end{prop}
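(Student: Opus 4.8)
The plan is to establish (1)--(5) together, by induction on the number of generating tangles in a presentation of $\cT$ as an iterated operadic composition of $1_n$, $t_{n+1}$, $t_{n+1}\op$, $\otimes_{m,n}$, $\tau_n$, $\tau_n\op$. The base case is to read (1)--(5) off each generator: $1_n$ is $n$ through-strings, each a $0$-cap; $t_{n+1}$ (resp.\ $t_{n+1}\op$) is a rainbow of $0,1$-caps joining $D_1$ to $D_0$ plus one right (resp.\ left) $1$-cap; $\otimes_{m,n}$ is a rainbow of $0,1$-caps beside a rainbow of $0,2$-caps, with no string joining $D_1$ and $D_2$; and $\tau_n$ (resp.\ $\tau_n\op$) is a nested, hence concentric, family of right (resp.\ left) $1,2$-caps meeting no external point. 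All of (1)--(5) are visible in each picture.

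For the inductive step, write $\cT=\cT'\circ_i\cT''$, the substitution of $\cT''$ into the $i$-th internal disk $D_i$ of $\cT'$. The orientation and base-point conventions of $\B\P$ are chosen exactly so that the marked-point and orientation pattern on the external disk of $\cT''$ matches that on $D_i$; thus the gluing identifies the $k$-th point of $D_0(\cT'')$ with the $k$-th point of $D_i$ compatibly with orientation, and every string of $\cT$ is an alternating concatenation of strings of $\cT'$ and $\cT''$. Since substitution leaves $D_0(\cT)=D_0(\cT')$ and every disk other than $D_i$ untouched, any string of $\cT$ meeting $D_0$ was already one in $\cT'$, so (1) and (2) for $\cT$ follow from (1), (2) for $\cT'$. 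A string of $\cT$ both of whose endpoints lie on one disk $D$ is either a cap already present in $\cT'$ or $\cT''$, or it travels from $D$ to $D_i$, bounces finitely often between the $\cT'$- and $\cT''$-sides of $D_i\cong D_0(\cT'')$ along $0$-caps of $\cT''$ and $i$-caps of $\cT'$, and returns to $D$; applying the rainbow conditions (3)--(4) of $\cT'$ and $\cT''$ along this chain (each bounce sends a point $p$ to $2k-p+1$, and the matched-pair condition (4) for $\cT'$ returns the chain symmetrically to $D$) forces the two endpoints to be $m$ and $2k_D-m+1$, giving (3) for $\cT$; the identical bookkeeping between two distinct disks gives (4).

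It remains to propagate (5). Suppose $\cT$ has a right (resp.\ left) $i,j$-cap with $i\neq j$, and let $\cS$ collect all disks and strings reachable from $D_i$ or $D_j$. Iterating (2), no disk of $\cS$ is joined to $D_0$, so $\cS$, drawn inside a far-away imaginary external disk, is an internally connected $\B\P$-tangle. If $\cS$ is a proper part of $\cT$ it admits a presentation with strictly fewer generators, and strong induction applied to $\cS$ (whose own $i,j$-cap makes its ``component'' all of $\cS$) gives that its $k,\ell$-caps ($k\neq\ell$) are concentric circles. If instead $\cS=\cT$, then, since the presence of the $i,j$-cap rules out the degenerate single-disk and closed-loop generators and only $\tau_n,\tau_n\op$ and their iterated substitutions yield a tangle with no external boundary point, $\cT$ is at the outermost level of the form $\tau_n(\cT_1,\cT_2)$ or $\tau_n\op(\cT_1,\cT_2)$; the rainbow of $1,2$-caps of $\tau_n$ glues the outermost strings of $\cT_1$ to those of $\cT_2$ in a nested fashion, and since the $k,\ell$-caps internal to each of $\cT_1,\cT_2$ are already concentric by the inductive hypothesis, the $\tau_n$-gluing interleaves them into a single concentric family --- provided one checks that no $k,\ell$-cap of the composite can weave a cap coming from $\cT_1$ between two caps coming from $\cT_2$. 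Ruling out such interleaving, using planarity together with the nested structure of the $\tau_n$ rainbow, is the one nonroutine point; everything else is the orientation-and-rainbow bookkeeping above, carried out in the style of \cite{0912.1320}.
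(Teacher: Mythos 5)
Your overall strategy (induct on a presentation of $\cT$ by the generators; check the generators; track concatenated strings across the glued disk) is a reasonable way to make precise what the paper simply declares ``clear by drawing pictures,'' but as written the induction does not close at two places. First, in the step for (1)--(2), the claim that ``any string of $\cT$ meeting $D_0$ was already one in $\cT'$'' is false and contradicts your own observation that strings of $\cT=\cT'\circ_i\cT''$ are alternating concatenations: a string emanating from an internal disk $E$ of $\cT''$ can exit through $D_i\cong D_0(\cT'')$ and reach $D_0$, and such a string is not a string of $\cT'$; moreover (1)--(2) for $\cT$ constrain the disks of $\cT''$, about which $\cT'$ says nothing, so they cannot ``follow from (1),(2) for $\cT'$'' alone. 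The statement is still true, but the proof needs exactly your bouncing machinery: if $E$ meets $D_0$, then $D_i$ meets $D_0(\cT')$ in $\cT'$, so by (1) for $\cT'$ every $\cT'$-string at $D_i$ is an $i$-cap or goes to $D_0$; an $i$-cap sends $p\mapsto 2k_i-p+1$ by (3) for $\cT'$, and then (4) for $\cT''$ together with the fact that each boundary point carries exactly one string forces the continuation at the symmetric point of $D_0(\cT'')$ to return to the \emph{same} disk $E$ at its symmetric point. So (1)--(2) in the inductive step require (3)--(4) of both factors; you should run the chain argument here too (and note that, by partner-uniqueness, the chains in fact close after at most three segments).

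Second, and more seriously, the treatment of (5) has a genuine gap. The assertion that ``only $\tau_n,\tau_n\op$ and their iterated substitutions yield a tangle with no external boundary point'' is false: $t_1(\cR)$ and $t_1\op(\cR)$ are also closed. To conclude that the outermost operation is a $\tau$ when $\cS=\cT$ contains an $i,j$-cap, you need an argument ruling out $t_1,t_1\op$; for instance, (4) applied to the two external points of a connected filling $\cR$ shows the two strings reaching $\partial D_0(\cR)$ land on the same internal disk at symmetric points, so the $t_1$-cap only creates a single-disk cap, and a connected $\cR$ has no disk--disk strings at all by (1)--(2). That last observation also undercuts your concentricity step: since $\cT_1,\cT_2$ are connected they contain no $k,\ell$-caps whatsoever, so ``already concentric by the inductive hypothesis'' is vacuous --- every $k,\ell$-cap of $\tau_n(\cT_1(-),\cT_2(-))$ is created by the $\tau_n$-gluing (possibly through several $0$-caps of $\cT_1,\cT_2$), and the nestedness of exactly this family is what (5) asserts. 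You explicitly leave the ``no interleaving'' claim unproven, calling it the one nonroutine point; but that claim is the substantive topological content of (5) (it is what Theorem \ref{thm:IJcaps} later exploits), so until it is carried out --- e.g.\ by showing via (3)--(4) that each concatenated string through the nested $\tau_n$ rainbow joins symmetric boundary points and hence the resulting loops are pairwise disjoint and nested --- the proof of (5), and with it the proposition, is incomplete.
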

\begin{proof}
Clear by drawing pictures.
\end{proof}

\begin{ex}
The tangle on the left is in $\B\P$ (see the proof of Theorem \ref{thm:IJcaps}), but the tangle on the right is not:
$$
\begin{tikzpicture}[scale=.6,baseline=.5cm]
	\clip (-2.1,5.2) --(4,5.2) -- (4,-3.1) -- (-2.1,-3.1);

	\draw[ultra thick] (-.7,-2)--(-.7,4).. controls ++(90:1.6cm) and ++(90:2.5cm) ..  (3.8,3)--(3.8,1.5).. controls ++(270:1.8cm) and ++(270:3cm) .. (-.7,-2);
	\draw[ultra thick] (-.2,-2)--(-.2,4).. controls ++(90:1.4cm) and ++(90:2.2cm) ..  (3.3,3)--(3.3,1.5).. controls ++(270:1.4cm) and ++(270:2.2cm) .. (-.2,-2);
	\draw[ultra thick] (.3,.5)--(.3,3.5).. controls ++(90:1.6cm) and ++(90:1.6cm) ..  (3.1,3).. controls ++(270:2.4cm) and ++(270:3.2cm) .. (.3,.5);
	\draw[ultra thick] (.8,1.5)--(.8,3).. controls ++(90:1.4cm) and ++(90:1.6cm) ..  (2.8,3) .. controls ++(270:1.4cm) and ++(270:2.2cm) .. (.8,1.5);
	\draw[ultra thick] (1.7,3.5) arc (180:0:.3cm) -- (2.3,2.5) arc (0:-180:.3cm);	
	\draw[ultra thick] (2.15,3)--(2.3,2.7)--(2.45,3);
	\draw[ultra thick] (3.65,2.3)--(3.8,2)--(3.95,2.3);
	\draw[ultra thick] (3.15,2)--(3.3,1.7)--(3.45,2);
	\draw[ultra thick] (2.4,2.1)--(2.4,1.7)--(2.7,1.9);
	\draw[ultra thick] (2.5,1.2)--(2.5,.8)--(2.8,1);
	\draw[ultra thick] (-1.3,-1) arc (0:180:.3cm) -- (-1.9,-2) arc (180:360:.3cm);
	\draw[ultra thick] (-1.75,-1.2)--(-1.9,-1.5)--(-2.05,-1.2);

	\filldraw[thick, unshaded] (.5,-2)--(.5,-1)--(-1.5,-1)--(-1.5,-2)--(.5,-2);
	\filldraw[thick, unshaded] (1,-.5)--(1,.5)--(-.5,.5)--(-.5,-.5)--(1,-.5);
	\filldraw[thick, unshaded] (1.5,1)--(1.5,2)--(0,2)--(0,1)--(1.5,1);
	\filldraw[thick, unshaded] (2,2.5)--(2,3.5)--(.5,3.5)--(.5,2.5)--(2,2.5);
\end{tikzpicture}
\hspace{.5in}
\begin{tikzpicture}[scale=.6,baseline=1.5cm]
	\clip (-2.5,5.2) --(3.3,5.2) -- (3.3,-1) -- (-2.5,-1);
	\draw[ultra thick] (-.2,1.5)--(-.7,3.5).. controls ++(90:1.4cm) and ++(90:2.2cm) ..  (-2.3,3)--(-2.3,1.5).. controls ++(270:1.4cm) and ++(270:1.2cm) .. (-.2,.5);
	\draw[ultra thick] (-2.15,2.7)--(-2.3,2.4)--(-2.45,2.7);
	\draw[ultra thick] (.3,.5)--(.3,3.5).. controls ++(90:1.6cm) and ++(90:1.6cm) ..  (3.1,3)--(3.1,1.5).. controls ++(270:2.4cm) and ++(270:1.2cm) .. (.3,.5);
	\draw[ultra thick] (2.55,2.7)--(2.7,2.4)--(2.85,2.7);
	\draw[ultra thick] (.8,1.5)--(.8,3).. controls ++(90:1.4cm) and ++(90:1.6cm) ..  (2.7,3)--(2.7,1.5) .. controls ++(270:1.4cm) and ++(270:1.2cm) .. (.8,.5);
	\draw[ultra thick] (2.95,2.4)--(3.1,2.1)--(3.25,2.4);
	\draw[ultra thick] (1.7,3.5) arc (180:0:.3cm) -- (2.3,2.5) arc (0:-180:.3cm);	
	\draw[ultra thick] (2.15,3)--(2.3,2.7)--(2.45,3);
	\draw[ultra thick] (-1.3,3.5) arc (0:180:.3cm) -- (-1.9,2.5) arc (180:360:.3cm);
	\draw[ultra thick] (-1.75,3)--(-1.9,2.7)--(-2.05,3);

	\filldraw[thick, unshaded] (0,2.5)--(0,3.5)--(-1.5,3.5)--(-1.5,2.5)--(0,2.5);
	\filldraw[thick, unshaded] (1,.5)--(1,1.5)--(-.5,1.5)--(-.5,.5)--(1,.5);
	\filldraw[thick, unshaded] (2,2.5)--(2,3.5)--(.5,3.5)--(.5,2.5)--(2,2.5);
\end{tikzpicture}.
$$
\end{ex}

\begin{rems}\label{rem:StandardForm}
The following are trivial observations about tangles $\cT\in\B\P$:
\item[(1)] We may draw $\cT$ such that each disk $D_i$ where $i=0,\dots,s$ is a rectangle with $k_i$ points on the top, $k_i$ points on the bottom, and the distinguished interval on the left. When drawing diagrams, we omit the disk $D_0$.
\item[(2)] We may draw all strings which are not part of a cap of $\cT$ (strings that meet $D_0$) as vertical lines, oriented upward. We will assume this orientation in the sequel.
\item[(3)] If $\cT\in\B\P$ is connected, then no two internal disks of $\cT$ are connected by a string. Hence it is possible to draw $\cT$ such that each internal disk $D_i$, $i>0$, is the same vertical size and is on the same horizontal level. Moreover, the ordering of the internal disks from left to right is unique.
\end{rems}

\begin{thm}\label{thm:ConnectedStandardForm}
Suppose $\cT\in\B\P$ is a connected tangle with $s>0$ input disks $D_j$, each with $2k_j$ boundary points. Suppose further that $\cT$ is in the form afforded by Remarks \ref{rem:StandardForm}, $\cT$ has no strings which connect $D_0$ to $D_0$, and that the $D_j$'s are numbered from left to right in $\cT$. Then $\cT$ can be written in a unique standard form 
\begin{align*}
&\otimes_{m_1,\sum_{j>1} m_j}( t_{m_1+1}\op\cdots t_{m_1+v_1}\op t_{m_1+v_1+1}\cdots t_{m_1+v_1+w_1}(-),\\
&\otimes_{m_2,\sum_{j>2} m_j}(t_{m_2+1}\op\cdots t_{m_2+v_2}\op t_{m_2+v_2+1}\cdots t_{m_2+v_2+w_2}(-),\cdots\\
&\otimes_{m_{s-1},m_s}(t_{m_{s-1}+1}\op\cdots t_{m_{s-1}+v_{s-1}}\op t_{m_{s-1}+v_{s-1}+1}\cdots t_{m_{s-1}+v_{s-1}+w_{s-1}}(-),\\
&\hspace{1.3in}t_{m_s+1}\op\cdots t_{m_s+v_s}\op t_{m_s+v_s+1}\cdots t_{m_s+v_s+w_s}(-))\cdots))
\end{align*}
such that for all $j=1,\dots,s$,
\item[$\bullet$] $m_j>0$ is half the number of strings connecting $D_j$ to $D_0$,
\item[$\bullet$] $v_j\geq 0$ is the number of left caps on $D_j$,
\item[$\bullet$] $w_j\geq 0$ is the number of right caps on $D_j$, and
\item[$\bullet$] $m_j+v_j+w_j=k_j$.
\item
Moreover, using the relations in Theorem \ref{thm:relations}, any composite of $t_k, t_\ell\op,\otimes_{m,n}$ 
for $k,\ell,m,n>0$ can be written uniquely in the above form.
\end{thm}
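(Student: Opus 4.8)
The plan is to prove existence and uniqueness of the standard form by a structural induction on the number $s$ of input disks, reading the tangle from left to right. First I would observe that since $\cT$ is connected and has no $D_0$-to-$D_0$ strings, Proposition \ref{prop:BPprime}(1)--(2) forces every internal disk $D_j$ to be connected to $D_0$ (no two internal disks may be joined by a string), so each $D_j$ has $m_j>0$ strings running out to $D_0$ and the remaining $2(k_j-m_j)$ boundary points are matched among themselves by $j$-caps; by Proposition \ref{prop:BPprime}(3) each such $j$-cap connects boundary points $p$ and $2k_j-p+1$, and is either a left or right $j$-cap. Putting $\cT$ in the rectangular form of Remark \ref{rem:StandardForm} and using that the left-to-right order of the $D_j$ is unique (Remark \ref{rem:StandardForm}(3)), I would peel off the leftmost disk $D_1$: its $v_1$ left caps are produced by the $t_{m_1+i}\op$ generators, its $w_1$ right caps by the $t_{m_1+v_1+i}$ generators (this is exactly relation \ref{thm:relations}(1) together with the pictures defining $t$, $t\op$, which let us commute the left-cap and right-cap factors into the displayed order), and then $\otimes_{m_1,\sum_{j>1}m_j}$ attaches the $m_1$ through-strings of $D_1$ to the left of the tangle $\cT'$ obtained by deleting $D_1$. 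Since $\cT'$ is again connected with $s-1$ input disks and satisfies the same hypotheses, induction finishes existence, with the identities $m_j+v_j+w_j=k_j$ holding by counting boundary points on $D_j$.

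For uniqueness, the key point is that every parameter in the standard form is a planar-isotopy invariant of $\cT$: $m_j$ is half the number of strings from $D_j$ to $D_0$, $v_j$ and $w_j$ are the numbers of left and right $j$-caps respectively, and the ordering of the $D_j$ is the unique left-to-right ordering in the normal form of Remark \ref{rem:StandardForm}(3). Hence two standard-form expressions representing isotopic tangles must have identical data, so the expression is unique. The final sentence of the theorem — that \emph{any} composite of $t_k,t_\ell\op,\otimes_{m,n}$ can be brought to this form — follows by the same procedure applied to the tangle such a composite represents, using the relations in Theorem \ref{thm:relations} to move all $t\op$'s left of all $t$'s within each disk's contribution, to reassociate the $\otimes$'s via \ref{thm:relations}(2), and to push cap-creations through tensor products via \ref{thm:relations}(3); one checks these moves never change any of the invariants $m_j,v_j,w_j$, so the result of normalizing is forced.

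The main obstacle I anticipate is the bookkeeping in the uniqueness/normalization step: one must verify that the relations of Theorem \ref{thm:relations} are \emph{complete} enough to bring an arbitrary word in the generators to the displayed form — i.e.\ that no ambiguity survives in how caps on a single disk are ordered or how the $\otimes$-tree is associated. This amounts to checking that the rewriting system (move $t\op$ past $t$ within a disk, right-associate $\otimes$, and slide $t,t\op$ past $\otimes$) is confluent and terminating on $\B\P$-words, which is most cleanly handled by exhibiting the invariants above and showing each relation preserves them while each non-normal word admits a strictly simplifying rewrite. The analogous finite-index argument in \cite{0912.1320} can be followed closely; the only genuinely new feature here is the presence of \emph{oriented} caps (left vs.\ right $i$-caps), but orientation is locally determined by Proposition \ref{prop:BPprime}(3)--(4) and is preserved under all the moves, so it adds nothing essential beyond a second index ($v_j$ versus $w_j$) to track.
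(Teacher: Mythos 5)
Your proposal is correct and is essentially the paper's own argument: the paper also treats existence as immediate and gets uniqueness from the observation that $m_j,v_j,w_j$ are determined by the tangle $\cT$ itself and that the parenthesization of the $\otimes_{m,n}$'s is forced (right-to-left) by relation (2) of Theorem \ref{thm:relations}. Your induction for existence and the rewriting/invariant discussion for composites are simply a more detailed elaboration of the same two-sentence argument given in the paper.
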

\begin{proof}
Clearly each tangle/composite can be written in such a form. For uniqueness, note that $m_j,u_j,v_j$ are completely determined by $\cT$, and the order of the $\otimes_{m,n}$ is  given by ``parenthesizing" the $D_j$'s from right to left (use relation (2) of Theorem \ref{thm:relations}).
\end{proof}

\begin{cor}\label{cor:DecomposeTangle}
Each connected tangle $\cT\in\B\P$ in the form of Remarks \ref{rem:StandardForm} can be written in a similar unique standard form where instead of some of the tangles
$$
t_{m_j+1}\op\cdots t_{m_j+v_j}\op t_{m_j+v_j+1}\cdots t_{m_j+v_j+w_j}(-),
$$
we have tangles $1_{k_j}$ with the condition that we never see two $1_{k_j}$'s in a row, i.e.,
$$
\otimes_{\ell,m}(1_\ell,\otimes_{m,n}(1_m,\cdots)) \text{ or } \otimes_{s-1,s}(1_{s-1},1_s).
$$
This amounts to grouping as many vertical strands connecting $D_0$ to $D_0$ as possible, and treating them as a ``labelled" input disk (as in \cite{math/9909027}).
\end{cor}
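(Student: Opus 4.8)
The plan is to deduce this from Theorem \ref{thm:ConnectedStandardForm} by enlarging the list of input disks so as to include maximal blocks of through-strings. Fix a connected $\cT\in\B\P$ drawn in the form of Remarks \ref{rem:StandardForm}. Since $\cT$ is connected, no string joins two internal disks and we may arrange the internal disks at a common level in their (unique) left-to-right order (Remark \ref{rem:StandardForm}(3)), with each $j$-cap drawn as a small bubble on the left or right of $D_j$ and each string meeting $D_0$ at both ends drawn, after isotopy, as a disjoint upward vertical segment. By Proposition \ref{prop:BPprime} every string is then a $j$-cap of some $D_j$, a strand from $D_0$ to some $D_j$, or a strand from $D_0$ to $D_0$, and reading $\cT$ from left to right presents it as a sequence of disjoint vertical columns: each column is either (a) an internal disk $D_j$ with its caps and its strands to $D_0$, or (b) a single $D_0$--$D_0$ strand. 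The order of these columns is forced by $\cT$. (If there are no internal disks then $\cT=1_n$ and the asserted form is $1_n$ itself.)

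Next I would merge each maximal run of consecutive type-(b) columns into a single block, of width $k$ say, and regard that block as the generator $1_k$; plugging $1_k$ (which has no input disks) into an input slot is a legitimate operadic composition. Relabelling the resulting columns-and-blocks from left to right as slots, I would then run the proof of Theorem \ref{thm:ConnectedStandardForm} unchanged: parenthesize the slots from the right via nested $\otimes_{m,n}$'s (canonically, by associativity, relation (2) of Theorem \ref{thm:relations}), fill each slot holding a genuine disk $D_j$ with the unique composite $t_{m_j+1}\op\cdots t_{m_j+v_j}\op\, t_{m_j+v_j+1}\cdots t_{m_j+v_j+w_j}(-)$ recording its $v_j$ left and $w_j$ right caps, and fill each block slot with $1_k$. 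This produces the claimed standard form.

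For uniqueness I would combine the uniqueness in Theorem \ref{thm:ConnectedStandardForm} --- which already fixes the number and order of the slots and the integers $m_j,v_j,w_j$ --- with the identity $\otimes_{\ell,m}(1_\ell,1_m)=1_{\ell+m}$, clear from the pictures. This identity says that two adjacent $1$-slots both may be and, in a standard form, must be amalgamated into one; hence forbidding two $1_{k_j}$'s in a row is exactly the requirement that the block decomposition be the maximal one, which is intrinsic to $\cT$. (The corresponding statement that an arbitrary composite of the $t_k$, $t_\ell\op$, $\otimes_{m,n}$ and $1_n$ reduces to this form follows by applying the same passage to the ``moreover'' clause of Theorem \ref{thm:ConnectedStandardForm}, again using this identity to absorb stray copies of $1_n$.) The only step requiring any care is the column bookkeeping --- verifying that caps, $D_0$--$D_j$ strands, and $D_0$--$D_0$ strands do not interleave so as to spoil the linear order of columns --- and this is immediate from the normal form in Remarks \ref{rem:StandardForm}; so I do not expect a genuine obstacle.
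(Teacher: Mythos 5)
Your argument is correct and is essentially the paper's own (the corollary is stated without a separate proof, being treated as immediate from Theorem \ref{thm:ConnectedStandardForm} together with the remark about grouping maximal runs of vertical $D_0$--$D_0$ strands as ``labelled'' input disks). Your column decomposition, the maximal grouping of through-strands into $1_k$ slots, and the use of $\otimes_{\ell,m}(1_\ell,1_m)=1_{\ell+m}$ to equate the no-two-$1$'s-in-a-row condition with maximality are exactly the intended reasoning, carried out at the same level of rigor as the paper.
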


\begin{thm}\label{thm:IJcaps}
Suppose $\cT\in\B\P$ is internally connected and has at least two internal disks. Then the $i,j$-caps of $\cT$ form concentric circles by (5) in Proposition \ref{prop:BPprime}. Let $C_1$ be the outermost $i,j$-cap of $\cT$. There is a unique smallest $n\in\N$ and two unique connected tangles $\cT_1,\cT_2\in\B\P$ up to swapping such that:
\item[(1)] if $C_1$ is a right $i,j$-cap, $\cT=\tau_n(\cT_1(-),\cT_2(-))$, and
\item[(2)] if $C_1$ is a left $i,j$-cap, $\cT=\tau_n\op(\cT_1(-),\cT_2(-))$.
\end{thm}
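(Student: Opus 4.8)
The plan is to read the decomposition off the outermost $i,j$-cap and then argue minimality and uniqueness; the whole argument is geometric, in the spirit of Proposition~\ref{prop:BPprime} and Theorem~\ref{thm:relations}, and I would present it at the level of ``drawing pictures'' used throughout this appendix.

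\emph{Producing an $i,j$-cap and the outermost one.} Since $\cT$ is internally connected with at least two internal disks, the set $\{D_i\}_{i=1}^s\cup\{S_j\}_{j=1}^t$ is connected and contains no closed loop of string (a closed loop meeting no disk would be its own component), so some string joins two distinct internal disks; by Proposition~\ref{prop:BPprime}(4) it is one strand of an $i,j$-cap with $i\neq j$. Now apply Proposition~\ref{prop:BPprime}(5) with $\cS=\cT$, which is legitimate precisely because $\cT$ is internally connected, so the collection of disks and strings reachable from $D_i$ or $D_j$ is all of $\cT$: all $i,j$-caps of $\cT$ with $i\neq j$ form a family of concentric Jordan curves. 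Let $C_1$ be the outermost, an $i_0,j_0$-cap with strands $S,S'$.

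\emph{Cutting along $C_1$.} I would choose a simple closed curve $\gamma$ that separates $D_{i_0}$ from $D_{j_0}$, runs parallel to $S$ and $S'$, avoids every disk interior, and meets the strings of $\cT$ transversally in the minimal possible number of points; an innermost/bigon argument removes any string meeting $\gamma$ twice, so each string meets $\gamma$ at most once and the strings that do meet it are exactly the cross-strands joining a disk on the $D_{i_0}$ side to one on the $D_{j_0}$ side. By maximality of $C_1$ every $i,j$-cap (hence every cross-strand) is nested inside $C_1$, so each such strand has a partner (Proposition~\ref{prop:BPprime}(4)), and these pair off into $i,j$-caps with one endpoint-disk on each side of $\gamma$; write their number as $2n$, so $\gamma$ bounds a disk $\Delta\ni D_{i_0}$ with $D_{j_0}\notin\Delta$ and $\gamma$ crosses exactly $2n$ strings. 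Let $\cT_1$ be the tangle whose external disk is $\gamma$ (its boundary points the $2n$ intersection points, in the cyclic order and with the orientations inherited from $\cT$), whose internal disks are those of $\cT$ lying in $\Delta$, and whose strings are the restrictions; define $\cT_2$ symmetrically outside $\Delta$. One checks the boundary data obeys the $\B\P$ conventions, so $\cT_1,\cT_2\in\B\P$. They are connected: given an internal disk $D$ of $\cT_1$, internal connectedness of $\cT$ gives a path in $\cT$ from $D$ to $D_{j_0}$, which must first leave $\Delta$ along one of the $2n$ cut strings, exhibiting a path in $\cT_1$ from $D$ to the external disk $\gamma$; likewise for $\cT_2$. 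Reading off the picture, $\cT=\tau_n(\cT_1(-),\cT_2(-))$ when $C_1$ is a right cap and $\cT=\tau_n\op(\cT_1(-),\cT_2(-))$ when it is a left cap, the type of $\tau$ being exactly what records the chirality of the cross-caps. Finally, since $\cT_1$ and $\cT_2$ are connected, Remark~\ref{rem:StandardForm}(3) forbids a string joining two of their internal disks, so \emph{every} $k,\ell$-cap of $\cT$ with $k\neq\ell$ is a cross-strand of the cut and therefore has the same chirality as $C_1$; this is the parenthetical assertion of Proposition~\ref{prop:BPprime}(5).

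\emph{Minimality and uniqueness.} Suppose $\cT=\tau_m(\cS_1(-),\cS_2(-))$ or $\tau_m\op(\cS_1(-),\cS_2(-))$ with $\cS_1,\cS_2$ connected. Then $D_{i_0}$ and $D_{j_0}$ lie in different $\cS_k$ (otherwise $C_1$ would join two internal disks of a connected tangle), so $C_1$ is one of the $m$ cross-caps of this decomposition; since $C_1$ is outermost among \emph{all} $i,j$-caps of $\cT$, the separating curve of this decomposition must cross the entire bundle determined by $C_1$, so $m\geq n$, and equality forces that separating curve to be isotopic to $\gamma$ relative to $\cT$, whence $\{\cS_1,\cS_2\}=\{\cT_1,\cT_2\}$. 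This yields the smallest such $n$ and the uniqueness of $(\cT_1,\cT_2)$; the ``up to swapping'' is accounted for by the symmetry $\tau_n(\cT_1(-),\cT_2(-))=\tau_n(\cT_2(-),\cT_1(-))$ of Theorem~\ref{thm:relations}(4). The main obstacle is the topology of the middle step --- pinning down $\gamma$ from $C_1$ so that the cut is clean, so that the induced boundary data is genuinely a $\B\P$-tangle, and so that the cut realizes the minimal waist --- but, as throughout this appendix, once the correct picture is drawn these verifications are routine.
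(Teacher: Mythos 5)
Your cutting-curve strategy is a legitimate alternative to the paper's iterative isotopy argument, and your minimality/uniqueness step (comparing $\gamma$ with the separating curve attached to any other decomposition $\tau_m(\cS_1(-),\cS_2(-))$ and invoking relation (4) of Theorem~\ref{thm:relations} for the swap) is fine as a sketch. But there is a genuine gap at the crux. When you write ``one checks the boundary data obeys the $\B\P$ conventions, so $\cT_1,\cT_2\in\B\P$'' and then read off $\cT=\tau_n(\cT_1(-),\cT_2(-))$ or $\tau_n\op(\cT_1(-),\cT_2(-))$ ``according to the chirality of the cross-caps,'' you are assuming exactly the hard point of the theorem: that the $2n$ strands crossing $\gamma$ occur, in cyclic order around $\gamma$, as $n$ consecutive strands leaving $\Delta$ followed by $n$ consecutive strands entering it, so that each piece has the required ``first half outward, second half inward from $*$'' boundary pattern and the gluing is the rainbow pattern of a single $\tau_n$ (resp.\ $\tau_n\op$) rather than a mixture. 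Since each nested cap contributes one strand in each direction, this consistency is equivalent to the statement that every $i,j$-cap has the same chirality as $C_1$ --- precisely the parenthetical claim in (5) of Proposition~\ref{prop:BPprime} that Theorem~\ref{thm:IJcaps} is supposed to establish. It is not automatic from planarity plus the orientation conventions: the second tangle displayed after Proposition~\ref{prop:BPprime} has concentric cross-caps of both chiralities and for exactly that reason fails to lie in $\B\P$. Your final sentence then deduces the same-chirality claim \emph{from} the decomposition $\cT=\tau_n(\cT_1(-),\cT_2(-))$, which makes the argument circular: that decomposition is only available once the chirality claim is known.

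The paper supplies the missing step by ordering the nested caps from the outside in, showing (via internal connectedness and concentricity) that each successive cap shares a disk with the already-processed configuration, and concluding from the ``both distinguished intervals inside or both outside'' dichotomy of (4) in Proposition~\ref{prop:BPprime} that a cap contained in a right (resp.\ left) cap is again right (resp.\ left); only after this can the tangle be isotoped into the standard $\tau_n$ picture. Some version of this induction --- or any other argument that the orientations of the crossings of $\gamma$ are consecutive --- has to be inserted before your cut pieces can be certified as $\B\P$-tangles and the type of $\tau$ identified. A smaller point: membership in $\B\P$ means expressibility as a composite of the generators, so after the orientations are settled you should still match $\cT_1,\cT_2$ against the standard form of Theorem~\ref{thm:ConnectedStandardForm} rather than only checking boundary data.
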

\begin{proof}
We prove (1). We give an algorithm to build $\cT_1$ and $\cT_2$ by partitioning the internal disks of $\cT$ into two sets $U$ and $L$, standing for ``upper" and ``lower." All $i,j$-caps of $\cT$ will be between a $D_i\in U$ and a $D_j\in L$, and we will see they are all right caps. We form $\cT_1$ by putting a box around the $D_i\in U$ together with all ``contractible" $i$-caps, and we form $\cT_2$ by doing the same to the $D_j\in L$.

Before we describe the algorithm, we give an example:
$$
\begin{tikzpicture}[scale=.6,baseline=5]
	\clip (-2.1,5.2) --(4,5.2) -- (4,-3.1) -- (-2.1,-3.1);

	\draw[ultra thick] (-.7,-2)--(-.7,4).. controls ++(90:1.6cm) and ++(90:2.5cm) ..  (3.8,3)--(3.8,1.5).. controls ++(270:1.8cm) and ++(270:3cm) .. (-.7,-2);
	\draw[ultra thick] (-.2,-2)--(-.2,4).. controls ++(90:1.4cm) and ++(90:2.2cm) ..  (3.3,3)--(3.3,1.5).. controls ++(270:1.4cm) and ++(270:2.2cm) .. (-.2,-2);
	\draw[ultra thick] (.3,.5)--(.3,3.5).. controls ++(90:1.6cm) and ++(90:1.6cm) ..  (3.1,3).. controls ++(270:2.4cm) and ++(270:3.2cm) .. (.3,.5);;
	\draw[ultra thick] (.8,1.5)--(.8,3).. controls ++(90:1.4cm) and ++(90:1.6cm) ..  (2.8,3) .. controls ++(270:1.4cm) and ++(270:2.2cm) .. (.8,1.5);
	\draw[ultra thick] (1.7,3.5) arc (180:0:.3cm) -- (2.3,2.5) arc (0:-180:.3cm);	
	\draw[ultra thick] (2.15,3)--(2.3,2.7)--(2.45,3);
	\draw[ultra thick] (3.65,2.3)--(3.8,2)--(3.95,2.3);
	\draw[ultra thick] (3.15,2)--(3.3,1.7)--(3.45,2);
	\draw[ultra thick] (2.4,2.1)--(2.4,1.7)--(2.7,1.9);
	\draw[ultra thick] (2.5,1.2)--(2.5,.8)--(2.8,1);
	\draw[ultra thick] (-1.3,-1) arc (0:180:.3cm) -- (-1.9,-2) arc (180:360:.3cm);
	\draw[ultra thick] (-1.75,-1.2)--(-1.9,-1.5)--(-2.05,-1.2);

	\filldraw[thick, unshaded] (.5,-2)--(.5,-1)--(-1.5,-1)--(-1.5,-2)--(.5,-2);
	\filldraw[thick, unshaded] (1,-.5)--(1,.5)--(-.5,.5)--(-.5,-.5)--(1,-.5);
	\filldraw[thick, unshaded] (1.5,1)--(1.5,2)--(0,2)--(0,1)--(1.5,1);
	\filldraw[thick, unshaded] (2,2.5)--(2,3.5)--(.5,3.5)--(.5,2.5)--(2,2.5);
	\node at (-.5,-1.5) {$D_2$};
	\node at (.25,0) {$D_1$};
	\node at (.75,1.5) {$D_3$};
	\node at (1.25,3) {$D_4$};
\end{tikzpicture}
\longrightarrow
\begin{tikzpicture}[scale=.6,baseline=.5]
	\clip (-2.3,4) --(5.4,4) -- (5.4,-4) -- (-2.3,-4);

	\draw[ultra thick] (-.7,-2)--(-.7,2.2).. controls ++(90:2cm) and ++(90:2cm) ..  (5.2,2.2)--(5.2,-2.2).. controls ++(270:1.8cm) and ++(270:2.4cm) .. (-.7,-2);
	\draw[ultra thick] (-.2,-2)--(-.2,2).. controls ++(90:1.8cm) and ++(90:1.8cm) ..  (4.8,2)--(4.8,-2.2).. controls ++(270:1.4cm) and ++(270:2.2cm) .. (-.2,-2);
	\draw[ultra thick] (1.3,-1)--(.3,1)--(.3,2).. controls ++(90:1.6cm) and ++(90:1.2cm) ..  (4.4,2)--(4.4,-2.2).. controls ++(270:1cm) and ++(270:1.2cm) .. (1.3,-2);;
	\draw[ultra thick] (2,-1)--(2,2).. controls ++(90:1cm) and ++(90:1cm) ..  (4,2)--(4,-2) .. controls ++(270:1cm) and ++(270:1cm) .. (2,-2);
	\draw[ultra thick] (2.7,2) arc (180:0:.3cm) -- (3.3,1) arc (0:-180:.3cm);	
	\draw[ultra thick] (3.15,1.5)--(3.3,1.2)--(3.45,1.5);
	\draw[ultra thick] (3.85,-.1)--(4,-.4)--(4.15,-.1);
	\draw[ultra thick] (4.25,-.4)--(4.4,-.7)--(4.55,-.4);
	\draw[ultra thick] (4.65,-.7)--(4.8,-1)--(4.95,-.7);
	\draw[ultra thick] (5.05,-1)--(5.2,-1.3)--(5.35,-1);
	\draw[ultra thick] (-1.3,-1) arc (0:180:.3cm) -- (-1.9,-2) arc (180:360:.3cm);
	\draw[ultra thick] (-1.75,-1.2)--(-1.9,-1.5)--(-2.05,-1.2);
	%lower
	\filldraw[thick, unshaded] (.5,-2)--(.5,-1)--(-1.5,-1)--(-1.5,-2)--(.5,-2);
	\filldraw[thick, unshaded] (2.5,-2)--(2.5,-1)--(1,-1)--(1,-2)--(2.5,-2);	
	%upper
	\filldraw[thick, unshaded] (1,1)--(1,2)--(-.5,2)--(-.5,1)--(1,1);
	\filldraw[thick, unshaded] (3,1)--(3,2)--(1.5,2)--(1.5,1)--(3,1);
	%boxes
	\draw[dashed] (-2.2,-2.5)--(-2.2,-.5)--(2.8,-.5)--(2.8,-2.5)--(-2.2,-2.5);
	\draw[dashed] (-1,.5)--(-1,2.5)--(3.6,2.5)--(3.6,.5)--(-1,.5);
	%labels
	\node at (-.5,-1.5) {$D_2$};
	\node at (1.75,-1.5) {$D_3$};
	\node at (.25,1.5) {$D_1$};
	\node at (2.25,1.5) {$D_4$};
\end{tikzpicture}.
$$

Assume all input disks of $\cT$ are rectangles as in Remark \ref{rem:StandardForm}. Reindexing the internal disks, we suppose $C_1$ is a right $1,2$-cap. Set $U=\{D_1\}$ and $L=\{D_2\}$. Isotope the tangle so 
\item[$\bullet$] $D_1$ appears above $D_2$,
\item[$\bullet$] all strings connecting $D_1$ and $D_2$ either go upward from $D_2$ to $D_1$ with no critical points or are large arcs with only two  critical points, and
\item[$\bullet$] all $1$-caps and $2$-caps which enclose $C_1$ are large arcs with only two critical points, and all $1$-caps and $2$-caps which do not enclose $C_1$ are close to $D_1$ and $D_2$ respectively.
\item
We work our way inside to the next $i,j$-cap that is not a $1,2$-cap (clearly all the $1,2$-caps are right caps). If there are no other $i,j$-caps, we are finished. 

Otherwise, after reindexing, the next innermost $i,j$-cap $C_2$ is either a $1,3$-cap or $2,3$-cap, and all other strings connected to $D_2$ or $D_1$ respectively are $2$-caps or $1$-caps which can be isotoped so they are close to $D_2$ or $D_1$ respectively. We consider the case where $C_2$ is a $1,3$-cap, with the $2,3$-cap case being similar. We set $L=\{D_2,D_3\}$ (in the $2,3$-cap case, $U=\{D_1,D_3\}$), and we note that $C_2$ must also be a right cap as it is contained in $C_1$. We can also isotope the tangle so that 
\item[$\bullet$] $D_2$ and $D_3$ appear on the same horizontal level by moving $D_3$ around $C_2$,
\item[$\bullet$] all strings connecting disks in $U$ and $L$ either go upward from a disk in $L$ to a disk in $U$ with no critical points or are large arcs with only two critical points, and
\item[$\bullet$] all $1$-caps which enclose $C_2$ are large arcs with only two critical points (note that no $3$-cap can enclose $C_2$), and all $1$-caps and $2$-caps which do not enclose $C_2$ are close to $D_1$ and $D_2$ respectively.
\item
We work our way inside to the next $i,j$-cap that is not a $1,3$-cap (clearly all the $1,3$-caps are right caps). If there are no other $i,j$-caps, we are finished. 

Otherwise, after reindexing, the next $i,j$-cap $C_3$ is either a $1,4$-cap or a $3,4$-cap, and all other strings connected to $D_3$ or $D_1$ respectively are $3$-caps or $1$-caps which can be isotoped so they are close to $D_3$ or $D_1$ respectively. We consider the case where the next cap is a $3,4$-cap, with the $1,4$-cap case being similar. We set $U=\{D_1,D_4\}$ (in the $1,4$-case, $L=\{D_1,D_3,D_4\}$), and we note that $C_3$ must also be a right cap as it is contained in $C_2$. We can also isotope the tangle so that 
\item[$\bullet$] all disks in $U$ appear on the same horizontal level by moving $D_4$ around $C_3$, and
\item[$\bullet$] all strings connecting disks in $U$ and $L$ either travel upward from a disk in $L$ to a disk in $U$ with no critical points or travel in large arcs with only two critical points.
\item[$\bullet$] all $3$-caps which enclose $C_3$ are large arcs with only two critical points (note that no $1$-cap can enclose $C_3$), and all $1$-caps and $3$-caps which do not enclose $C_3$ are close to $D_1$ and $D_3$ respectively.
\item
We work our way inside once again. Iterating this process until we run out of $i,j$-caps, we see that all $i,j$-caps must be right caps. Moreover, we have partitioned our internal disks into two sets, $U,L$, and we can isotope $\cT$ so that:
\item[$\bullet$] all disks in $U$ and $L$ appear on the same horizontal levels,
\item[$\bullet$] any string connecting a disk $D_i\in U$ to a disk $D_j\in L$ travels upward from $D_j$ to $D_i$ with no critical points, or travels in a large arc from $D_i$ to $D_j$ with only two critical points, and
\item[$\bullet$] all $i$-caps for $D_i\in U$ which do not enclose a $k,\ell$-cap are close to $D_i$ and all $j$-caps for $D_j\in L$ which do not enclose a $k,\ell$-cap are close to $D_j$.
\item
It is now clear that we can put boxes around the disks and caps in $U,L$ as desired, and we are left with $\tau_n(\cT_1(-),\cT_2(-))$ for some $n\in\N$ and some connected tangles $\cT_1,\cT_2\in\B\P$. Note that the $n$ is determined by the $i,j$-caps and the $k$-caps which enclose an $i,j$-cap, and this $n$ is minimal when all other $\ell$-caps are contracted so they are close to $D_\ell$. Moreover, the only choice we made was the initial choice $U=\{D_1\}$ and $L=\{D_2\}$, but if we swapped $U$ and $L$, we would have ended up with $\tau_n(\cT_2(-),\cT_1(-))$. Hence $\cT_1,\cT_2$ are unique up to swapping.
\end{proof}

\begin{cor}\label{cor:subtangles}
Each $\cT\in\B\P$ with no external boundary points and at least one input disk contains an internally connected subtangle of the standard form:
\item[(1)] $t_1\op\cdots t_{r}\op t_{r+1} t_{r_i+2}\cdots t_{k}$ for some $0\leq r\leq k$, or 
\item[(2)] $\tau_{n_1+n_2}(\cT_1(-),\cT_2(-))$ or $\tau_{n_1+n_2}\op(\cT_1(-),\cT_2(-))$ for some connected $\cT_1,\cT_2\in\B\P$.
\end{cor}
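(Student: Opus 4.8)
The plan is to pass to a single connected component of the disk--string diagram of $\cT$ and then invoke Theorem~\ref{thm:IJcaps} (when that component has several internal disks) or a direct analysis of caps on one disk (when it has only one). Since $\cT$ has no external boundary points, Proposition~\ref{prop:BPprime} shows every string of $\cT$ is either a closed loop or part of an $i,j$-cap joining two internal disks (possibly with $i=j$). Discarding the closed loops, I would look at the connected components of the union of the internal disks with their incident strings; since $\cT$ has at least one input disk, some component $\cS$ contains an internal disk, and enclosing $\cS$ in a bounding disk with no marked interval exhibits $\cS$ as an internally connected tangle in $\B\P$ sitting inside $\cT$. It then suffices to put $\cS$ into one of the two standard forms.

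If $\cS$ has exactly one internal disk $D_i$ with $2k_i$ boundary points, then all of its strings are $i$-caps, so by part~(3) of Proposition~\ref{prop:BPprime} they realise the fully nested matching $p\leftrightarrow 2k_i-p+1$. Because ``left-ness'' of an $i$-cap is monotone under nesting --- a cap enclosed by a right $i$-cap is again a right $i$-cap, directly from the definition in Proposition~\ref{prop:BPprime} --- the left $i$-caps occupy the outermost positions and the right $i$-caps the innermost, so reading the picture from the inside out (cf.\ Remark~\ref{rem:StandardForm}) identifies $\cS$ with $t_1\op\cdots t_r\op t_{r+1}\cdots t_{k_i}$, where $r$ is the number of left $i$-caps and $k=k_i$; note $0\le r\le k$. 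This is form~(1).

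If instead $\cS$ has two or more internal disks, then by connectedness some string joins two distinct internal disks, and by part~(4) of Proposition~\ref{prop:BPprime} that string belongs to an $i,j$-cap with $i\neq j$. Theorem~\ref{thm:IJcaps} therefore applies verbatim to $\cS$: letting $C_1$ be the outermost $i,j$-cap, one gets $\cS=\tau_n(\cT_1(-),\cT_2(-))$ when $C_1$ is a right cap and $\cS=\tau_n\op(\cT_1(-),\cT_2(-))$ when $C_1$ is a left cap, for a unique $n\in\N$ (written $n_1+n_2$ above) and connected tangles $\cT_1,\cT_2\in\B\P$ --- which is form~(2). The genuine work here, namely that the $i,j$-caps form concentric circles and that the tangle splits along the outermost one into two connected pieces, is exactly the content of Theorem~\ref{thm:IJcaps}, already established; so the main obstacle has in effect been dealt with before reaching this corollary, and the only residual point requiring care is the elementary nesting monotonicity invoked in the single-disk case.
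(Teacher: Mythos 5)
Your argument is correct and follows the route the paper intends (the corollary is left unproved there precisely because it is meant to fall out of Proposition \ref{prop:BPprime} and Theorem \ref{thm:IJcaps} in exactly this way): pass to an internally connected component containing an input disk, handle the one-disk case by the forced nested matching $m\leftrightarrow 2k_i-m+1$ with left caps adjacent to $*$ and right caps opposite, and invoke Theorem \ref{thm:IJcaps} when there are two or more disks. Your nesting-monotonicity observation is the right elementary point in the single-disk case (its justification tacitly uses planarity to keep an enclosed cap's loop inside the enclosing one, which is immediate), so no gap remains.
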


\begin{defn}[Action of tangles in $\B\P$]\label{defn:TangleAction}
We may now describe the action of a tangle $\cT\in \B\P$ on a tuple 
$$
(z_1,\dots,z_s)\in \prod_{i=1}^s\widehat{Q_{n_i}^+}.
$$
If $\cT$ is connected, we put $\cT$ in the standard form afforded by Corollary \ref{cor:DecomposeTangle}, label the inputs with the $z_i$'s, and replace $1_n$ with $\id_{H_n}$, $t_n,t_n\op$ with $T_n,T_n\op$, and $\otimes_{m,n}$ with $\otimes_A$.

If $\cT$ is not connected, then there are closed subtangles as in Corollary \ref{cor:subtangles}. These closed subtangles will act as scalars in $\widehat{Q_0^+}=\widehat{Z(A)^+}=[0_\R,\I_R]$, and the order of scalar multiplication does not matter, so it suffices to define the scalar given by a single internally connected subtangle. 

First, closed loops count for a multiplicative factor: 
$$
\dim_{-A}(H)=T_1(1)=\ClosedLoop{1}\, \text{ and } \dim_{A-}(H)=T_1\op(1)=\ClosedLoopOp{1}.
$$
Suppose $\cS$ is a closed, internally connected subtangle of $\cT$ with only one input disk. Then we put $\cS$ in the standard form of (1) in Corollary \ref{cor:subtangles}, label the tangle by $z_i$, and replace $t_n,t_n\op$ with $T_n,T_n\op$. 

If $\cS$ is a closed, internally connected subtangle with more than one input disk. Then there is a right (respectively left) $i,j$-cap, we have unique connected $\cS_1,\cS_2\in\B\P$ such that $\cS=\tau_n(\cS_1(-),\cS_2(-))$ (respectively $\cS=\tau_n\op(\cS_1(-),\cS_2(-))$) by Theorem \ref{thm:IJcaps}. Now we replace $\tau_n$ with $\Tr_n$ (respectively $\tau_n\op$ with $\Tr_n\op$), and we get the action of $\cS_1,\cS_2$ as above to get $w_1\in \widehat{Q_{k_1}^+},w_2\in \widehat{Q_{k_2}^+}$, and we use Theorem \ref{thm:BilinearExtension} to get the value $\Tr_n(w_1\cdot w_2)$ (respectively $\Tr_n\op(w_1\cdot w_2)$).

Hence we have defined the action of any tangle in $\B\P$. To show the action is well-defined (well-behaved under composition of tangles), one uses methods of \cite{0912.1320} to show that the standard forms of connected and internally connected tangles given in Corollaries \ref{cor:DecomposeTangle} and \ref{cor:subtangles} and the maps given in Subsection \ref{sec:TowersOfBimodules} behave the same under composition by Theorems \ref{thm:BPRelations} and \ref{thm:relations}. We briefly sketch such an argument.

First, it suffices to consider the composites $\cR\circ_1 \cT$, $\cT'\circ_i \cT$, and $\cS\circ_j \cT$ of $\cR,\cS,\cT',\cT\in\B\P$ such that $\cR$ is internally connected with one input disk (see (1) of Corollary \ref{cor:subtangles}), $\cS$ is internally connected with two or more input disks (see (2) of Corollary \ref{cor:subtangles} and Theorem \ref{thm:IJcaps}), and $\cT,\cT'$ are connected. We may assume the respective connectivity properties because if any of $\cR,\cS,\cT',\cT$ had an internally connected subtangle $\cU$ which is not involved with the composition, the scalar that $\cU$ would contribute when acting remains unchanged by the composition. Note we cannot compose two nontrivial internally connected tangles.

Now if $\cR\circ_1 \cT, \cT'\circ_i \cT, \cS\circ_j \cT$ is still internally connected, internally connected, connected respectively, then we are finished by the existence of the standard forms and Theorems \ref{thm:BPRelations} and \ref{thm:relations}. In the cases of $\cR\circ_1 \cT$ and $\cT'\circ_i\cT$, we can only get internally connected tangles of the standard form (1) in Corollary \ref{cor:subtangles}, and once again, Theorems \ref{thm:BPRelations} and \ref{thm:relations} and (4) in Corollary \ref{cor:BPRelationsCor} are sufficient to show the well-definition. 

One must treat the case $\cS\circ_j \cT$ more carefully, as we may create internally connected tangles of the standard form (1) or (2) in Corollary \ref{cor:subtangles}. First, use that there are connected $\cS_1,\cS_2\in\B\P$ such that $\cS=\tau_n(\cS_1(-),\cS_2(-))$ (or $\OP$) by Theorem \ref{thm:IJcaps}. Then $\cT$ is inserted into $S_1$ or $S_2$, so we look at $S_k\circ_\ell \cT$. Now Theorems \ref{thm:BPRelations} and \ref{thm:relations} and (4) in Corollary \ref{cor:BPRelationsCor} are once again sufficient.
\end{defn}

%%%%%%%%%%%%%%%%%%%%%%%%%%%%%%%%%%%%%%%%%%%%%%%%%%%%%
\section{Extended positive cones}\label{sec:cones}
For the bimodule planar calculus, we need to make multiplication by $\I_\R$ rigorous. We do so by generalizing the notion of an extended positive cone.

\begin{defn}\label{defn:ConeDefn}
An \underline{extended positive cone} is a set $V$ together with a partial order $\leq$, an addition $+\colon V\times V\to V$, and a scalar multiplication $\cdot \colon [0_\R,\I_\R] \times V\to V$ such that
\itt{Additivity axioms} 
\item[$\bullet$] (Zero) There is a $0_V\in V$ such that $0_V+v=v+0_V=v$ for all $v\in V$.
\item[$\bullet$] (Infinity) There is an $\I_V\in V\setminus\{0\}$ such that $v+\I_V = \I_V+v = \I_V$ for all $v\in V$.
\item[$\bullet$] (Associativity) $v_1+(v_2+v_3)=(v_1+v_2)+v_3$ for all $v_1,v_2,v_3\in V$.
\item[$\bullet$] (Commutativity) $v_1+v_2=v_2+v_1$ for all $v_1,v_2\in V$.
\itt{Multiplicative axioms} 
\item[$\bullet$] (Unit) $1_\R v=v$ for all $v\in V$.
\item[$\bullet$] (Associativity) $(\lambda\mu) v=\lambda(\mu v)$ for all $\lambda,\mu\in [0_\R,\I_\R]$ and $v\in V$.
\item[$\bullet$] (Zero) $0_\R v=0_V$ for all $v\in V$.
\item[$\bullet$] (Infinity) $\lambda \I_V=\I_V$ for all $\lambda>0_\R$.
\itt{Distributivity}
\item[$\bullet$] (Scalars distribute) 
 $\lambda (v_1+v_2)=\lambda v_1+\lambda v_2$ for all $\lambda\in [0_\R,\I_\R]$ and $v_1,v_2\in V$.
\item[$\bullet$] ($V$ distributes)
 $(\lambda_1+\lambda_2)v = \lambda_1 v + \lambda_2 v$ for all $\lambda_1,\lambda_2\in [0_\R,\I_\R]$ and $v\in V$.
\itt{Partial order axioms}
\item[$\bullet$] (Non-degeneracy) $0_V\leq x\leq \I_V$ for all $x\in V$.
\item[$\bullet$] (Linearity) if $x_i\leq y_i$ for $i=0,1$ and $\lambda\in [0_\R,\I_\R]$, then $\lambda x_0 +x_1\leq \lambda y_0+y_1$.
\end{defn}

\begin{rem} 
\item[(1)] $0_V,\I_V\in V$ are unique.
\item[(2)] If $\lambda v = 0_V$, then $v=0_V$ or $\lambda=0_\R$.
\end{rem}

\begin{exs}\label{ex:EPC}
\item[(1)]
The set $[0_\R,\I_\R]$ with the usual ordering and the convention that $\lambda \I_\R = \I\lambda=\I_\R$ for all $\lambda\in\R_{>0}$ and $0_\R\I_\R=\I_\R0_\R=0_\R$ is an extended positive cone.
\item[(2)]
Let $X$ be a nonempty set. The space of functions $\{f\colon X\to [0_\R,\I_\R]\}$ is an extended positive cone with pointwise addition and scalar multiplication, where $f\leq g$ if $f(x)\leq g(x)$ for all $x\in X$. Similarly, the space of extended positive measurable functions on a measure space is an extended positive cone.
\item[(3)]
If $M$ is a von Neumann algebra, $\omega(M)$, the set of normal weights $\omega\colon M^+\to [0_\R,\I_\R]$, is an extended positive cone where $\I_{\omega(M)}$ is the map which sends $0_M$ to $0_\R$ and all other elements of $M$ to $\I_\R$, and $\varphi\leq \psi$ if $\varphi(x)\leq \psi(x)$ for all $x\in M^+$.
\item[(4)]
If $M$ is a von Neumann algebra, $\widehat{M^+}$ is an extended positive cone where $\I_{\widehat{M^+}}$ is the unbounded operator affiliated to $M$ with domain $(0)$, and $m_1\leq m_2$ if $m_1(\phi)\leq m_2(\phi)$ for all $\phi\in M_*^+$.
\item[(5)]
If $V,W$ are extended positive cones, then so is $V\times W$ where $(v_1,w_1)+(v_2,w_2)=(v_1+v_2,w_1+w_2)$, $\lambda (v_1,w_1)=(\lambda v_1,\lambda w_1)$, $0_{V\times W} = (0_{V},0_{W})$, $\I_{V\times W}=(\I_{V},\I_{W})$, and $(v_1,w_1)\leq (v_2,w_2)$ if $v_1\leq v_2$ and $w_1\leq w_2$.
\end{exs}

\begin{defn}
Let $V,W$ be extended positive cones. A function $T\colon V\to W$ is a linear map (of extended positive cones) if 
\item[$\bullet$] $T(\lambda u+v)=\lambda Tu+Tv$ for all $u,v\in V$ and $\lambda\in [0_\R,\I_\R]$, and
\item[$\bullet$] if $u,v\in V$ with $u\leq v$, then $Tu\leq Tv$.
\item
We define a multi-linear map of extended positive cones $V_1\times \cdots \times V_n\to V_0$ similarly.
\end{defn}

\begin{exs}\label{ex:MapExamples}
\item[(1)]
For a fixed scalar $\lambda\in [0_\R,\I_\R]$, multiplication by $\lambda$ is a map of extended positive cones.
\item[(2)]
Suppose $\omega\colon M^+\to [0_\R,\I_\R]$ is a normal weight. Then its unique extension to a normal weight $\omega\colon \widehat{M^+}\to [0_\R,\I_\R]$ is a map of extended positive cones.
\item[(3)]
If $m\in \widehat{M^+}$, then $m\colon \omega(M)\to[0_\R,\I_\R]$ given by$ \varphi\mapsto m(\varphi)$ is a map of extended positive cones.
\item[(4)]
Suppose $N\subset M$ is an inclusion of von Neumann algebras, $i\colon \widehat{N^+}\to \widehat{M^+}$ is the inclusion (well-defined by Equation \eqref{eq:KS}), and $T\colon \widehat{M^+}\to \widehat{N^+}$ is the unique extension of an operator valued weight $M^+\to \widehat{N^+}$. Then $i,T$ are maps of extended positive cones.
\item[(5)] 
Using the notation of Appendix \ref{sec:TensorUnbounded}, the map $\widehat{X^+} \times \widehat{Y_0^+}\to \widehat{X\otimes_A Y_0^+}$ given by $(x,y)\mapsto x\otimes_A y$ is a multilinear map of extended positive cones by Lemma \ref{cor:TensorConeMap}.
\end{exs}

\begin{defn}
An increasing net $(x_i)_{i\in I}\subset V$ converges to $x\in V$ if $x$ is the unique least upper bound for $(x_i)_{i\in I}$. We denote this convergence by $\sup_{i\in I} x_i=x$ or $x_i\nearrow x$. 
\item[$\bullet$] 
$V$ is \underline{complete} if each increasing net $(x_i)_{i\in I}$ has a unique least upper bound.
\item[$\bullet$] 
A map $T\colon V\to W$ is \underline{normal} if $x_i\nearrow x$ implies $Tx_i \nearrow Tx$.
\end{defn}

\begin{rem}\label{rem:normal}
The maps in Examples \ref{ex:MapExamples} are all normal.
\end{rem}

\begin{defn}
The \underline{dual space} of $V$, denoted $V^*$, is the set of all normal maps $V\to [0_\R,\I_\R]$. Note that $V^*$ is a complete extended positive cone with
\item[(1)] $(\lambda \varphi+\psi)(v)=\lambda \varphi(v)+\psi(v)$ for all $v\in V$, $\lambda\in [0_\R,\I_\R]$, and $\varphi,\psi\in V^*$, with the convention that $0_\R\cdot \I_\R=0_\R$,
\item[(2)] $0_{V^*}$ is the zero map,
\item[(3)] $\D \I_{V^*}(v)=\begin{cases}
0 & \text{if } v=0\\
\I_V & \text{else, and}
\end{cases}$
\item[(4)] $(\sup_{i\in I} \varphi_i)(v) := \sup_{i\in I} \varphi_i(v)$.
\item[$\bullet$] There is a natural inclusion $V\to V^{**}$ by $x\mapsto (\ev_x\colon \varphi\mapsto \varphi(x))$.
\item[$\bullet$] The \underline{completion} of $V$ is the set of sups of increasing nets in the image of $V$ in $V^{**}$.
\end{defn}

\begin{thm}
Let $M$ be a semifinite von Neumann algebra with n.f.s. trace $\Tr_M$. Let $\omega(M)$ be the set of normal weights on $M^+$.
\item[(1)] $\widehat{M^+}$ is the dual extended positive cone of $\omega(M)$ (the ordering on each is given in Examples \ref{ex:EPC}). 
\item[(2)] The map $\widehat{M^+}\ni x\mapsto \Tr_M(x\,\cdot \, )\in \omega(M)$ is a normal isomorphism of extended positive cones.
\end{thm}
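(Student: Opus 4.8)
The plan is to establish (2) first — it is essentially a repackaging of Theorem~\ref{thm:BilinearExtension} — and then to deduce (1) from it. For (2), Theorem~\ref{thm:BilinearExtension} already asserts that $x\mapsto\Tr_M(x\,\cdot\,)$ is a homogeneous, additive bijection of $\widehat{M^+}$ onto the normal weights $\omega(M)$, with $x\le y\iff\Tr_M(x\,\cdot\,)\le\Tr_M(y\,\cdot\,)$ and $x_i\nearrow x\iff\Tr_M(x_i\,\cdot\,)\nearrow\Tr_M(x\,\cdot\,)$. Additivity together with homogeneity gives $T(\lambda u+v)=\lambda Tu+Tv$, the order equivalence makes it an order-isomorphism, the normality equivalence makes it normal, and bijectivity forces the inverse to be additive, homogeneous, and order-preserving as well. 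So the only thing left is to verify the two distinguished elements: $\Tr_M(0\cdot y)=0$ for all $y$, so $0_{\widehat{M^+}}\mapsto 0_{\omega(M)}$; and writing $\I_{\widehat{M^+}}=\sup_n n1_M$ (the increasing net $n1_M$ has pointwise supremum the affiliated operator with domain $(0)$), one gets $\Tr_M(\I_{\widehat{M^+}}\cdot y)=\sup_n n\,\Tr_M(y)$, which is $0$ when $y=0$ and $\I_\R$ otherwise by faithfulness of $\Tr_M$, so $\I_{\widehat{M^+}}\mapsto\I_{\omega(M)}$. This is precisely the assertion of (2).

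For (1), I would define $\Phi\colon\widehat{M^+}\to\omega(M)^*$ by $\Phi(m)(\varphi)=\varphi(m)$, using the canonical extension of a normal weight $\varphi$ to $\widehat{M^+}$ (a normal weight is in particular a normal operator valued weight $M^+\to[0_\R,\I_\R]$); by (2) this is the same as $\Phi(m)(\Tr_M(y\,\cdot\,))=\Tr_M(m\cdot y)$. First I would check $\Phi(m)$ is normal, so that indeed $\Phi(m)\in\omega(M)^*$: if $\varphi_i\nearrow\varphi$ in $\omega(M)$, pick $m_j\in M^+$ with $m_j\nearrow m$ and compute $\varphi(m)=\sup_j\varphi(m_j)=\sup_j\sup_i\varphi_i(m_j)=\sup_i\sup_j\varphi_i(m_j)=\sup_i\varphi_i(m)$, using that least upper bounds in $\omega(M)$ are pointwise suprema and that extensions to $\widehat{M^+}$ are normal. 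That $\Phi$ is additive, homogeneous, and sends $0_{\widehat{M^+}}\mapsto 0_{\omega(M)^*}$ and $\I_{\widehat{M^+}}\mapsto\I_{\omega(M)^*}$ follows from additivity and homogeneity of the extensions of normal weights and the observation $\varphi(\I_{\widehat{M^+}})=\I_\R\,\varphi(1_M)$, which is $0$ exactly when $\varphi=0$. Injectivity of $\Phi$ and the order equivalence $m\le m'\iff\Phi(m)\le\Phi(m')$ are then immediate by testing against $\phi\in M_*^+\subseteq\omega(M)$, since elements of $\widehat{M^+}$ are determined and ordered by their values on $M_*^+$.

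The substantive point is surjectivity of $\Phi$, and here I would go through (2). Given a normal map $F\colon\omega(M)\to[0_\R,\I_\R]$, set $\tilde F(x):=F(\Tr_M(x\,\cdot\,))$ for $x\in\widehat{M^+}$; by the normal order-isomorphism of (2), $\tilde F$ is a normal map $\widehat{M^+}\to[0_\R,\I_\R]$. Its restriction to $M^+$ is additive, homogeneous, and normal (if $x_i\nearrow x$ in $M^+$ then $\Tr_M(x_i\,\cdot\,)\nearrow\Tr_M(x\,\cdot\,)$ in $\omega(M)$, hence $\tilde F(x_i)\nearrow\tilde F(x)$), i.e. a normal weight, so by Theorem~\ref{thm:BilinearExtension} there is a unique $m\in\widehat{M^+}$ with $\tilde F|_{M^+}=\Tr_M(m\,\cdot\,)$. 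Since $\tilde F$ and $x\mapsto\Tr_M(m\cdot x)$ are both normal on $\widehat{M^+}$ and agree on $M^+$, and every element of $\widehat{M^+}$ is an increasing supremum of elements of $M^+$, they agree everywhere; unwinding the identification shows $F=\Phi(m)$.

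I expect the main obstacle to be the normality bookkeeping rather than any single hard estimate: one must consistently treat "convergence of increasing nets" in $\omega(M)$, in $\widehat{M^+}$, and in $[0_\R,\I_\R]$ as the pointwise-supremum notion, check that the embedding $M^+\hookrightarrow\widehat{M^+}$ respects these suprema, and use that restriction of a normal map to $M^+$ lands among normal weights — all of which are contained in Theorem~\ref{thm:BilinearExtension} and the definitions but need to be invoked with care. Going through (2) for surjectivity is what lets one sidestep a direct lower-semicontinuity argument for the restriction of $F$ to $M_*^+$.
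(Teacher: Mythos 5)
Your proposal is correct and takes essentially the same route as the paper: the paper's entire proof is the remark that the theorem ``is a rewording of Theorem~\ref{thm:BilinearExtension},'' and your argument is precisely that reduction, with the details the paper leaves implicit (the pairing map $\Phi$, injectivity and order-comparison via $M_*^+$, and surjectivity of $\Phi$ routed through part (2)) written out carefully.
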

\begin{proof}
This is a rewording of Theorem \ref{thm:BilinearExtension} into the language of this subsection.
\end{proof}

\begin{defn}\label{defn:ExtendedConeAdjoint}
If $T\colon V\to W$ is a normal map of extended positive cones, we get a map of dual spaces $T^*\colon W^*\to V^*$ by $T^*(\phi)=\phi\circ T$ for all $\phi\in W^*$. 
We can characterize it as the unique map satisfying
$$
\langle T(v), \varphi\rangle_W = \varphi(T(v))=\langle v, T^*(\varphi)\rangle_V
$$
for all $v\in V$ and $\varphi\in W$.
\end{defn}

\begin{prop}\label{prop:adjoint}
Suppose $N\subset M$ is an inclusion of semifinite von Neumann algebras with n.f.s. traces $\Tr_N,\Tr_M$ respectively. Let $i\colon \omega(N)\cong \widehat{N^+}\to \widehat{M^+}\cong \omega(M)$ be the inclusion, and let $T\colon \widehat{M^+}\to \widehat{N^+}$ be the unique extension to $\widehat{M^+}$ of the unique trace-preserving operator valued weight. Then $i,T$ are normal and $T=i^*$, $T^*=i$.
\end{prop}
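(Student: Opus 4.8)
The plan is to reduce both claims to a single trace identity between the relevant extended positive cones, and then read off $T=i^*$ and $T^*=i$ directly from the definition of the dual map.

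First, the normality assertions are essentially already in place: $T$ exists as the unique n.f.s.\ trace-preserving operator valued weight and extends normally to $\widehat{M^+}$ by Theorem~\ref{thm:Texists}, and both $i$ and $T$ are normal maps of extended positive cones by Example~\ref{ex:MapExamples}(4) and Remark~\ref{rem:normal}. So the substance is the two equalities. Next I would fix the identifications. By the duality theorem preceding this proposition, $\widehat{M^+}$ is the dual extended positive cone of $\omega(M)$ and $x\mapsto\Tr_M(x\,\cdot\,)$ is a normal isomorphism $\widehat{M^+}\cong\omega(M)$; combining these gives $\omega(M)\cong(\widehat{M^+})^*$, with the evaluation pairing transported, under the trace isomorphism, to $\langle x,y\rangle_M:=\Tr_M(x\cdot y)$ for $x,y\in\widehat{M^+}$, and likewise for $N$. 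With these identifications, Definition~\ref{defn:ExtendedConeAdjoint} says that $T=i^*$ amounts to
\[
\Tr_M(i(x)\cdot y)=\Tr_N(x\cdot T(y))\qquad\text{for all }x\in\widehat{N^+},\ y\in\widehat{M^+},
\]
and, using symmetry of the traces, $T^*=i$ amounts to the very same identity. So both statements follow once this identity is established.

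Finally I would prove the identity by monotone approximation. For $x\in N^+$ and $y\in M^+$ it is precisely Equation~\eqref{eq:ovwCondition} of Theorem~\ref{thm:Texists} (with $\Tr(a\cdot b)=\Tr(a^{1/2}ba^{1/2})$). For general $x\in\widehat{N^+}$ and $y\in\widehat{M^+}$, choose $x_i\in N^+$ with $x_i\nearrow x$ and $y_j\in M^+$ with $y_j\nearrow y$; then $i(x_i)\nearrow i(x)$ in $\widehat{M^+}$ by normality of the inclusion and $T(y_j)\nearrow T(y)$ by normality of $T$, and the monotone-continuity of the bilinear trace extension in Theorem~\ref{thm:BilinearExtension} (if $x_i\nearrow x$, $y_j\nearrow y$ then $\Tr(x_i\cdot y_j)\nearrow\Tr(x\cdot y)$) lets me pass to the supremum over $i,j$ on both sides of $\Tr_M(i(x_i)\cdot y_j)=\Tr_N(x_i\cdot T(y_j))$. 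This yields the identity, whence $T=i^*$ and $T^*=i$. The only point requiring genuine care is this passage from $M^+,N^+$ to $\widehat{M^+},\widehat{N^+}$ — one must invoke both that $\sup_i i(x_i)=i(x)$ (normality of $i$) and the joint monotone continuity of $\Tr$ — but all of this is supplied by the cited results, so I expect no real obstacle beyond bookkeeping with the self-duality identifications.
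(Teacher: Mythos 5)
Your proposal is correct and follows essentially the same route as the paper: identify $(\omega(M))^*$ with $\widehat{M^+}$ via $x\mapsto\Tr_M(x\,\cdot\,)$, reduce both $T=i^*$ and $T^*=i$ (the latter via symmetry of the trace pairing) to the single identity $\Tr_M(i(x)\cdot y)=\Tr_N(x\cdot T(y))$ for $x\in\widehat{N^+}$, $y\in\widehat{M^+}$. The only difference is that you spell out, via monotone approximation from $N^+,M^+$ and the joint monotone continuity in Theorem \ref{thm:BilinearExtension}, the extension of Equation \eqref{eq:ovwCondition} to the extended positive cones, a step the paper's proof leaves implicit.
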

\begin{proof}
Clearly $i,T$ are normal. Suppose $n\in \widehat{N^+}$ and $m\in (\widehat{M^+})^*=\widehat{M^+}$. Then
$$
\langle i(n),m\rangle_{\widehat{M^+}}=\Tr_M(m\cdot n)=\Tr_N(T(m)\cdot n) = \langle n, T(m)\rangle_{\widehat{N^+}},
$$
so $T=i^*$. Since $\Tr_M(m\cdot n) =\Tr_M(n\cdot m)$, $i=T^*$. 
\end{proof}

%%%%%%%%%%%%%%%%%%%%%%%%%%%%%%%%%%%%%%%%%%%%%%%%%
\bibliographystyle{amsalpha}
\bibliography{../bibliography}

\providecommand{\bysame}{\leavevmode\hbox to3em{\hrulefill}\thinspace}
\providecommand{\MR}{\relax\ifhmode\unskip\space\fi MR }
% \MRhref is called by the amsart/book/proc definition of \MR.
\providecommand{\MRhref}[2]{%
  \href{http://www.ams.org/mathscinet-getitem?mr=#1}{#2}
}
\providecommand{\href}[2]{#2}
\begin{thebibliography}{GdlHJ89}

\bibitem[Bis97]{MR1424954}
Dietmar Bisch, \emph{Bimodules, higher relative commutants and the fusion
  algebra associated to a subfactor}, Operator algebras and their applications
  (Waterloo, ON, 1994/1995), 13-63, Fields Inst. Commun., 13, Amer. Math. Soc.,
  Providence, RI, 1997, \mathscinet{MR1424954}, \googlebooks{_InIRTO8Y7gC}.

\bibitem[BMPS09]{0909.4099}
Stephen Bigelow, Scott Morrison, Emily Peters, and Noah Snyder,
  \emph{Constructing the extended {H}aagerup planar algebra}, 2009,
  \arXiv{0909.4099}, to appear \emph{Acta Mathematica}.

\bibitem[Bur03]{burns}
Michael Burns, \emph{Subfactors, planar algebras, and rotations}, 2003, Ph.D.
  Thesis at the University of California Berkeley.

\bibitem[Con74]{MR0377533}
Alain Connes, \emph{Caract\'erisation des espaces vectoriels ordonn\'es
  sous-jacents aux alg\`ebres de von {N}eumann}, Ann. Inst. Fourier (Grenoble)
  \textbf{24} (1974), no.~4, x, 121--155 (1975), \mathscinet{MR0377533}.

\bibitem[Con80]{MR561983}
\bysame, \emph{On the spatial theory of von {N}eumann algebras}, J. Funct.
  Anal. \textbf{35} (1980), no.~2, 153--164, \mathscinet{MR561983}.

\bibitem[EN96]{MR1387518}
Michel Enock and Ryszard Nest, \emph{Irreducible inclusions of factors,
  multiplicative unitaries, and {K}ac algebras}, J. Funct. Anal. \textbf{137}
  (1996), no.~2, 466--543, \mathscinet{MR1387518}.

\bibitem[EV00]{MR1753177}
Michel Enock and Jean-Michel Vallin, \emph{Inclusions of von {N}eumann
  algebras, and quantum groupoids}, J. Funct. Anal. \textbf{172} (2000), no.~2,
  249--300, \mathscinet{MR1753177}.

\bibitem[GdlHJ89]{MR999799}
Frederick~M. Goodman, Pierre de~la Harpe, and Vaughan~F.R. Jones, \emph{Coxeter
  graphs and towers of algebras}, Mathematical Sciences Research Institute
  Publications, 14. Springer-Verlag, New York, 1989, x+288 pp. ISBN:
  0-387-96979-9, \mathscinet{MR999799}.

\bibitem[GJS10]{MR2732052}
Alice Guionnet, Vaughan F.~R. Jones, and Dimitri Shlyakhtenko, \emph{Random
  matrices, free probability, planar algebras and subfactors}, Quanta of maths,
  Clay Math. Proc., vol.~11, Amer. Math. Soc., Providence, RI, 2010,
  \mathscinet{MR2732052}, \arXiv{0712.2904v2}, pp.~201--239.

\bibitem[Haa79]{MR534673}
Uffe Haagerup, \emph{Operator-valued weights in von {N}eumann algebras. {I}},
  J. Funct. Anal. \textbf{32} (1979), no.~2, 175--206, \mathscinet{MR534673}.

\bibitem[HO89]{MR1055223}
Richard~H. Herman and Adrian Ocneanu, \emph{Index theory and {G}alois theory
  for infinite index inclusions of factors}, C. R. Acad. Sci. Paris S\'er. I
  Math. \textbf{309} (1989), no.~17, 923--927, \mathscinet{MR1055223}.

\bibitem[IJMS11]{1109.3190}
Masaki Izumi, Vaughan F.~R. Jones, Scott Morrison, and Noah Snyder,
  \emph{Subfactors of index less than 5, part 3: quadruple points}, Comm. Math.
  Phys. (2011), \arXiv{1109.3190}, Accepted October 8, 2011.

\bibitem[ILP98]{MR1622812}
Masaki Izumi, Roberto Longo, and Sorin Popa, \emph{A {G}alois correspondence
  for compact groups of automorphisms of von {N}eumann algebras with a
  generalization to {K}ac algebras}, J. Funct. Anal. \textbf{155} (1998),
  no.~1, 25--63, \mathscinet{MR1622812}.

\bibitem[Izu91]{MR1145672}
Masaki Izumi, \emph{Application of fusion rules to classification of
  subfactors}, Publ. Res. Inst. Math. Sci. \textbf{27} (1991), no.~6, 953--994,
  \mathscinet{MR1145672} \doi{10.2977/prims/1195169007}.

\bibitem[Jon83]{MR696688}
Vaughan F.~R. Jones, \emph{Index for subfactors}, Invent. Math. \textbf{72}
  (1983), no.~1, 1--25, \mathscinet{MR696688}, \doi{10.1007/BF01389127}.

\bibitem[Jon99]{math/9909027}
\bysame, \emph{Planar algebras {I}}, 1999, \arXiv{math/9909027}.

\bibitem[JP11]{1007.3173}
Vaughan F.~R. Jones and David Penneys, \emph{The embedding theorem for finite
  depth subfactor planar algebras}, Quantum Topol. \textbf{2} (2011), no.~3,
  301--337, \arXiv{1007.3173}, \doi{10.4171/QT/23}.

\bibitem[Lie72]{MR0286940}
Arthur Lieberman, \emph{The structure of certain unitary representations of
  infinite symmetric groups}, Trans. Amer. Math. Soc. \textbf{164} (1972),
  189--198, \mathscinet{MR0286940}.

\bibitem[MPPS11]{1007.2240}
Scott Morrison, David Penneys, Emily Peters, and Noah Snyder, \emph{Subfactors
  of index less than 5, part 2: triple points}, Internat. J. Math. (2011),
  \arXiv{1007.2240}, \doi{10.1142/S0129167X11007586}.

\bibitem[MS11]{1007.1730}
Scott Morrison and Noah Snyder, \emph{Subfactors of index less than 5, part 1:
  the principal graph odometer}, Comm. Math. Phys. (2011), \arXiv{1007.1730},
  Accepted June 28, 2011.

\bibitem[Ocn88]{MR996454}
Adrian Ocneanu, \emph{Quantized groups, string algebras and {G}alois theory for
  algebras}, Operator algebras and applications, Vol.\ 2, London Math. Soc.
  Lecture Note Ser., vol. 136, Cambridge Univ. Press, Cambridge, 1988,
  \mathscinet{MR996454}, pp.~119--172.

\bibitem[Pen11]{0912.1320}
David Penneys, \emph{A cyclic approach to the annular {T}emperley-{L}ieb
  category}, J. Knot Theory and its Ramifications (2011), \arXiv{0912.1320},
  \doi{10.1142/S0218216511010012}.

\bibitem[Pet10]{0902.1294}
Emily Peters, \emph{A planar algebra construction of the {H}aagerup subfactor},
  International Journal of Mathematics \textbf{21} (2010), no.~8, 987--1045,
  \arXiv{0902.1294}.

\bibitem[Pop86]{correspondences}
Sorin Popa, \emph{Correspondences}, INCREST Preprint, 1986.

\bibitem[Pop93]{MR1198815}
\bysame, \emph{Markov traces on universal {J}ones algebras and subfactors of
  finite index}, Invent. Math. \textbf{111} (1993), no.~2, 375--405,
  \mathscinet{MR1198815} \doi{10.1007/BF01231293}.

\bibitem[Pop94]{MR1278111}
\bysame, \emph{Classification of amenable subfactors of type {II}}, Acta Math.
  \textbf{172} (1994), no.~2, 163--255, \mathscinet{MR1278111},
  \doi{10.1007/BF02392646}.

\bibitem[Pop95]{MR1334479}
\bysame, \emph{An axiomatization of the lattice of higher relative commutants
  of a subfactor}, Invent. Math. \textbf{120} (1995), no.~3, 427--445,
  \mathscinet{MR1334479} \doi{10.1007/BF01241137}.

\bibitem[PT11]{1010.3797}
David Penneys and James Tener, \emph{Subfactors of index less than 5, part 4:
  vines}, Internat. J. Math. (2011), \arXiv{1010.3797},
  \doi{10.1142/S0129167X11007641}.

\bibitem[Sau83]{MR703809}
Jean-Luc Sauvageot, \emph{Sur le produit tensoriel relatif d'espaces de
  {H}ilbert}, J. Operator Theory \textbf{9} (1983), no.~2, 237--252,
  \mathscinet{MR703809}.

\bibitem[Sau85]{MR799587}
\bysame, \emph{Produits tensoriels de {$Z$}-modules et applications}, Operator
  algebras and their connections with topology and ergodic theory (1983),
  Lecture Notes in Math., vol. 1132, Springer, Berlin, 1985,
  \mathscinet{MR799587}, pp.~468--485.

\bibitem[Tak03]{MR1943006}
M.~Takesaki, \emph{Theory of operator algebras. {II}}, Encyclopaedia of
  Mathematical Sciences, vol. 125, Springer-Verlag, Berlin, 2003, Operator
  Algebras and Non-commutative Geometry, 6 \mathscinet{MR1943006}.

\bibitem[Yam94]{MR1297671}
Shigeru Yamagami, \emph{Modular theory for bimodules}, J. Funct. Anal.
  \textbf{125} (1994), no.~2, 327--357, \mathscinet{MR1297671}.

\end{thebibliography}
\end{document}